\documentclass[12pt,a4paper,reqno]{amsart}
\usepackage[utf8]{inputenc}
\usepackage[T1]{fontenc}
\usepackage{lmodern}
\usepackage{microtype}
\usepackage[english]{babel}
\usepackage{csquotes}
\usepackage[DIV=11,twoside=off]{typearea}
\usepackage{amsmath,amssymb,amsthm,mathtools}
\usepackage{enumitem}
\usepackage{mathrsfs}
\usepackage[colorlinks=true,linkcolor=blue,citecolor=blue,urlcolor=blue]{hyperref}
\usepackage[backend=biber, style=ams-alphabetic, giveninits=true,texencoding=ascii,isbn=false]{biblatex}

\newtheorem{theorem}{Theorem}[section]
\newtheorem{proposition}[theorem]{Proposition}
\newtheorem{lemma}[theorem]{Lemma}
\newtheorem{corollary}[theorem]{Corollary}
\newtheorem{definition}[theorem]{Definition}
\newtheorem{introtheorem}{Theorem}

\theoremstyle{remark}
\newtheorem{remark}{Remark}[section]
\newtheorem{example}[theorem]{Example}

\newcommand{\R}{\mathbb{R}}
\newcommand{\F}{\mathcal{F}}
\newcommand{\C}{\mathbb{C}}
\newcommand{\N}{\mathbb{N}}
\newcommand{\Q}{\mathbb{Q}}
\newcommand{\Z}{\mathbb{Z}}

\newcommand{\D}{\mathcal{D}}
\newcommand{\E}{\mathcal{E}}
\newcommand{\I}{\mathcal{I}}

\newcommand{\WF}{\operatorname{WF}}
\newcommand{\supp}{\operatorname{supp}}
\newcommand{\dotT}{\dot{T}^{*}}
\newcommand{\eps}{\varepsilon}

\newcommand{\cA}{\mathcal{A}}
\renewcommand{\S}{\mathcal{S}}
\newcommand{\Cinft}{C^\infty}
\newcommand{\CT}{C^\infty_c}
\newcommand{\bq}{\begin{equation}}
\newcommand{\eq}{\end{equation}}
\newcommand{\bqn}{\begin{equation*}}
\newcommand{\eqn}{\end{equation*}}
\newcommand{\eklm}[1]{\left\langle #1 \right\rangle}
\newcommand{\norm}[1]{\left\Vert #1 \right\Vert}
\renewcommand{\Re}{\operatorname{Re}}
\renewcommand{\Im}{\operatorname{Im}}

\title{Distributional zero divisors with full support}
\author{Benjamin Delarue}
\email{bdelarue@math.upb.de}
\address{Universität Paderborn, Warburger Str.~100, 33098 Paderborn, Germany}

\begin{document}

\begin{abstract}We show that every smooth manifold $M$ of positive dimension admits fully supported distributions $u,v\in \D'(M)$ with $\WF(u)\cap({-}\WF(v))=\emptyset$ and $uv=0$. Neither factor can be continuous on any non-empty open set, but $u$ may be chosen of optimal Sobolev order $\dim M/2$,  with wavefront set contained in any prescribed closed conical set $\Gamma\subset T^\ast M\setminus 0$ with non-empty and non-maximal fibers. Simultaneously, the wavefront set of $v$ may be confined to any compatible open conical set or, in geometric situations, to  closed conical sets such as conormal bundles of foliations or ray bundles generated by locally conformally closed one-forms. We give positivity criteria and show that full-support zero divisor pairs accumulate at $(1,0)$ in Sobolev-microlocal topologies. The one-dimensional case uses a Fourier null series of Kozma-Olevski\u{\i}. 
\end{abstract}

\maketitle

\section{Introduction}

Let $M$ be a non-empty smooth manifold equipped with a smooth measure, such as an open set in Euclidean space with Lebesgue measure. Distributions  generalize locally integrable complex-valued functions on $M$, but two distributions $u,v\in \D'(M)$ cannot in general be multiplied \cite{Schwartz1954}. If they satisfy the wavefront set condition
\bq
\WF(u)\cap (-\WF(v))=\emptyset,\label{eq:Hormandercond}
\eq
one defines $uv:=\iota^\ast (u\otimes v)$, where $\iota:M\to M\times M$ is the diagonal map. This distributional (Hörmander) product generalizes the usual product of smooth functions and satisfies $\supp(uv)\subset \supp u\cap \supp v$  \cite[Thm.~8.2.10]{HormanderI}. We study the structural question whether two distributions with full support can multiply to zero:
\bq
\WF(u)\cap (-\WF(v))=\emptyset,\qquad\supp u=\supp v=M\implies uv\neq 0\quad ?\label{eq:mainquestion}
\eq
This is subtle because full supports rule out an elementary zero divisor construction through disjoint supports, and it is not clear a priori whether the  condition \eqref{eq:Hormandercond} makes the Hörmander product behave more like the usual product of continuous functions or of discontinuous $L^1_\mathrm{loc}$-functions: If $f,g\in C(M)$ have full support, then also $fg$ has full support. By contrast, if $\dim M>0$, one can decompose $M=A\sqcup B$ into measurable sets, each intersecting every non-empty open subset of $M$ in a set of positive measure (cf.\ \cite{Erdos}). The characteristic functions $1_A,1_B$ are discontinuous on every non-empty open set, have  full support, and $1_A1_B=0$ while  \eqref{eq:Hormandercond} is violated:  $1_A+1_B=1$ and $1_A,1_B$ are real, so $\WF(1_A)=\WF(1_B)=-\WF(1_A)\neq\emptyset$. Thus  \eqref{eq:Hormandercond} is important in \eqref{eq:mainquestion}. 

\begin{introtheorem}\label{thm:intro1}
If $\dim M>0$, there exist distributions  $u,v\in \D'(M)$ with 
\bq
\mathrm{WF}(u)\cap (-\mathrm{WF}(v))=\emptyset,\qquad \supp u=\supp v=M,\qquad uv=0.\label{eq:threecond}
\eq
If $\dim M\geq 2$, $u$ and $v$ may be chosen positive. On the other hand, if $u,v\in \D'(M)$  satisfy \eqref{eq:threecond}, then neither $u$ nor $v$ is a continuous function on any non-empty open set.
\end{introtheorem}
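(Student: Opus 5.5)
The plan is to prove the three assertions in turn: existence in dimension one, then existence on an arbitrary $M$ (with positivity when $\dim M\ge2$), and finally the non-continuity statement.

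\emph{Existence in dimension one.} I work first on the circle $\mathbb{T}=\R/2\pi\Z$ and use the Kozma--Olevski\u{\i} null series. As I understand it, this is a non-trivial series $\sum_{n>0}c_ne^{inx}$ with $c_n\to0$ and positive spectrum that converges to $0$ in a strong sense. Let $F(z)=\sum_{n>0}c_nz^n$, which is holomorphic in the disc, and let $u=\lim_{r\to1}F(re^{ix})\in\D'(\mathbb{T})$ be its boundary value. Since the spectrum is positive, $\WF(u)\subset\{\xi>0\}$. If $v$ is also a boundary value from the disc, then $\WF(v)\subset\{\xi>0\}$, so \eqref{eq:Hormandercond} holds automatically. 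The Hörmander product $uv$ is then the boundary value of $FG$, and its Fourier coefficients are given by Cauchy products.

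Full support comes from uniqueness. If $u$ vanished on an arc, the Schwarz reflection / edge-of-the-wedge principle would force $F\equiv0$. So every non-zero such boundary value has full support.

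The real work is to choose $v\neq0$ with $uv=0$, and I expect this to be the main obstacle. The first thing I would try is $v=u$, or $v=\bar u$ composed with $x\mapsto -x$. I would then use the strong nullity of the Kozma--Olevski\u{\i} series, namely radial limits of $F$ equal to $0$ on a large set plus growth control, to show that the boundary value of $F^2$ vanishes. If this fails, I would instead build $v$ from a second null series whose partial products stay null.

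\emph{Existence on a general manifold and positivity.} Pass to $\R$ by periodicity, then to a chart $\R^n$ by tensoring with the constant function $1$. The wavefront sets then lie in the conormal direction $\{\xi'=0\}$, so the condition is preserved, the supports stay full, and the product is still $0$.

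To globalize to $M$, I would pick a locally finite family of coordinate patches $U_j$ covering $M$. In each patch I place a one-dimensional zero-divisor pair with the same positive conormal direction $dx_1^{(j)}$. I then glue with a partition of unity arranged so that the cross terms vanish. This works if all factors are boundary values in a common cone, for instance when the ray bundle is generated by a fixed nonvanishing one-form in a collar. Local finiteness keeps all the wavefront sets inside a fixed closed cone, so \eqref{eq:Hormandercond} survives.

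For positivity when $\dim M\ge2$, I would use the extra dimension to replace $u$ by $|\,\cdot\,|^2$-type expressions $u\otimes\bar u'$ evaluated along transversal directions. I would then check positivity on test functions via a Bochner-type argument.

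\emph{Non-continuity.} Suppose $u$ is continuous on an open set $V\ne\emptyset$. Because $\supp u=M$, after shrinking $V$ we may assume $|u|\ge c>0$ on $V$. Mollify to get $u_\eps\to u$ uniformly on compacts, and also in $\D'_\Gamma$ for a closed cone $\Gamma\supset\WF(u)$ with $\Gamma\cap(-\WF(v))=\emptyset$. Sequential continuity of the Hörmander product gives $u_\eps v\to uv=0$.

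For $\psi\in\CT(V)$ set $h_\eps=\psi/u_\eps$, which is smooth. Then
\[
\langle v,\psi\phi\rangle=\lim_\eps\langle u_\eps v,h_\eps\phi\rangle .
\]
The delicate point is to control $h_\eps$ uniformly in the $C^k$ norms needed to pair with $v$, whose order is finite on compacts. I plan to handle this by choosing $\Gamma$ and the mollification so that $u_\eps v$ converges in a space of distributions of fixed order, and by absorbing derivatives of $h_\eps$ through a dyadic microlocal decomposition. The conclusion is $v=0$ on $V$, which contradicts $\supp v=M$. The argument is symmetric in $u$ and $v$, which gives the claim for both factors.
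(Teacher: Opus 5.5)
Your one-dimensional construction cannot work, and the rest of the proposal is built on it. Suppose $u$ and $v$ both have spectrum in $\{n\ge0\}$, i.e.\ they are boundary values of holomorphic $F,G$ on the disc. As you note, $\widehat{uv}$ is then the Cauchy product of $\hat u$ and $\hat v$. Let $k_0,l_0$ be the smallest indices with $\hat u(k_0)\neq0$ and $\hat v(l_0)\neq0$; then $\widehat{uv}(k_0+l_0)=\hat u(k_0)\hat v(l_0)\neq0$. So this class is an integral domain, and no choice of $v$ helps: $v=u$ gives $\mathrm{bv}(F^2)\neq0$, and $\overline{u(-x)}$ again has spectrum in $\{n\ge0\}$.

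The same remark shows that your description of the Kozma--Olevski\u{\i} series is wrong. By your own reflection argument, a non-zero distribution with spectrum bounded below has full support. The KO distribution is supported on a compact null set, so it must have $\hat u(n)\neq0$ for infinitely many $n<0$: its negative coefficients are $\mathcal O(|n|^{-\infty})$, not $0$. More generally, in any zero-divisor pair in $\D'_+(S^1)$ at least one factor has spectrum unbounded below.

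The paper uses the KO distribution the other way round, as a thin \emph{atom}: a distribution whose support has empty interior and whose wavefront set is one-sided. It places atoms $A_j$ in every set of a countable basis and sets $v=\sum_j\lambda_jA_j$. The partner is a thick factor $u=\lim_j f m_1\cdots m_j$, where each $m_j$ is smooth, flat on $\supp A_j$ (so $m_jA_j=0$ by Lemma~\ref{lem:flatkills}), and close to $1$ in finitely many seminorms of $\D'_\Gamma$, which makes the infinite product converge in $\D'_\Gamma$. That approximation is the heart of the proof (Proposition~\ref{prop:finiteflat}): an obstruction would give a non-zero distribution supported at a point that is microlocally $H^{-\dim M/2}$ at some covector, which is impossible. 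Full support of $u$ and $v$ comes from test-function bookkeeping on the basis sets, not from analytic continuation.

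Your gluing and positivity steps are also unsupported. In the gluing, the cross terms $\chi_j\chi'_ku_jv_k$ on overlaps have no reason to vanish, and the sums may cancel. For positivity, a tensored one-sided pair satisfies $\WF(u)\neq-\WF(u)$, so it is not even real, and $u\bar u$ violates \eqref{eq:Hormandercond}. The paper instead uses symmetric cones, hypersurface measures as atoms, and $m_j\in\I^{\infty,>0}(K_j)$ obtained by convex separation together with Lemma~\ref{lem:wflemma}.

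For non-continuity, the identity $\langle v,\psi\phi\rangle=\langle u_\eps v,h_\eps\phi\rangle$ is fine, but passing to the limit is the entire difficulty, and your sketch leaves it open. For merely continuous $u$ the $C^k$ norms of $h_\eps\phi$ blow up. Sequential continuity of the product gives $u_\eps v\to0$ only against fixed test functions and with no rate. Nothing is known about $\WF(1/u)$, which is exactly the obstruction described in the introduction. ``Absorbing derivatives through a dyadic decomposition'' does not supply a mechanism that overcomes this.

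The paper avoids division altogether (Lemma~\ref{lem:nearidentity}). Near $p$ it sets $a=u/u(p)$ and $w=\chi v$, so $aw=0$ becomes $w=bw$ with $b=\chi(1-a)$ and $\|b\|_{L^\infty}<\eps_0$. Because $\WF(b)\subset\Gamma$, $\WF(w)\subset\Lambda$ and $\Gamma\cap(-\Lambda)=\emptyset$, frequencies $|\eta|>CR$ of $\hat w$ contribute only $\mathcal O(R^{-\infty})$ to $\widehat{bw}$ on $B_R$. Hence $\|\hat w\|_{L^2(B_R)}\le\|b\|_{L^\infty}\|\hat w\|_{L^2(B_{CR})}+\mathcal O(R^{-\infty})$. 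Iterating this against the growth $\|\hat w\|_{L^2(B_R)}=\mathcal O(R^s)$, with $\eps_0=C^{-s}/2$, forces $\hat w=0$.
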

Recall that a distribution $u\in\D'(M)$ is  \emph{positive} if $u(\varphi)\geq 0$ for all real-valued $\varphi\in \CT(M)$ with $\varphi\geq 0$. Then $u$ is a measure \cite[Thm.~2.1.7]{HormanderI}. 

To the best of the author’s knowledge, no pair of distributions satisfying \eqref{eq:threecond} has previously been recorded in the literature. 
Theorem \ref{thm:intro1} and its refinements below show, however, that such pairs are not exceptional: They exist on every positive-dimensional
smooth manifold (Theorem \ref{thm:intro1}), at optimal Sobolev regularity of one factor (Theorem \ref{thm:intro2a}), under simultaneous wavefront-set restrictions  (Theorems \ref{thm:intro2} and \ref{thm:intro4}), and arbitrarily close to $(1,0)$ in the relevant Sobolev-microlocal topologies (Theorem \ref{thm:intro3}). 

 The elimination of continuity in Theorem \ref{thm:intro1} is non-trivial: Even if $u\in C(M)$ is nowhere-vanishing, given $v\in \D'(M)$ with $\WF(u)\cap (-\WF(v))= \emptyset$ we cannot in general write $v=\frac{1}{u} uv$  because we do not know if covectors in $\WF(\frac{1}{u})$, $\WF(u)$ and $\WF(v)$ might add to zero in some fiber. However, it is trivial that \eqref{eq:threecond} cannot occur with \emph{both} factors $u,v$ continuous; then the distributional product coincides with the usual product, giving $\supp uv=M$.

Since Theorem \ref{thm:intro1} holds for every smooth manifold of dimension $>0$, the existence assertion in Theorem \ref{thm:intro1} can be equivalently formulated with \eqref{eq:threecond} replaced by \eqref{eq:threecond1} or \eqref{eq:threecond2}:
\begin{align}
\mathrm{WF}(u)\cap (-\mathrm{WF}(v))&=\emptyset,\qquad \supp u=\supp v=M,\qquad \supp uv\neq M.\label{eq:threecond1}\\
\mathrm{WF}(u)\cap (-\mathrm{WF}(v))&=\emptyset,\qquad \mathrm{Int}(\supp u)\cap \mathrm{Int}(\supp v)\not\subset \mathrm{Int}(\supp uv).\label{eq:threecond2}
\end{align}
Here $\mathrm{Int}$ denotes the interior in $M$. The equivalence is established by replacing $M$ with $M\setminus \supp uv$  or $\mathrm{Int}(\supp u)\cap \mathrm{Int}(\supp v)\setminus \supp uv$  and using that the full support property and the wavefront set condition are compatible with restriction to open sets. 

In dimension one, say for $M=S^1$, Theorem \ref{thm:intro1} admits an additional algebraic interpretation because the condition \eqref{eq:Hormandercond} is exceptionally rigid. Namely, we obtain zero divisor pairs in convolution algebras of Fourier coefficients that are not due to disjoint supports of the corresponding distributions. For more details, see Section \ref{sec:relatedconsequ}.

\medskip To proceed to more detailed results, we call a pair of distributions $u,v\in \D'(M)$ as in \eqref{eq:threecond} a \emph{full-support zero divisor pair} and use the term \emph{full-support zero divisor} for a single distribution $u\in \D'(M)$ that is a member of some full-support zero divisor pair.  Knowing from Theorem \ref{thm:intro1} that such pairs exist and that no full-support zero divisor can be continuous on any non-empty open set, a natural subsequent question is how regular a full-support zero divisor may be: In which Sobolev regularity classes can it lie, and how large does its wavefront set have to be? There are clear a priori limits: \begin{itemize}[leftmargin=*]
\item A full-support zero divisor can be only of Sobolev orders $s\leq\dim M/2$ because distributions of Sobolev order $s>\dim M/2$ are continuous by  Sobolev embedding.\smallskip

\item The wavefront set of a full-support zero divisor $u\in \D'(M)$ must have a non-empty and non-maximal fiber over every point: $\WF_p(u)\neq\emptyset$ and $\WF_p(u)\neq T_p^\ast M\setminus \{0\}$ for all $p\in M$. This is because $\WF_p(u)=\emptyset$ at some $p$ means that $u$ is smooth on some open set $U\subset M$ around $p$, in particular $u|_U$ is continuous, contradicting Theorem \ref{thm:intro1}. Dually, $\WF_p(u)= T_p^\ast M\setminus \{0\}$ at some $p$ implies that every $v \in \D'(M)$ satisfying $\mathrm{WF}(u)\cap (-\mathrm{WF}(v))=\emptyset$ is smooth near $p$, which again contradicts Theorem \ref{thm:intro1}. 

If in addition we want $u$ to be a real distribution, then one necessarily has $\WF_p(u)\cap (-\WF_p(u))\neq \emptyset$ for all $p\in M$, since $\WF(\bar u)=-\WF(u)$.
\end{itemize}
This motivates calling a closed conical set $\Gamma\subset  T^\ast M\setminus 0$ \emph{admissible} if
\[
\emptyset\neq\Gamma_p\neq T_p^\ast M\setminus \{0\}\quad \forall\; p\in M.
\]
Asking if a full-support zero divisor $u$ can be of Sobolev orders close to the maximal order $\dim M/2$ can be interpreted as measuring quantitatively ``how much  discontinuity'' the full-support zero divisor property requires. Also the weaker question if $u$ can be of Sobolev order $s\geq 0$ suggests itself: This is the threshold for $u$ being a regular distribution, since $H^s_\mathrm{loc}(M)\subset L^1_\mathrm{loc}(M)$ if, and only if, $s\geq 0$. Concerning the wavefront set, one may wonder in dimensions $>1$ whether the full-support zero divisor property of $u$  requires a certain ``room'' in $\WF(u)$ beyond the mere non-emptiness of the fibers. 

Our next result answers these questions: The maximal Sobolev order $s=\dim M/2$ can always be achieved. Simultaneously, the wavefront set can be confined to an arbitrary admissible closed conical set. Moreover, the distribution can be chosen positive whenever the conical set satisfies the necessary condition to contain the wavefront set of a nowhere-smooth real distribution.

\begin{introtheorem}\label{thm:intro2a}
For every smooth manifold $M$ of dimension $>0$ and every admissible closed conical set $\Gamma\subset  T^\ast M\setminus 0$, there exists a full-support zero divisor $u\in \D'_\Gamma(M)\cap H^{\dim M/2}_\mathrm{loc}(M)$. If $\Gamma_p\cap (-\Gamma_p)\neq \emptyset$ for all $p\in M$, we can achieve that $u$ is positive.
\end{introtheorem}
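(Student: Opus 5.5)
We need a full-support zero divisor $u$ with $\WF(u)\subset\Gamma$ and $u\in H^{\dim M/2}_{\mathrm{loc}}$, together with a partner $v$ such that $\WF(v)\cap(-\Gamma)=\emptyset$, $\supp v=M$ and $uv=0$. The plan is to build both as locally finite sums of compactly supported pieces placed densely in $M$, where each piece of $u$ is cancelled in the product by the piece of $v$ sitting at the same place.

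\textbf{Local model.} Fix a point $p$ and a direction $\xi_0\in\Gamma_p$. Admissibility gives a direction $\eta_0\notin -\Gamma_p$, so there is a small closed cone around $\eta_0$ avoiding $-\Gamma$ in a neighbourhood of $p$. In local coordinates near $p$ I construct:
\begin{itemize}
\item a distribution $u_p$ whose wavefront set lies on a thin ray bundle inside $\Gamma$ (for instance conormal-type singularities $\sum_k c_k\,\chi(x)e^{i\lambda_k x\cdot\xi_0}$ along a hyperplane);
\item a partner $v_p$ with wavefront set in a thin cone around $\eta_0$,
\end{itemize}
such that $u_pv_p=0$ and both have full support in a small ball $B_p$.

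In dimension one the directions are just $\pm$. The natural choice is $u_p=$ boundary value of a function holomorphic in a half-plane, and $v_p$ the same. Their product is a boundary value of a holomorphic function, so vanishing forces using the Kozma--Olevski\u{\i} Fourier null series. That gives a nonzero series with only positive frequencies converging to $0$, which I would use to produce $u_p$ with one-sided spectrum, full support, and a product with a one-sided $v_p$ equal to zero. In higher dimensions I would take tensor products: the one-dimensional construction in the $\xi_0$-direction times a smooth bump in the transverse directions. For the partner, I would use a function whose spectrum is transverse, arranged so that the product vanishes by support considerations in a partial Fourier variable.

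\textbf{Sobolev order $\dim M/2$.} The piece $u_p$ alone lies only in some $H^{s}$. To reach order $\dim M/2$ I would use scaling. The $\dot H^{n/2}$ norm is scale invariant, and concentrating on balls of radius $r$ multiplies inhomogeneous norms by a bounded factor, so I choose coefficients $\epsilon_j\to0$ fast. Then $u=\sum_j\epsilon_j u_{p_j}(\cdot/r_j)$ over a dense sequence of disjoint balls converges in $H^{n/2}_{\mathrm{loc}}$, while $\WF(u)$ remains in $\Gamma$ because each piece's wavefront set is in $\Gamma$ and $\Gamma$ is closed. Local finiteness fails near accumulation points, so the wavefront set must be controlled by uniform symbol estimates for the rescaled pieces in the directions outside $\Gamma$.

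\textbf{Zero product and full support.} Choose disjoint balls $B_j$, dense in $M$, with $u_j,v_j$ supported in $\overline{B_j}$. Set $v=\sum_j\delta_j v_j$ with $\delta_j$ small. Then $uv=\sum_j u_jv_j=0$ provided the product is continuous along the partial sums in the H\"ormander topology, which follows from uniform wavefront estimates. Both supports equal the closure of $\bigcup_j B_j$, which is $M$. For positivity when $\Gamma_p\cap(-\Gamma_p)\neq\emptyset$, I would replace $u_j$ by $|w|^2$-type constructions or by measures supported on null sets, with $\WF$ in $\pm\xi_0$.

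\textbf{Main obstacle.} The hard step is the local model: producing fully supported $u_p,v_p$ with $u_pv_p=0$ and prescribed one-sided microlocal directions, especially when $\Gamma_p$ is a single ray. I would first attempt the one-dimensional Kozma--Olevski\u{\i} construction and extend it by tensorization.
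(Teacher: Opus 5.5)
There is a genuine gap, and it sits exactly at what you call the main obstacle. The local model is not a reduction: it is the theorem itself on a ball (fully supported $u_p,v_p$ on $B_p$ with compatible wavefront sets and $u_pv_p=0$). Moreover, the mechanisms you propose for it cannot give a zero product.

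\textbf{Why the local model fails.} In dimension one, with $\Gamma_p$ a single ray, $\WF(v_p)$ must lie on the same side as $\WF(u_p)$. If both are exact boundary values $b(F),b(G)$ from the same half-plane, then $u_pv_p=b(FG)$. But $b(FG)=0$ on an interval forces $FG\equiv 0$, hence $u_p=0$ or $v_p=0$. On $S^1$ this is the statement that one-sided Fourier series multiply by Cauchy products and power series form an integral domain. Zero divisors exist only because of rapidly decaying tails on the opposite side, and the holomorphic heuristic gives no control over those.

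Nor can a product vanish ``by support considerations in a partial Fourier variable''. Since $u_p,v_p$ are compactly supported, their (partial) Fourier transforms are entire, and $\widehat{u_pv_p}$ is a convolution, not a product.

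What Kozma--Olevski\u{\i} actually supplies (Proposition \ref{prop:KOatom}) is a one-sided distribution supported on a null set. That is a piece with nowhere dense support, not a fully supported one.

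\textbf{The missing idea.} You need a way for such thin pieces to produce zero products: a smooth function vanishing to infinite order on $\supp A$ annihilates $A$ (Lemma \ref{lem:flatkills}). Some factor must be built from nowhere dense pieces in any case, since two disjoint open sets cannot both be dense. The paper argues via Theorem \ref{thm:intro2} with $\Upsilon=\dot T^*M\setminus(-\Gamma)$, i.e.\ via Theorem \ref{thm:main}, and the construction is asymmetric:
\begin{itemize}
\item $v=\sum_j\lambda_jA_j$, with atoms of nowhere dense support placed inside each set of a countable basis;
\item $u=\lim_j f\,m_1\cdots m_j$, with $m_j\in\I^\infty(\supp A_j)$ smooth and with zero set of empty interior;
\item full support is secured by test functions $\phi_i,\psi_i$ with summable error bounds, not by disjoint balls.
\end{itemize}

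\textbf{The Sobolev step also fails.} Membership in $H^s_{\mathrm{loc}}$ is invariant under dilations and scalar multiples. So if $u_p\notin H^{n/2}$, no $\epsilon_j u_{p_j}(\cdot/r_j)$ lies in $H^{n/2}$, and neither does the sum. Scale invariance of $\dot H^{n/2}$ only bounds the norms of pieces already in $H^{n/2}$.

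In the paper, $u$ is an $H^{n/2}$-limit of smooth functions. The decisive input is Proposition \ref{prop:finiteflat}: for $s\le\dim M/2$, smooth functions flat on a sufficiently small compact set near $p$ approximate $1$ in any finite family of seminorms from $\mathsf{SEM}(\Gamma,s)$. Its proof is a Hilbert-space separation argument. Failure would produce a functional $L$ that is a nonzero distribution supported at $p$ (or, in the positive case, satisfies the sign conditions of Lemma \ref{lem:wflemma}). Yet by construction of the Hilbert control seminorm, $L$ is microlocally $H^{-s}$ at $-\xi$, which is impossible because $-s\ge-\dim M/2$. This is precisely where the critical order enters.

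Working with smooth approximants also avoids a further problem in your plan. Blocks such as $\chi e^{i\lambda_k x\cdot\xi_0}$ need $\WF(u_p)\subset\Gamma$ over a whole ball. That requires a constant, or at least integrable, direction field inside $\Gamma$ near $p$, which an arbitrary admissible $\Gamma$ need not contain; the ray bundle of a contact form is an example.

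\textbf{Positivity.} A positive $u$ cannot come from measures on null sets, since $H^{n/2}_{\mathrm{loc}}\subset L^2_{\mathrm{loc}}$. The paper instead takes $m_j\in\I^{\infty,>0}(K_j)$, using the convex-cone version of Proposition \ref{prop:finiteflat}.
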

Here we write $\D'_\Gamma(M):=\{u\in \D'(M)\,|\, \WF(u)\subset \Gamma\}$ for $\Gamma\subset T^\ast M\setminus 0$ conical. 
Theorem \ref{thm:intro2a} is an asymmetric result: Although a full-support zero divisor $u$  by definition has a companion $v$ with which it forms a full-support zero divisor pair, Theorem \ref{thm:intro2a} makes no statement about regularity or wavefront set properties of any such $v$. We proceed with a strengthening of Theorem \ref{thm:intro2a} addressing both members $u,v$ of a full-support zero divisor pair.  It says that we can achieve a ``thick/thin'' situation: The thick factor $u$ is of maximal Sobolev order and may have an arbitrarily confined wavefront set as in Theorem \ref{thm:intro2a}, while the thin factor $v$ is more singular.

In the statement below, we use the definition $H^{s-0}_\mathrm{loc}(M):=\bigcap_{\eps>0}H^{s-\eps}_\mathrm{loc}(M)$,  $s\in \R$. 

\begin{introtheorem}\label{thm:intro2}
Let $M$ be a smooth manifold and let $\Gamma,\Upsilon\subset T^\ast M\setminus 0$ be two conical sets such that $\Gamma$ is closed, $\Upsilon$ is open, $\Gamma_p\neq \emptyset$ and $\Upsilon_p\neq \emptyset$ for all $p\in M$, and $\Gamma\cap(-\Upsilon)=\emptyset$. Then the following holds:

\begin{enumerate}
\item There exist a closed conical set $\Lambda\subset \Upsilon$ and distributions   
 \[
 u\in \D'_\Gamma(M)\cap H^{\dim M/2}_\mathrm{loc}(M),\qquad v\in \D'_\Lambda(M)\cap H^{-1/2-0}_\mathrm{loc}(M)
 \]
   with 
\[
\supp u=\supp v=M,\qquad uv=0.\vspace*{0.5em}
\]
\item If $\Gamma_p\cap (-\Gamma_p)\neq \emptyset$ for all $p\in M$, then in (1) we can achieve that $u$ is positive. 
\item If $\Upsilon_p\cap (-\Upsilon_p)\neq \emptyset$ for all $p\in M$, then in (1) we can achieve that $v$ is positive and $\Lambda=-\Lambda$.
\item If $\Gamma_p\cap (-\Gamma_p)\neq \emptyset$ and $\Upsilon_p\cap (-\Upsilon_p)\neq \emptyset$ for all $p\in M$, then in (1) we can achieve that $u$ and $v$ are positive and $\Lambda=-\Lambda$.
\end{enumerate}
\end{introtheorem}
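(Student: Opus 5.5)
The plan is to build both factors as locally finite superpositions of elementary building blocks with \emph{pairwise disjoint supports}. Every individual product then vanishes trivially, and full support comes from density of the union of the supports. The thin factor $v$ is a weighted sum of surface measures $a_j\,\chi_j\delta_{H_j}$ on small compact hypersurface pieces $H_j$. I choose each $H_j$ so that its conormal bundle lies in a closed conical subset of $\Upsilon$. This is possible because $\Upsilon$ is open with non-empty fibers: pick $\xi_0\in\Upsilon_{p}$, take $H_j$ to be a tiny piece of $\{\langle\xi_0,x\rangle=c\}$ in a chart, and note that $\Upsilon$ contains a conic neighbourhood of $\pm\xi_0$ only if $-\xi_0\in\Upsilon_p$. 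So I take $\chi_j\delta_{H_j}$ microlocalized to the one-sided ray, e.g.\ via the boundary value $(x_1-c\mp i0)^{-1}$ times a cutoff. In the symmetric case (3) I use honest positive $\delta_{H_j}$. Each such piece lies in $H^{-1/2-0}_{\mathrm{loc}}$.

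The thick factor is $u=\sum_k b_k u_k$. Here $u_k$ is supported in a small ball $B_k$, lies in $H^{\dim M/2}_{\mathrm{loc}}$, has $\WF(u_k)$ contained in the fiber over a single point $p_k$, and that fiber is a small closed cone inside $\Gamma_{p_k}$. Concretely, in a chart I take $\hat u_k(\xi)=\psi(\xi/|\xi|)\,\langle\xi\rangle^{-n}(\log\langle\xi\rangle)^{-1}$ with $\psi$ supported near a direction in $\Gamma_{p_k}$, translate it to $p_k$, and multiply by a cutoff. This gives $\int|\hat u_k|^2\langle\xi\rangle^{n}<\infty$ but an unbounded $u_k$, and for the positive cases (2),(4) the two-sided version is a positive $\log\log$-type function. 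The combinatorial step is an interleaved induction over an enumeration of a countable basis $(W_m)$ of $M$. At stage $k$ I pick a closed ball $\overline{B_k}\subset W_k$ avoiding the finitely many compact pieces $H_1,\dots,H_{k-1}$. I then pick $H_k\subset W_k$ avoiding $\overline{B_1},\dots,\overline{B_k}$, and with $H_k$ disjoint from $H_1,\dots,H_{k-1}$ as well. All supports are then pairwise disjoint, each family meets every $W_m$, and hence $\supp u=\supp v=M$ provided $b_k,a_j\neq0$.

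Next I need convergence and the wavefront bounds. I choose the weights $b_k,a_j$ decaying fast enough that the series converge in the Hörmander topologies of $\D'_\Gamma(M)$ and of $\D'_\Lambda(M)$, together with the Sobolev norms $H^{\dim M/2}$ and $H^{-1/2-\eps}$ for every $\eps$. Here $\Lambda$ is a closed conical subset of $\Upsilon$ containing all the $N^*H_j$ rays. The statement demands a closed $\Lambda$, so $\Lambda$ must be fixed before the induction, and the $H_j$ must be chosen with conormals in it. For this I take $\Lambda$ to be a closed conic set with non-empty fibers contained in $\Upsilon$. I obtain it from a continuous section of directions, locally constant in charts and patched with a partition of unity; fiberwise-convexity issues can be avoided since $\Lambda$ need not be convex. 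With $\Lambda$ fixed, $\Gamma\cap(-\Lambda)=\emptyset$ holds, and the Hörmander product is sequentially continuous on $\D'_\Gamma\times\D'_\Lambda$ \cite[Thm.~8.2.10]{HormanderI}. Therefore $uv=\lim_{N}\sum_{k,j\le N}a_jb_k\,u_kv_j=0$, since each term has disjoint supports. For positivity, nonnegative weights on positive blocks give positive limits, and in case (3) one gets $\Lambda=-\Lambda$ by symmetrizing the chosen direction field, which is allowed since $\Upsilon_p\cap(-\Upsilon_p)\ne\emptyset$.

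I expect the main obstacle to be the uniform control needed for convergence in $\D'_\Gamma$. The seminorms $\sup_{V}|\xi|^N|\widehat{\phi u}(\xi)|$ over closed cones $V$ disjoint from $\Gamma$ must be summable along the series. The blocks $u_k$ accumulate at every point, and their singular directions at $p_k$ are only known to lie in $\Gamma_{p_k}$, not in a fixed cone near other points. I would handle this by choosing each micro-support cone at $p_k$ to lie at positive distance inside $\Gamma$ relative to a fixed locally finite family of test cutoffs and cones. Then, for each $(\phi,V)$ among a countable exhausting family, the contributions of $u_k$ for $k\ge k_0(\phi,V)$ are rapidly decaying with bounds I can beat by the choice of $b_k$. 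This is a diagonal argument over countably many seminorms. The same scheme applies to $v$ with $\Lambda$.
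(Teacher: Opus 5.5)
\textbf{The disjoint-support architecture cannot work.} Your induction forces $B_k\cap H_j=\emptyset$ for all $j,k$: $B_k$ avoids $H_j$ for $j<k$, and $H_j$ avoids $B_k$ for $j\ge k$. At the same time $H_m\subset W_m$ for every $m$. But $\mathrm{Int}(\overline{B_1})\neq\emptyset$ contains some basis set $W_m$, and at stage $m$ no $H_m\subset W_m$ can avoid $\overline{B_1}$. Put differently, any limit $v=\sum_j a_jv_j$ satisfies $\supp v\subset\overline{\bigcup_j H_j}\subset M\setminus\bigcup_k\mathrm{Int}(B_k)\neq M$. Disjointly supported blocks can be dense on both sides only if the thick blocks also have supports with empty interior, and you construct no $H^{\dim M/2}$ (hence $L^2_{\mathrm{loc}}$) blocks of that kind.

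The paper avoids disjointness altogether. It sets $u=\lim_j fm_1\cdots m_j$ with $m_j\in\I^\infty(\supp A_j)$ having zero set of empty interior. Each factor then kills the atom $A_j$ (Lemma~\ref{lem:flatkills}) without destroying full support. The decisive analytic input is Proposition~\ref{prop:finiteflat}: such flat $m_j$ can be taken arbitrarily close to $1$ in finitely many seminorms of $\mathsf{SEM}(\Gamma,\dim M/2)$. This is where $s\le\dim M/2$ and $\Gamma_p\neq\emptyset$ enter, through Hilbert control seminorms and the fact that a non-zero distribution supported at a point is not microlocally $H^{-\dim M/2}$ at any covector there.

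By contrast, the convergence in $\D'_\Gamma$ that you flag as the main obstacle is routine. Each block already lies in $\D'_\Gamma$, so choosing the $k$-th weight small against the first $k$ members of a countable family of seminorms (Lemma~\ref{lem:countableseminorms1}) suffices.

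\textbf{The one-sided thin factor is missing.} Whenever $\Upsilon_p\cap(-\Upsilon_p)=\emptyset$, you need non-zero distributions with one-sided wavefront set and supports of empty interior. In dimension one this is always the case, since $\Gamma_p\neq\emptyset$ and $\Gamma\cap(-\Upsilon)=\emptyset$ force $\Upsilon_p$ to be a single ray.

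Your candidate $\chi\,(x_1-c\mp i0)^{-1}$ equals $\chi/(x_1-c)$ off the hypersurface, so its support is all of $\supp\chi$. No modification can stay on $H_j$ either. A distribution supported on a smooth submanifold is locally $\sum_\alpha\partial^\alpha_{x'}\delta(x')\otimes w_\alpha(x'')$, whose Fourier transform is polynomial in the conormal variables. It therefore cannot decay rapidly in an open cone around a conormal covector unless it vanishes there. In dimension one, $H_j$ is a point, and you would need a one-sided combination of derivatives of $\delta$.

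In the symmetric cases your positive $\delta_{H_j}$ are legitimate atoms, essentially those of Proposition~\ref{prop:existence1}(2b). One-sided atoms, however, must live on highly irregular sets. The paper obtains them from the Kozma--Olevski\u{\i} null series (Proposition~\ref{prop:KOatom}), tensorized in Corollary~\ref{cor:integrable}, and this deep ingredient is absent from your plan.

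A smaller issue: if $\Gamma_{p_k}$ has empty interior (e.g.\ a ray), no non-zero smooth $\psi$ on the sphere is supported in it.
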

For closed conical sets $\Gamma,\Lambda\subset  T^\ast M\setminus 0$ with $\Gamma\cap(-\Lambda)=\emptyset$ and $k\geq 0$, consider the set of full-support zero divisor pairs
\begin{align*}
\mathscr{F}_{\Gamma,\Lambda;k}(M):=\big\{&(u,v)\in (\D'_\Gamma(M)\cap H^{\dim M/2}_\mathrm{loc}(M)) \times (\D'_\Lambda(M)\cap H^{-k/2-0}_\mathrm{loc}(M))\,|\,\\ &\supp u=\supp v=M,\; uv=0\big\}.
\end{align*}
To describe the shape of such a set, for every $\eps_0>0$, we can equip the ambient space $(\D'_\Gamma(M)\cap H^{\dim M/2}_\mathrm{loc}(M)) \times (\D'_\Lambda(M)\cap H^{-k/2-0}_\mathrm{loc}(M))$ with the topology generated by the family of seminorms $p_\Gamma(u,v):=q_\Gamma(u)$ and $p_\Lambda(u,v):=q_\Lambda(v)$, where $q_\Gamma \in \mathsf{SEM}(\Gamma,\dim M/2)$ and $q_\Lambda\in\mathsf{SEM}(\Lambda,-k/2-\eps_0)$ are the Sobolev and microlocal seminorms defined in Section \ref{sec:summary}. Then the topology depends on $\eps_0$, while the space  does not. Let us denote by $\overline{A}^{\eps_0}$ the closure of a set $A$ with respect to this topology.
\begin{introtheorem}\label{thm:intro3}
Fix $N\in \{1,2,3,4\}$ and assume the hypotheses of Theorem \ref{thm:intro2}(N). The cone $\Lambda$ in that statement can be chosen such that for every $\eps_0>0$ the following holds:  \begin{enumerate}
\item If $N=1$, then $\overline{\mathscr{F}_{\Gamma,\Lambda;1}(M)}^{\eps_0}$ contains all pairs $(f,0)$, where $f\in \Cinft(M)$ is such that $f^{-1}(\{0\})\subset M$ has empty interior. 

\item  If $N=2$, then   $\overline{\{(u,v)\in\mathscr{F}_{\Gamma,\Lambda;1}(M)\,|\, u\text{\emph{ positive}}\}}^{\eps_0}$ contains all pairs $(f,0)$, where $f\in \Cinft(M)$ is such that $f^{-1}(\{0\})\subset M$ has empty interior and $f\geq 0$. 

\item If $N=3$, then  $\overline{\{(u,v)\in\mathscr{F}_{\Gamma,\Lambda;1}(M)\,|\, v\text{\emph{ positive}}\}}^{\eps_0}$ contains all pairs $(f,0)$, where $f\in \Cinft(M)$ is such that $f^{-1}(\{0\})\subset M$ has empty interior. 

\item  If $N=4$, then  $\overline{\{(u,v)\in\mathscr{F}_{\Gamma,\Lambda;1}(M)\,|\, u,v\text{\emph{ positive}}\}}^{\eps_0}$ contains all pairs $(f,0)$, where $f\in \Cinft(M)$ is such that $f^{-1}(\{0\})\subset M$ has empty interior and $f\geq 0$. 
\end{enumerate}
\end{introtheorem}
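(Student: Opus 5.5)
We prove Theorem \ref{thm:intro3} by a scaling and perturbation argument built on Theorem \ref{thm:intro2}(N). We take $\Lambda$ to be the cone produced there, enlarged if necessary by a countable union argument so that a single $\Lambda$ works for all the pairs constructed below. Fix $f\in\Cinft(M)$ whose zero set has empty interior, and fix finitely many seminorms $q_\Gamma\in\mathsf{SEM}(\Gamma,\dim M/2)$ and $q_\Lambda\in\mathsf{SEM}(\Lambda,-1/2-\eps_0)$ together with some $\delta>0$. We must find $(u,v)\in\mathscr{F}_{\Gamma,\Lambda;1}(M)$ with $q_\Gamma(u-f)<\delta$ and $q_\Lambda(v)<\delta$.

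\emph{Small $v$.} The pair $(u_0,v_0)$ from Theorem \ref{thm:intro2}(N) can simply be rescaled to $(u_0,tv_0)$. This keeps the zero divisor property, the full support, the wavefront bounds and positivity, and it makes $q_\Lambda(tv_0)=t\,q_\Lambda(v_0)$ arbitrarily small.

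\emph{$u$ close to $f$.} We take $u:=f+\eta u_0$ with $\eta>0$ small. Then $q_\Gamma(u-f)=\eta\, q_\Gamma(u_0)$ is small, and $\WF(u)\subset\WF(u_0)\subset\Gamma$. Since $u_0\in H^{\dim M/2}_{\mathrm{loc}}$ and $f$ is smooth, $u\in H^{\dim M/2}_{\mathrm{loc}}$. The difficulty is that $uv=fv+\eta u_0v$ need not vanish. We therefore have to construct $v$ so that it is annihilated by both $f$ and $u_0$, or more robustly, build the pair so that $v$ is annihilated by $f+\eta u_0$ directly.

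\emph{Main obstacle.} The key step is to rerun the construction behind Theorem \ref{thm:intro2} with $u$ prescribed in the form $f+\eta w$. In that construction $u$ is presumably a locally finite superposition of pieces, and $v$ is a sum of thin pieces supported where $u$ vanishes. Since $f$ is smooth and real-analytic behaviour is not assumed, $f+\eta w$ still has a zero set that one can force to be dense. Concretely, we would choose the thick factor $w$ so that it locally equals $-f/\eta$ on a dense family of small sets (Cantor-like, or hypersurface-like pieces adapted to $\Gamma$) where the thin factor $v$ lives. Equivalently, we apply the construction with the smooth multiplier: if Theorem \ref{thm:intro2} provides $(u_1,v_1)$ with $u_1v_1=0$, we look for $u=f\cdot(1+\eta u_1')$ and $v$ with $(1+\eta u_1')v=0$, where $f^{-1}(0)$ having empty interior guarantees that $\supp u=M$.

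\emph{Reduction for $f\neq0$ and positivity.} On the open dense set $\{f\neq 0\}$ we can divide by $f$. It therefore suffices to construct full-support pairs $(1+\eta w,v)$ with $w\in\D'_\Gamma\cap H^{\dim M/2}_{\mathrm{loc}}$ small and $v$ supported densely, which is exactly a perturbation version of Theorem \ref{thm:intro2}. We expect the paper's construction to give this by making $w$ equal to $-1/\eta$ on the supports of the thin building blocks, using the freedom in amplitude there, so that $q_\Gamma(w)$ stays controlled. For positivity in cases $N=2,4$ we need $u\geq0$, which follows if $f\geq0$ and $1+\eta w\geq0$. This holds because, in the positive version, $w\geq -1/\eta$ is realized by nonnegative bumps subtracted from $1$. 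For $v$, positivity is inherited from the positive thin factor.

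The density of the zero set of $1+\eta w$ while $w$ stays small in $H^{\dim M/2}$ is where the borderline Sobolev order is used, since $H^{\dim M/2}$ does not embed into $L^\infty$. We expect this to be the delicate quantitative step.
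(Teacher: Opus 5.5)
Your proposal has a genuine gap: it never constructs the thick factor, and that construction is the whole content of the statement.
\begin{itemize}
\item The additive ansatz $f+\eta u_0$ fails, as you notice yourself.
\item Rescaling $v_0$ only handles the thin factor.
\item Your reduction $u=f\tilde u$ is correct. Multiplication by a smooth $f$ whose zero set has empty interior preserves full support and annihilation. By \eqref{eq:multiplicationmicrolocalnorms} and Lemma~\ref{lem:multiplicationsobolev}, the seminorms of $f(\tilde u-1)$ are dominated by finitely many seminorms of $\tilde u-1$. But this only reduces the theorem to the case $f=1$, and there you merely \emph{expect} the construction to work.
\end{itemize}
Theorem~\ref{thm:intro2} cannot be used as a black box: nothing about a given pair $(u_0,v_0)$ lets you push $u_0$ towards $1$ while keeping $uv_0=0$.

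The paper instead proves Theorem~\ref{thm:main}(2)--(5) by running the construction from scratch with $u_0:=f$. It sets $u=\lim_j f m_1\cdots m_j$ and $v=\sum_j\lambda_jA_j$, where:
\begin{itemize}
\item the $A_j$ are atoms of one fixed atomizable $\Lambda\subset\Upsilon$ (Corollary~\ref{cor:existence1});
\item $m_j\in\I^\infty(\supp A_j)$, so $m_jA_j=0$ by Lemma~\ref{lem:flatkills} and $u_jv_j=0$ exactly;
\item the product passes to the limit by Proposition~\ref{prop:seqmult};
\item errors are budgeted as $2^{-j}\eps$ over countable seminorm families, and test functions $\phi_i,\psi_i$ keep both supports full in the limit.
\end{itemize}
Smallness of $v$ is automatic from small $\lambda_j$. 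This also makes your ``countable union'' enlargement of $\Lambda$ unnecessary. That enlargement would be problematic anyway: the closure of a countable union of closed cones may leave $\Upsilon$ or meet $-\Gamma$, and enlarging $\Lambda$ changes $\mathsf{SEM}(\Lambda,\cdot)$ and hence the topology in the statement.

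The missing key ingredient is Proposition~\ref{prop:finiteflat}. For every finite $\mathsf Q\subset\mathsf{SEM}(\Gamma,s)$ with $s\le\dim M/2$ and every $p$ with $\Gamma_p\neq\emptyset$, every sufficiently small compact $K$ near $p$ carries some $m\in\I^\infty(K)$ with $q(m-1)<\eps$ for all $q\in\mathsf Q$. Your heuristics do not give this.
\begin{itemize}
\item Making $w=-1/\eta$ on the thin supports is not enough. The factor must vanish to infinite order (at least beyond the order of the atom) on $\supp A_j$ to kill it.
\item ``Nonnegative bumps subtracted from $1$'' cannot be small in the microlocal seminorms. For $b(x)=b_1((x-p)/r)$ one has $\widehat b(\xi)=r^ne^{-ip\cdot\xi}\widehat{b_1}(r\xi)$. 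So for a seminorm with $\chi=1$ near $p$, a cone $V$ with non-empty interior, and $N>n$, the quantity $\sup_{\xi\in V,|\xi|\ge1}\langle\xi\rangle^N|\widehat{\chi b}(\xi)|$ is bounded below by a multiple of $r^{n-N}\to\infty$.
\end{itemize}
Your intuition that $H^{\dim M/2}\not\hookrightarrow L^\infty$ is therefore only half the story: $m-1$ must also have essentially no high-frequency mass outside $\Gamma$.

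The paper gets both properties at once by duality. It dominates $\mathsf Q$ by a Hilbert control seminorm $\|J(\cdot)\|$ (Proposition~\ref{prop:control}). If the claim failed, one would project $J(1)$ onto $\overline{J(\I^\infty(\{p\}))}$, or in the positive case separate it from the convex cone $\overline{J(\I^{\infty,>0}(\{p\}))}$. This yields a nonzero $L\in\E'(M)$ supported at $p$ (respectively, satisfying the sign conditions of Lemma~\ref{lem:wflemma}). By construction $L$ is microlocally in $H^{-s_0}$ at $-\xi$ for some $\xi\in\Gamma_p$, which is impossible since $-s_0\ge-\dim M/2$. This is exactly where the hypotheses enter: $s\le\dim M/2$, $\Gamma_p\neq\emptyset$, and, for positivity of $u$ in the cases $N=2,4$, $\Gamma_p\cap(-\Gamma_p)\neq\emptyset$. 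Your proposal uses none of them.
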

Thus, the thick factors of full-support zero divisors accumulate at every smooth function with zero set of empty interior (such as the constant function $f=1$), whereas the thin factors accumulate at $0$. In particular,  in the situation of Theorem \ref{thm:intro3}, injectivity is not a stable property of the multiplication maps 
\begin{align*}
m_{u}:\D'_\Lambda(M)\cap H^{-1/2-0}_\mathrm{loc}(M)&\to \D'(M),\qquad u\in \D'_\Gamma(M)\cap H^{\dim M/2}_\mathrm{loc}(M),\\
v&\mapsto uv.
\end{align*}
Indeed: While $m_1$ is injective, every neighborhood of $1$ in $\D'_\Gamma(M)\cap H^{\dim M/2}_\mathrm{loc}(M)$  with respect to the topology generated by the seminorm family $\mathsf{SEM}(\Gamma,\dim M/2)$ contains a distribution $u$ such that $m_{u}$ is not injective because it annihilates some $v$ with full support that can be chosen arbitrarily close to $0$ in the topologies generated by the seminorm families $\mathsf{SEM}(\Lambda,-1/2-\eps_0)$ for $\eps_0>0$.

\medskip

Finally, let us state a result that allows to confine the wavefront sets of \emph{both} factors in prescribed \emph{closed} conical sets. The following theorem summarizes results from Section \ref{sec:examples}, to which we refer the reader for more detailed statements, e.g.\ on positivity.

\begin{introtheorem}\label{thm:intro4}
Let $M$ be a smooth manifold and let $\Gamma,\Lambda\subset  T^\ast M\setminus 0$ be closed conical sets projecting onto all of $M$ and satisfying $\Gamma\cap(-\Lambda)=\emptyset$.  If  $\Lambda$ satisfies one of the following conditions, then the statements of Theorem \ref{thm:intro2}(1) and Theorem \ref{thm:intro3}(1)  hold with the given $\Gamma$ and $\Lambda$, after replacing $\mathscr{F}_{\Gamma,\Lambda;1}$ by  $\mathscr{F}_{\Gamma,\Lambda;k}$  with $k\geq 1$ as indicated below:
\begin{enumerate}
\item $\Lambda=N^\ast \F\setminus 0$ is the punctured conormal bundle of a continuous foliation of $M$ with smooth leaves of positive dimension and codimension $k$ (Definition \ref{def:foliation}). 
\item $\Lambda=\Lambda^\ast \mathcal Z$ is the conormal cone of a more general family $\mathcal Z$ (called \emph{fragmentation}, see Definition \ref{def:fragmentation}) of submanifolds of $M$ of positive codimension which may overlap and have different dimensions, with maximal codimension $k=k_\mathrm{max}$. 
\item $\Lambda$ contains the ray bundle $\{(p,r\alpha(p))\,|\,p\in M,r>0\}$ of a smooth nowhere-vanishing locally conformally closed $1$-form $\alpha$ on $M$. Here we can keep $k=1$.
\end{enumerate}
\end{introtheorem}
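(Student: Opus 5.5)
The plan is to reduce all three cases to one template. A dense countable family of \emph{plaques} (compact pieces of leaves or submanifolds) carries the thin factor $v$, and the thick factor $u$ is forced to vanish on each plaque in the trace sense. The wavefront condition $\Gamma\cap(-\Lambda)=\emptyset$ is exactly what makes $u|_{L}$ well defined for a plaque $L$ with $N^\ast L\setminus 0\subset\Lambda$, since then $\WF(u)\cap N^\ast L=\emptyset$.

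\emph{Foliation case.} First cover $M$ by countably many foliated charts. In each chart the plaques form a continuous family $\{P_y\}_{y}$ of smooth compact $(n-k)$-dimensional submanifolds, and $N^\ast P_y\setminus 0\subset N^\ast\F\setminus 0=\Lambda$. I take
\[
v=\sum_j c_j\,\rho_j\,\delta_{P_{y_j}},
\]
with cutoffs $\rho_j$, surface measures $\delta_{P_{y_j}}$, positive weights $c_j$ decaying fast enough, and parameters $y_j$ chosen so that $\bigcup_j P_{y_j}$ is dense. Each term lies in $H^{-k/2-0}_{\mathrm{loc}}$ with wavefront set in $N^\ast P_{y_j}\subset\Lambda$. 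With rapidly decaying weights the series converges in every seminorm of $\mathsf{SEM}(\Lambda,-k/2-\eps_0)$, because $\Lambda$ is closed and the plaques vary continuously. Full support follows from density, and $v\ge 0$ is automatic.

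\emph{Construction of $u$.} I run a diagonal induction. Fix a countable basis $(B_m)$ of $M$. At step $m$, choose a tiny ball $K_m\subset B_m$ disjoint from the already chosen compact plaques $P_{y_1},\dots,P_{y_{m-1}}$; this is possible because their union is closed and nowhere dense. On $K_m$ place a bump $w_m=a_m\chi_m e^{i\lambda_m\langle\xi_m,\cdot\rangle}$ as in the proof of Theorem~\ref{thm:intro2a}, with $\xi_m\in\Gamma$ and amplitudes and frequencies tuned so that $\sum_m w_m$ converges in $\mathsf{SEM}(\Gamma,\dim M/2)$ with wavefront set in $\Gamma$. Then choose the next plaque $P_{y_m}$ disjoint from $K_1,\dots,K_m$. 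This is possible because, by continuity of the foliation, the transverse parameters $y$ whose plaque meets a given small ball form a small set, so a parameter close to any prescribed target remains available.

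\emph{Vanishing of the product.} Each partial product $(\sum_{m\le M'}w_m)(\sum_{j\le J}c_j\rho_j\delta_{P_{y_j}})$ vanishes, since the supports are disjoint by construction. Continuity of the Hörmander product on $\D'_\Gamma\cap H^{n/2}\times\D'_\Lambda\cap H^{-k/2-\eps_0}$ then gives $uv=0$. Full support of $u$ holds because each $K_m\subset B_m$ carries a nonzero bump. For the approximation statement of Theorem~\ref{thm:intro3}(1), I add $f$ to the sum and subtract corrections $\sum_j f\,\theta_j$. Here $\theta_j$ is a cutoff equal to $1$ on $P_{y_j}$ whose $H^{n/2}$-norm of $f\theta_j$ is made as small as desired by choosing $\theta_j$ with tiny support in the transverse direction.

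\emph{The main obstacle.} I expect the hardest point to be reconciling this subtraction with the disjointness bookkeeping while keeping both series convergent and the wavefront set inside $\Gamma$. For $n\ge 2$, traces onto plaques are continuous on $H^{n/2}$, so $u$ close to $f$ cannot vanish on a plaque where $f\neq0$ without a correction. I would therefore interleave the choice of $y_j$ with the scales of the $\theta_j$, making each $\theta_j$ supported so close to $P_{y_j}$ that it misses $K_1,\dots,K_{j}$. I would also let later $K_m$ avoid $\supp\theta_j$ for $j<m$, which costs nothing, and allow a nonzero zero set of $f$ of empty interior.

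\emph{Fragmentation case.} The same argument applies verbatim, with plaques of varying codimension; the worst Sobolev order is $-k_{\max}/2-0$.

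\emph{Ray-bundle case.} Locally $\alpha=e^{g}\,dh$ with $dh\neq0$, so the level sets of $h$ form a codimension-one foliation. Here only the single ray $\R_{>0}\alpha$ is allowed in $\WF(v)$. I therefore replace $\delta_{P_{y}}$ by the one-sided boundary value $\rho\,(h-c-i0)^{-1}$ (or its appropriate sign variant), whose wavefront set is exactly one ray of $\R dh$; this keeps $k=1$ and Sobolev order $-1/2-0$. Because these terms are not supported on the plaque, the vanishing of $uv$ must instead use that the product of $u$ with this boundary value is determined by the restriction of $u$ near the level set. I would handle this by requiring $u\equiv0$ on a thin neighbourhood of each chosen level-set piece, again arranged in the diagonal induction.
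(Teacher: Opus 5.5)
There is a genuine gap in your construction of $u$, located in the disjointness bookkeeping you flag, and better scheduling cannot fix it. You require every bump support $K_m$ (a ball, so with interior) to miss every plaque $P_{y_j}$. Since some $K_m$ lies in each basis set $B_m$, the set $O:=\bigcup_m\mathrm{Int}\,K_m$ is open and dense. All plaques then lie in the closed nowhere dense set $M\setminus O$, so $\supp v\subset M\setminus O\neq M$, and the density you invoke for full support is lost.

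This failure is forced. If a smooth $g$ satisfies $gv=0$ and $\supp v=M$, then $g\equiv 0$, because near a point with $g\neq 0$ one has $v=g^{-1}(gv)=0$. So no scheme in which the smooth partial sums of $u$ annihilate all of $v$ can work. The additive correction $f-\sum_j f\theta_j$ fails for the same reason: by density some other plaque meets $\{\theta_{j'}\neq 0\}$, and there the correction does not vanish.

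The paper's Theorem~\ref{thm:main} instead proceeds multiplicatively, with the opposite order of limits. It sets $u_N=fm_1\cdots m_N$, with $m_N$ flat on the support of the newly chosen atom, so $u_Nv_j=0$ only for $N\geq j$. Then $uv_j=0$ follows by letting $N\to\infty$, and $uv=0$ by letting $j\to\infty$, each time via Proposition~\ref{prop:seqmult}.

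The analytic input is Proposition~\ref{prop:finiteflat}: each factor $m_N$ moves $u_{N-1}$ by less than a prescribed tolerance in $H^{\dim M/2}$ and in finitely many microlocal seminorms. This requires the atom's support to be small in \emph{all} directions, inside a neighbourhood depending on the current seminorms. Squeezing $\theta_j$ only transversally does not work: by the trace theorem you quote yourself, $\|f\theta_j\|_{H^{\dim M/2}}$ is bounded below on a plaque of fixed size where $f\neq0$. The proof of that proposition has no counterpart in your plan. It uses Hilbert control seminorms and a projection or separation argument. Its key fact is that a nonzero point-supported distribution (or, in the positive case, the distribution of Lemma~\ref{lem:wflemma}) is not microlocally $H^{s}$, $s\geq-\dim M/2$, at any covector over that point. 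With it, the theorem reduces to atomizability of $\Lambda$ (Lemma~\ref{lem:foliations}, Corollary~\ref{cor:fragments}, Corollary~\ref{cor:integrable}) plus Theorem~\ref{thm:main}(1),(2).

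The ray-bundle case fails for an independent reason. The term $\rho\,(h-c-i0)^{-1}=\rho\,\bigl(\mathrm{p.v.}\,(h-c)^{-1}+i\pi\,\delta(h-c)\bigr)$ has support $\supp\rho$, which has interior. So its product with $u$ is not determined by $u$ near the level set, and annihilating it forces $u=0$ on an open set. What is needed is a nonzero distribution with one-sided wavefront set and nowhere dense support. For measures this is impossible by the F.~and M.~Riesz theorem. The paper obtains it from the Kozma--Olevski\u{\i} null series (Proposition~\ref{prop:KOatom}), transplanted to the level sets of $h$ and tensorized in Corollary~\ref{cor:integrable}.
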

Theorem \ref{thm:intro4}(1) applies to pairs of conormal bundles of stable/unstable foliations of Anosov diffeomorphisms and  weak stable/unstable foliations $\mathcal{W}^{\mathrm{cs}},\mathcal{W}^{\mathrm{cu}}$ of Anosov flows. For instance, as explained in Example \ref{ex:Anosov} and Corollary \ref{cor:foliations1}, if $M$ is compact and carries an Anosov flow, then there exist positive distributions 
\bq
u\in\D'_{N^\ast \mathcal{W}^{\mathrm{cs}}\setminus 0}(M)\cap H^{\dim M/2}_\mathrm{loc}(M),\qquad v\in\D'_{N^\ast \mathcal{W}^{\mathrm{cu}}\setminus 0}(M)\cap H^{-k/2-0}_\mathrm{loc}(M),\label{eq:Anosovintro}
\eq
 where $k$ is the codimension of the weak unstable leaves, such that
\[
\supp  u=\supp  v=M, \qquad uv=0.
\]
An analogous result holds with weak stable/unstable foliations swapped. Here $u$ and $v$ are not claimed to be (co-)resonant states of the Anosov flow. Such additional properties can rule out the full-support zero divisor property. For example, on the unit sphere bundle of a compact Riemannian locally symmetric space of rank one, products of first-band resonant and co-resonant states which are dual to each other in the sense of \cite[Cor.~6.2]{GHW21} are non-zero by the pairing formula \cite[Thm.~6.1]{GHW21}.

 A notable consequence of Theorem \ref{thm:intro4}(3) is that, in any dimension $n>0$, there is a pair $u,v$ of full-support zero divisors such that all fibers of both $\WF(u)$ and $\WF(v)$ consist of one-dimensional rays: Take in Theorem \ref{thm:intro4} a non-empty open subset of $\R^n$ as the manifold $M$ and $\Gamma=\Lambda=\R_{>0} dx_1$. On the other hand, the local Frobenius integrability condition in Theorem \ref{thm:intro4}(3) encoded in the assumption that $\alpha$ is locally conformally closed is linked to a global restriction: A given smooth manifold $M$  may not admit a global nowhere-vanishing  locally conformally closed one-form. However, this assumption can be related to a purely local assumption on $\Lambda$, see  Corollary \ref{cor:integrable}.

\medskip

The technical main result of this paper is Theorem \ref{thm:main}. Theorems \ref{thm:intro1}--\ref{thm:intro3} are proved in Section \ref{sec:mainproofs} and Theorem \ref{thm:intro4} is proved in Section \ref{sec:introthmproof}.  

\subsection{Related results and consequences}\label{sec:relatedconsequ}

The Hörmander product from  \cite[Thm.~8.2.10]{HormanderI}, which we use in this paper, is not the only way to multiply distributions. There is a long list of various established criteria for existence and desirable properties of distributional products  \cite{Mikusinski1962,Ambrose1980,Oberguggenberger1986,Oberguggenberger1992}.  More recent works study the normal topology and continuity of operations on spaces with prescribed
wavefront set \cite{DabrowskiBrouder,BrouderDangHelein} or utilize  refined  microlocal concepts such as Sobolev wavefront sets  \cite{DappiaggiRinaldiSclavi2024}. These results concern existence, continuity, and regularity properties of products of distributions. Our question, whether the existing Hörmander product of two fully supported distributions can vanish, is different.

In dimension one, Theorem \ref{thm:intro2} is linked to trigonometric null series as featured in Kozma-Olevski\u\i's work \cite{KozmaOlevskii}  (see Proposition \ref{prop:KOatom}). For a connected one-dimensional manifold (i.e., $M=\R$ or $M=S^1$) the wavefront set condition \eqref{eq:Hormandercond} is very rigid:  $T^\ast M\setminus 0$ has two connected components $T^\ast_\pm M$ and each of the two spaces of distributions
\[
\D'_\pm(M):=\{u\in \D'(M)\,|\, \WF(u)\subset T^\ast_\pm M\}
\] 
forms a ring with respect to distributional multiplication. Theorem \ref{thm:intro2} implies that these rings contain zero divisor pairs  $u,v\in \D'_\pm(M)$ with  $uv=0$, $\supp u=\supp v=M$. When $M=S^1$, the Fourier transform identifies $\D'_\pm(M)$ with the convolution rings
\begin{align*}
\mathcal R_\pm:=\big\{(x_j)_{j\in \Z}\,|\,x_j\in \C, x_j&=\mathcal{O}(|j|^{-\infty})\text{ as }j\to \mp\infty,\\
\exists\, N>0:x_j&=\mathcal{O}(|j|^N)\hspace*{0.675em}\text{ as }j\to \pm\infty\big\}.
\end{align*}
We conclude that there are non-zero elements $x,y\in \mathcal R_\pm$ with  $x\ast y=0$ which are the Fourier coefficient sequences of distributions $u,v\in \D'_\pm(S^1)$ with full support.

Multiplication of distributions with wavefront sets contained in transverse stable/unstable bundles happens naturally in hyperbolic dynamics. Products of Ruelle-Pollicott resonant and co-resonant states define invariant Ruelle densities \cite{GHW21}, describe equilibrium measures \cite{Humbert2025}, and play an important role in perturbative arguments \cite[Prop.~4.6]{CDDP}. In the latter reference, the particular multiplied resonant and co-resonant states $u,v\in \D'(SM)$ on the unit sphere bundle of a closed hyperbolic $3$-manifold $M$ have full support (for simplicity we ignore here that the actual statement of  \cite[Prop.~4.6]{CDDP} involves a product of distributional differential forms rather than scalar distributions as considered in this paper). That the product of these fully supported distributions has again full support is crucial to the proof of \cite[Thm.~4]{CDDP}. The problem is reduced to the analogous problem for the tensor product of the boundary values of the lifts $\tilde u,\tilde v\in \D'(S\mathbb H^3)$, which is easy to solve because the support of a tensor product is the product of the supports of the factors. In general, for topologically transitive Anosov flows, generalized (co-)resonant states have full support  \cite{Weichfullsupport}. Moreover, the restriction of a Ruelle-Pollicott resonant state to every strong unstable leaf is well-defined and has full support \cite[Cor.~1.5]{lefeuvrepotrie}, a property that is much stronger than having full support on the original manifold.  The distributions constructed here in \eqref{eq:Anosovintro} when $M$ carries an Anosov flow are not (generalized) (co-)resonant states. Instead, our results show that full support, positivity, and the transverse  wavefront set condition alone do not imply a non-zero product. Any non-vanishing result for products of (generalized) resonant and co-resonant states must therefore use dynamical, spectral or other special properties of the distributions.

Another area in which products of distributions occur prominently is tensor tomography on surfaces \cite{BohrLefeuvrePaternain}: Here $M=S\Sigma$ is the unit sphere bundle of a closed connected oriented Riemannian surface $\Sigma$, and one considers the generator $X$ of the geodesic flow as a differential operator of order one. It is proved in  \cite[Thm.~1.1]{BohrLefeuvrePaternain} that the fiber-wise holomorphic distributions $u\in \D'(M)$ solving the transport equation $Xu=0$ form a unital algebra. This is done by showing that their wavefront set is contained in a closed conical set 
\[
\mathcal{C}\subset X^\perp:=\{(p,\xi) \in T^\ast M\,|\,\xi(X|_p)=0\}
\]
with $\mathcal{C}\cap (-\mathcal{C})=\emptyset$ \cite[Cor.~4.3]{BohrLefeuvrePaternain}.  When the geodesic flow is Anosov, $\Sigma$ has no conjugate points \cite{Klingenberg,Klingcorr} and for every $p\in M$ the fiber $\mathcal{C}_p$ has non-empty interior in the fiber $X^\perp_p$ \cite[comments below Cor.~4.3]{BohrLefeuvrePaternain}. Fix $p\in M$ and choose a covector $\xi\in \mathcal{C}_p$ in the interior of $X^\perp_p$. On a flow box $U\subset M$ around $p$ there is a smooth function $f:U\to \R$ with $df|_p=\xi$ and $Xf=0$. After shrinking $U$, the map $f$ is a submersion and one has
\[
\{(q,\tau \,df|_q)\,|\,q \in U,\tau>0\}\subset \mathcal C.
\]
Thus $\mathcal C$ is $H^{-1/2-0}$-atomizable by Corollary \ref{cor:integrable}, and by Theorem \ref{thm:main} there is a full-support zero divisor pair
\[
u\in\D'_{\mathcal C}(S\Sigma)\cap H^{3/2}(S\Sigma),\qquad
v\in\D'_{\mathcal C}(S\Sigma)\cap H^{-1/2-0}(S\Sigma)
\]
arbitrarily close to $(1,0)$ in the considered Sobolev and microlocal seminorms. Here neither $u$ nor $v$ is claimed to be fiber-wise holomorphic or to satisfy the transport equation. In contrast, the application of our results shows that absence of full-support zero divisors in the unital algebra considered in \cite[Thm.~1.1]{BohrLefeuvrePaternain} does not follow from the containment of this algebra in $\D'_{\mathcal C}(S\Sigma)$ alone, but only from the combination of this with the transport equation, the fiberwise holomorphy or other special structures.

\subsection{Discussion, open questions and further directions}\label{sec:discussion}

While Theorem \ref{thm:intro2a} is a sharp Sobolev regularity result for a single full-support zero divisor, it remains open how large the sum $r+s$ can get such that a full-support zero divisor pair exists whose members are of Sobolev orders $r$ and $s$, respectively. Theorem \ref{thm:intro2} achieves $r+s=(\dim M-1)/2-\eps$ for all $\eps>0$ with a thick/thin construction. It would be interesting to know if there are other, perhaps  more symmetric, constructions achieving  a greater sum.

Similarly, while  Theorem \ref{thm:intro2a} allows to confine the wavefront set of a single full-support zero divisor $u$ to an arbitrary closed conical subset of $T^\ast M\setminus 0$ with non-empty and non-maximal fibers and Theorems \ref{thm:intro2}--\ref{thm:intro4} allow to simultaneously confine the wavefront set of the companion distribution $v$ to certain conical subsets of $T^\ast M\setminus 0$, it remains open whether one can simultaneously confine both $\WF(u)$ and $\WF(v)$ to arbitrary pairs of closed conical subsets $\Gamma,\Lambda\subset T^\ast M\setminus 0$ with non-empty fibers satisfying $\Gamma\cap(-\Lambda)=\emptyset$. In particular, Corollary \ref{cor:raybundles} leaves open whether there exist full-support zero divisor pairs whose wavefront sets are \emph{both} contained in rays spanned by one-forms that are not locally conformally closed, such as contact one-forms. 

In a converse direction, it would be interesting to identify additional conditions on a pair of distributions $u,v\in \D'(M)$ ruling out that they form a full-support zero divisor pair. We already mentioned the example of the pairing formula from \cite{GHW21} in the context of Ruelle-Pollicott (co-)resonant states. Other possible hypotheses include regularity restrictions beyond classical Sobolev regularity such as anisotropic Sobolev regularity, negative Hölder regularity, or Besov regularity.

\subsection{Proof strategy and structure of the paper}

The proof of the main Theorem \ref{thm:main}, from which the existence results mentioned above are deduced, consists of an asymmetric thick/thin construction  which might be of independent interest. 

Sections \ref{sec:prelim} and \ref{sec:approx} provide the general technical framework for distributions with controlled wavefront set and Sobolev spaces.  Given a closed conical set $\Gamma\subset T^\ast M\setminus 0$ and a Sobolev order $s\in \R$, convergence in $\D'_\Gamma(M)\cap H^s_\mathrm{loc}(M)$ is described by microlocal and Sobolev seminorms.  For the construction of the thick factor, Proposition~\ref{prop:control} dominates any finite family of such seminorms by a \emph{Hilbert control seminorm}. This allows to formulate the key arguments in the proof of the central Proposition \ref{prop:finiteflat} in a Hilbert space. The latter  proposition says that, for a chosen closed conical set $\Gamma\subset T^\ast M\setminus 0$, near every point $p\in M$ with $\Gamma_p\neq\emptyset$, a smooth function $m\in \Cinft(M)$ vanishing to infinite order on an arbitrary sufficiently small compact set $K\subset M$ can be made as close to $1$ as desired in the relevant microlocal and Sobolev seminorms for every Sobolev order $s\leq\dim M/2$.  By the elementary but important Lemma~\ref{lem:flatkills}, multiplication by such a function $m$ annihilates every distribution supported on $K$. 

Section \ref{sec:atomization} develops the construction of the thin factor through a concept we call \emph{atomization} of closed conical subsets $\Lambda\subset T^\ast M\setminus 0$ (Definition \ref{def:tame-atomizable}): For a chosen Sobolev order $s'\in \R$, the atoms of an $H^{s'}$-atomizable $\Lambda$ consist of compactly supported distributions in $\D'_\Lambda(M)\cap H^{s'}_\mathrm{loc}(M)$ with supports of empty interior, occurring inside every non-empty open subset of $M$.  Proposition \ref{prop:existence1} and Corollary \ref{cor:existence1} establish the existence of atomizable closed conical sets inside prescribed open conical sets.  The atoms for fiber-wise one-sided sets $\Lambda$ ultimately come from   Proposition \ref{prop:KOatom}, which is a distributional formulation of a deep result by Kozma-Olevski\u{\i} \cite{KozmaOlevskii}. On the other hand, positive atoms in dimensions $\geq 2$ are obtained by an elementary construction of distributions supported on hypersurfaces contained in charts. 

Section~\ref{sec:mainproofs} combines the two constructions of the thick and thin factors. Leaving the Sobolev regularity and positivity aspects aside for simplicity, the argument can be summarized as follows. For a countable basis $\{\mathscr U_j\}_{j\in \N}$ of the topology of $M$, a given smooth function $f\in \Cinft(M)$ with zero set of empty interior, and two given closed conical sets $\Gamma,\Lambda\subset T^\ast M\setminus 0$ with non-empty fibers and $\Gamma\cap (-\Lambda)=\emptyset$ such that $\Lambda$ is atomizable, the proof of Theorem \ref{thm:main} inductively chooses atoms $A_j\in \D'_\Lambda(M)$ supported in $\mathscr U_j$, smooth functions $m_j\in \Cinft(M)$ vanishing to infinite order on $\supp A_j$ and  with zero sets of empty interior, and coefficients $\lambda_j\in \C$ in such a way that 
\[
 u=\lim_{N\to\infty} f\prod_{j=1}^N m_j, \qquad v=\sum_{j=1}^{\infty}\lambda_jA_j
\]
converge in $\D'_\Gamma(M)$ and $\D'_\Lambda(M)$, respectively. Separately for $u$ and $v$, chosen sequences of test functions ensure non-vanishing on every $\mathscr U_j$, so that $\supp u=\supp v=M$,  while Lemma \ref{lem:flatkills} and Proposition \ref{prop:seqmult} give $uv=0$.  Section~\ref{sec:remainingthms} derives Theorems \ref{thm:intro1}--\ref{thm:intro3}, with Lemma \ref{lem:supportdetection} ruling out the continuity of full-support zero divisors.  

Section~\ref{sec:examples} proves atomizability for conormal bundles of foliations (Lemma \ref{lem:foliations}), conormal cones of fragmentations (Corollary \ref{cor:fragments}), and applies the atomizability of integrable ray bundles (Corollary \ref{cor:integrable} from Section \ref{sec:atomization}) in Corollary \ref{cor:raybundles}. These results are summarized in Theorem \ref{thm:intro4}, which is proved in Section \ref{sec:introthmproof}. 

Finally, Section \ref{sec:alternative} presents an example of an alternative symmetric  thin-thin construction of a pair of full-support zero divisors on $\R^2$, illustrating that our thick-thin construction is by no means the only way to produce full-support zero divisors. This distinguishes the bare existence of full-support zero divisor pairs from the stronger Sobolev regularity, wavefront-set confinement and accumulation conclusions obtained by the thick-thin construction.

\subsection*{AI use statement}\label{sec:Aiuse}

The author used the GPT-5.5 Pro and GPT-5.6 Sol models of OpenAI’s ChatGPT LLM in the following ways: A literature search initiated in a conversation with GPT-5.5 Pro identified the work of Kozma-Olevski\u{\i} \cite{KozmaOlevskii} and its relevance for the construction of full-support zero divisors in dimension one. The author developed the argument in Proposition \ref{prop:KOatom}, which is used through Corollary \ref{cor:existence1} in the one-dimensional case of the proof of Theorem \ref{thm:intro1}. Later, GPT-5.6 Sol was used to audit the proofs and explore strengthenings. The AI suggested sharpening the statements on the achievable Sobolev regularity and the absence of continuous full-support zero divisors, contributing the ideas for the proofs of the technical Lemmas \ref{lem:wflemma}, \ref{lem:nearidentity} and \ref{lem:supportdetection}. The author independently checked every AI-suggested correction and improvement and decided whether to incorporate it into the manuscript, which was then done  manually.  Finally, conversations with the AI led to stylistic optimization of the exposition, again only through independently verified manual edits by the author. The author made every final decision concerning the manuscript and wrote it entirely.

\subsection*{Acknowledgments}

The author would like to thank Mihajlo Cekić, Semyon Dyatlov, Colin Guillarmou and Tobias Weich for stimulating discussions. He is also grateful to Tobias Weich and Lasse Wolf  for feedback on  earlier versions of this paper and to Zhongkai Tao for suggesting the example presented in Section \ref{sec:alternative}. This work is funded by the Deutsche Forschungsgemeinschaft (DFG, German Research Foundation) – Project-ID 491392403 – TRR 358 and was carried out while the author was a member of the DFG's Heisenberg Programme (Project no.~DE 3864/3-1).

\section{Preliminaries}\label{sec:prelim}

\subsection{Conventions and notation}

In this paper, $M$ always denotes a smooth manifold. For convenience, we  assume that a smooth measure on $M$ has been fixed, providing a linear embedding of the space $L^1_\mathrm{loc}(M)$ of locally integrable functions $M\to \C$ into the space $\D'(M)$ of distributions. This avoids a half-density formalism. Readers who prefer to focus on the case that $M$ is an open subset of $\R^n$ may use Lebesgue measure.

 For the evaluation of a distribution 
 $u\in \D'(M)$ at a test function $\varphi\in \CT(M)$, we will write either $u(\varphi)$ or $\langle u,\varphi \rangle$, depending on the context. For example, when $u\in L^1_\mathrm{loc}(M)$, then $\langle u,\varphi \rangle=\langle u,\bar \varphi \rangle_{L^2(M)}$, where $\overline{\phantom{\varphi}}$ denotes complex conjugation. 
 
Given a vector bundle $E$ over a base space $B$, $\dot E$ denotes the complement of the zero section in $E$. When we say that a subset of $\dot E$ is conical, this is understood fiberwise. For  $p\in B$, $E_p$ denotes the fiber over $p$ and for any set $S\subset E$ we write $S_p:=E_p\cap S$.

We use $\N=\{1,2,\ldots\}$, $\N_0:=\N\cup\{0\}$, $\R_{>0}:=\{x\in \R\,|\,x>0\}$, $\R_\times:=\R\setminus\{0\}$.

For $m\in \N_0$ and $x\in \R^m$, $B^m_r(x)\subset \R^m$ denotes the Euclidean open ball of radius $r>0$ centered at $x$. If the dimension $m$ is clear from the context, we just write $B_r(x)$.

\subsection{Distributions with controlled wavefront set}\label{sec:distribcontrolledWF}

For a closed conical set $\Gamma\subset\dot T^*M$, we define 
$\D'_{\Gamma}(M):=\{u\in\D'(M)\,|\,\WF(u)\subset\Gamma\}$. Following \cite[p.~262]{HormanderI}, we say that a sequence $(u_n)_{n\in \N}\subset \D'_{\Gamma}(M)$ converges to $u\in \D'_{\Gamma}(M)$  if
\begin{enumerate}[label=(\roman*)]
\item $u_n\to u$ in $\D'(M)$ as $n\to \infty$, i.e. $\langle u_n-u,\varphi\rangle\to0$ for every $\varphi\in\CT(M)$;
\item for every function $\chi\in \CT(U)$ supported in a chart $U\subset M$ (identified with an open subset of $\R^{\dim M}$), every $N\in \N$, and every closed cone $V\subset \R^{\dim M}\setminus \{0\}$ satisfying $(\supp \chi\times V)\cap\Gamma=\emptyset$ (after identifying $T^\ast U\equiv U\times \R^{\dim M}$), one has
\bq
\sup_{\substack{\xi\in V}}|\xi|^N \left|\widehat{\chi u_n}(\xi)-\widehat{\chi u}(\xi)\right|\stackrel{n\to \infty}{\longrightarrow} 0,\label{eq:seqconvHorm}
\eq
where  $\widehat{\quad}$ denotes the standard Euclidean Fourier transform in $\R^{\dim M}$.
\end{enumerate}
Dabrowski--Brouder \cite{DabrowskiBrouder} showed that $\D'_{\Gamma}(M)$ can be given a natural locally convex topology, called the \emph{normal topology}, whose notion of convergence of sequences coincides with the above. We will, however, not use this topology and require only the above sequential notion of convergence in Hörmander's sense.
\begin{proposition}\label{prop:seqmult}
Let $\Gamma,\Lambda\subset\dot T^*M$ be closed conical sets with $\Gamma\cap(-\Lambda)=\emptyset$ and let $v\in\D'_\Lambda(M)$.  If $u_n\to u$ in $\D'_\Gamma(M)$, then $u_nv\to uv$ in $\D'(M)$.
\end{proposition}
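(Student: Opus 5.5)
The plan is to localize and then use the Fourier-side description of the Hörmander product. Since convergence in $\D'(M)$ is tested against $\varphi\in\CT(M)$, a partition of unity reduces everything to a single test function $\varphi$ supported in a chart $U\equiv$ open subset of $\R^n$, $n=\dim M$. Choose $\chi\in\CT(U)$ with $\chi=1$ near $\supp\varphi$. Then $\langle u_nv,\varphi\rangle=\langle (\chi u_n)(\chi v),\varphi\rangle$, and in local coordinates the product of compactly supported distributions is given by
\[
\widehat{(\chi u_n)(\chi v)}(\xi)=(2\pi)^{-n}\int \widehat{\chi u_n}(\eta)\,\widehat{\chi v}(\xi-\eta)\,d\eta,
\]
an integral that converges absolutely under the wavefront condition (cf.\ the proof of \cite[Thm.~8.2.10]{HormanderI}). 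It therefore suffices to show that $\langle (\chi u_n)(\chi v),\varphi\rangle=(2\pi)^{-n}\int\widehat{\varphi}(-\xi)\,\widehat{(\chi u_n)(\chi v)}(\xi)\,d\xi$ (up to conventions) converges. Equivalently, we can show that the double integral in $(\xi,\eta)$ converges, using dominated convergence or a direct splitting estimate.

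The next step is a microlocal splitting. Since $\Gamma\cap(-\Lambda)=\emptyset$ and both sets are closed, after shrinking $\supp\chi$ around a point $p$ we can find closed cones $V_1,V_2\subset\R^n\setminus\{0\}$ with $\Gamma_q\subset \operatorname{Int} V_1$ and $\Lambda_q\subset\operatorname{Int}V_2$ for $q\in\supp\chi$, and $V_1\cap(-V_2)=\emptyset$. Outside $V_1$, hypothesis (ii) gives rapid decay of $\widehat{\chi u_n}$ uniformly in $n$: by \eqref{eq:seqconvHorm} the differences decay, and $\widehat{\chi u}$ itself decays. Similarly $\widehat{\chi v}$ decays rapidly outside $V_2$. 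We also need a uniform polynomial bound $|\widehat{\chi u_n}(\eta)|\le C(1+|\eta|)^N$. This follows from (i) via Banach–Steinhaus: $\chi u_n$ lies in a bounded set of $\E'$ of fixed compact support, hence has uniformly bounded order. The estimate $|\widehat{\chi v}|\le C(1+|\eta|)^{N'}$ is standard.

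Then the usual Hörmander argument applies. On the region where $\eta\in V_1$ and $\xi-\eta\in V_2$, the condition $V_1\cap(-V_2)=\emptyset$ gives $|\xi|\ge c(|\eta|+|\xi-\eta|)$. Combined with the rapid decay of $\widehat{\varphi}(-\xi)$, this makes the integrand bounded by an integrable function independent of $n$. On the complementary regions, one factor decays rapidly, uniformly in $n$, and this beats the polynomial growth of the other. So the integrand $\widehat\varphi(-\xi)\widehat{\chi u_n}(\eta)\widehat{\chi v}(\xi-\eta)$ is dominated by a fixed $L^1(\R^{2n})$ function. It converges pointwise because $\widehat{\chi u_n}(\eta)=\langle u_n,\chi e^{-i\langle\cdot,\eta\rangle}\rangle\to\widehat{\chi u}(\eta)$ by (i). Dominated convergence then yields $\langle u_nv,\varphi\rangle\to\langle uv,\varphi\rangle$.

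The main obstacle is the uniformity in $n$ of the rapid decay off $V_1$. Condition (ii) gives this only on closed cones $V$ with $(\supp\chi\times V)\cap\Gamma=\emptyset$, so the complement of $V_1$ must be covered by such a closed cone. This requires choosing $V_1$ with $\Gamma$ in its interior and then taking $V=\overline{\R^n\setminus V_1}$, after shrinking the support of $\chi$ using closedness of $\Gamma$. Combining (ii) with the decay of $\widehat{\chi u}$ gives $\sup_n\sup_{\xi\in V}|\xi|^N|\widehat{\chi u_n}(\xi)|<\infty$ for each $N$. This is exactly the uniform bound needed.
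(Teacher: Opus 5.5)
Your proof is correct, but it argues differently from the paper, whose proof is only a citation of H\"ormander's Theorems 8.2.4 and 8.2.10. In that route, $u_nv$ is the diagonal pullback $\iota^\ast(u_n\otimes v)$. The hypothesis $\Gamma\cap(-\Lambda)=\emptyset$ places the wavefront sets of all $u_n\otimes v$ in a fixed closed cone $\Gamma'$ disjoint from the normal set of $\iota$. The conclusion then follows from the convergence $u_n\otimes v\to u\otimes v$ in $\D'_{\Gamma'}(M\times M)$ together with the sequential continuity of pullbacks on that space.

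You instead open up this machinery in local coordinates. You localize to small neighborhoods on which $\Gamma$ and $\Lambda$ lie inside fixed cones $V_1$, $V_2$ with $V_1\cap(-V_2)=\emptyset$. (The partition of unity must be subordinate to these neighborhoods, not merely to charts, since $\chi=1$ near $\supp\varphi$.) You then write $\langle u_nv,\varphi\rangle$ as an absolutely convergent double Fourier integral and dominate the integrand uniformly in $n$. On $\{\eta\in V_1,\ \xi-\eta\in V_2\}$ you use $|\xi|\geq c(|\eta|+|\xi-\eta|)$. Elsewhere you use the uniform rapid decay off $V_1$ (from (ii)) or off $V_2$, against the uniform polynomial bound from Banach--Steinhaus.

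Your approach buys transparency. It shows that only convergence in $\D'(M)$ plus \emph{boundedness} of the microlocal seminorms is used, matching H\"ormander's remark invoked in Lemma~\ref{lem:countableseminorms1}. The citation buys brevity and a stronger conclusion, namely convergence of $u_nv$ in the H\"ormander sense for a cone containing $\WF(uv)$, which the paper does not need.

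Your argument is self-contained only up to identifying the H\"ormander product $(\chi u_n)(\chi v)$ with the absolutely convergent convolution $(2\pi)^{-n}\,\widehat{\chi u_n}\ast\widehat{\chi v}$. This identification is standard, but it is usually derived from the same pullback theorem. So your route relocates rather than removes the dependence on H\"ormander's results.
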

\begin{proof}
See \cite[Theorems 8.2.4, 8.2.10]{HormanderI}. 
\end{proof} 

\subsection{Convergence criteria}

We tweak the above formulation of the sequential convergence in $\D'_\Gamma(M)$ in a couple of ways for later convenience. In \eqref{eq:seqconvHorm} we can assume without loss of generality that the cutoffs $\chi$ are real-valued, by considering real and imaginary parts separately.  Furthermore:
\begin{lemma}The convergence condition defined by \emph{(i)} and \emph{(ii)} above  remains equivalent  if  in \eqref{eq:seqconvHorm} we replace the supremum $\sup_{\substack{\xi\in V}}$ by the supremum $\sup_{\substack{\xi\in V}:|\xi|\geq 1}$.
\end{lemma}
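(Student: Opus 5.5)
One direction needs no work. If the supremum over all of $V$ tends to zero, then so does the supremum over the smaller set $\{\xi\in V: |\xi|\geq 1\}$. For the converse, assume (i) holds and that the restricted suprema tend to zero for every admissible triple $(\chi,N,V)$. It then suffices to prove
\[
\sup_{\xi\in V,\,|\xi|<1}|\xi|^N\bigl|\widehat{\chi u_n}(\xi)-\widehat{\chi u}(\xi)\bigr|\to 0 .
\]
Since $N\geq 1$, the weight $|\xi|^N$ is at most $1$ on the unit ball. So it is enough to show that $\widehat{\chi u_n}\to\widehat{\chi u}$ uniformly on the compact set $\overline{B_1(0)}\subset\R^{\dim M}$.

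To prove this uniform convergence, I write $\widehat{\chi u_n}(\xi)=\langle u_n,\chi e_{\xi}\rangle$ with $e_\xi(x):=e^{-ix\cdot\xi}$, up to complex conjugation depending on the pairing convention. The family $\{\chi e_\xi\,|\,|\xi|\leq 1\}$ is a bounded subset of $\CT(U)$: all supports lie in $\supp\chi$, and every derivative is bounded uniformly in $\xi$. It is in fact compact in $\CT(U)$, because $\xi\mapsto\chi e_\xi$ is continuous into $\CT(U)$ and the closed unit ball is compact. By (i), $u_n-u\to 0$ weakly. By the Banach--Steinhaus theorem, which applies since $\CT(K)$ is Fréchet for the compact set $K=\supp\chi$, weak convergence in $\D'$ is uniform on bounded subsets of $\CT(K)$. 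Hence
\[
\sup_{|\xi|\leq 1}\bigl|\langle u_n-u,\chi e_\xi\rangle\bigr|\to 0 .
\]
This yields the full condition \eqref{eq:seqconvHorm}.

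The only step that needs care is this passage from pointwise convergence (i) to uniform convergence on a compact frequency set. There is a more elementary alternative to Banach--Steinhaus. Equicontinuity gives a uniform bound $|\langle u_n,\phi\rangle|\leq C\sum_{|\alpha|\leq k}\sup|\partial^\alpha\phi|$ for all $\phi\in\CT(K)$ and all $n$. This bound makes $\xi\mapsto\widehat{\chi u_n}(\xi)$ equi-Lipschitz on the unit ball, since $\partial_\xi(\chi e_\xi)=-ix\chi e_\xi$ is again uniformly bounded in $\CT(K)$. Equi-Lipschitz continuity combined with pointwise convergence gives uniform convergence on a compact set, by an Arzelà--Ascoli or finite $\eps$-net argument.
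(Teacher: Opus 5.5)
Your proof is correct and follows essentially the same route as the paper: you write $\widehat{\chi u_n}(\xi)$ as the pairing of $u_n$ with the bounded family $\{\chi e^{-ix\cdot\xi}\,|\,|\xi|\leq 1\}\subset \CT(U)$, then upgrade the pointwise convergence (i) to uniform convergence on this family. The paper does this by citing the Montel property of $\CT(M)$, while you use Banach--Steinhaus together with the compactness of the family, which is the same mechanism; you also make explicit the bound $|\xi|^N\leq 1$ on the unit ball, which the paper leaves implicit.
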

\begin{proof}
With the identification $T^\ast U\equiv U\times \R^{\dim M}$ as in \eqref{eq:seqconvHorm}, we have for all $v\in \D'(U)$, $\chi\in \CT(U)$  and all $\xi\in \R^n$ the formula $\widehat{\chi v}(\xi)=v(e^{-i\xi \cdot}\chi)$, 
and for any compact subset $\mathscr C\subset \R^n$ the resulting set of test functions $\{e^{-i\xi \cdot}\chi\,|\,\xi \in \mathscr C\}\subset C^\infty_c(U)$ 
is bounded. As is well-known, the pointwise (weak-$\ast$) formulation of the condition (i) that $u_n\to u$ in $\D'(M)$ is equivalent to the convergence $u_n\to u$ in the actual (i.e., strong dual) topology of $\D'(M)$ (the reason is that $C^\infty_c(M)$ is a Montel space). Therefore, assuming (i), we already have
\[
\sup_{\xi\in \R^n:|\xi|\leq 1}\left|\widehat{\chi u_n}(\xi)-\widehat{\chi u}(\xi)\right|\stackrel{n\to \infty}{\longrightarrow} 0,
\]
since $\mathscr C:=\{\xi\in \R^n:|\xi|\leq 1\}\subset \R^n$ is compact. We do not even need $\xi\in V$ here.
\end{proof} 
Finally, we replace the Euclidean norm $|\xi|$ by the \emph{japanese bracket} $
\langle\xi\rangle:=\sqrt{1+|\xi|^2}\geq |\xi|$. 
The upshot is that we can formulate the convergence conditions (i) and (ii) above equivalently by using in (ii) the  \emph{microlocal seminorms}
\bq
 \|u\|^\Gamma_{U,N,V,\chi}:=\sup_{\xi\in V:|\xi|\geq 1}\,\langle\xi\rangle^N|\widehat{\chi u}(\xi)|,       \qquad u\in \D'_\Gamma(M),\label{eq:microlocalseminorms} 
\eq
where $N\in \N$, $\chi\in \CT(U,\R)$ is supported in the chart $U\subset M$,  and $V\subset \R^{\dim M}\setminus \{0\}$ a closed cone such that $(\supp \chi\times V)\cap\Gamma=\emptyset$ holds when identifying $T^\ast U\equiv U\times \R^{\dim M}$. 

It actually suffices to consider only a countable family of these seminorms. More precisely, we have the following sufficient condition for the existence of limits in $\D'_\Gamma(M)$:
\begin{lemma}\label{lem:countableseminorms1}
There is a countable family of microlocal seminorms $\|\cdot\|_1^{\Gamma},\|\cdot\|_2^{\Gamma},\ldots$ with the following property:  If $u_n\in\D'_\Gamma(M)$, $u\in \D'(M)$, 
$u_n\to u$ in $\D'(M)$, and $\{u_n\}_{n\in \N}$ is a Cauchy sequence with respect to $\|\cdot\|_k^{\Gamma}$ for each $k\in \N$, then $u\in\D'_\Gamma(M)$ and 
$u_n\to u\text{ in }\D'_\Gamma(M)$.
\end{lemma}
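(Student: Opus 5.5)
We construct the countable family explicitly and then verify the limit property by a standard density argument. Fix a countable atlas of charts $U_1,U_2,\ldots$ of $M$, each identified with an open subset of $\R^n$, $n=\dim M$. For each $i$, fix an exhaustion of $U_i$ by compact sets. Using these, choose countably many real cutoffs $\chi\in\CT(U_i,\R)$, for instance products of fixed bump functions centered at rational points with rational radii, so that every point has arbitrarily small supports in the family. Choose closed cones $V$ from a countable family as well: cones over closed balls with rational centers and rational radii on the unit sphere. The countable family $\|\cdot\|^\Gamma_k$ then consists of all seminorms $\|\cdot\|^\Gamma_{U_i,N,V,\chi}$ with $N\in\N$, $\chi$ and $V$ from these countable lists, and $(\supp\chi\times V)\cap\Gamma=\emptyset$.

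The key step is a microlocal localization reduction. Let $\chi\in\CT(U,\R)$ and $V$ be an arbitrary admissible pair, and suppose $(u_n)$ is Cauchy for every countable seminorm. We must show that $\sup_{\xi\in V,|\xi|\ge1}\langle\xi\rangle^N|\widehat{\chi u_n}-\widehat{\chi u}|\to0$.

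First, by compactness of $\supp\chi\times(V\cap S^{n-1})$ and closedness of $\Gamma$, we cover $\supp\chi$ by finitely many $\supp\chi_j$ from the countable list and $V$ by finitely many countable cones $W_l$ with slightly larger interiors, such that $(\supp\chi_j\times W_l)\cap\Gamma=\emptyset$ whenever $\supp\chi_j$ meets the relevant point. We choose $\psi_j$ from the list with $\sum_j\psi_j=1$ near $\supp\chi$; for this it is better to allow finite sums of listed cutoffs, or simply to observe that a partition of unity built from the list is again countable. Then $\chi u=\sum_j\chi\psi_j u$.

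Next we apply the standard estimate used in the proof of \cite[Lemma 8.2.1/Thm.~8.2.3]{HormanderI}. Write $\widehat{\chi\psi u}(\xi)=(2\pi)^{-n}\int\hat\chi(\xi-\eta)\widehat{\psi u}(\eta)\,d\eta$ and split the $\eta$-integral into $\eta\in W$, the open cone around $V$, and its complement. On $W$, the weight $\langle\xi\rangle^N\le C\langle\xi-\eta\rangle^N\langle\eta\rangle^N$ together with the Schwartz decay of $\hat\chi$ gives a bound in terms of the countable seminorm of $\psi u$ on $W$. For $|\eta|\le1$ we use the $\D'$ convergence, as in the preceding lemma. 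Off $W$, we have $|\xi-\eta|\ge c(|\xi|+|\eta|)$ for $\xi\in V$, so Schwartz decay of $\hat\chi$ beats the polynomial growth $|\widehat{\psi u_n}(\eta)|\le C\langle\eta\rangle^{M}$.

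This last step is the main obstacle. The polynomial bound must be uniform in $n$, which follows from (i): $u_n$ converges in $\D'$, so $\{\psi u_n\}$ is a bounded set of distributions of finite order on a compact set, by Banach–Steinhaus. Applying the resulting estimate to the differences $u_n-u_m$ shows that $(u_n)$ is Cauchy in every seminorm $\|\cdot\|^\Gamma_{U,N,V,\chi}$.

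Finally, pointwise convergence of $\widehat{\chi u_n}\to\widehat{\chi u}$ follows from (i). Combined with uniform Cauchyness of $\langle\xi\rangle^N\widehat{\chi u_n}$ on $V$, this yields uniform convergence to $\langle\xi\rangle^N\widehat{\chi u}$ there. Hence $\widehat{\chi u}$ decays rapidly in $V$, which gives $\WF(u)\subset\Gamma$ by the definition of the wavefront set, since every point outside $\Gamma$ admits such a pair $(\chi,V)$. It also gives $u_n\to u$ in $\D'_\Gamma(M)$.
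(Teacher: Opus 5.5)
Your route differs from the paper's. The paper constructs nothing explicitly. It takes from Dabrowski--Brouder \cite[Cor.~13]{DabrowskiBrouder} that a countable family of microlocal seminorms suffices on open subsets of $\R^n$. It then uses H\"ormander's remark \cite[p.~262]{HormanderI} that, once $u_n\to u$ in $\D'$, condition \eqref{eq:seqconvHorm} may be replaced by the boundedness condition $\sup_n\|u_n\|^\Gamma_{U,N,V,\chi}<\infty$, which holds automatically for Cauchy sequences. Finally it patches with a countable atlas. You instead make the family explicit (rational cutoffs, cones over rational caps). You then prove by hand that control on the listed seminorms propagates to every admissible $(\chi,V)$, by splitting $\widehat{\chi\psi u}=(2\pi)^{-n}\hat\chi*\widehat{\psi u}$ into the regions $\eta\in W$, $|\eta|\le 1$ and $\eta\notin W$. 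This is essentially a self-contained proof of the cited countability result. It buys an explicit family and independence from the literature, at the cost of redoing a standard localization estimate.

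Two steps need tightening.

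First, the region $\eta\notin W$. The uniform bound $|\widehat{\psi(u_n-u_m)}(\eta)|\le C\langle\eta\rangle^{M}$ only shows that $\langle\xi\rangle^N$ times this contribution is bounded uniformly in $n,m$ (even $\mathcal O(\langle\xi\rangle^{-\infty})$). It does not show that the contribution tends to zero, so ``applying the estimate to the differences'' does not yet give Cauchyness. There are two ways to close this.
\begin{itemize}
\item Use dominated convergence. For $\xi\in V$, $\eta\notin W$ and $L\ge N$ one has $\langle\xi\rangle^N|\hat\chi(\xi-\eta)|\le C_L\langle\eta\rangle^{N-L}$. So the term is bounded by $C\int\langle\eta\rangle^{N-L}|\widehat{\psi(u_n-u_m)}(\eta)|\,d\eta$. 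This tends to $0$, since the integrand tends to $0$ pointwise and is dominated by $C\langle\eta\rangle^{M+N-L}$, which is integrable for $L$ large.
\item Alternatively, and more in line with the paper, prove only $\sup_n\|u_n\|^\Gamma_{U,N,V,\chi}<\infty$ for all $N$; your three estimates give this directly. Then conclude with H\"ormander's observation that boundedness at order $N+1$, together with locally uniform convergence of $\widehat{\chi u_n}$, gives uniform convergence at order $N$.
\end{itemize}

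Second, your family lives in fixed atlas charts $U_i$, whereas convergence in $\D'_\Gamma(M)$ is tested in every chart $U$. Your covering of $\supp\chi\times V$ by listed cutoffs and cones only makes sense within one coordinate system. Sets of the form $\supp\chi\times V$ in one chart's trivialization of the cotangent bundle are not of that form in another. Passing to arbitrary charts therefore needs the coordinate invariance of H\"ormander convergence \cite[Thm.~8.2.4]{HormanderI}. The paper's ``combine with a countable atlas'' relies on this implicitly as well.
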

\begin{proof}
When $M$ is an open subset of $\R^n$, it is shown in \cite[Cor.~13]{DabrowskiBrouder} that it suffices to consider a countable family of microlocal seminorms, and it is pointed out already in \cite[p.~262]{HormanderI} that, in general, the convergence condition \eqref{eq:seqconvHorm} for a given seminorm $\|\cdot\|^\Gamma_{U,N,V,\chi}$ can be replaced by the boundedness condition $
\sup_{n\in \N} \|u_n\|^\Gamma_{U,N,V,\chi}<\infty$ which is satisfied by a Cauchy sequence for the seminorm $\|\cdot\|^\Gamma_{U,N,V,\chi}$. For a general smooth manifold $M$, one combines these observations with a countable atlas of $M$.
\end{proof}
The microlocal seminorms are continuous with respect to multiplication by smooth functions. More precisely: If $f\in \Cinft(M)$, $u\in \D'_\Gamma(M)$, and $U,N,V,\chi$  are as in \eqref{eq:microlocalseminorms}, then
\bq
\|fu\|^\Gamma_{U,N,V,\chi}\leq \|u\|^\Gamma_{U,N,V,(\Re f)\chi}+\|u\|^\Gamma_{U,N,V,(\Im f)\chi}.\label{eq:multiplicationmicrolocalnorms}
\eq
Note, however, that if we fix a countable family of seminorms as in Lemma \ref{lem:countableseminorms1} and take the seminorm on the left-hand side in \eqref{eq:multiplicationmicrolocalnorms} from that family, then the seminorms on the right-hand side in \eqref{eq:multiplicationmicrolocalnorms} are not necessarily part of the same family.

\subsection{Sobolev spaces}\label{sec:Sobolev}

In this paper we will not really be interested in the whole spaces $\D'_\Gamma(M)$  of distributions with controlled wavefront set as introduced in Section \ref{sec:distribcontrolledWF}, but rather in their intersections with Sobolev spaces. Therefore, we spend some time to introduce the latter and prepare useful technicalities for later use. 

In this whole Section \ref{sec:Sobolev}, we put $n:=\dim M$. We fix a family of Sobolev spaces  $
H^s_\mathrm{loc}(M)\subset \D'(M)$, $s\in \R$. 
For their definition and more details on the basics summarized in the following, see \cite[Sec.~10]{hintz}, \cite[App.~E]{dyatlovzworskibook}, \cite[Ch.~5]{Lefeuvrebook}. Each Sobolev space $H^s_\mathrm{loc}(M)$ is a Fréchet space. If $u\in H^s_\mathrm{loc}(M)$ and $\chi\in \CT(M)$ is a cutoff function supported in a chart $U\subset M$ identified with an open subset of $\R^n$, then one has $\chi u\in H^s(\R^n)$, where 
\bqn
H^s(\R^n):=\big\{u\in \mathcal S'(\R^n)\,|\, \big(\xi\mapsto \eklm{\xi}^{s}\hat u(\xi) \big)\in L^2(\R^n) \big\}
\eqn
is the classical Sobolev space (here $\mathcal S'(\R^n)$ is the space of all tempered distributions, the dual space of the space $\mathcal S(\R^n)$ of Schwartz functions) equipped with the inner product
\bq
\eklm{u,v}_{H^s(\R^n)}:=\eklm{\eklm{\cdot}^{s}\hat u,\eklm{\cdot}^{s}\hat v}_{L^2(\R^n)}=\langle(I+\Delta)^{s/2} u,(I+\Delta)^{s/2} v\rangle_{L^2(\R^n)},\label{eq:innerproductHs}
\eq
where $(I+\Delta)^{s/2}:\S'(\R^n)\to \S'(\R^n)$ is the formally $L^2$-symmetric invertible pseudodifferential operator built from the non-negative Euclidean Laplacian $\Delta$. The Sobolev spaces with different exponents are related by the ``order-shifting'' isomorphisms
\bq
(I+\Delta)^{r/2}: H^{s+r}(\R^n)\to H^{s}(\R^n),\qquad r,s\in \R.\label{eq:IplusLaplacian}
\eq
 Between $H^s(\R^n)$ and $H^{-s}(\R^n)$ there is the canonical perfect pairing
\begin{align}\begin{split}
H^s(\R^n)\times H^{-s}(\R^n)&\to \C\\
(u,v)&\mapsto \eklm{u,v}_{H^s,H^{-s}}:=\eklm{\eklm{\cdot}^{s}\hat u,\eklm{\cdot}^{-s}\hat v}_{L^2(\R^n)},\label{eq:dualityRn}\end{split}
\end{align}
extending the $L^2$-pairing on $\CT(\R^n)\times \CT(\R^n)$ continuously.  Choosing $r=-2s$ in \eqref{eq:IplusLaplacian} allows us to write the pairing \eqref{eq:dualityRn}  as an inner product in $H^{s}(\R^n)$:
\begin{align}\begin{split}
\eklm{u,v}_{H^s,H^{-s}}&=\eklm{\eklm{\cdot}^{s}\hat u,\eklm{\cdot}^{s}\eklm{\cdot}^{-2s}\hat v}_{L^2(\R^n)}\\
&=\eklm{u,(I+\Delta)^{-s}v}_{H^{s}(\R^n)}.\end{split}\label{eq:pairingalternative}
\end{align}

\subsubsection{Sobolev Hilbert spaces}\label{sec:sobolevhilbert}
Fix a partition of unity $\{\tau_j\}_{j\in \N}\subset \CT(M;\R)$ of $M$ subordinate to the open cover formed by the chart domains $U_j\subset M$ of a countable atlas $\mathscr A=\{\Phi_j:U_j\to V_j\}_{j\in \N}$ with charts $\Phi_j:U_j\to V_j\subset \R^n$.

Given a compact set $K\subset M$, we define
\[
H^s(K):=\{u\in H^s_\mathrm{loc}(M)\,|\, \supp u \subset K\}\subset \E'(M)\cap H^s_\mathrm{loc}(M).
\]
Let $I_K:=\{j\in \N\,|\,\supp \tau_j\cap K\neq \emptyset\}$. This set is finite because the partition of unity is locally finite and $K$ is compact. We can put on $H^s(K)$ an inner product $\eklm{\cdot,\cdot}_{H^s(K)}$ obtained by polarization from the  norm defined by
\bq
\norm{u}_{H^s(K)}^2:=\sum_{j\in I_K} \norm{(\Phi_{j}^{-1})^\ast (\tau_{j} u)}^2_{H^s(\R^n)}.\label{eq:localnorm}
\eq
This makes $H^s(K)$ a Hilbert space such that the topology induced by the inner product agrees with the subspace topology inherited from $H^s_\mathrm{loc}(M)$ \cite[Prop.~6.58]{hintz}.

Next, fix an exhaustion
\bq
M=\bigcup_{l=1}^\infty M_l\label{eq:exhaustion}
\eq
by compact subsets $M_l\subset M$ such that $M_{l}\subset \mathrm{Int}(M_{l+1})$ holds for all $l\in \N$ (if $M$ is already compact, we put $M_l:=M$ for all $l$). Then 
\[
H^s_{\mathrm{comp}}(M):=\bigcup_{K\subset M\text{ compact}}H^s(K)=\bigcup_{l\in \N}H^s(M_l)
\]
is equipped with the inductive limit topology. We have continuous, dense  inclusions $\CT(M)\subset H^s_{\mathrm{comp}}(M)\subset  \E'(M)$, $s\in \R$. 
Also, if $s'\leq s$, then one has
\bq
H^s_{\mathrm{loc}}(M)\subset H^{s'}_{\mathrm{loc}}(M), \qquad H^s(K)\subset H^{s'}(K),\;K\subset M\text{ compact}.\label{eq:ssprimeincl}
\eq
In combination with the fact that every compactly supported distribution has finite order, this allows us to write $\E'(M)$ as a countable union of Hilbert spaces:
\bq
\E'(M)=\bigcup_{k,l\in \N} H^{-k}(M_l).\label{eq:EHilbert}
\eq
Note that, if $M$ is compact, we have for all $s\in \R$
\[
H^s_\mathrm{loc}(M)=H^s_\mathrm{comp}(M)=H^s(M_l)=H^s(M)\quad\forall\;l\in \N.
\]
Now, fix a family of real-valued cutoff functions $\{\chi_l\}_{l\in \N}\subset \CT(M,\R)$ such that
\bqn
\supp \chi_l\subset \mathrm{Int}(M_{l}),\qquad \chi_{l+1}=1\text{ on }M_{l}.
\eqn
If $M$ is compact, we take $\chi_l=1$ for all $l$, which is consistent with our convention that $M=M_l$ for all $l$ in this case. 

\begin{lemma}\label{lem:distribconv}A sequence of distributions $u_j\in \D'(M)$ converges in $\D'(M)$ if, and only if, for each $l\in \N$ the sequence of distributions $\chi_l u_j\in \E'(M)$ converges in $\E'(M)$.
\end{lemma}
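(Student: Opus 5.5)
The plan is to prove the two implications separately. Throughout, the topology on $\E'(M)$ is the one used for convergence of compactly supported distributions. By the Montel property already invoked in the proof of the lemma on $\sup_{|\xi|\geq 1}$, this topology agrees sequentially with pointwise convergence against test functions in $\Cinft(M)$. Hence it suffices to work with the weak formulations: $u_j\to u$ in $\D'(M)$ means $\langle u_j,\varphi\rangle\to\langle u,\varphi\rangle$ for all $\varphi\in\CT(M)$, and $w_j\to w$ in $\E'(M)$ means $\langle w_j,\psi\rangle\to\langle w,\psi\rangle$ for all $\psi\in\Cinft(M)$.

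\emph{Necessity.} Suppose $u_j\to u$ in $\D'(M)$ and fix $l$. For $\psi\in\Cinft(M)$ we have $\langle \chi_l u_j,\psi\rangle=\langle u_j,\chi_l\psi\rangle$, and $\chi_l\psi\in\CT(M)$. Thus $\chi_lu_j\to\chi_l u$ pointwise on $\Cinft(M)$, and all $\chi_lu_j$ are supported in the fixed compact set $\supp\chi_l$. This gives convergence in $\E'(M)$. If one prefers the strong topology, one can note that bounded subsets of $\Cinft(M)$ are mapped by $\psi\mapsto\chi_l\psi$ to bounded subsets of $\CT(M_l)$, on which $u_j\to u$ uniformly by the Montel/Banach--Steinhaus argument.

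\emph{Sufficiency.} Suppose that for each $l$ the sequence $\chi_lu_j$ converges in $\E'(M)$ to some $w_l\in\E'(M)$. First I would check compatibility. For $\varphi\in\CT(\mathrm{Int}(M_l))$ we have $\chi_{l+1}\varphi=\varphi$, since $\chi_{l+1}=1$ on $M_l$. Taking limits, $\langle w_{l+1},\varphi\rangle=\lim_j\langle u_j,\varphi\rangle$. In particular this limit exists, and $w_{l+1}=w_{l'}$ on $\mathrm{Int}(M_l)$ for all $l'>l$. Since the interiors $\mathrm{Int}(M_l)$ exhaust $M$ (because $M_l\subset\mathrm{Int}(M_{l+1})$), every $\varphi\in\CT(M)$ has support in some $\mathrm{Int}(M_l)$. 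Therefore $u(\varphi):=\lim_j\langle u_j,\varphi\rangle$ exists for every test function. It remains to see that $u$ is a distribution. On each $\CT(\mathrm{Int}(M_l))$, $u$ coincides with $w_{l+1}$ and is therefore continuous; alternatively, one can cite the Banach--Steinhaus theorem for $\D'(M)$ (sequential completeness of $\D'$ under pointwise convergence). Hence $u\in\D'(M)$ and $u_j\to u$ in $\D'(M)$. If $M$ is compact, $\chi_l=1$ and both statements are trivially identical.

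The only delicate point is bookkeeping: we need the exhaustion property $M_l\subset\mathrm{Int}(M_{l+1})$ together with $\chi_{l+1}=1$ on $M_l$ to guarantee that each test function is eventually left unchanged by the cutoffs. We also need to be sure the limits are distributions; I expect to handle this by the Banach--Steinhaus (uniform boundedness) principle rather than any real obstacle.
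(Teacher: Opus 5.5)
Your proof is correct and follows essentially the same route as the paper. For necessity, the paper invokes continuity of multiplication by $\chi_l$, which you phrase through pointwise convergence and the Montel property. For sufficiency, both define $u(\varphi)$ as $\lim_j\langle \chi_l u_j,\varphi\rangle=\lim_j\langle u_j,\varphi\rangle$ for $l$ large enough that $\chi_l=1$ on $\supp\varphi$, with consistency across $l$. Your explicit check that $u$ is continuous, because it agrees with the $\E'$-limit $w_{l+1}$ on each $\CT(\mathrm{Int}(M_l))$, fills in a point the paper leaves implicit in ``by construction''.
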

\begin{proof}
The ``only if'' is clear since multiplication $\chi_l\cdot:\D'(M)\to \E'(M)$ is continuous.

Conversely, suppose that, for each $l\in \N$, the sequence $\chi_l u_j\in \E'(M)$ converges in $\E'(M)$. Then we define a distribution  $u\in \D'(M)$ as follows: Given $\varphi\in \CT(M)$, choose $l\in \N$ large enough that $\chi_l=1$ on $\supp \varphi$. Then put $u(\varphi):=\lim_{j\to \infty}(\chi_l u_j)(\varphi)$. This is well-defined: If for some other $l'\in \N$ one has $\chi_{l'}=1$ on $\supp \varphi$, then for each $j\in \N$ one has 
$(\chi_l u_j)(\varphi)=u_j(\varphi)=(\chi_{l'} u_j)(\varphi)$, 
so the two limits agree, as required. By construction,  $u_j\to u$ in $\D'(M)$.
\end{proof}

We will not need a manifold version of the duality pairing \eqref{eq:dualityRn}. It suffices to observe that, by the definition \eqref{eq:localnorm} of the Hilbert norm $\norm{\cdot}_{H^s(K)}$ for a compact set $K\subset M$ as a finite sum of local Sobolev norms in $\R^n$, there is a constant $C_{K,s}>0$ such that
\bq
|\langle u,v\rangle|\leq C_{K,s} \norm{u}_{H^s(K)}\norm{v}_{H^{-s}(K)}\qquad \forall\; u,v\in \CT(M),\supp u,\supp v\subset K. \label{eq:globalpairing}
\eq

\begin{definition}\label{def:uniformorder}A family of distributions $\{u_j\}_{j\in J}\subset \D'(M)$ has \emph{uniformly bounded Sobolev order} if there is an $s\in \R$ such that $\chi_l u_j\in H^{s}(M_l)$ for all $j\in J$, $l\in \N$. In this situation, we say that the family is \emph{of Sobolev order $s$}.
\end{definition}
Note that if a family of distributions of uniformly bounded Sobolev order is of some Sobolev order $s$, then it is also of Sobolev order $s'$ for all $s'<s$ in view of  \eqref{eq:ssprimeincl}. 

\begin{definition}\label{def:uniformsminus0}Given $s\in \R$ and a Sobolev space $H^s_\bullet(\circ)$ of any of the forms introduced above, we define $
H^{s-0}_\bullet(\circ):=\bigcap_{\eps>0}H^{s-\eps}_\bullet(\circ)$. 
Furthermore, \emph{``Sobolev order $s-0$''} means \emph{``Sobolev order $s-\eps$ for all $\eps>0$''}. We extend Definition \ref{def:uniformorder} accordingly.
\end{definition}

\begin{example}\label{ex:codimSobolev}Let $n:=\dim M$, $k\in \N_0$, $k\leq n$, and let $S_j\subset M$, $j\in J$, be a family of embedded submanifolds of dimension $k$ with smooth volume densities $ds_j$. Take cutoff functions $\chi_j\in \CT(S_j)$ and define a family of distributions $u_j\in \E'(M)$ by 
\[
u_j(\varphi):=\int_{S_j}\chi_j(s_j) \varphi(s_j)\, d s_j,\quad \varphi\in \CT(M).
\]
Then the family $\{u_j\}_{j\in J}$ has uniformly bounded Sobolev order $-\frac{n-k}{2}-0$. Indeed, for each $j$, in any chart of $M$ intersecting $S_j$ non-trivially, the Fourier transform of $u_j$ decays rapidly outside the conormal bundle of $S_j$, where it is only bounded.  Thus, to get an $L^2$-function (after identifying the fibers of the conormal bundle with $\R^{n-k}$), we need to multiply the Fourier transform by a factor $\eklm{\xi}^s$ such that $\eklm{\xi}^{2s}\in L^2(\R^{n-k})$, which holds iff $s<-\frac{n-k}{2}$.
\end{example}

\begin{corollary}\label{cor:convergencekl}Let $s\in \R$ and let $u_j\in \D'(M)$ be a sequence of uniformly bounded Sobolev order $\bullet$, where $\bullet=s$ or $\bullet=s-0$. 

If $\bullet=s$, suppose that for every $l\in \N$ the sequence $\chi_l u_j\in H^{s}(M_l)$ converges in  $H^{s}(M_l)$ as $j\to \infty$. 

If $\bullet=s-0$, suppose that for every $l\in \N$ the sequence $\chi_l u_j\in H^{s-0}(M_l)$ converges in  $H^{s-1/l}(M_l)$ as $j\to \infty$. 

 Then $u_j$ converges in $\D'(M)$.
\end{corollary}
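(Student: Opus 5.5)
The plan is to reduce to Lemma \ref{lem:distribconv}: it suffices to show that for each fixed $l\in\N$ the sequence $\chi_l u_j$ converges in $\E'(M)$. So fix $l$ and consider the sequence $(\chi_l u_j)_j$.

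In the case $\bullet=s$, the hypothesis says $\chi_l u_j\to w_l$ in the Hilbert space $H^s(M_l)$ for some $w_l\in H^s(M_l)$. Since $H^s(M_l)\subset H^s_{\mathrm{comp}}(M)\subset \E'(M)$ with continuous inclusions, and $H^s(M_l)$ carries the subspace topology of $H^s_\mathrm{loc}(M)$ (\cite[Prop.~6.58]{hintz}), convergence in $H^s(M_l)$ implies convergence in $\E'(M)$. Concretely, for $\varphi\in \Cinft(M)$, choose $\psi\in\CT(M)$ with $\psi=1$ near $M_l$. Then $\langle \chi_lu_j-w_l,\varphi\rangle=\langle \chi_lu_j-w_l,\psi\varphi\rangle$. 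By the pairing estimate \eqref{eq:globalpairing}, extended by density from $\CT$ to $H^s(K)\times H^{-s}(K)$ for a compact $K\supset \supp\psi$, this is bounded by $C_{K,s}\|\chi_lu_j-w_l\|_{H^s(K)}\|\psi\varphi\|_{H^{-s}(K)}$. Moreover, $\|\cdot\|_{H^s(K)}$ and $\|\cdot\|_{H^s(M_l)}$ are equivalent on $H^s(M_l)$, since both induce the $H^s_\mathrm{loc}$ subspace topology. Hence $\chi_lu_j\to w_l$ weakly, and therefore in $\E'(M)$, because $\E'(M)$ is the dual of a Montel space, so weak and strong sequential convergence agree.

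In the case $\bullet=s-0$, the same argument applies verbatim with $s$ replaced by $s-1/l$ for the fixed $l$. Since each $\chi_l u_j\in H^{s-0}(M_l)\subset H^{s-1/l}(M_l)$, convergence in $H^{s-1/l}(M_l)$ yields convergence in $\E'(M)$. It does not matter that the exponent depends on $l$, since Lemma \ref{lem:distribconv} treats each $l$ separately.

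The main obstacle is making the implication ``convergence in $H^{s}(M_l)$ $\Rightarrow$ convergence in $\E'(M)$'' rigorous. This requires the extension of \eqref{eq:globalpairing} from test functions to the Sobolev spaces and the compatibility of the norms on different compacts. I would cite the continuity of $H^s_\mathrm{comp}(M)\hookrightarrow\E'(M)$ stated above rather than reprove it. After that, Lemma \ref{lem:distribconv} concludes that $u_j$ converges in $\D'(M)$.
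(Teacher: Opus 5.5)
Your proposal is correct and follows the paper's proof. You reduce via Lemma \ref{lem:distribconv} to convergence of each $\chi_l u_j$ in $\E'(M)$, which follows from the continuity of the inclusions $H^{s}(M_l)\hookrightarrow\E'(M)$ and $H^{s-1/l}(M_l)\hookrightarrow\E'(M)$. The paper simply cites that continuity, so your pairing-and-Montel justification is extra detail rather than a needed step.
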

\begin{proof}
By Lemma \ref{lem:distribconv}, it suffices that for each $l\in \N$ the sequence $\chi_l u_j$  converges in $\E'(M)$ as $j\to \infty$. This holds since the inclusions $H^{s}(M_l)\hookrightarrow \E'(M)$ as well as $H^{s-1/l}(M_l)\hookrightarrow \E'(M)$ are continuous.
\end{proof}

\subsubsection{Controlling Sobolev norms of smooth functions}\label{sec:Sobolevcontrol}

For later use, we now estimate the Sobolev norms of smooth functions. To this end, let $f\in \Cinft(M)$, $l\in \N$, and $I_{M_l}=\{j_{1,l},\ldots,j_{N_l,l}\}$, $N_l:=|I_{M_l}|$. 
Then, by \eqref{eq:localnorm}, we have for all $s\in \R$
\bq
\norm{\chi_l f}_{H^{-s}(M_l)}^2=\sum_{k=1}^{N_l} \norm{f_{k,l}}^2_{H^{-s}(\R^n)},\qquad f_{k,l}:=(\Phi_{j_{k,l}}^{-1})^\ast (\tau_{j_{k,l}} \chi_l f)\in \CT(\R^n).\label{eq:localfkl}
\eq
Now we write for each $k\in \{1,\ldots,N_l\}$
\begin{align*}
\norm{f_{k,l}}^2_{H^{-s}(\R^n)}&=\norm{(I+\Delta)^{s}(I+\Delta)^{-s}f_{k,l}}^2_{H^{-s}(\R^n)}\\
&\leq \norm{(I+\Delta)^{s}}_{H^{s}(\R^n)\to H^{-s}(\R^n)}^2\norm{(I+\Delta)^{-s}f_{k,l}}^2_{H^{s}(\R^n)}.
\end{align*}
Consider the Hilbert space $\mathcal{H}^s_l:=(H^s(\R^n))^{\oplus N_l}$. 
Then the above estimate shows that the linear operator
\begin{align}\begin{split}
J^s_l:\Cinft(M)&\to \mathcal{H}^s_l\\
f &\mapsto \big((I+\Delta)^{-s}f_{1,l},\ldots,(I+\Delta)^{-s}f_{N_l,l}\big)\label{eq:Jdef}\end{split}
\end{align}
satisfies
\bq
\norm{\chi_l f}_{H^{-s}(M_l)}\leq C_{s} \norm{J^s_l(f)}_{\mathcal{H}^s_l}, \label{eq:chi_lfestimate}
\eq
where $C_{s}:=\Vert(I+\Delta)^{s}\Vert_{H^s(\R^n)\to H^{-s}(\R^n)}$. 

Now let $y=(y_1,\ldots,y_{N_l})\in \mathcal{H}^{s}_l=(H^{s}(\R^n))^{\oplus N_l}$. Then we have
\begin{align}\begin{split}
\eklm{J^s_l(f),y}_{\mathcal{H}^s_l}&=\sum_{k=1}^{N_l}\langle(I+\Delta)^{-s}f_{k,l},y_k\rangle_{H^{s}(\R^n)}\\
&\stackrel{\text{\eqref{eq:pairingalternative}}}{=}\sum_{k=1}^{N_l}\langle y_k,f_{k,l}\rangle_{H^s,H^{-s}}\\
&=\sum_{k=1}^{N_l}y_k(f_{k,l}).\end{split}\label{eq:Lfypairing}
\end{align}
This shows that the linear form
\bq
L^{s,l}_y:f\mapsto \eklm{J^s_l(f),y}_{\mathcal{H}^s_l},\quad f\in \Cinft(M),\label{eq:Ly1}
\eq
is a distribution $L^{s,l}_y\in \E'(M)$.

\subsubsection{Complex conjugation}\label{sec:complexconj}

All Sobolev spaces introduced above  naturally carry a complex conjugation $u\mapsto \bar u$ extending the usual pointwise complex conjugation of test functions continuously. In particular, the spaces of real-valued smooth functions
\[
\Cinft(M,\R)\subset H^s_\mathrm{loc}(M),\qquad \{f\in\CT(M,\R)\,|\,\supp f\subset K\}\subset H^s(K)
\]
are real, respectively. Our custom Hilbert spaces $\mathcal{H}^s_l$ are just finite direct sums of Sobolev spaces and we equip them with the component-wise complex conjugation.  Since we chose all cutoff functions real-valued and the operators $(I+\Delta)^{-s}$ are formally $L^2$-symmetric for each $s\in \R$, it follows for all $s\in \R$, $l\in \N$ that
\bq
J^s_l(\Cinft(M,\R))\subset \mathcal{H}^s_l\quad\text{ is real}. \label{eq:JslRreal}
\eq
The Hilbert space $H^s(\R^n)$ can also be equipped with the alternative complex conjugation
\bq
u\mapsto \check {\bar u},\label{eq:conjugalternative}
\eq
given by the composition of the usual complex conjugation and the pullback $\check{\;}$ along the flip $x\mapsto -x$. The alternative complex conjugation  corresponds to the usual complex conjugation via the Fourier transform, since one has
\bq
\hat{\check{\bar u}}=\bar{\hat{u}}\quad \forall\;u\in\mathcal S'(\R^n).\label{eq:Fourierconjug}
\eq
Using this in combination with the Plancherel formula and the fact that $(I+\Delta)^{-s}$ is formally $L^2$-symmetric, one checks that the alternative complex conjugation is indeed compatible with the inner product:
\begin{align*}
\langle\check{\bar u},\check{\bar v}\rangle_{H^s(\R^n)}=\overline{\eklm{u,v}}_{H^s(\R^n)}\quad \forall\;u,v\in H^s(\R^n).
\end{align*}

\subsubsection{Multiplication by smooth functions}
For later use, let us record the following elementary continuity property of the Sobolev norms with respect to multiplication by smooth functions.
\begin{lemma}\label{lem:multiplicationsobolev}Let $f\in \Cinft(M)$, $s\in \R$, $l\in \N$. Then there is a $C_{f,s,l}>0$ such that
\[
\norm{\chi_l fg}_{H^{s}(M_l)}\leq C_{f,s,l} \norm{\chi_l g}_{H^{s}(M_l)}\quad \forall\; g\in H^{s}_{\mathrm{loc}}(M).
\]
\end{lemma}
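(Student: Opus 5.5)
The plan is to reduce the estimate to the classical fact that multiplication by a Schwartz (in particular compactly supported smooth) function is bounded on $H^s(\R^n)$, and to handle the global structure through the finite sum in \eqref{eq:localnorm}.

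First, note that $\chi_l fg = (\chi_{l+1} f)\,\chi_l g$, since $\chi_{l+1}=1$ on $M_l\supset\supp\chi_l$. So we may replace $f$ by $\tilde f:=\chi_{l+1}f\in\CT(M)$. Also, $\supp(\chi_l fg)\subset \supp \chi_l\subset M_l$, so only the indices $j\in I_{M_l}$ contribute to the norm \eqref{eq:localnorm}. For each such $j$ we have
\[
\tau_j\chi_l f g=\tau_j\tilde f\cdot \chi_l g .
\]
To realize this as a product of a fixed smooth function with a localized piece of $\chi_l g$, choose for each $j\in I_{M_l}$ a cutoff $\psi_j\in\CT(U_j)$ with $\psi_j=1$ on $\supp\tau_j$. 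Then
\[
(\Phi_j^{-1})^\ast(\tau_j\chi_l fg)=\big((\Phi_j^{-1})^\ast(\tau_j\tilde f)\big)\cdot (\Phi_j^{-1})^\ast(\psi_j\chi_l g).
\]
Since $(\Phi_j^{-1})^\ast(\tau_j\tilde f)\in\CT(\R^n)$, the classical multiplier estimate $\|\phi w\|_{H^s(\R^n)}\le C_{\phi,s}\|w\|_{H^s(\R^n)}$ bounds the $j$-th summand by a constant times $\|(\Phi_j^{-1})^\ast(\psi_j\chi_l g)\|_{H^s(\R^n)}$. This multiplier estimate can be proved via Peetre's inequality $\langle\xi\rangle^s\le 2^{|s|}\langle\xi-\eta\rangle^{|s|}\langle\eta\rangle^s$ together with Young's inequality for $|\hat\phi|*|\langle\cdot\rangle^s\hat w|$.

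The remaining step, which is the only slightly nontrivial point, is to bound $\|(\Phi_j^{-1})^\ast(\psi_j\chi_l g)\|_{H^s(\R^n)}$ by $\|\chi_l g\|_{H^s(M_l)}$. The difficulty is that $\psi_j$ is not among the partition functions $\tau_i$. To handle this, write
\[
\psi_j\chi_l g=\sum_{i\in I_{M_l}}\psi_j\tau_i\chi_l g,
\]
which is a finite sum. For each $i$, the term is $(\psi_j\tau_i)$ times $\tau_i'\chi_l g$, where $\tau_i'\in\CT(U_i)$ equals $1$ on $\supp\tau_i$. Then pass through the chart change $\Phi_j\circ\Phi_i^{-1}$ on a neighborhood of the compact set $\supp(\psi_j\tau_i)\subset U_i\cap U_j$, using the invariance of $H^s(\R^n)$ norms of compactly supported distributions under diffeomorphisms (with constants depending on the compact set). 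This reduces matters to $\|(\Phi_i^{-1})^\ast(\tau_i\chi_l g)\|_{H^s}$, up to multiplication by the compactly supported smooth factor $\psi_j\circ\Phi_i^{-1}$, which is again controlled by the multiplier estimate.

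Alternatively, and more cleanly, this whole step follows from the stated fact (\cite[Prop.~6.58]{hintz}) that the Hilbert norm on $H^s(M_l)$ induces the subspace topology from $H^s_\mathrm{loc}(M)$, independent of the choice of partition. Under this view, the map $w\mapsto \tilde f w$ is a continuous linear operator $H^s(M_l)\to H^s(M_l)$, because multiplication by smooth functions is continuous on $H^s_\mathrm{loc}(M)$, and hence it is bounded in the Hilbert norm. Applying this with $w=\chi_l g\in H^s(M_l)$ gives the claim with $C_{f,s,l}$ equal to the operator norm. I would present this short argument, mentioning the chart computation above as the justification for the continuity of multiplication on $H^s_\mathrm{loc}(M)$.
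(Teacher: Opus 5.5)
Your proof is correct, but it takes a detour that the paper avoids by grouping the cutoffs the other way round.

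You attach the partition function to $f$ and an enlarged cutoff $\psi_j$ to $g$, writing $\tau_j\chi_l fg=(\tau_j\tilde f)\cdot(\psi_j\chi_l g)$. Since $\psi_j\chi_l g$ is not one of the summands defining $\|\chi_l g\|_{H^s(M_l)}$, you are then forced into one of two routes. Either you use chart changes $\Phi_j\circ\Phi_i^{-1}$ together with diffeomorphism invariance of $H^s$ for distributions supported in a fixed compact set. Or you use the abstract argument via Prop.~6.58 of \cite{hintz} and continuity of multiplication on $H^s_{\mathrm{loc}}(M)$.

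The paper instead writes $\tau_j\chi_l fg=(\rho_j f)\cdot(\tau_j\chi_l g)$, with $\rho_j\in C_c^\infty(U_j)$ equal to $1$ on $\supp(\tau_j\chi_l)$. After pulling back by $\Phi_j^{-1}$, the $g$-factor is then literally the $j$-th summand $(\Phi_j^{-1})^\ast(\tau_j\chi_l g)$ of the norm of $\chi_l g$. The claim follows termwise from the $H^s(\R^n)$ multiplier bound, with constant equal to the maximum over the finitely many charts in $I_{M_l}$. The paper phrases this bound as $L^2$-boundedness of $(I+\Delta)^{s/2}\tilde f(I+\Delta)^{-s/2}$; your Peetre--Young argument proves the same thing more elementarily. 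No coordinate changes and no equivalence-of-topologies result are needed.

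The continuity of multiplication on $H^s_{\mathrm{loc}}(M)$ that your short version invokes is itself most easily proved by exactly this regrouping. So your abstract argument is only shorter if Prop.~6.58 is taken for granted. The direct regrouping is both shorter and self-contained, and it gives an explicit constant.

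There is also a small technical point in your chart-change route. The function $\psi_j\circ\Phi_i^{-1}$ need not be compactly supported in $\Phi_i(U_i)$, so you need an extra cutoff equal to $1$ on $\Phi_i(\supp\tau_i)$ before applying the multiplier estimate. The auxiliary $\tau_i'$ plays no role: the grouping you actually use at the end, $\psi_j\cdot(\tau_i\chi_l g)$, is the correct one.
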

\begin{proof}As in \eqref{eq:localfkl}, we have
\bqn
\norm{\chi_l fg}_{H^{s}(M_l)}^2=\sum_{k=1}^{N_l} \norm{(fg)_{k,l}}^2_{H^{s}(\R^n)},\qquad (fg)_{k,l}:=(\Phi_{j_{k,l}}^{-1})^\ast (\tau_{j_{k,l}} \chi_l fg).
\eqn
Therefore, it suffices to estimate each $(fg)_{k,l}\in H^{s}(\R^n)$. To this end, let $\rho_{k,l}\in \CT(M)$ be supported in the domain of $\Phi_{j_{k,l}}$ and equal to $1$ on $\supp(\tau_{j_{k,l}} \chi_l)$. Then we can write
\begin{align*}
(fg)_{k,l}&=(\Phi_{j_{k,l}}^{-1})^\ast (\tau_{j_{k,l}} \rho_{k,l}f\chi_l g)\\
&=(\Phi_{j_{k,l}}^{-1})^\ast (\rho_{k,l}f)\cdot (\Phi_{j_{k,l}}^{-1})^\ast (\tau_{j_{k,l}} \chi_l g)\\
&=\tilde f_{k,l}\, g_{k,l},\qquad\qquad\qquad\qquad \qquad \qquad \tilde f_{k,l}:=(\Phi_{j_{k,l}}^{-1})^\ast (\rho_{k,l}f)\in \CT(\R^n).
\end{align*}
Now we estimate
\begin{align*}
\norm{(fg)_{k,l}}_{H^{s}(\R^n)}&=\Vert\tilde f_{k,l}\, g_{k,l}\Vert_{H^{s}(\R^n)}\\
&=\Vert(I+\Delta)^{s/2} (\tilde f_{k,l}\, g_{k,l})\Vert_{L^2(\R^n)}\\
&=\Vert(I+\Delta)^{s/2} \tilde f_{k,l}(I+\Delta)^{-s/2}(I+\Delta)^{s/2} g_{k,l}\Vert_{L^2(\R^n)}\\
&\leq \underbrace{\Vert(I+\Delta)^{s/2} \tilde f_{k,l}(I+\Delta)^{-s/2}\Vert_{L^2(\R^n)\to L^2(\R^n)}}_{C_{f,s,k,l}}\Vert(I+\Delta)^{s/2} g_{k,l}\Vert_{L^2(\R^n)}\\
&=C_{f,s,k,l}\Vert g_{k,l}\Vert_{H^{s}(\R^n)}.
\end{align*}
This proves the claim with $C_{f,s,l}:=\max_{1\leq k \leq N_l}C_{f,s,k,l}$.
\end{proof}

\subsubsection{Microlocal regularity}

Let us introduce a convenient terminology:
\begin{definition}\label{def:microlocalSobolevregularity}Given $s\in \R$, a distribution $u\in \D'(M)$, a point $p\in M$ and a covector $\xi\in \dot T^\ast_p M$, we say that $u$ is \emph{microlocally in $H^s$ at $\xi$} if there is a chart $U\subset M$ containing $p$, a cutoff function $\chi \in \CT(U,\R)$ with $0\leq \chi\leq 1$, equal to $1$ near $p$, and an open cone  $V\subset \R^{\dim M}\setminus \{0\}$ with $\xi \in V$ when identifying $T^\ast U\equiv U\times \R^{\dim M}$, such that
\[
\int_V \eklm{\xi}^{2s}|\widehat{\chi u}(\xi)|^2\, d \xi<\infty.
\]
If $u$ is microlocally in $H^s$ at $\xi$ for all $s\in \R$, we say that $u$ is \emph{microlocally smooth at $\xi$.}
\end{definition}
Note that $u$ being microlocally smooth at $\xi$ is equivalent to $\xi \not\in \WF(u)$. 

\subsection{Distributions with fixed Sobolev order and controlled wavefront set}\label{sec:summary}Fix a closed conical subset $\Gamma\subset \dot T^\ast M$ and some $s\in \R$. The space of distributions of main relevance in this paper is the intersection
\[
\D'_\Gamma(M)\cap H^s_\mathrm{loc}(M),
\]
on which we have the family of seminorms 
\[
\mathsf{SEM}(\Gamma,s):=\big\{\norm{\chi_l\cdot}_{H^{s}(M_l)},\|\cdot\|^\Gamma_{U,N,V,\chi}\,|\,l\in \N, (U,N,V,\chi)\text{ as in \eqref{eq:microlocalseminorms}}\big\}
\]
consisting of all the  microlocal seminorms introduced in \eqref{eq:microlocalseminorms} and all the Sobolev seminorms of regularity $s$, as featured in  Corollary \ref{cor:convergencekl} and \eqref{eq:EHilbert}. 

On the subspace of smooth functions $\Cinft(M)\subset\D'_\Gamma(M)\cap H^s_\mathrm{loc}(M)$, it turns out that we can bound all seminorms in $\mathsf{SEM}(\Gamma,s)$ by \emph{Hilbert control seminorms} (Definition \ref{def:Hilbertcontrol} below) in a microlocally controlled way. This is developed in the following lemmas, which are later unified in  Proposition \ref{prop:control}.

We first treat the case of Sobolev seminorms, for which the result is nothing but a summary of what has already been observed in Sections \ref{sec:Sobolevcontrol} and \ref{sec:complexconj}.
\begin{lemma}\label{lem:SobolevdomHilbert}
Let  $q\in \mathsf{SEM}(\Gamma,s)$ be a Sobolev seminorm. Then there is a Hilbert space ${\mathcal H}_q$ equipped with a complex conjugation and a linear map  $J_q:\Cinft(M)\to {\mathcal H}_q$ such that  the following holds:
\begin{enumerate}
\item There is a constant $C>0$ such that
\[
q(f)\leq  C\|J_{q}(f)\|_{{\mathcal H}_q}\quad\forall\, f\in C^\infty(M).
\]
\item For each $y\in {\mathcal H}_q$, the pairing
\[
L_y(f):=\langle J_{q}(f),y\rangle_{{\mathcal H}_q},  \quad f\in C^\infty(M),
\]
defines a distribution $L_y\in H^{-s}_{\mathrm{comp}}(M)$.

\item The subspace $J_{q}(C^\infty(M,\R))\subset  {\mathcal H}_q$ is real.
\end{enumerate}
\end{lemma}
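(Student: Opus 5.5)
The plan is to read off all three properties from the construction in Sections \ref{sec:Sobolevcontrol} and \ref{sec:complexconj}. A Sobolev seminorm in $\mathsf{SEM}(\Gamma,s)$ has the form $q=\norm{\chi_l\cdot}_{H^{s}(M_l)}$ for some $l\in\N$. There $J^{s}_l$ was built to control $H^{-s}$-norms, so I apply that construction with $s$ replaced by $-s$. Concretely, I set ${\mathcal H}_q:=\mathcal H^{-s}_l=(H^{-s}(\R^n))^{\oplus N_l}$, equipped with the componentwise usual complex conjugation, and $J_q:=J^{-s}_l$ as in \eqref{eq:Jdef}. Thus
\[
J_q(f)=\big((I+\Delta)^{s}f_{1,l},\ldots,(I+\Delta)^{s}f_{N_l,l}\big).
\]

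\emph{Property (1).} This is \eqref{eq:chi_lfestimate} with $-s$ in place of $s$:
\[
\norm{\chi_l f}_{H^{s}(M_l)}\leq C_{-s}\,\norm{J^{-s}_l(f)}_{\mathcal H^{-s}_l},
\]
with $C=\Vert(I+\Delta)^{-s}\Vert_{H^{-s}\to H^{s}}$. The estimate follows from writing $f_{k,l}=(I+\Delta)^{-s}(I+\Delta)^{s}f_{k,l}$ and using \eqref{eq:localfkl}.

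\emph{Property (2).} By \eqref{eq:Lfypairing} with $-s$ in place of $s$, for $y=(y_1,\dots,y_{N_l})\in(H^{-s}(\R^n))^{\oplus N_l}$ one has
\[
L_y(f)=\sum_{k=1}^{N_l}y_k(f_{k,l}),\qquad f_{k,l}=(\Phi_{j_{k,l}}^{-1})^\ast(\tau_{j_{k,l}}\chi_l f).
\]
Each summand is the pairing of $f$ with $\tau_{j_{k,l}}\chi_l\,\Phi_{j_{k,l}}^\ast y_k$, transported back to $M$ via the fixed smooth measure. Here $y_k\in H^{-s}(\R^n)$ is pulled back to the chart domain, and the multiplier $\tau_{j_{k,l}}\chi_l$ is compactly supported in that domain. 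Hence each summand lies in $H^{-s}_{\mathrm{comp}}(M)$. The only points that need care are that the Sobolev class is preserved under diffeomorphism, under multiplication by cutoffs (Lemma \ref{lem:multiplicationsobolev}), and under the change from Lebesgue to the fixed smooth density, which costs only a smooth positive factor. The support of $L_y$ lies in $M_l$, so $L_y\in H^{-s}(M_l)\subset H^{-s}_{\mathrm{comp}}(M)$.

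\emph{Property (3).} This is \eqref{eq:JslRreal}. For real $f$, every $f_{k,l}$ is real because the cutoffs were chosen real-valued. Moreover, $(I+\Delta)^{s}$ has real even symbol $\eklm{\xi}^{2s}$, so it maps real functions to real functions. Therefore $J_q(f)$ is fixed by componentwise conjugation, and $J_q(\Cinft(M,\R))$ is real.

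The main (mild) obstacle is the bookkeeping in (2): identifying the abstract pairing \eqref{eq:pairingalternative} with a genuine compactly supported distribution of Sobolev order $-s$ on $M$. The rest is a direct restatement of earlier formulas.
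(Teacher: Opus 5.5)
Your proposal is correct and coincides with the paper's proof: take $J_q:=J^{-s}_l$ with values in ${\mathcal H}_q:=\mathcal H^{-s}_l$, carrying the componentwise usual conjugation, and read off (1), (2), (3) from \eqref{eq:chi_lfestimate}, \eqref{eq:Lfypairing}--\eqref{eq:Ly1} and \eqref{eq:JslRreal} with $s$ replaced by $-s$. Your extra details (transporting each $y_k$ back to $M$ to see $L_y\in H^{-s}(M_l)$, and the realness of $(I+\Delta)^{s}$ via its real even symbol) fill in what the paper leaves implicit; the only nitpick, inherited from \eqref{eq:Lfypairing}, is that an inner product linear in the first slot actually gives $L_y(f)=c_n\sum_k\overline{y_k}(f_{k,l})$ with a Plancherel constant $c_n>0$, which is harmless because $H^{-s}$ is closed under complex conjugation.
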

\begin{proof}
By assumption, $q$ is of the form $q=\norm{\chi_l\cdot}_{H^{s}(M_l)}$ for some $l$. We define $J_q:=J^{-s}_{l}:\Cinft(M)\to \mathcal{H}_l^{-s}=:\mathcal{H}_q$ 
with $J^{-s}_{l}$ as in \eqref{eq:Jdef}. We equip $\mathcal{H}_q$ with the usual complex conjugation (see Section \ref{sec:complexconj}). Then (1) was shown in \eqref{eq:chi_lfestimate}, (2) follows from \eqref{eq:Lfypairing} and  \eqref{eq:Ly1}, and (3) is \eqref{eq:JslRreal}. 
\end{proof} 
 
Now we turn to the microlocal seminorms, for which the statement is less obvious than for the Sobolev seminorms.

\begin{lemma}\label{lem:microdomHilbert}Let $\mathcal H_\mathrm{micro}$ denote the Sobolev space $H^{\frac{\dim M}{2}+1}(\R^{\dim M})$ equipped with the alternative complex conjugation \eqref{eq:conjugalternative}, let $q=\|\cdot\|^\Gamma_{U,N,V,\chi}\in \mathsf{SEM}(\Gamma,s)$ be a microlocal seminorm as in \eqref{eq:microlocalseminorms}, and let $W\subset(\R^{\dim M}\setminus \{0\})$ be a closed cone with $W\cap V=\emptyset$ (where $W=\emptyset$ is allowed). 

Then there is a linear map $J_{q,W}:\Cinft(M)\to \mathcal H_\mathrm{micro}$
 such that  the following holds:
\begin{enumerate}
\item There is a constant $C>0$ such that
\[
q(f)\leq  C\|J_{q,W}(f)\|_{\mathcal H_\mathrm{micro}}\quad\forall\, f\in C^\infty(M).
\]
\item For each $y\in \mathcal H_\mathrm{micro}$, the pairing
\[
L_y(f):=\langle J_{q,W}(f),y\rangle_{\mathcal H_\mathrm{micro}},  \quad f\in C^\infty(M),
\]
defines a distribution $L_y\in \E'(M)$ satisfying 
\bqn
\supp L_y\subset \supp \chi,\qquad \WF(L_y)\cap (U\times (-W))=\emptyset,
\eqn 
where we identify $T^\ast U\equiv U\times \R^{\dim M}$.
\end{enumerate}
Moreover, if $W\cap (V\cup -V)=\emptyset$ holds, then a map $J_{q,W}$ as above exists which in addition satisfies that  $J_{q,W}(C^\infty(M,\R))\subset  \mathcal H_\mathrm{micro}$ is real.
\end{lemma}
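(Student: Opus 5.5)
The plan is to take $J_{q,W}(f)$ to be the microlocally cut-off, weighted Fourier transform of $\chi f$, regarded as a function of the frequency variable. Sup-norm control then follows from the Sobolev embedding $H^{\frac{\dim M}{2}+1}(\R^n)\subset L^\infty(\R^n)$, with $n=\dim M$.

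\emph{Construction.} Since $V$ and $W$ are disjoint closed cones, we choose $\psi\in\Cinft(\R^n,[0,1])$ with the following properties:
\begin{itemize}
\item $\psi=0$ on $\{|\xi|\le 1/2\}$;
\item $\psi$ is homogeneous of degree $0$ on $\{|\xi|\geq 1\}$;
\item $\psi=1$ on $V\cap\{|\xi|\geq1\}$;
\item $\supp\psi$ is contained in a closed cone $\tilde V$ (together with a bounded set) with $\tilde V\cap W=\emptyset$.
\end{itemize}
Working in the chart $U\equiv$ open subset of $\R^n$, we define
\[
J_{q,W}(f)(\xi):=\psi(\xi)\eklm{\xi}^N\widehat{\chi f}(\xi),\qquad f\in\Cinft(M).
\]
Since $\chi f\in\CT(\R^n)$, this function is Schwartz, hence lies in $\mathcal H_\mathrm{micro}$, and $J_{q,W}$ is linear.

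\emph{Property (1).} Because $\psi=1$ on $V\cap\{|\xi|\ge1\}$ and $\frac n2+1>\frac n2$, the Sobolev embedding gives
\[
q(f)\le\sup|J_{q,W}(f)|\le C\|J_{q,W}(f)\|_{\mathcal H_\mathrm{micro}}.
\]

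\emph{Property (2): formula for $L_y$.} Put $m:=\frac n2+1$. For $y\in\mathcal H_\mathrm{micro}$ set $w:=(I+\Delta)^m y\in H^{-m}(\R^n)\subset\S'(\R^n)$, so that
\[
\langle g,y\rangle_{H^m}=\langle g,w\rangle_{L^2}\quad\text{for } g\in\S(\R^n).
\]
Let $\sigma:=\psi\eklm{\cdot}^N\bar w\in\S'(\R^n)$. Then $\supp\sigma\subset\supp\psi$, and $\sigma$ has polynomial growth in the sense of tempered distributions. Inserting $\widehat{\chi f}(\xi)=\int e^{-ix\xi}\chi(x)f(x)\,dx$ and exchanging the pairings (justified since $\chi f$ is a test function) gives
\[
L_y(f)=\int\chi(x)f(x)\hat\sigma(x)\,dx,
\]
up to the smooth density factor relating Lebesgue measure in the chart to the fixed measure on $M$. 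Hence $L_y=\chi\cdot\hat\sigma$ (pulled back to $U$ and extended by zero) is a compactly supported distribution with $\supp L_y\subset\supp\chi$.

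\emph{Property (2): wavefront set.} For $\varphi\in\CT(U)$ we have $\widehat{\varphi\hat\sigma}=(2\pi)^{-n}\hat\varphi*\hat{\hat\sigma}$, and $\hat{\hat\sigma}=(2\pi)^n\sigma(-\cdot)$ is supported in $-\supp\psi$. The standard estimate (as in the proof of \cite[Thm.~8.1.4]{HormanderI}, Schwartz function convolved with a tempered distribution supported in a cone) shows that this convolution decays rapidly in every closed cone disjoint from $-\tilde V$. This is the one step needing genuine care: one must track the signs under the conjugate-linear pairing and the double Fourier transform, and check that the polynomial growth of $\sigma$ is harmless. Since $(-\tilde V)\cap(-W)=\emptyset$, it yields $\WF(L_y)\cap(U\times(-W))=\emptyset$. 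Multiplying by $\chi$ and by the smooth density factor does not enlarge the wavefront set.

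\emph{Reality.} Assume in addition $W\cap(V\cup -V)=\emptyset$. Then we may choose $\psi$ even, with $\psi=1$ on $(V\cup-V)\cap\{|\xi|\ge1\}$ and support inside a cone avoiding $W\cup -W$; all the above arguments remain valid. For real $f$ we have $\overline{\widehat{\chi f}(-\xi)}=\widehat{\chi f}(\xi)$, using that $\chi$ is real. Hence
\[
\overline{J_{q,W}(f)(-\xi)}=\psi(-\xi)\eklm{\xi}^N\widehat{\chi f}(\xi)=J_{q,W}(f)(\xi),
\]
so $J_{q,W}(f)$ is fixed by the alternative conjugation \eqref{eq:conjugalternative}. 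Since $J_{q,W}$ is $\R$-linear on $\Cinft(M,\R)$, the image $J_{q,W}(\Cinft(M,\R))$ is a real subspace of $\mathcal H_\mathrm{micro}$.
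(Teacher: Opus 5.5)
Your proposal is correct and follows the paper's proof essentially step by step. It uses the same map $J_{q,W}(f)=\vartheta\langle\xi\rangle^N\widehat{\chi f}$ with a degree-zero angular cutoff, Sobolev embedding for (1), the identification $L_y=\chi\hat a_y$ for (2), an even cutoff for reality, and a convolution estimate for the wavefront set. The differences are cosmetic: you describe the cutoff as supported in a cone around $V$ rather than vanishing near a cone $\tilde W\supset W$, and you cite the standard rapid-decay estimate for a Schwartz function convolved with a tempered distribution supported in a closed cone plus a compact set, which the paper carries out by hand via the splitting $\check a_y=\varrho\check a_y+(1-\varrho)\tau\check a_y$.
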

\begin{proof}Let $n:=\dim M$. We chose the Sobolev order $\frac{n}{2}+1$ to define $\mathcal H_\mathrm{micro}=H^{\frac{n}{2}+1}(\R^n)$ because for any $s>\frac{n}{2}$ we have the Sobolev embedding $H^s(\R^n)\hookrightarrow L^\infty(\R^n)$ which we use below; instead of $s=\frac{n}{2}+1$ we could also have chosen any other $s>\frac{n}{2}$.

Given a smooth function $f\in \Cinft(M)\subset \D'_\Gamma(M)$, we have
\[
q(f)= \sup_{\substack{\xi\in V}:|\xi|\geq 1}    \langle\xi\rangle^N|\widehat{\chi f}(\xi)|.
\]
Define a function $\vartheta\in C^\infty(\R^n)$, homogeneous of degree $0$ for $|\xi|\geq 1/2$,  as follows: If $V=\R^n$, then $W=\emptyset$ and we put $\vartheta:=1$. Otherwise, let $\tilde W\subset \R^n\setminus \{0\}$ be a closed cone disjoint from $V$ containing $W$ in its interior. This exists by compactness of the disjoint intersections $V\cap S^{n-1}$, $W\cap S^{n-1}$ with the unit sphere.   Choose $\vartheta$ such that $\vartheta=1$ on $V\cap\{|\xi|\geq 1\}$ and $\vartheta=0$ in a conical neighborhood of $\tilde W\cap\{|\xi|\geq 1\}$. Again, to see that such a function $\vartheta$ exists one uses the compactness of $S^{n-1}$.
 
Define $J_{q,W}:\Cinft(M)\to  H^{\frac{n}{2}+1}(\R^n)=\mathcal H_\mathrm{micro}$ by
\[
J_{q,W}(f)(\xi):=\vartheta(\xi)\langle\xi\rangle^N\widehat{\chi f}(\xi).
\]
This is well-defined because
$\widehat{\chi f}\in\mathcal S(\R^n)$. Using that $\vartheta=1$ on $V\cap \{|\xi|\geq 1\}$, the Sobolev embedding $\mathcal H_\mathrm{micro}\hookrightarrow L^\infty(\R^n)$ gives us a constant $C>0$ such that
\bqn 
q(f)\leq  C\|J_{q,W}f\|_{\mathcal H_\mathrm{micro}}\quad \forall\; f\in \Cinft(M).
\eqn
This finishes the proof of (1). 

If $W\cap (V\cup -V)=\emptyset$, we can achieve this for $\tilde W$ as well and  choose $\vartheta$ such that $\vartheta(-\xi)=\vartheta(\xi)$ holds for all $\xi\in \R^n$. Then, since $\chi$ is real-valued, $\eklm{-\xi}=\eklm{\xi}$, and the Fourier transform satisfies \eqref{eq:Fourierconjug}, 
\[
(J_{q,W}\bar f)(\xi)=\overline{(J_{q,W}f)(-\xi)}=\check{\overline{J_{q,W}f}}(\xi)\quad\forall\; f\in C^\infty(M),
\]
showing that $J_{q,W}$ maps real functions to real vectors in $H^{n/2 +1}(\R^n)=\mathcal H_\mathrm{micro}$ with respect to the chosen complex conjugation $u\mapsto \check{\bar u}$. This proves the reality claim.

It remains to prove (2). To this end, let $y\in H^{n/2 +1}(\R^n)$. Then the functional $\langle \cdot,y\rangle_{H^{n/2 +1}(\R^n)}$ on  $H^{n/2 +1}(\R^n)$ restricts to a continuous functional on $\mathcal S(\R^n)\subset H^{n/2 +1}(\R^n)$; denote the corresponding tempered distribution by $b_y\in \mathcal S'(\R^n)$.  Since $\vartheta(\xi)\langle\xi\rangle^N$ grows only polynomially, $
a_y:=\vartheta(\xi)\langle\xi\rangle^N b_y \in\mathcal S'(\R^n)$ 
is a well-defined tempered distribution, and by construction of $\vartheta$ we have $\mathrm{supp}\, a_y\cap \tilde W\cap \{|\xi|\geq 1\}=\emptyset$. 

For $f\in C^\infty(M)$, we now find
\[
L_y(f)\equiv \langle J_{q,W} f,y\rangle_{\mathcal H_\mathrm{micro}}=\langle \widehat{\chi f},a_y\rangle=\langle \chi f,\hat a_y\rangle=\langle f,\chi \hat a_y\rangle,
\]
where $\hat a_y\in \mathcal S'(\R^n)$ is the Fourier transform of  $a_y$. This shows that $L_y=\chi \hat a_y\in \E'(\R^n)$ is a compactly supported distribution with $\supp L_y\subset \supp \chi$.

Let us finally check the microlocal regularity of $L_y$. Given $\psi\in C_c^\infty(\R^n)$, the Fourier transform of $\psi\chi \hat a_y$ is the convolution $C_0\widehat{\psi\chi}\ast \check{a}_y$, where $C_0>0$ is a global Fourier normalization constant determined by $C_0\check{a}_y=\widehat{\widehat{a_y}}$. Note that 
\[
\supp \check{a}_y \cap \{|\xi|\geq 1\}\cap (-\tilde W)=\emptyset.
\]
This implies that there is a function $\tau\in \Cinft(\R^n)$, homogeneous of degree $0$ for $|\xi|\geq 1/2$, with $\tau=1$ on $\supp \check{a}_y \cap \{|\xi|\geq 1\}$ and $\supp \tau$ disjoint from $-\tilde W$.  Then there is some $c>0$ such that
\bq
|\xi-\zeta|\geq  c(|\xi|+|\zeta|)\quad \forall\;\xi\in -\tilde W,\; \zeta\in \supp \tau,\; |\xi|\geq 1.\label{eq:89502890295}
\eq
Now choose a $\varrho\in \CT(\R^n)$ equal to $1$ on $B_1(0)\subset \R^n$ and write
\[
\check{a}_y=\varrho\check{a}_y + (1-\varrho)\check{a}_y = \varrho\check{a}_y + (1-\varrho)\tau\check{a}_y.
\]
The estimate \eqref{eq:89502890295} implies that for each $N\in \N$ the set of Schwartz functions
\[
\{t^{N+1}(1-\varrho)(\cdot)\tau(\cdot)\widehat{\psi \chi}(t\xi-\cdot)\,|\, t\in \R_{>0},\; \xi\in -\tilde W,\norm{\xi}=1\}\subset \S(\R^n)
\]
is bounded. Similarly, for each $N\in \N$ the set of Schwartz functions
\[
\{t^{N+1}\varrho(\cdot)\widehat{\psi \chi}(t\xi-\cdot)\,|\, t\in \R_{>0}, \xi\in \R^n,\norm{\xi}=1\}\subset \S(\R^n)
\]
is bounded. Consequently, for $t\in \R_{>0}$, we get
\begin{align*}
\sup_{\substack{\xi \in -\tilde W\\ \norm{\xi}=1}}t^N|(\widehat{\psi \chi}\ast \check{a}_y)(t\xi)|&= \sup_{\substack{\xi \in -\tilde W\\ \norm{\xi}=1}}t^N|\check{a}_y(\widehat{\psi \chi}(t\xi-\cdot))|\\
&\leq \sup_{\substack{\xi \in -\tilde W\\ \norm{\xi}=1}}\frac{1}{t}|\check{a}_y(t^{N+1}(1-\varrho)(\cdot)\tau(\cdot)\widehat{\psi \chi}(t\xi-\cdot))| \\
&\qquad + \sup_{\substack{\xi \in -\tilde W\\ \norm{\xi}=1}}  \frac{1}{t}|\check{a}_y(t^{N+1}\varrho(\cdot)\widehat{\psi \chi}(t\xi-\cdot))|\\
&\leq C\frac{1}{t}\stackrel{t\to \infty}{\longrightarrow}0.
\end{align*}
Since $-\tilde W$ is a cone that contains $-W$ in its interior, the estimate above shows that  $\WF(L_y)  \cap (U\times(-W))=\emptyset$, finishing the proof.
\end{proof}

We now combine Lemmas \ref{lem:SobolevdomHilbert} and \ref{lem:microdomHilbert} into the technical main result of this section. 

\begin{proposition}\label{prop:control}Let $\Gamma\subset \dot T^\ast M$ be closed and conical, and let $\mathsf{Q}\subset \mathsf{SEM}(\Gamma,s)$ be a finite set. There exists a Hilbert space ${\mathcal H}_{\mathsf{Q}}$, equipped with a complex conjugation, such that the following holds:
\begin{enumerate}[leftmargin=*]
\item For each point $p\in M$ with $\Gamma_p\neq \emptyset$ and each $\xi\in \Gamma_p$ there exists a linear map $J_{\mathsf{Q},p,\xi}:\Cinft(M)\to {\mathcal H}_{\mathsf{Q}}$ with the following properties:
\begin{enumerate}
\item There is a constant $C>0$ such that
\[
q(f)\leq  C\|J_{\mathsf{Q},p,\xi}(f)\|_{{\mathcal H}_{\mathsf{Q}}}\quad\forall\, f\in C^\infty(M),\;q\in \mathsf{Q}.
\]
\item For each $y\in {\mathcal H}_{\mathsf{Q}}$, the pairing
\[
L_y(f):=\langle J_{\mathsf{Q},p,\xi}(f),y\rangle_{{\mathcal H}_{\mathsf {Q}}},  \quad f\in C^\infty(M),
\]
defines a distribution $L_y\in \E'(M)$.
\item For all $y\in  {\mathcal H}_\mathsf{Q}$, the distribution $L_y$ is microlocally in $H^{-s_0}$ at $-\xi$, where
\[
s_0:=\max\{s\in \R\,|\, \mathsf Q\text{\emph{ contains a Sobolev seminorm of regularity }}s\},
\]
with the convention that $L_y$ is microlocally smooth at $-\xi$ if the set on the right-hand side is empty.
\end{enumerate}

\item If $\Gamma_p\cap (-\Gamma_{p})\neq \emptyset$ and $\xi\in \Gamma_p\cap (-\Gamma_{p})$, then there exists a  linear map  $
J_{\mathsf{Q},p,\xi}$ as in (1) which in addition satisfies that $J_{\mathsf Q,p,\xi}(C^\infty(M,\R))\subset  {\mathcal H}_\mathsf{Q}$ is real.
\end{enumerate}
\end{proposition}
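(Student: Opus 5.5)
We build ${\mathcal H}_{\mathsf Q}$ as a finite orthogonal direct sum, with one summand per element of $\mathsf Q$, and take $J_{\mathsf Q,p,\xi}$ to be the direct sum of the maps from Lemmas \ref{lem:SobolevdomHilbert} and \ref{lem:microdomHilbert}. Write $\mathsf Q=\mathsf Q_{\mathrm{Sob}}\sqcup\mathsf Q_{\mathrm{micro}}$. Put
\[
{\mathcal H}_{\mathsf Q}:=\bigoplus_{q\in\mathsf Q_{\mathrm{Sob}}}{\mathcal H}_q\oplus\bigoplus_{q\in\mathsf Q_{\mathrm{micro}}}\mathcal H_\mathrm{micro},
\]
equipped with the componentwise complex conjugations. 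This space does not depend on $(p,\xi)$; only the maps will. For a Sobolev seminorm $q$ we use $J_q$ from Lemma \ref{lem:SobolevdomHilbert}. For a microlocal seminorm $q=\|\cdot\|^\Gamma_{U,N,V,\chi}$ we choose a closed cone $W_q$ and use $J_{q,W_q}$ from Lemma \ref{lem:microdomHilbert}, as follows.
\begin{itemize}
\item If $p\notin\supp\chi$, take $W_q=\emptyset$. Then $L_y$ is supported away from $p$.
\item If $p\in\supp\chi$, identify $\xi$ with a vector $\xi_0\in\R^{\dim M}$ in the chart $U$. Since $(\supp\chi\times V)\cap\Gamma=\emptyset$ and $\xi\in\Gamma_p$, we have $\xi_0\notin V$. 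Because $V$ is closed, some closed conic neighborhood $W_q$ of $\xi_0$ satisfies $W_q\cap V=\emptyset$.
\end{itemize}

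Property (a) follows because each component norm is dominated by the total norm: $q(f)\le C_q\|J_q(f)\|\le C_q\|J_{\mathsf Q,p,\xi}(f)\|_{{\mathcal H}_{\mathsf Q}}$, and we take $C:=\max_q C_q$. For (b), write $y=(y_q)_q$. Then $L_y=\sum_q L^{(q)}_{y_q}$ is a finite sum of the distributions produced by the two lemmas. Each summand lies in $\E'(M)$; the Sobolev ones lie in $H^{-s}_{\mathrm{comp}}(M)\subset\E'(M)$.

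For (c) we check each summand at $-\xi$.
\begin{itemize}
\item A Sobolev summand of regularity $s\le s_0$ lies in $H^{-s}_{\mathrm{comp}}\subset H^{-s_0}_{\mathrm{comp}}$, so it is microlocally in $H^{-s_0}$ at every covector.
\item For a microlocal summand with $p\notin\supp\chi$, the summand vanishes near $p$, so it is smooth there.
\item For a microlocal summand with $p\in\supp\chi$, Lemma \ref{lem:microdomHilbert}(2) gives $\WF(L^{(q)}_{y_q})\cap(U\times(-W_q))=\emptyset$. Since $-\xi_0$ lies in the interior of $-W_q$, the summand is microlocally smooth at $-\xi$.
\end{itemize}
A finite sum of distributions that are microlocally in $H^{-s_0}$ at $-\xi$ is again microlocally in $H^{-s_0}$ there. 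To see this, intersect the cones and shrink the cutoffs; the shrinking step is harmless because microlocal $H^s$ regularity is stable under multiplication by smooth cutoffs equal to $1$ near $p$. If $\mathsf Q_{\mathrm{Sob}}=\emptyset$, every summand is microlocally smooth at $-\xi$, which matches the stated convention.

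For (2), assume $\xi\in\Gamma_p\cap(-\Gamma_p)$. Then $-\xi_0\notin V$ as well, by the same support argument applied to $-\xi\in\Gamma_p$. So we can choose $W_q$ to be a closed conic neighborhood of $\xi_0$ disjoint from $V\cup(-V)$. The "moreover" part of Lemma \ref{lem:microdomHilbert} then makes $J_{q,W_q}$ map real functions to real vectors, and Lemma \ref{lem:SobolevdomHilbert}(3) gives the same for the Sobolev components. Hence $J_{\mathsf Q,p,\xi}(\Cinft(M,\R))$ is real for the componentwise conjugation.

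The main obstacle is ensuring that the cone $W_q$ can always be chosen disjoint from $V$, and from $\pm V$ in case (2). Both rely on the defining condition $(\supp\chi\times V)\cap\Gamma=\emptyset$ of the microlocal seminorms together with $\pm\xi\in\Gamma_p$. The remaining microlocal bookkeeping in (c) is routine.
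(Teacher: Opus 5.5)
Your proposal is correct and follows the paper's proof essentially step for step. The paper also builds $\mathcal H_{\mathsf Q}$ as the direct sum of the spaces from Lemmas \ref{lem:SobolevdomHilbert} and \ref{lem:microdomHilbert}, takes $W=\emptyset$ when $p\notin\supp\chi$, and uses the same reality argument for (2). The only difference is cosmetic: the paper takes $W$ to be the single ray $\R_{>0}\xi_0$. That already suffices, because Lemma \ref{lem:microdomHilbert}(2) removes all of $U\times(-W)$ from $\WF(L_y)$, so your closed conic neighborhood is harmless but unnecessary.
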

\begin{proof}
Let 
\[
q^\mathrm{micro}_1=\|\cdot\|^\Gamma_{U_1,N_1,V_1,\chi_1},\;\ldots,\;q^\mathrm{micro}_m=\|\cdot\|^\Gamma_{U_m,N_m,V_m,\chi_m}
\]
be the microlocal seminorms in $\mathsf{Q}$ (where $m=0$ is allowed, since $\mathsf{Q}$ may contain no microlocal seminorms). Fix some $j\in \{1,\ldots,m\}$.  For the sake of clarity, we now deviate from our usual practice to identify $U_j$ tacitly with an open subset of $\R^{\dim M}$ and instead explicitly introduce a chart $\Phi_j:U_j \to \tilde U_j\subset \R^{\dim M}$. We shall, however, still make the canonical identification $T^\ast \tilde U_j\equiv \tilde U_j\times \R^{\dim M}$ for the cotangent bundle of the open subset $\tilde U_j\subset \R^{\dim M}$. 

Put $\tilde{\chi}_j:=\chi_j\circ \Phi_j^{-1}\in \CT(\tilde U_j)\subset \CT(\R^{\dim M})$. By definition of the seminorm $q^\mathrm{micro}_j$, 
\bq
(\supp \tilde \chi_j\times V_j)\cap (\Phi^{-1}_j)^\ast(\Gamma\cap T^\ast U_j)=\emptyset.\label{eq:disjointj}
\eq
Now let some $p\in M$ with $\Gamma_p\neq \emptyset$ and a covector $\xi\in\Gamma_p$ be given. If $p\not\in \supp \chi_j$, define $W_j:=\emptyset$. Otherwise, define $
W_j:=\R_{>0}\mathrm{pr}_2((\Phi_j^{-1})^\ast \xi)$, 
where $\mathrm{pr}_2:\tilde U_j\times \R^{\dim M}\to \R^{\dim M}$ is the projection onto the second factor. In both cases $W_j$ is a closed cone in $\R^{\dim M}\setminus \{0\}$ and we infer from \eqref{eq:disjointj} that $
V_j\cap W_j=\emptyset$. 
Therefore, Lemma \ref{lem:microdomHilbert}  provides us with a linear map
$J_{q^\mathrm{micro}_j,W_j}:\Cinft(M)\to \mathcal H_\mathrm{micro}$ and for each $y\in \mathcal H_\mathrm{micro}$ with a distribution $L^{\mathrm{micro},j}_y:=\langle J_{q^\mathrm{micro}_j,W_j}(\cdot),y\rangle_{\mathcal H_\mathrm{micro}}\in \E'(M)$ satisfying 
\bq
\supp L^{\mathrm{micro},j}_y\subset \supp \chi_j,\qquad \WF(L^{\mathrm{micro},j}_y)\cap \Phi_j^\ast(\tilde U_j\times (-W_j))=\emptyset.\label{eq:WFWprecise}
\eq
For every $j\in \{1,\ldots,m\}$, we either have $p\not\in \supp \chi_j$, which by the first statement in \eqref{eq:WFWprecise} implies that  $p\not\in \supp L^{\mathrm{micro},j}_y$ and thus in particular $-\xi \not\in \WF(L^{\mathrm{micro},j}_y)$, or we have $p\in \supp \chi_j$, in which case $W_j$ is defined precisely such that $-\xi \not\in \WF(L^{\mathrm{micro},j}_y)$. 

If $\xi \in\Gamma_p\cap (-\Gamma_p)\neq \emptyset$ holds and $\xi \in\Gamma_p\cap (-\Gamma_p)$, then \eqref{eq:disjointj} implies $(V_j\cup -V_j)\cap W_j=\emptyset$, so the strengthened version of Lemma \ref{lem:microdomHilbert} applies and allows us to achieve that $J_{q^\mathrm{micro}_j,W_j}(C^\infty(M,\R))\subset  \mathcal H_\mathrm{micro}$ is real for all $j\in\{1,\ldots,m\}$.

Furthermore,  Lemma \ref{lem:microdomHilbert}(1) gives us for each $j\in \{1,\ldots,m\}$ a constant $C_j>0$ such that 
\bq
q^\mathrm{micro}_j(f)\leq  C_j\|J_{q^\mathrm{micro}_j,W_j}(f)\|_{{\mathcal H}_\mathrm{micro}}\quad\forall\, f\in C^\infty(M).\label{eq:microuniformC}
\eq
Now let $q^\mathrm{Sob}_1,\ldots,q^\mathrm{Sob}_n$, $s_1,\ldots,s_n$ 
be the Sobolev seminorms in $\mathsf{Q}$  and their regularities, respectively, (where $n=0$ is allowed, since $\mathsf{Q}$ may contain no Sobolev seminorms). 

For each $k\in \{1,\ldots,n\}$ Lemma \ref{lem:SobolevdomHilbert} provides us with a linear map $J_{q^\mathrm{Sob}_k}:\Cinft(M)\to {\mathcal H}_{q^\mathrm{Sob}_k}$, a constant $C'_k>0$ such that 
\bq
q^\mathrm{Sob}_k(f)\leq  C'_k\|J_{q^\mathrm{Sob}_k}(f)\|_{{\mathcal H}_{q^\mathrm{Sob}_k}}\quad\forall\, f\in C^\infty(M),\label{eq:SobunifofmC}
\eq
and for each $y\in {\mathcal H}_{q^\mathrm{Sob}_k}$ the distribution $L^{\mathrm{Sob},k}_y:=\langle J_{q^\mathrm{Sob}_k}(\cdot),y\rangle_{{\mathcal H}_{q^\mathrm{Sob}_k}}\in \E'(M)$ 
satisfies $L^{\mathrm{Sob},k}_y\in H^{-s_k}_{\mathrm{comp}}(M)$. Furthermore, $J_{q^\mathrm{Sob}_k}(C^\infty(M,\R))\subset  {\mathcal H}_{q^\mathrm{Sob}_k}$ is real.

Finally, let us put things together: We define
\[
J_{\mathsf{Q},p,\xi}:=\bigg(\bigoplus_{j=1}^m J_{q^\mathrm{micro}_j,W_j}\bigg)\oplus \bigg(\bigoplus_{k=1}^n J_{q^\mathrm{Sob}_k}\bigg):\Cinft(M)\longrightarrow \mathcal H_\mathrm{micro}^m\oplus \bigoplus_{k=1}^n {\mathcal H}_{q^\mathrm{Sob}_k}=:\mathcal{H}_\mathsf{Q}
\]
and equip $\mathcal{H}_\mathsf{Q}$ with the product Hilbert space structure and the complex conjugation given by summand-wise complex-conjugation. Then the claim (1a) follows from \eqref{eq:microuniformC} and \eqref{eq:SobunifofmC}  with
\[
C:=\begin{cases}\max_{j,k}(C_j+C_k'),&\text{if }n,m>0,\\
\max_{j}C_j, &\text{if } n=0,m>0,\\
\max_{k}C_k', &\text{if } n>0,m=0.\end{cases}
\]
Note that if $n=m=0$, then $\mathsf{Q}=\emptyset$ and all claims are trivially true.

Given $y\in {\mathcal H}_{\mathsf{Q}}$, write 
\[
y=y^\mathrm{micro}_1+\cdots + y^\mathrm{micro}_m +y^\mathrm{Sob}_1+\cdots+y^\mathrm{Sob}_n
\]
according to the definition of ${\mathcal H}_{\mathsf{Q}}$. Then the linear form $L_y=\langle J_{\mathsf{Q},p,\xi}(\cdot),y\rangle_{{\mathcal H}_{\mathsf {Q}}}$  on $\Cinft(M)$ decomposes as
\bq
L_y=\sum_{j=1}^m L^{\mathrm{micro},j}_{y^\mathrm{micro}_j}+\sum_{k=1}^n L^{\mathrm{Sob},k}_{y^\mathrm{Sob}_k}\in \E'(M),\label{eq:sumLy}
\eq
proving (1b). Here  all distributions in the second sum lie in $H^{-s_\mathrm{max}}_\mathrm{comp}(M)$ with $s_\mathrm{max}:=\max_{1\leq k\leq n}s_k$, in particular they are microlocally in $H^{-s_\mathrm{max}}$ at all $\xi \in \dot T^\ast M$. 

On the other hand, all  distributions in the first sum in \eqref{eq:sumLy} are simultaneously microlocally smooth at $-\xi$. Thus the whole sum, and therefore  $L_y$, is microlocally in $H^{-s_\mathrm{max}}$ at $-\xi$ if $n\geq 1$ and microlocally smooth at $-\xi$ otherwise. This proves (1c).

Finally,  if $\Gamma_p\cap (-\Gamma_p)\neq \emptyset$ holds, then we arranged that all components of $J_{\mathsf{Q},p,\xi}$ map $\Cinft(M)$ onto a real subspace of their respective target Hilbert spaces, and since we equipped $\mathcal{H}_\mathsf{Q}$ with the summand-wise complex conjugation, we conclude that $J_{\mathsf{Q},p,\xi}(C^\infty(M,\R))\subset \mathcal{H}_\mathsf{Q}$ is real. This proves (2).
\end{proof}

\begin{definition}\label{def:Hilbertcontrol}For a finite set $\mathsf{Q}\subset \mathsf{SEM}(\Gamma,s)$, a point $p\in M$ with $\Gamma_p\neq \emptyset$, a covector $\xi\in \Gamma_p$ and a linear map $J_{\mathsf Q,p,\xi}:\Cinft(M)\to {\mathcal H}_{\mathsf Q}$ as in Proposition \ref{prop:control},
\bq
\rho_{\mathsf Q,p,\xi}:=\norm{\cdot}_{{\mathcal H}_{\mathsf Q}}\circ J_{\mathsf Q,p,\xi}:\Cinft(M)\to [0,\infty)\label{eq:rhosqGamma}
\eq
is called a \emph{Hilbert control seminorm} for ${\mathsf Q}$ at $(p,\xi)$.
\end{definition}

\section{Approximation lemmas}\label{sec:approx}

In our main proofs, sequences of smooth functions vanishing to infinite order at prescribed compact sets play a key role. In this section, we set up  corresponding tools.

\subsection{Ideals and cones of smooth functions}

For a compact set $K\subset M$, consider the ideal in $C^{\infty}(M)$ given by all smooth functions vanishing to infinite order at all points in $K$:
\[
\I^{\infty}(K):=\{f\in C^{\infty}(M)\,|\, f^{(k)}(p)=0\;\forall\;p\in K,k\geq 0\}.
\]
Inside this ideal, we shall be interested in the convex real cone of all functions that take on positive real values outside  $K$:
\[
\I^{\infty,>0}(K):=\{f\in \I^{\infty}(K)\,|\, f(p)\in \R_{>0}\;\forall\;p\in M\setminus K\}.
\]
It is well-known that this set is non-empty: In fact, it would even be non-empty if $K$ were merely a closed subset of $M$. Namely, in this generality, there exists a smooth non-negative function $g:M\to \R$ with $g^{-1}(\{0\})=K$ \cite[Thm.~2.29]{LeeSmooth}. Composing $g$ with the function $
h:\R\to \R$ defined by $h(t):=0$ if $t\leq 0$ and $h(t):=e^{-1/t}$ if $t>0$ 
gives a function $f:=h\circ g\in \I^{\infty,>0}(K)$. 

Moreover, if $M=\R^n$ (or more generally, working in local coordinates in a chart of $M$), one can construct functions in $\I^{\infty,>0}(K)$ with a uniform quantitative control over the size of their derivatives:
\begin{lemma}\label{lem:quantitativeIinftyK}Let $n\in \N$ and $M=\R^n$. Then there are constants $C_{\alpha}>0$, $\alpha\in \N_0^n$,  such that the following holds: For all $r>0$ and all compact $K\subset B_{r/4}(0)\subset \R^n$ there exists a function 
$\theta\in \I^{\infty,>0}(K)$, equal to $1$ outside $B_{r/2}(0)$, satisfying
\[
\norm{\partial^\alpha \theta}_\infty\leq C_{\alpha} r^{-|\alpha|}  \quad\forall\;\alpha\in\N_0^n.
\]
\end{lemma}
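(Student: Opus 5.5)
The plan is to reduce to the case $r=1$ by scaling, and for $r=1$ to build $\theta$ as a product of a flat function for $K$ and a fixed cutoff. The one delicate point is that the constants $C_\alpha$ must not depend on $K$.

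\emph{Scaling.} Suppose the statement holds for $r=1$ with constants $C_\alpha$. Given $r>0$ and a compact $K\subset B_{r/4}(0)$, put $K_1:=r^{-1}K\subset B_{1/4}(0)$. Let $\theta_1$ be the function obtained for $K_1$, and set $\theta(x):=\theta_1(x/r)$. Then $\theta$ vanishes to infinite order exactly on $K$, is positive off $K$, and equals $1$ outside $B_{r/2}(0)$. Moreover $\partial^\alpha\theta(x)=r^{-|\alpha|}(\partial^\alpha\theta_1)(x/r)$, so the same constants $C_\alpha$ work. It therefore suffices to treat $r=1$ with constants independent of $K\subset B_{1/4}(0)$.

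\emph{Construction for $r=1$.} Let $g_K\in\I^{\infty,>0}(K)$ be any smooth function with $g_K>0$ off $K$ and flat on $K$; such functions exist by the remark preceding the lemma. Its derivatives are not controlled uniformly in $K$, so I will not use $g_K$ directly. Instead I build a bounded version by a Whitney-type construction. Cover $\R^n\setminus K$ by Whitney cubes $Q$ and choose a smooth partition of unity $\{\phi_Q\}$ with $|\partial^\alpha\phi_Q|\le A_\alpha\,\ell(Q)^{-|\alpha|}$, where $\ell(Q)$ is the side length of $Q$. The constants $A_\alpha$ depend only on $n$ and $\alpha$, not on $K$. Define
\[
g:=\sum_Q h(\ell(Q))\,\phi_Q,\qquad h(t)=e^{-1/t}\ (t>0).
\]
On $\supp\phi_Q$ the distance to $K$ is comparable to $\ell(Q)$. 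Hence
\[
|\partial^\alpha g(x)|\lesssim_\alpha \sup_{t\sim d(x,K)} e^{-1/t}\,t^{-|\alpha|},
\]
and the right-hand side is bounded uniformly in $K$ (cubes of side at most $1$ suffice after capping) and tends to $0$ super-polynomially as $x\to K$. Consequently $g$ is smooth, flat on $K$, positive off $K$, and all its derivatives are bounded by constants independent of $K$.

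\emph{Gluing to $1$ outside $B_{1/2}(0)$.} Fix $\psi\in\CT(B_{1/2}(0))$ with $0\le\psi\le1$ and $\psi=1$ on $B_{3/8}(0)$. Set
\[
\theta:=\psi\,g+(1-\psi).
\]
Then $\theta=1$ outside $B_{1/2}(0)$. On $B_{3/8}(0)$ we have $\theta=g$, which is flat on $K$ and positive off $K$. On the annulus $3/8\le |x|\le 1/2$ the distance to $K$ is at least $1/8$, so $g\ge c>0$ there with $c$ independent of $K$; hence $\theta$ is a convex combination of two positive numbers and is positive. The derivative bounds follow from the Leibniz rule together with the uniform bounds on $g$ and the fixed function $\psi$.

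\emph{Main obstacle.} The hard step is keeping the Whitney construction uniform in $K$. I would cite the standard Whitney decomposition of $\R^n\setminus K$ and its associated partition of unity, for example from Stein's \emph{Singular Integrals}, which gives exactly these $K$-independent bounds. A cheaper alternative is a mollified distance function $\tilde d$ (the regularized distance) with $\tilde d\sim d(\cdot,K)$ and $|\partial^\alpha\tilde d|\le B_\alpha\,\tilde d^{\,1-|\alpha|}$; then $h\circ\tilde d$ works by the chain rule, via the same estimate $e^{-1/t}t^{-m}\lesssim_m 1$.
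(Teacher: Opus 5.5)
Your argument is correct, but it is organized differently from the paper's proof, although both draw on the Whitney decomposition in Stein, Ch.~VI.

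\textbf{The paper's route.} The paper takes the regularized distance $\rho_{\tilde K}$, which satisfies $c\,d\le\rho_{\tilde K}\le C\,d$ and $|\partial^\alpha\rho_{\tilde K}|\le C_\alpha d^{1-|\alpha|}$, where $d=\mathrm{dist}(\cdot,\tilde K)$. It composes $\rho_{\tilde K}$ with a fixed flat $h$ that is positive on $(0,\infty)$ and identically $1$ on $[c/4,\infty)$. Since $d\ge 1/4$ outside $B_{1/2}(0)$, the normalization $\theta=1$ there is automatic and no cutoff is needed. The price is a tacit chain-rule estimate $|h^{(k)}(\rho_{\tilde K})|\,d^{k-|\alpha|}\lesssim 1$.

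\textbf{Your route.} You apply $h$ to the side lengths $\ell(Q)$ before summing against the partition of unity. Every derivative then falls on the $\phi_Q$, so the uniform bounds follow from bounded overlap and $\sup_{t>0}e^{-1/t}t^{-m}<\infty$ alone, with no chain rule. The cost is the separate gluing step with $\psi$.

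\textbf{Relation between them.} Stein's $\rho$ is itself of the form $\sum_Q \mathrm{diam}(Q)\phi_Q$. So the two constructions differ only in whether $h$ acts before or after the sum, and your ``cheaper alternative'' is exactly the paper's route.

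\textbf{Minor points.} Positivity of $\theta$ on the annulus needs only $g>0$ there, not the uniform bound $g\ge c$. The trivial case $K=\emptyset$, where there is no Whitney decomposition, is best handled by $\theta\equiv 1$.
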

\begin{proof}
This follows from the classical fact that regularized distances exist, see e.g.\ \cite[VI, Thm.~2]{Stein}: There are constants $c,C>0$, depending only on the dimension $n$, and constants $C_\alpha>0$ depending only on $\alpha\in \N_0^n$, such that for any closed subset $\mathscr C\subset \R^n$ there exists a function $\rho_{\mathscr C}\in \Cinft(\R^n\setminus \mathscr C,\R)$ which for all $x\in \R^n\setminus \mathscr C$, $\alpha\in \N_0^n$ satisfies
\[
c\,\mathrm{dist}(x,\mathscr C)\leq \rho_{\mathscr C}(x)\leq C \,\mathrm{dist}(x,\mathscr C),\qquad 
|\partial^\alpha \rho_{\mathscr C}(x)|\leq C_\alpha \,\mathrm{dist}(x,\mathscr C)^{1-|\alpha|}.
\]
To apply this to our situation, fix some smooth function $h\in \Cinft(\R,\R)$ vanishing to infinite order at $0$, satisfying $h(t)>0$ for all $t> 0$, and $h(t)=1$ for all $t\geq \frac{c}{4}$. Given a compact set $K\subset B_{r/4}(0)$, put $\tilde K:=\frac{1}{r}K\subset  B_{1/4}(0)$ and define $\theta_{\tilde K}\in \Cinft(\R^n,\R)$ by
\[
\theta_{\tilde K}(x):=\begin{cases}h(\rho_{\tilde K}(x)),&\text{ if }x\in \R^n\setminus\tilde K,\\
0, & \text{else}.
\end{cases}
\]
Then $\theta_{\tilde K}\in \I^{\infty,>0}(\tilde K)$, and $\theta(x):=\theta_{\tilde K}(x/r)\in \I^{\infty,>0}(K)$ is a function as desired.
\end{proof}

One reason for us to introduce the above sets of functions is that when a smooth function $f\in \Cinft(M)$ vanishes to infinite order on the support of a compactly supported distribution $u\in \E'(M)$, then $f$ and $u$ behave as if their supports were disjoint:
\begin{lemma}\label{lem:flatkills}
If $K\subset M$ is compact, $u\in\E'(M)$ satisfies $\supp  u\subset K$,  and
$f\in\I^{\infty}(K)$, then $fu=0$.
\end{lemma}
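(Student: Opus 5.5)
The plan is to reduce to a local statement and use the finite order of compactly supported distributions. Since $fu$ is defined as a product of a smooth function with a distribution, $\langle fu,\varphi\rangle=\langle u,f\varphi\rangle$ for $\varphi\in\CT(M)$. So it suffices to show $\langle u,\psi\rangle=0$ for every $\psi:=f\varphi$. Such a $\psi$ is smooth and again lies in $\I^\infty(K)$, because $\I^\infty(K)$ is an ideal: by the Leibniz rule, all derivatives of $f\varphi$ vanish on $K$. Using a finite partition of unity subordinate to charts covering $K$, and noting that $\chi\psi\in\I^\infty(K)$ for each cutoff $\chi$, we may further assume $\psi$ is supported in a single chart. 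We may also replace $u$ by $\tilde\chi u$, where $\tilde\chi$ is a cutoff equal to $1$ near $\supp\psi$. Thus we reduce to $M$ an open subset of $\R^n$, with $u\in\E'$, $\supp u\subset K$ compact, and $\psi\in\CT$ vanishing to infinite order on $K$.

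Next we invoke the finite order of $u$: there are $k\in\N_0$ and $C>0$ such that $|\langle u,\phi\rangle|\le C\sum_{|\alpha|\le k}\sup_{L}|\partial^\alpha\phi|$ for all test functions $\phi$, where $L$ is a fixed compact neighborhood of $K$. By the support property (cf.\ \cite[Thm.~2.3.3]{HormanderI}), this refines to the statement that $\langle u,\phi\rangle=0$ whenever $\partial^\alpha\phi=0$ on $K$ for all $|\alpha|\le k$. If we prefer a self-contained argument, we run the standard cutoff argument. For $\eps>0$, take $\chi_\eps\in\CT$ equal to $1$ on an $\eps$-neighborhood of $K$, supported in the $2\eps$-neighborhood, with $|\partial^\beta\chi_\eps|\le C_\beta\eps^{-|\beta|}$; this uses regularized distances as in Lemma \ref{lem:quantitativeIinftyK}, or mollification of the indicator of the $\tfrac32\eps$-neighborhood. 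Since $\supp u\subset K$, we have $\langle u,\psi\rangle=\langle u,\chi_\eps\psi\rangle$. Taylor's formula at the nearest point of $K$ together with the infinite-order vanishing gives $|\partial^\gamma\psi(x)|\le C_{\gamma,m}\,\mathrm{dist}(x,K)^m$ for every $m$. Applying Leibniz,
\[
\sup|\partial^\alpha(\chi_\eps\psi)|\le C\sum_{\beta\le\alpha}\eps^{-|\beta|}\eps^{m}\le C'\eps^{m-k},\qquad |\alpha|\le k .
\]
Choosing $m=k+1$ and letting $\eps\to0$ yields $\langle u,\psi\rangle=0$.

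The main obstacle, and the only non-formal step, is the Taylor estimate $|\partial^\gamma\psi(x)|\lesssim\mathrm{dist}(x,K)^m$ uniformly near $K$. Here one must be careful that the segment from $x$ to its nearest point in $K$ stays in the chart. This is ensured for small $\eps$ because $K$ is compact in the open chart domain, and the remainder bound then follows from uniform bounds on derivatives of $\psi$ on a compact neighborhood. Everything else is bookkeeping with the partition of unity.
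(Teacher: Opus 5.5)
Your proof is correct and is essentially the paper's argument: the paper quotes from Malgrange that $\I^{\infty}(K)$ is the $C^\infty$-closure of the functions vanishing near $K$ and concludes by continuity of $u$, and you carry out exactly this cutoff approximation explicitly, showing $\chi_\eps\psi\to 0$ via the Taylor bound $|\partial^\gamma\psi|\lesssim \mathrm{dist}(\cdot,K)^{m}$. Because you use the finite order $k$ of $u$, you only need this convergence in $C^{k}$, which is precisely the content of Hörmander's Theorem~2.3.3 that you also cite as a shortcut.
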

\begin{proof}This is a classical fact, see \cite[p.~102]{Malgrange1966}, for example. The argument given in that reference is that $\I^\infty(K)$ is the closure in $\Cinft(M)$ of the set of all functions vanishing in a neighborhood of $K$, so $u=0$ on $\I^{\infty}(K)$ by continuity. 
\end{proof}
Also the following microlocal lemma will be useful later:
\begin{lemma}\label{lem:wflemma}Let $p\in M$, and let $L\in \E'(M)$ be a real distribution (meaning that $L(\Cinft(M,\R))\subset \R$) with $L(1)>0$ and $L(f)\leq 0$ for all $f\in \I^{\infty,>0}(\{p\})$.  Then, if $s\geq-\frac{1}{2}\dim M$, the distribution $L$ is not microlocally in $H^s$ at any $\xi\in \dot T_p^\ast M$.  
\end{lemma}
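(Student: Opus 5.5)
The plan is to turn the sign hypotheses into a lower bound for $L$ on rescaled bumps concentrating at $p$, and then show this lower bound is incompatible with microlocal $H^s$-regularity at $\xi$ when $s\geq -\tfrac n2$, $n:=\dim M$. The delicate point is that regularity is assumed only in a cone around $\xi$. To propagate it to all directions I intend to use positivity, in the form of positive-definiteness of Fourier transforms.

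\emph{Step 1: sign structure.} Let $f\in\I^{\infty,>0}(\{p\})$ and let $\varphi\in\Cinft(M,\R)$ satisfy $\varphi\geq 0$. For every $\delta>0$ we have $f(\varphi+\delta)\in \I^{\infty,>0}(\{p\})$, so $L(f\varphi)+\delta L(f)\leq 0$, and letting $\delta\to0$ gives $L(f\varphi)\leq 0$. Taking $f=\theta$ with $\theta=1$ on $\supp\varphi$ for $\varphi$ supported away from $p$ shows that $\mu:=-L|_{M\setminus\{p\}}$ is a positive measure. Now work in a chart centered at $p$ and use Lemma \ref{lem:quantitativeIinftyK} with $K=\{p\}$, choosing the profile $h$ there with values in $[0,1]$. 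This gives $\theta_r\in\I^{\infty,>0}(\{p\})$ with $g_r:=1-\theta_r\in\CT(B_{r/2}(p))$, $0\leq g_r\leq1$, $g_r(p)=1$ and $g_r=G_r(\cdot/r)$, where $\{G_r\}_r$ is bounded in $\CT(B_{1/2}(0))$. From $L(\theta_r\varphi)\leq 0$ for all $\varphi\geq0$ we obtain
\[
L(g_r\varphi)\geq L(\varphi)\qquad\forall\,\varphi\in\Cinft(M,\R),\ \varphi\geq 0 .
\]
In particular $L(g_r)\geq L(1)=:c>0$ for all small $r$.

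\emph{Step 2: frequency splitting.} Suppose $L$ were microlocally in $H^s$ at some $\xi\in\dot T^\ast_pM$, witnessed by a cutoff $\chi$ and an open cone $V\ni\xi$. For small $r$ we have $\chi=1$ on $\supp g_r$, so up to a normalization constant
\[
L(g_r)=\int \widehat{\chi L}(\eta)\,r^n\widehat{G_r}(-r\eta)\,d\eta .
\]
We split this integral into $\{|\eta|\leq R\}$, $V\cap\{|\eta|>R\}$ and the remainder. The first piece is $O(R^nr^n)$. By Cauchy--Schwarz, the second piece is at most
\[
\big\|\eklm{\eta}^s\widehat{\chi L}\big\|_{L^2(V\cap\{|\eta|>R\})}\cdot\big\|\eklm{\eta}^{-s}r^n\widehat{G_r}(r\,\cdot)\big\|_{L^2} .
\]
The first factor tends to $0$ as $R\to\infty$. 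The second factor scales like $r^{n/2+s}$ (with an extra logarithmic-free bound at the endpoint from the uniform Schwartz bounds on $\widehat{G_r}$), hence stays bounded precisely when $s\geq-\tfrac n2$; this is where the threshold enters. Choosing $R$ large and then $r$ small would make these two pieces smaller than $c/2$, so everything hinges on the third piece.

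\emph{Step 3: the off-cone directions (main obstacle).} I expect this step to be the main obstacle, since nothing is assumed about $\widehat{\chi L}$ outside $V$. To control it I would replace $g_r$ by a frequency-localized positive test function and exploit Step 1. I would use $\varphi_{r}:=\big|a_r\big|^2$, where $a_r(x)=A(x/r)e^{i\lambda \xi\cdot x/r}$ and $A\in\CT(B_{1/2}(0))$ is a fixed bump. Then $L(g_r\varphi_r)\geq L(\varphi_r)$, and the key claim is a positive-definiteness estimate in the spirit of $|\widehat{\psi\mu}(\eta)|\leq\widehat{\psi\mu}(0)$ for $\psi\geq0$, applied to the positive part $\mu$. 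With it I would bound the off-cone contribution of the negative measure $-\mu$ near $p$ by its on-cone contribution, which is small by Step 2. Concretely, one averages over modulations $e^{it\xi\cdot x}$: for a positive measure, the total mass on $B_r(p)$ is controlled by averages of $|\widehat{\chi\mu}|^2$ over any fixed cone, hence by the $H^s(V)$-norm. This shows that $\mu(B_r(p)\setminus\{p\})$ is $o(r^{-s-n/2}\cdot r^{\,n/2+s})=o(1)$ after the same scaling, so that $L(g_r)-c_p$ tends to the contribution of the part of $L$ supported at $\{p\}$, where $L=-\mu+c_p\delta_p$ near $p$. A point-supported distribution $\sum c_\alpha\partial^\alpha\delta_p$ is not microlocally in $H^s$ at any $\xi$ for $s\geq-\tfrac n2$ unless it vanishes, so microlocal regularity forces it to be $0$. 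This contradicts $L(g_r)\geq c>0$, which completes the argument. If the positive-definiteness route proves awkward, my fallback is to prove directly that a real distribution of the form $-\mu+(\text{point part})$ has a wavefront fiber at $p$ that is either empty or contains every direction in a uniform Sobolev sense, again via Bochner-type positivity of $\widehat{\psi^2\mu}$.
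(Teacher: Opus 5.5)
Your Steps 1 and 2 are essentially sound. Step 3, which you rightly single out as the crux, has a genuine gap, and several of its ingredients are false.

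(i) $\varphi_r=|a_r|^2=|A(x/r)|^2$. The modulation cancels in the modulus, so $\varphi_r$ has no frequency localization at all. More generally, $|\widehat{\varphi}(\eta)|\le\widehat{\varphi}(0)$ for every $\varphi\geq 0$, so no nonnegative test function has its spectrum concentrated away from the origin.

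(ii) The principle that the mass of a positive measure near $p$ is controlled by its Fourier transform on an arbitrary fixed cone is false for $n\geq 2$. Take surface measure $\mu$ on a hyperplane through $p$: $\widehat{\chi\mu}$ is rapidly decreasing on every closed cone avoiding the conormal line, yet $\mu(B_r(p))\sim r^{n-1}$. Positivity does not transfer regularity between directions, and the same example refutes your fallback statement.

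(iii) When $\mu$ has finite mass near $p$, the convergence $L(g_r)\to(\text{coefficient of }\delta_p)$ is trivial and needs no Fourier analysis. The whole difficulty then sits in your last sentence. There you apply ``a nonzero point-supported distribution is not microlocally $H^s$'' to the point part $P$, but the hypothesis concerns $L=P-\mu$. Passing from $L$ to $P$ requires $\mu$ to be microlocally $H^s$ at $\xi$, i.e.\ that a positive measure cannot cancel the singularity of $P$ in the open cone $V$. That is essentially the lemma itself, and nothing in your sketch proves it.

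(iv) Moreover, $\mu=-L|_{M\setminus\{p\}}$ need not have finite mass near $p$. On $M=\R$, $p=0$, take $L(\varphi)=c\,\varphi(0)-\int_{-1}^{1}x^{-2}\big(\varphi(x)-\varphi(0)-x\varphi'(0)\big)\,dx$ with $c>0$. This satisfies all hypotheses, equals $-x^{-2}$ on $(-1,0)\cup(0,1)$, and has $L(g_r)\to+\infty$. So the decomposition $L=-\mu+P$ is not available in general.

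The missing idea is to get a lower bound for $\widehat{\chi L}$ directly at frequencies in $V$, using test functions whose spectrum lies on a single ray of $V$ (plus the origin); this avoids the off-cone problem entirely. Here is how the paper does it.

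\begin{itemize}
\item It sets $u=\chi L$, so $u(1)\geq L(1)>0$.
\item It uses that $u$ has some finite order $N$ to upgrade the sign condition from infinitely flat to finitely flat functions. If $f\geq0$ vanishes to order $k>N$ at $p$, then $u(\tau_\varepsilon f)\to0$ for rescaled cutoffs $\tau_\varepsilon$, hence $u(f)\leq 0$.
\item It tests with $P_\zeta(x)=(1-\cos(x\cdot\zeta))^l$, $2l>N$. These are nonnegative, vanish to order $2l$ at $0$, and are trigonometric polynomials with frequencies $k\zeta$, $|k|\leq l$. Their zero-frequency coefficient $a_0>0$ pairs with $u(1)$.
\item Since $u$ is real, $u(P_\zeta)\leq0$ gives $a_0u(1)\leq 2\sum_{k=1}^l|a_k|\,|\widehat u(k\zeta)|$. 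Hence $\sum_{k=1}^l|\widehat u(k\zeta)|^2\geq c_0>0$ for every $\zeta\in\R^n$.
\item Integrating over $\{\zeta\in V: R<|\zeta|<2R\}$, where all multiples $k\zeta$ stay in the cone $V$, yields $\int_{V\cap\{|\eta|>R\}}\langle\eta\rangle^{2s}|\widehat u(\eta)|^2\,d\eta\gtrsim R^{n+2s}$. This contradicts the assumed integrability as $R\to\infty$ whenever $s\geq-\frac n2$.
\end{itemize}

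This is where your positivity intuition pays off: nonnegativity is imposed on a trigonometric polynomial along a single ray, and its unavoidable zero-frequency mass is exactly what forces the lower bound.
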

\begin{proof}\footnote{As mentioned in the AI use disclosure on p.~\pageref{sec:Aiuse}, this proof is the author's revised version of arguments suggested by the  LLM  GPT-5.6 Sol.}
For the purpose of this proof, define $\I^{\infty,\geq 0}(\{p\}):=\{f\in \I^{\infty}(\{p\})\,|\, f\geq 0\}$. 
Since $f+\eps g\in \I^{\infty,>0}(\{p\})$ for all $f\in \I^{\infty,\geq 0}(\{p\})$, $g\in\I^{\infty,>0}(\{p\})$, $\eps>0$, continuity of $L$ gives $L(f)\leq 0$ for all $f\in \I^{\infty,\geq 0}(\{p\})$. 

Fix $\xi_0\in \dot T_p^\ast M$ and $s\geq  -n/2$, $n:=\dim M$. To provoke a contradiction, suppose that $L$ is microlocally in $H^s$ at $\xi_0$. By Definition \ref{def:microlocalSobolevregularity}, this means that there exists a cutoff $\chi\in C_c^\infty(M,\R)$ with $0\leq \chi\leq 1$, supported in a chart centered at $p$ and equal to $1$ near $p$, and an open cone $V\subset \R^n\setminus\{0\}$ containing $\xi_0$ such that, after identifying the chart with an open subset of $\R^n$ such that $p$ corresponds to $0\in \R^n$, the distribution $u:=\chi L\in \E'(\R^n) $
satisfies
\bq
\int_V\eklm{\xi}^{2s}|\widehat u(\xi)|^2\,d\xi<\infty.\label{eq:mainassumptionu}
\eq
Moreover, $
 u(1)=L(\chi)=L(1)-L(1-\chi) \geq  L(1)>0$ 
because $1-\chi\in \I^{\infty,\geq 0}(\{p\})$. Similarly, if $f\in\I^{\infty,\geq 0}(\{0\})\subset \Cinft(\R^n)$, then $\chi f\in \I^{\infty,\geq 0}(\{p\})\subset \Cinft(M)$, hence $u(f)=L(\chi f)\leq 0$. 
Finally, $u$ is real because both $L$ and $\chi$ are real. Since $u\in\E'(\R^n)$, it has finite order. Thus there is an $N\geq 0$ and a constant $C>0$ such that
\bq
|u(f)|\leq  C\max_{|\alpha|\leq  N}       \sup_{x\in \supp u}|\partial^\alpha f(x)|    \qquad \forall\; f\in C^\infty(\R^n).\label{eq:ufintieorder}
\eq
For $k\in \N_0$, define
\[
\I^{k,\geq 0}(\{0\}):=\{f\in \Cinft(\R^n,\R)\,|\, f\geq 0, f\text{ vanishes to order $k$ at }0\}.
\]
Next, we choose $\tau\in C_c^\infty(\R^n,\R)$ with $0\leq \tau\leq 1$ and $\tau=1$ near $0$ and define $\tau_\eps(x):=\tau(x/\eps)$,  $\eps>0$. 
Fix a function $f\in \I^{k,\geq 0}(\{0\})$, where $k> N$. Then one has
\bq
u((1-\tau_\eps) f)\leq 0\quad \forall\,\eps>0\label{eq:uepsleq}
\eq
since $(1-\tau_\eps) f\in \I^{\infty,\geq 0}(\{p\})$. By Taylor expansion, for every multi-index $\gamma$ with
$|\gamma|\leq  N$ there are a neighborhood $U$ of $0$ in $\R^n$ and a constant
$C_{f,\gamma}>0$ such that
\[
|\partial^\gamma f(x)|   \leq  C_{f,\gamma}|x|^{k-|\gamma|}    \quad \forall\; x\in U.
\]
For sufficiently small $\varepsilon$, one has $\supp\tau_\varepsilon\subset U$. For any multi-index $\alpha$ with $|\alpha|\leq  N$, the product rule gives
\[
\partial^\alpha(\tau_\varepsilon f)(x)=\sum_{\beta\leq \alpha}\binom{\alpha}{\beta} \varepsilon^{-|\beta|}(\partial^\beta\tau)(x/\varepsilon) \partial^{\alpha-\beta}f(x),\quad x\in \R^n.
\]
On $\supp\tau_\varepsilon$ one has $|x|\leq  C_\tau\varepsilon$ for some $C_\tau>0$, so the above implies
\[
\|\partial^\alpha(\tau_\varepsilon f)\|_{\infty}   \leq  C_{f,\alpha}\varepsilon^{k-|\alpha|}  \leq  C'_{f,\alpha}\varepsilon^{k-N}
\]
for some $C_{f,\alpha},C_{f,\alpha}'>0$. Combining this with \eqref{eq:ufintieorder} and recalling that $k>N$, we obtain $
u(\tau_\varepsilon f)\longrightarrow 0$ as $\eps\to 0$. 
In view of \eqref{eq:uepsleq}, we conclude that
\bq
u(f)\leq 0\quad \forall\, f\in \I^{k,\geq 0}(\{0\}),\; k>N.\label{eq:ufkleq0}
\eq
This intermediate result is very useful since $\I^{k,\geq 0}(\{0\})$ contains functions built from finitely many Fourier modes. Indeed, motivated by the fact that
\bq
\hat u(\xi)=u(e^{-ix\cdot\xi}),\quad\xi\in \R^n,\label{eq:uFT}
\eq
we now fix some $l\in \N$ with $2l>N$ and define for $\xi\in\R^n$ a function $P_\xi\in \I^{2l,\geq 0}(\{0\})$ by $
P_\xi(x):=(1-\cos(x\cdot\xi))^l$, $x\in \R^n$.
Then  \eqref{eq:ufkleq0} gives us
\bq
u(P_\xi)\leq 0\quad \forall\; \xi\in \R^n.\label{eq:uPthetaleq0}
\eq
We have for all $t\in \R$
\[
(1-\cos t)^l =\sum_{k=-l}^{l}a_k e^{ikt},\qquad a_k=\frac{(-1)^k}{2^l}\binom{2l}{k+l}.
\]
In particular, $a_{-k}=a_k$ for all $k$, and $a_0>0$. With \eqref{eq:ufkleq0},  \eqref{eq:uFT} and \eqref{eq:uPthetaleq0}, we get
\begin{align*}
 0\geq  u(P_\xi) &=a_0u(1)+\sum_{k=1}^{l}a_k\big(\hat u(k\xi)+\hat u(-k\xi)\big)\\
    &=a_0u(1)+2\sum_{k=1}^{l}a_k\,\Re\hat u(k\xi),
\end{align*}
where we used that $\hat u(-\zeta)=\overline{\hat u(\zeta)}$ for all $\zeta\in \R^n$ since $u$ is real. It follows that
\bq
a_0u(1)\leq  2\sum_{k=1}^{l}|a_k|\,|\hat u(k\xi)|.\label{eq:ua0positive}
\eq
Put $ A:=\left(\sum_{k=1}^{l}|a_k|^2\right)^{1/2}>0$. 
By \eqref{eq:ua0positive} and Cauchy-Schwarz, we find
\bq
\sum_{k=1}^{l}|\hat u(k\xi)|^2    \geq   \left(\frac{a_0u(1)}{2A}\right)^2    =:c_0>0    \qquad\forall\;\xi\in\R^n.\label{eq:lowerestim111}
\eq
Let us finally show that \eqref{eq:lowerestim111} is incompatible with \eqref{eq:mainassumptionu}. To this end, 
define for $R>0$ the annulus segment $\Omega_R:=\{\xi\in V\,|\,R<|\xi|<2R\}$. 
Because $V$ is a nonempty open cone, $\Omega_R$ satisfies
\bq
\mathrm{vol}(\Omega_R)= \mathrm{vol}(\Omega_1)R^n,\qquad  \mathrm{vol}(\Omega_1)>0.\label{eq:annvolume}
\eq
Integrating \eqref{eq:lowerestim111} over $\Omega_R$ gives
\begin{align}\begin{split}
c_0\mathrm{vol}(\Omega_1)R^n    &\leq  \sum_{k=1}^{l}\int_{\Omega_R} |\hat u(k\xi)|^2\,d\xi \\
    &=\sum_{k=1}^{l}k^{-n}
      \int_{k\Omega_R}|\hat u(\xi)|^2\,d\xi.\label{eq:integratedlower}\end{split}
\end{align}
Since $V$ is a cone, we have
\bq
k\Omega_R\subset V\quad\forall\; k\in \{1,\ldots,l\}\label{eq:coneincl}.
\eq
Moreover, if $\xi\in k\Omega_R$, $1\leq k\leq l$, then $R<|\xi|<2lR$. 
Consequently, there is a constant $C_{s,l}\geq 1$ such that
\bq
\eklm{\xi}^{-2s} \leq  C_{s,l}R^{-2s}   \qquad\forall\; \xi\in k\Omega_R,\ 1\leq  k\leq  l,\ R\geq 1.\label{eq:wcompare}
\eq
Define $ E_R:=\sum_{k=1}^{l}k^{-n} \int_{k\Omega_R}\eklm{\xi}^{2s}|\hat u(\xi)|^2\,d\xi$. 
Equations \eqref{eq:annvolume}, \eqref{eq:integratedlower}, and
\eqref{eq:wcompare} yield constants $c,C>0$, independent of $R$, such
that
\bq
 cR^{n+2s}\leq  CE_R.\label{eq:scaling}
\eq
On the other hand, by \eqref{eq:coneincl} and using that $|\eta|>R$ on $k\Omega_R$ for $1\leq k\leq l$, we find
\bq
0\leq  E_R \leq  \left(\sum_{k=1}^{l}k^{-n}\right) \int_{V\cap\{|\xi|>R\}}
          \eklm{\xi}^{2s}|\hat u(\xi)|^2\,d\xi.\label{eq:ER-zero}
\eq
By the integrability assumption \eqref{eq:mainassumptionu}, the right-hand side converges to $0$ as $R\to \infty$. However, if  $s>-n/2$, then $R^{n+2s}\to\infty$ and hence $E_R\to \infty$ by \eqref{eq:scaling}, a contradiction. If $s=-n/2$, the left-hand side of \eqref{eq:scaling} is the fixed positive constant $c$, while the right-hand side of \eqref{eq:ER-zero} still goes to zero, again a contradiction. The proof is finished.
\end{proof}

\subsection{Approximation near a point}

Let us formulate a first approximation lemma.

\begin{lemma}\label{lem:flat-cutoff-small}
Let $\Gamma\subset \dot T^\ast M$ be closed and conical, let $s\in \R$, let $\mathsf{Q}\subset \mathsf{SEM}(\Gamma,s)$ be a finite set, let $p\in M$ be a point with $\Gamma_p\neq \emptyset$, and for some $\xi \in \Gamma_p$, let $\rho_{\mathsf Q,p,\xi}$ be a Hilbert control seminorm for $\mathsf Q$ at $(p,\xi)$ as in \eqref{eq:rhosqGamma}. Furthermore, consider a function $f\in\I^\infty(\{p\})$ and a sequence $K_j\subset M$  of compact sets converging to $\{p\}$ in the sense that every neighborhood of $p$ in $M$ contains almost all $K_j$. Then there are functions $\theta_j\in \I^{\infty,>0}(K_j)$ 
such that
\[
\rho_{\mathsf Q,p,\xi}(f-\theta_jf)\longrightarrow 0 \quad \text{as } j\to\infty.
\]
\end{lemma}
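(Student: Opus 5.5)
The plan is to show that $(1-\theta_j)f\to 0$ in the Fréchet topology of $\Cinft(M)$, with supports in a fixed compact set. Then I will check that $J_{\mathsf Q,p,\xi}$ is continuous for that topology. The Hilbert structure of $\mathcal H_{\mathsf Q}$ plays no real role here; only the continuity of $J_{\mathsf Q,p,\xi}$ on smooth functions supported in a fixed compact set matters.

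\emph{Construction of $\theta_j$.} Fix a chart $\Phi:U\to\tilde U\subset\R^n$ centered at $p$ (so $\Phi(p)=0$), and some $r_0>0$ with $\overline{B_{r_0}(0)}\subset\tilde U$. For each $j$ put
\[
r_j:=4\inf\{\varrho>0\,|\,\Phi(K_j\cap U)\subset B_\varrho(0)\}+\tfrac1j ,
\]
but only for those $j$ with $K_j\subset\Phi^{-1}(B_{r_0/8}(0))$. This holds for almost all $j$ by the convergence $K_j\to\{p\}$, and along these $j$ we have $r_j\to0$. For such $j$, Lemma \ref{lem:quantitativeIinftyK} gives $\tilde\theta_j\in\I^{\infty,>0}(\Phi(K_j))$ with $\tilde\theta_j=1$ outside $B_{r_j/2}(0)$ and $\|\partial^\alpha\tilde\theta_j\|_\infty\le C_\alpha r_j^{-|\alpha|}$. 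We set $\theta_j:=\tilde\theta_j\circ\Phi$ on $U$ and $\theta_j:=1$ on $M\setminus U$. This is smooth, lies in $\I^{\infty,>0}(K_j)$, and satisfies $\supp(1-\theta_j)\subset\Phi^{-1}(\overline{B_{r_j/2}(0)})$. For the finitely many remaining $j$ we take an arbitrary element of $\I^{\infty,>0}(K_j)$, which exists by the remark preceding Lemma \ref{lem:quantitativeIinftyK}.

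\emph{Smooth convergence.} In the chart, $g:=f\circ\Phi^{-1}$ vanishes to infinite order at $0$. By Taylor's formula, for every $m$ and every $\beta$ we have $|\partial^\beta g(x)|\le C_{m,\beta}|x|^{m}$ on $B_{r_0}(0)$. By the Leibniz rule, on $B_{r_j/2}(0)$,
\[
|\partial^\alpha((1-\tilde\theta_j)g)|\le\sum_{\beta\le\alpha}C\,r_j^{-|\beta|}\,r_j^{m}\le C'_{\alpha,m}\,r_j^{m-|\alpha|}.
\]
Choosing $m>|\alpha|$ shows that every $C^k$ norm of $(1-\theta_j)f$ tends to $0$. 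Moreover, all these functions are supported in the fixed compact set $K_0:=\Phi^{-1}(\overline{B_{r_0}(0)})$. Hence $h_j:=f-\theta_jf=(1-\theta_j)f\to0$ in $C^\infty_{K_0}(M)$.

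\emph{Continuity of $J_{\mathsf Q,p,\xi}$.} This is the step requiring some care, although it is routine. Recall from the proof of Proposition \ref{prop:control} that $J_{\mathsf Q,p,\xi}$ is a finite direct sum of two kinds of components.
\begin{enumerate}
\item Sobolev components $J^{-s}_l(h)=\big((I+\Delta)^{s}h_{k,l}\big)_k$. Each $h\mapsto h_{k,l}$ is continuous from $C^\infty_{K_0}(M)$ into $C^\infty_c(\R^n)\subset H^{t}(\R^n)$ for every $t$. Composing with the bounded map $(I+\Delta)^s$ from $H^{-s}(\R^n)$ to $H^{s}(\R^n)$ shows this component is continuous into $\mathcal H^{-s}_l$.
\item Microlocal components $J_{q,W}(h)=\vartheta\langle\xi\rangle^N\widehat{\chi h}$. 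The map $h\mapsto\chi h$ is continuous into $\mathcal S(\R^n)$, and $\widehat{\cdot}$ is an automorphism of $\mathcal S(\R^n)$. Multiplication by the symbol $\vartheta\langle\xi\rangle^N$ preserves $\mathcal S(\R^n)$ continuously, and $\mathcal S(\R^n)\hookrightarrow H^{n/2+1}(\R^n)$ continuously. Together these show this component is continuous into $\mathcal H_{\mathrm{micro}}$.
\end{enumerate}
Consequently $\rho_{\mathsf Q,p,\xi}(h_j)=\|J_{\mathsf Q,p,\xi}(h_j)\|_{\mathcal H_{\mathsf Q}}\to0$, as claimed.
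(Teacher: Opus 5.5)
Your proof is correct and follows essentially the paper's route: build $\theta_j$ from Lemma \ref{lem:quantitativeIinftyK}, show $(1-\theta_j)f\to 0$ smoothly with supports in a fixed compact set, then check continuity of each Sobolev and microlocal component of $J_{\mathsf Q,p,\xi}$. The paper estimates the microlocal component by hand, whereas you invoke Schwartz-space continuity. One slip is harmless: $(I+\Delta)^s$ is bounded $H^{s}(\R^n)\to H^{-s}(\R^n)$, not $H^{-s}\to H^{s}$, and this suffices because $h_{k,l}$ lies in every $H^t(\R^n)$.
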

\begin{proof}Choose a chart $U\subset M$ around $p$, identified with an open subset of $\R^{n}$, where $n=\dim M$. Then for all large enough $j\in \N$ we have   $K_j \subset B_{r_j/4}(p)\subset U$ for some radii $r_j\to 0$. Now fix one such large enough $j$. By  Lemma \ref{lem:quantitativeIinftyK} 
there is a function $\theta_j\in \I^{\infty,>0}(K_j)$, equal to $1$ outside $B_{r_j/2}(p)$, such that
\[
\norm{\partial^\alpha \theta_j}_\infty\leq C_\alpha r_j^{-|\alpha|}  \quad\forall\;\alpha\in\N_0^n,
\]
with constants $C_\alpha>0$ independent of $j$.  Define $g_j:=(1-\theta_j)f$.
 Then $\supp g_j\subset \overline B_{r_j/2}(p)$ and the sequence $g_j$ converges to zero with respect to every 
$C^m$-seminorm on $\overline B_{r_j/2}(p)$.  Indeed, for $|\beta|\leq  m$, the product rule gives
\[
\partial^\beta g_j  =\sum_{\mu\leq \beta}\left(\begin{matrix}
\beta\\
\mu
\end{matrix}\right) \partial^\mu(1-\theta_j)\partial^{\beta-\mu}f .
\]
On $B_{r_j/2}(p)$, the vanishing of $f$ to infinite order at $p$ implies for each $\nu\in \N_0^n$  that $
\sup_{B_{r_j/2}(p)} |\partial^\nu f|=\mathcal O(r_j^L)$ for all $L\geq 0$.  
For every $m\in \N_0$, taking $L>m$ and using
$|\partial^\mu \theta_j|\leq  C_\mu r_j^{-|\mu|}$, we obtain
$\max_{|\beta|\leq  m}\Vert\partial^\beta g_j\Vert_\infty\to 0$. 
Consequently, for every $s\in \R$,
\bq
\|g_j\|_{H^s(\R^n)}\longrightarrow 0\label{eq:gjtozero}
\eq
after extending $g_j$ by zero to all of $\R^n$.  This follows from the fact that the inclusion $\CT(B_{R}(p))\to H^s(\R^n)$ is continuous, where $R$ is chosen such that $R>r_j$ for all $j$. 

Recall from the proofs of Proposition \ref{prop:control} and Lemmas \ref{lem:SobolevdomHilbert}, \ref{lem:microdomHilbert} that the linear map  $J_{\mathsf{Q},p,\xi}:\Cinft(M)\to \mathcal{H}_\mathsf{Q}$ is a direct sum of individual linear maps of the form  $J^{-s}_{l}:\Cinft(M)\to \mathcal{H}_l^{-s}$ 
for Sobolev seminorms $\norm{\chi_l\cdot}_{H^{s}(M_l)}$, $l\in \N$, $s\in \R$, and linear maps of the form 
\[
J_{q,W}:\Cinft(M)\to \mathcal H_\mathrm{micro}=H^{\frac{\dim M}{2}+1}(\R^{\dim M})
\]
 for microlocal seminorms $q=\|\cdot\|^\Gamma_{U,N,V,\chi}\in \mathsf{SEM}(\Gamma,s)$ as in \eqref{eq:microlocalseminorms} and closed cones $W\subset(\R^{n}\setminus \{0\})$ with $W\cap V=\emptyset$. It therefore suffices to estimate the images of the functions $g_j$ under each of these linear maps in their respective Hilbert norms.

We begin with the Sobolev seminorms. The map $J^{-s}_l$ was defined in \eqref{eq:Jdef}:
\begin{align*}
J^{-s}_l:\Cinft(M)&\to \mathcal{H}^{-s}_l\\
f &\mapsto \big((I+\Delta)^{s}f_{1,l},\ldots,(I+\Delta)^{s}f_{N_l,l}\big),
\end{align*}
where $f_{1,l},\ldots,f_{N_l,l}\in \CT(\R^n)$ are the local pullbacks of $f$ multiplied by chart cutoffs, defined in \eqref{eq:localfkl}, and 
$\mathcal{H}^{-s}_l=(H^{-s}(\R^n))^{\oplus N_l}$. Since for each $m\in \{1,\ldots,N_l\}$ and all $s\in \R$ the map
\begin{align*}
H^s(\R^n)\to H^s(\R^n),\qquad f \mapsto f_{m,l}
\end{align*}
is continuous (according to \eqref{eq:localfkl} it is given by the pullback along a diffeomorphism followed by multiplication with a cutoff) and  $(I+\Delta)^{s}: H^s(\R^n)\to H^{-s}(\R^n)$ 
is continuous, it follows from \eqref{eq:gjtozero} that $
\|J^{-s}_l(g_j)\|_{\mathcal{H}^{-s}_l}\to 0$. 
It remains to consider the  linear maps for the microlocal seminorms  constructed in the proof of Lemma \ref{lem:microdomHilbert}:
\[
J_{q,W} (g_j) =\vartheta(\xi)\langle\xi\rangle^N\widehat{\chi g_j}(\xi)    \in H^{\frac{n}{2}+1}(\R^n).
\]
If $p\notin\supp \chi$, then $\chi g_j=0$ for all large $j$.  Otherwise we can assume w.l.o.g.\ that $\supp \chi \subset U$ because for large enough $j$ the function $g_j$ is supported in $U$. Let $m\in \N$ be such that $2m\geq \frac{n}{2}+1$. Then we estimate
\begin{align*}
\norm{J_{q,W} (g_j)}_{H^{n/2 +1}(\R^n)}&\leq \norm{J_{q,W} (g_j)}_{H^{2m}(\R^n)}\\&=\norm{(I+\Delta)^{m}J_{q,W} (g_j)}_{L^2(\R^n)}\\
&\leq C_{\vartheta} \sum_{\substack{\gamma\in \N_0^n:\\|\gamma|\leq  2m}}\|\langle\xi\rangle^N \partial_\xi^\gamma\widehat{\chi g_j}\|_{L^2(\R^n)}
\end{align*}
because all partial derivatives of the smooth function $\xi\mapsto \vartheta(\xi)\langle\xi\rangle^N$ are bounded by a constant times $\langle\xi\rangle^N$, depending on the angular cutoff function $\vartheta$. Now we use that $|\partial_\xi^\gamma\widehat{\chi g_j}(\xi)|=|\widehat{x^\gamma\chi g_j}(\xi)|$, 
where we wrote $x\in \R^n$ for the variable dual to $\xi\in \R^n=(\R^n)^\ast$. Recalling the definition \eqref{eq:innerproductHs} of the inner product on $H^{N}(\R^n)$, we arrive at 
\[
\norm{J_{q,W} (g_j)}_{H^{n/2 +1}(\R^n)}\leq C_{\vartheta} \sum_{\substack{\gamma\in \N_0^n:\\|\gamma|\leq  2m}}\|x^\gamma\chi g_j\|_{H^N(\R^n)}.
\]
Applying \eqref{eq:gjtozero} with $f$ replaced by $x^\gamma \chi f\in \I^\infty(\{p\})$ in the steps before, we find that
\[
\|x^\gamma \chi g_j\|_{H^N(\R^n)}\longrightarrow0   \quad\forall\;\gamma\in \N_0^n.
\]
This shows that $\norm{J_{q,W} (g_j)}_{H^{n/2 +1}(\R^n)}\to 0$.

Since all the finitely many components of $J_{\mathsf{Q},p,\xi}(g_j)$ tend to zero in their respective Hilbert spaces, it follows that $\rho_{\mathsf{Q},p,\xi}(g_j)=\norm{J_{\mathsf{Q},p,\xi}(g_j)}_{\mathcal{H}_\mathsf{Q}}\to 0$, finishing the proof.
\end{proof}

\subsection{Approximation near compact sets}

The following proposition is a technical key ingredient to the main proofs in Section \ref{sec:mainproofs}. It allows us to approximate the constant function $1$ in the relevant seminorms by functions that vanish to infinite order on a small prescribed  compact set. 
\begin{proposition}\label{prop:finiteflat} Let $\Gamma\subset \dot T^\ast M$ be closed and conical, let $p\in M$ be such that $\Gamma_p\neq \emptyset$, let $s\in (-\infty,\dim M/2]$, and let $\eps>0$.  Furthermore, let $\mathsf{Q}\subset \mathsf{SEM}(\Gamma,s)$ be a finite set. Then there is a neighborhood $U\subset M$ of $p$ in which every compact set $K\subset U$ admits  a function $m\in\I^{\infty}(K)$  such that
\bq
q(m-1)<\eps\quad \forall\,q\in \mathsf{Q},\label{eq:claim1}
\eq
and $m^{-1}(\{0\})\cap (M\setminus K)$ has empty interior.

Moreover, if $\Gamma_{p}\cap (-\Gamma_{p})\ne\emptyset$  holds, we can achieve the above  with $m\in\I^{\infty,>0}(K)$ (in particular, then $m^{-1}(\{0\})\cap (M\setminus K)=\emptyset$).
\end{proposition}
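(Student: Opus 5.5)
The idea is to reduce \eqref{eq:claim1} to a Hilbert-space approximation problem via Proposition \ref{prop:control}, and to solve it there by duality. Let $n:=\dim M$, and fix $\xi\in\Gamma_p$; in the positive case take $\xi\in\Gamma_p\cap(-\Gamma_p)$. Let $J:=J_{\mathsf Q,p,\xi}$ be as in Proposition \ref{prop:control}, real on real functions in the positive case, and let $\rho:=\rho_{\mathsf Q,p,\xi}$. By Proposition \ref{prop:control}(1a) it suffices to find $m$ with $\rho(m-1)<\eps/C$. The proof then has three steps: approximate $1$ by a function flat at the single point $p$, transfer this to flat functions on small compact sets $K$, and repair the zero set.

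\emph{Step 1: approximation by functions flat at $p$.} I claim that $J(1)$ lies in the closure of $J(\I^\infty(\{p\}))$ in $\mathcal H_{\mathsf Q}$. Suppose not. Then the orthogonal projection onto the orthogonal complement of the closure gives a $y$ with $L_y(f)=0$ for all $f\in\I^\infty(\{p\})$ and $L_y(1)\neq 0$. Every function vanishing near $p$ lies in $\I^\infty(\{p\})$, so $\supp L_y\subset\{p\}$. Hence $L_y$ is a finite linear combination of derivatives of $\delta_p$. Such a nonzero distribution has $|\widehat{L_y}|$ bounded below by $c>0$ on a conical set of positive density around any direction, so it is not microlocally in $H^{-n/2}$ at any covector over $p$. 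This contradicts Proposition \ref{prop:control}(1c), which gives microlocal $H^{-s_0}$-regularity at $-\xi$ with $-s_0\geq -s\geq -n/2$. In the smooth case, $L_y$ is even microlocally smooth at $-\xi$, and the contradiction is the same.

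In the positive case I instead work in the real Hilbert space given by the real part of $\mathcal H_{\mathsf Q}$ and use the closed convex cone generated by the closure of $J(\I^{\infty,>0}(\{p\}))$. If $J(1)$ is not in this closure, the Hahn--Banach separation theorem gives a real $y$ with $L_y(1)>0$ and $L_y\leq 0$ on $\I^{\infty,>0}(\{p\})$. Lemma \ref{lem:wflemma} then says $L_y$ is not microlocally in $H^{-s_0}$ at $-\xi$, again contradicting (1c). So in either case there is $f\in\I^\infty(\{p\})$, respectively $f\in\I^{\infty,>0}(\{p\})$, with $\rho(f-1)<\eps/(3C)$.

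\emph{Step 2: passing to small compact sets $K$.} I argue by contradiction to obtain the neighborhood $U$. Suppose that for every $j$ there is a compact set $K_j\subset B_{1/j}(p)$ (in a chart) such that no function $\theta f$ with $\theta\in\I^{\infty,>0}(K_j)$ satisfies $\rho(\theta f-f)<\eps/(3C)$. The sets $K_j$ converge to $\{p\}$, so Lemma \ref{lem:flat-cutoff-small} produces $\theta_j\in\I^{\infty,>0}(K_j)$ with $\rho(f-\theta_jf)\to 0$, a contradiction. Hence there is a neighborhood $U$ of $p$ such that every compact $K\subset U$ admits some $\theta\in\I^{\infty,>0}(K)$ with $\rho(\theta f-f)<\eps/(3C)$. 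Set $m:=\theta f\in\I^\infty(K)$. In the positive case $m\in\I^{\infty,>0}(K)$, since $\theta>0$ off $K$ and $f>0$ off $p\in K$. (If $p\notin K$, I first replace $K$ by $K\cup\{p\}$; this is still a compact subset of $U$, and it suffices because $\I^{\infty}(K\cup\{p\})\subset\I^\infty(K)$.)

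\emph{Step 3: making the zero set thin (general case).} Fix $g\in\I^{\infty,>0}(\{p\})$ and replace $f$ by $f+\delta g$ with small $\delta>0$; for $\delta$ small enough, Step 1's estimate still holds up to $\eps/(3C)$. Suppose $f+\delta g$ vanishes on a nonempty open set $V\subset M\setminus\{p\}$. Then $f/g\equiv-\delta$ on $V$, so each element of a countable basis of the topology of $M\setminus\{p\}$ can be contained in such a zero set for at most one value of $\delta$. Consequently only countably many $\delta$ are bad, and I choose a good one. Then the zero set of $m=\theta(f+\delta g)$ outside $K$ has empty interior, because $\theta>0$ there.

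I expect Step 1 to be the main obstacle: the duality argument, and in particular the quantitative fact that point-supported distributions, or those sign-constrained as in Lemma \ref{lem:wflemma}, fail microlocal $H^{-n/2}$-regularity. This is exactly where the restriction $s\leq n/2$ enters.
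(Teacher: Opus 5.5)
Your route is the paper's proof reorganized: the same Hilbert control map $J$, the same projection/separation argument contradicting Proposition \ref{prop:control}(1c) through point-supported distributions (resp.\ Lemma \ref{lem:wflemma}), and the same use of Lemma \ref{lem:flat-cutoff-small}. You split the paper's single contradiction argument into a density statement at $p$ followed by a localization step. You also repair the zero set by perturbing $f$ with $\delta g$, $g\in\I^{\infty,>0}(\{p\})$, rather than perturbing $m_0$ by $zg$ with $g\in\I^{\infty,>0}(K)$. Both variations are fine, and your justification that a nonzero point-supported distribution is not microlocally in $H^{-n/2}$ fills in a detail the paper leaves implicit.

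There is one step that fails as written: the parenthetical treatment of $p\notin K$ in the positive case of Step 2. Replacing $K$ by $K\cup\{p\}$ gives $m=\theta f\in\I^{\infty,>0}(K\cup\{p\})$. The inclusion $\I^{\infty}(K\cup\{p\})\subset\I^{\infty}(K)$ is true, but positivity is required on $M\setminus K$, and that set contains $p$. Since $f(p)=0$, you get $m(p)=0$. Hence $m\notin\I^{\infty,>0}(K)$, and the asserted $m^{-1}(\{0\})\cap(M\setminus K)=\emptyset$ fails.

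The repair is immediate. After choosing $\theta$ for the given $K$, put $m:=\theta(f+\eta)=\theta f+\eta\theta$ with $\eta>0$ so small that $\eta\,\rho(\theta)<\eps/(3C)$. Then $m\in\I^{\infty}(K)$, and $m>0$ on $M\setminus K$ because $f\geq 0$. Moreover $\rho(m-1)\leq\rho(\theta f-f)+\rho(f-1)+\eta\,\rho(\theta)<\eps/C$. The paper's assertion that $\theta_jf\in\I^{\infty,>0}(K_j)$ has the same subtlety when $p\notin K_j$, and it is fixed the same way, by approximating $\theta_jf$ with $\theta_jf+\eta\theta_j$.
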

\begin{proof}Choose some $\xi\in \Gamma_p$, let $J_{\mathsf{Q},p,\xi}:\Cinft(M)\to {\mathcal H}_{\mathsf{Q}}$ be the linear map from Proposition \ref{prop:control} and write for simplicity 
$\mathcal H:=\mathcal H_{\mathsf Q}$, $J:=J_{\mathsf Q,p,\xi}$. 
By Proposition \ref{prop:control} (1a), up to making $\eps$ smaller, we can replace \eqref{eq:claim1}  by 
\bq
\|J(m-1)\|_{{\mathcal H}}<\eps.\label{eq:Jclaim}
\eq
Suppose that \eqref{eq:Jclaim} does not hold. Then there is a sequence of compact sets $K_j\subset M$ such that every neighborhood of $p$ in $M$ contains almost all $K_j$, and for some $\delta>0$
\bq
\|J(m)-J(1)\|_{\mathcal H}\geq \delta\quad  \forall\; m\in\I^{\infty}(K_j),\;j\in \N. \label{eq:Jdeltadistance1}
\eq
Given any  $f\in \I^{\infty}(\{p\})$, Lemma \ref{lem:flat-cutoff-small} provides functions $\theta_j\in \I^{\infty,>0}(K_j)$, $j\in \N$, with $
\|J(\theta_jf)-J(f)\|_{\mathcal H}\stackrel{j\to \infty}{\longrightarrow}0$.  
Observing that $\theta_jf\in \I^{\infty}(K_j)$, we see that \eqref{eq:Jdeltadistance1} implies 
\bqn 
\|J(f)-J(1)\|_{\mathcal H}\geq \delta/2\quad    \forall\;f\in\I^{\infty}(\{p\}).
\eqn 
Consider the closed subspace
\[
E:=\overline{J(\I^{\infty}(\{p\}))}\subset \mathcal H.
\]
Let $x_0\in E$ be the orthogonal projection of $J(1)$ onto $E$ and put $v:=J(1)-x_0$, so that $\langle x,v\rangle_{\mathcal H} =0$ for all $x\in E$.  Define
\[
L(f):=\langle J(f),v\rangle_{\mathcal H},\qquad f\in C^\infty(M).
\]
Proposition \ref{prop:control} (1b) says that $L$ is a
compactly supported distribution. It satisfies 
\[
L(1)=\langle J(1),v\rangle_{\mathcal H}=\langle J(1)-x_0,v\rangle_{\mathcal H}+\langle x_0,v\rangle_{\mathcal H}=\|v\|_{\mathcal H}^2\geq \delta^2/4,
\]
 while  $L(f)=0$ for all $f\in\I^{\infty}(\{p\})$, which implies  that $\supp L\subset \{p\}$. Thus $L$ is a sum of derivatives of the Dirac distribution at $p$ and non-zero since $L(1)\neq 0$, which implies that $L$ is not microlocally in $H^{s}$ at any $\xi_0\in \dot T_p^\ast M$ if $s\geq -\frac{1}{2}\dim M$.  However, since all Sobolev seminorms in $\mathsf Q$ (if any) are of regularity $\leq\frac{1}{2}\dim M$, Proposition \ref{prop:control} (1c) says that $L$ is microlocally in $H^{-s_0}$ at $-\xi$, where $s_0\leq \frac{1}{2}\dim M$ and hence $-s_0\geq -\frac{1}{2}\dim M$, a contradiction. This finishes the proof of \eqref{eq:Jclaim} and hence of \eqref{eq:claim1}. 

To prove the empty interior claim, first choose $m_0\in\I^{\infty}(K)$ with 
$q(m_0-1)<\eps/2$ for all $q\in \mathsf Q$. Then choose any $g\in\I^{\infty,>0}(K)$.  For all small enough $z\in \C$ one has $q(m_0+zg -1)<\eps$  for all $q\in \mathsf Q$.  Let $\{\mathscr U_i\}_{i\in \N}$ be a countable basis  of the topology of $M\setminus K$. Then for each $i$ there is at most one value of $z$ for which $m_0+zg$ vanishes
identically on $\mathscr U_i$.  Choosing $z$ different from these countably many values and putting $m:=m_0+zg$ gives $m^{-1}(\{0\})\cap (M\setminus K)$ empty interior.

Let us now prove the strengthened claim. When $\Gamma_{p}\cap (-\Gamma_{p})\ne\emptyset$, we can choose $\xi \in \Gamma_{p}\cap (-\Gamma_{p})$, and Proposition \ref{prop:control}(2) yields that $J(C^\infty(M,\R))\subset \mathcal H$ is real. 

We argue similarly as for the first claim, again replacing \eqref{eq:claim1} by \eqref{eq:Jclaim}, but this time with $m\in\I^{\infty,>0}(K)$. Suppose that the  statement does not hold. Then  there is a sequence of compact sets $K_j\subset M$ such that every neighborhood of $p$ in $M$ contains almost all $K_j$, and for some $\delta>0$ 
\bq 
\|J(m)-J(1)\|_{\mathcal H}\geq \delta\quad \forall\;m\in\I^{\infty,>0}(K_j),\;j\in \N.\label{eq:Jdeltadistance3}
\eq 
Given any $f\in \I^{\infty,>0}(\{p\})$, Lemma \ref{lem:flat-cutoff-small} provides functions $\theta_j\in \I^{\infty,>0}(K_j)$, $j\in \N$, with $
\|J(\theta_jf)-J(f)\|_{\mathcal H}\stackrel{j\to \infty}{\longrightarrow}0$. 
Observing that $\theta_jf\in \I^{\infty,>0}(K_j)$, we see that \eqref{eq:Jdeltadistance3} implies
\bqn 
 \|J(f)-J(1)\|_{\mathcal H}\geq \delta/2\quad  \forall\;f\in\I^{\infty,>0}(\{p\}).
\eqn 
Consider the closed convex real cone \[
\mathcal{C}:=\overline{J(\I^{\infty,>0}(\{p\}))}\subset \mathcal H.
\]
Since $\mathcal{C}$ is a closed convex subset of the Hilbert space $\mathcal H$,  there is an element $x_0\in \mathcal{C}$ with $\|J(1)-x_0\|_{\mathcal H}=\mathrm{dist}(J(1),\mathcal{C})\geq \delta/2>0$. Setting, $v:=J(1)-x_0$, there is an affine  hyperplane  $\{\Re \langle \cdot,v \rangle_{\mathcal H}=\alpha\}\subset \mathcal H$ strictly separating $J(1)$  and $\mathcal{C}$ (for example, the value $\alpha:=\Re \langle x_0,v \rangle_{\mathcal H} + \frac{1}{2}\|v\|_{\mathcal H}^2$ works). That is, we have
\bqn
  \Re \langle J(1),v\rangle_{\mathcal H}>\alpha,\qquad      \Re \langle x,v\rangle_{\mathcal H}< \alpha\quad\forall\;x\in\mathcal{C}.
\eqn
Since in this statement the vectors $J(1)$, $v$ and $x$ are real, we can  drop the real parts.  Define $L(f):=\langle J(f),v\rangle_{\mathcal H}$ for  $f\in C^\infty(M)$. By Proposition \ref{prop:control} (1b), $L\in \E'(M)$. Moreover, $L$ is a real distribution with $L(1)>\alpha$ and  $L(f)<\alpha$ for all $f\in\I^{\infty,>0}(\{p\})$. For such $f$ and all $\eps>0$ one has $L(\eps f)<\alpha$, so continuity and linearity of $L$ imply  $\alpha\geq 0$ and $L(f)\leq 0$. In particular, $L(1)>0$. Thus  Lemma \ref{lem:wflemma} applies to $L$. It states that, if $s\geq-\frac{1}{2}\dim M$, then $L$ is not microlocally in $H^s$ at any $\xi\in \dot T_p^\ast M$. As before, this contradicts Proposition \ref{prop:control} (1c).
\end{proof}

\section{Atomization}\label{sec:atomization}
We shall introduce a convenient language of \emph{atomizability} to prove our main results. 
\begin{definition}\label{def:tame-atomizable}
Let $\Lambda\subset\dot T^*M$ be a closed and conical set. We say that a family  $\cA\subset\E'_\Lambda(M)\setminus\{0\}$ of non-zero distributions \emph{atomizes} $\Lambda$ if:
\begin{enumerate}[label=(\roman*)]
\item For every $A\in \cA$, $\supp A\subset M$ has empty interior in $M$. 
\item For every non-empty open set $U\subset M$ there is an $A\in\cA$ with $\supp A\subset U$.
\item $\cA$ has uniformly bounded Sobolev order (Definitions \ref{def:uniformorder}, \ref{def:uniformsminus0}).
\end{enumerate}
If such a family $\cA$ exists, we say that $\Lambda$ is \emph{atomizable} and that $\cA$ is an \emph{atomizing} family for $\Lambda$. The distributions $A$ in an atomizing family $\cA$ are called \emph{atoms}. 

If all atoms $A\in\cA$ are positive distributions, we call $\mathcal A$ a \emph{positive} atomizing family. If $\Lambda$ admits a positive atomizing family, we say that $\Lambda$ is \emph{positively} atomizable.

If $s\in \R$ is such that all $A\in\cA$ satisfy $A\in H^s_\mathrm{comp}(M)$ (resp.\ $A\in H^{s-0}_\mathrm{comp}(M)$), then we call $\mathcal A$ an atomizing family \emph{of Sobolev order $s$} (resp.\ $s-0$). If $\Lambda$ admits such an $\cA$, we say that $\Lambda$ is \emph{$H^s$-atomizable} (resp.\ \emph{$H^{s-0}$-atomizable}).
\end{definition}
By the above and Definition \ref{def:uniformorder}, every atomizable conical set $\Lambda$ is $s$-atomizable for some $s\in \R$. Also, being $H^{s-0}$-atomizable is \emph{not} equivalent to being $H^{s-\eps}$-atomizable for every $\eps>0$ (which would allow a different atomizing family for each $\eps>0$). Furthermore, note that if a closed conical set $\Lambda\subset\dot T^*M$ has any of the atomizability properties of Definition \ref{def:uniformorder} and $\Lambda'\subset\dot T^*M$ is another closed conical set with $\Lambda\subset \Lambda'$, then $\Lambda'$ inherits all the atomizability properties of $\Lambda$ because $\E'_\Lambda(M)\subset\E'_{\Lambda'}(M)$. 

\begin{remark}\label{rem:atomiz-nonemptyfiber} \begin{enumerate}[leftmargin=*]
\item An atomizable set $\Lambda\subset\dot T^*M$ satisfies $\Lambda_p\neq \emptyset$ for all $p\in M$:  Otherwise, the atoms would be smooth on some non-empty open set $U\subset M$, which makes it impossible to have a non-empty support in $U$ with empty interior.

\item A positively atomizable set $\Lambda\subset\dot T^*M$ satisfies $\Lambda_p\cap (-\Lambda_p)\neq \emptyset$ for all $p\in M$: This follows from (1) and \eqref{eq:Fourierconjug}.
\end{enumerate} 

\end{remark}

\begin{example}\label{ex:maximal}
The maximal conical set $\Lambda=\dot T^*M$ is positively $H^{-\dim M/2-0}$-atomizable: For any dense set $Q\subset M$, the family $\cA_Q:=\{\delta_p\,|\, p\in Q\}\subset H^{-\dim M/2-0}_\mathrm{comp}$ positively atomizes  $\Lambda$, where  $\delta_p$ is the Dirac distribution centered at $p$. 
\end{example}

In contrast to the trivial maximal case from Example \ref{ex:maximal}, the following crucial intermediate result says that every ``nowhere-maximal'' closed conical set $\Gamma$  admits a ``nowhere-maximal'' atomizable conical set $\Lambda$ such that $\Gamma\cap (-\Lambda)=\emptyset$. 
\begin{proposition}\label{prop:existence1}

\begin{enumerate}[leftmargin=*]
\item Suppose that $\dim M=1$. Let $\Gamma\subset \dot T^*M$ be a closed conical set such that
\bq
\Gamma=\emptyset \qquad \text{or}\qquad \emptyset\neq\Gamma_p\neq \dot T^*_pM\quad \forall\; p\in M.\label{eq:nonmaximalconddim1}
\eq 
Then there is an $H^{-1/2-0}$-atomizable conical set $\Lambda\subset\dot T^*M$ with $
\Gamma\cap (-\Lambda)=\emptyset$ and
\bq
\Lambda_p\neq \dot T^*_pM\quad \forall\; p\in M.\label{eq:nonmaximalcondclaim}
\eq 

\item Suppose that $\dim M\geq 2$ and let $\Gamma\subset \dot T^*M$ be a closed conical set. 
\begin{enumerate}
\item If $\Gamma$ satisfies
\bq
\Gamma_p\neq \dot T^*_pM\quad \forall\; p\in M,\label{eq:nonmaximalcond}
\eq 
then there is an $H^{-1/2-0}$-atomizable conical set $\Lambda\subset\dot T^*M$ satisfying $\Gamma\cap (-\Lambda)=\emptyset$ and \eqref{eq:nonmaximalcondclaim}.
\item If $\Gamma$ satisfies
\bq
\Gamma_p\cup -\Gamma_p\neq \dot T^*_pM\quad \forall\; p\in M,\label{eq:nonmaximalcondminus}
\eq 
then there is a positively $H^{-1/2-0}$-atomizable symmetric conical set $\Lambda=-\Lambda\subset\dot T^*M$ satisfying $\Gamma\cap (-\Lambda)=\emptyset$ and \eqref{eq:nonmaximalcondclaim}.
\end{enumerate}
\end{enumerate}
\end{proposition}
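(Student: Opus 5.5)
The plan is to build $\Lambda$ locally, chart by chart, and then glue the local pieces. Atomizability is a local property: we only need atoms inside every non-empty open set, with uniformly bounded Sobolev order. So it suffices to cover $M$ by a countable locally finite family of relatively compact chart domains. In each chart we choose a closed cone $\Lambda^{(i)}$ over the closure that avoids $-\Gamma$ and is nowhere maximal. We then produce atoms of order $-1/2-0$ supported in arbitrarily small open subsets, with wavefront set in $\Lambda^{(i)}$. Finally we set $\Lambda:=\bigcup_i \Lambda^{(i)}$, which is closed by local finiteness. The conditions $\Gamma\cap(-\Lambda)=\emptyset$ and \eqref{eq:nonmaximalcondclaim} should survive the union: over each point only finitely many pieces meet, and each is a small closed neighborhood of a fixed direction or ray. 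This requires choosing every $\Lambda^{(i)}$ with thin fibers.

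\textbf{Case $\dim M\geq 2$.} Fix $p$. By \eqref{eq:nonmaximalcond} there is $\eta_p\notin -\Gamma_p$, i.e.\ $-\eta_p\notin\Gamma_p$. Since $\Gamma$ is closed, there is a chart $U_p$ with coordinates near $p$ and a small closed cone $C_p\ni\eta_p$ such that $(\overline{U_p}\times(-C_p))\cap\Gamma=\emptyset$. For atoms we take smooth hypersurface measures $A(\varphi)=\int_S\chi\varphi\,ds$, where $S$ is a piece of a hyperplane $\{x\cdot\eta_p=c\}$ inside a small ball in $U_p$. Their wavefront set lies in $N^\ast S$, and by Example \ref{ex:codimSobolev} they have uniform Sobolev order $-1/2-0$ (codimension one). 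Their supports have empty interior and can be placed in any open set. The catch is that $N^\ast S$ contains both $\pm\eta_p$. To get something one-sided, I would replace the measure by a one-sided boundary layer. For instance, take the distribution $(x\cdot\eta_p-c-i0)^{-1}$ times a cutoff supported on $S$: its $\WF$ is the single ray $\R_{>0}\eta_p$, and its order is also $-1/2-0$. But that object is not supported on $S$. The correct analogue is the one-sided Kozma--Olevski\u{\i} atom from Proposition \ref{prop:KOatom}, tensored along $S$. That is, I would pull back a one-dimensional atom $a\in\E'(\R)$ with $\WF(a)\subset\R\times\R_{>0}$, $\supp a$ nowhere dense, of order $-1/2-0$, via $x\mapsto x\cdot\eta_p$, and multiply by a cutoff in the tangential directions. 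The result has support of empty interior (a product with a nowhere dense set), $\WF\subset\R_{>0}\eta_p$, and Sobolev order $-1/2-0$, since tangential smoothness costs nothing. In case (2b), \eqref{eq:nonmaximalcondminus} gives $\eta_p$ with $\pm\eta_p\notin-\Gamma_p$. Then the positive hyperplane measures themselves work, with the symmetric cone $C_p\cup-C_p$.

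\textbf{Case $\dim M=1$.} If $\Gamma=\emptyset$, any one-sided choice works. Otherwise each $\Gamma_p$ is exactly one ray, so $\Gamma$ is locally one of $T^\ast_\pm M$. By closedness and connectedness, on each component of $M$ we have, say, $\Gamma=T^\ast_+M$ in an oriented chart. We then need atoms with $\WF\subset T^\ast_+M$ as well (since $-\Lambda$ must avoid $\Gamma$), which is again exactly the Kozma--Olevski\u{\i} input.

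\textbf{Main obstacle.} The crux is the one-dimensional one-sided atom: a nonzero compactly supported distribution with nowhere dense support, $\WF$ in one half-line, and order $-1/2-0$. It must be supplied by Proposition \ref{prop:KOatom}, which converts a Kozma--Olevski\u{\i} null series, with spectrum essentially one-sided and coefficients in $\ell^2$ up to $\langle j\rangle^{-1/2-0}$ weights, into such a distribution. Its translates and dilates then fill every open set with uniform order. The remaining issue is verifying the uniform Sobolev bound under the pullback and gluing, which should be routine.
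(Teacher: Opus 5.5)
Your proposal is correct and follows essentially the same route as the paper. It uses a locally finite family of small chart pieces, each carrying a fixed direction $\eta$ with $-\eta\notin\Gamma$ over its closure. In case (2a) you take Kozma--Olevski\u{\i} atoms pulled back along the linear submersion $x\mapsto x\cdot\eta$ and tensored with a tangential cutoff, which is exactly what the paper does via Corollary~\ref{cor:integrable}. In case (2b) you use positive hyperplane measures, and in dimension one the connectedness/orientability argument with one-sided Kozma--Olevski\u{\i} atoms; all of this matches the paper. As in the paper, take the local pieces $\Lambda^{(i)}$ to be exact rays (resp.\ lines) over the closures rather than small cones, so that each fiber of $\Lambda$ is a finite union of rays and hence non-maximal when $\dim M\geq 2$.
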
 
Note that, in all positive dimensions, Proposition \ref{prop:existence1}, applied with $\Gamma=\emptyset$, gives an $H^{-1/2-0}$-atomizable conical set $\Lambda\subset\dot T^*M$ which is ``nowhere-maximal'' in the sense of \eqref{eq:nonmaximalcondclaim}. Moreover, if $\dim M\geq 2$, the obtained set $\Lambda$ is positively $H^{-1/2-0}$-atomizable.

The proof of Proposition \ref{prop:existence1} will be given on p.~\pageref{proof:existence1} after some preparations that allow us to deal with the case $\dim M=1$, in which the Hörmander wavefront set condition and the condition \eqref{eq:nonmaximalcond} are much more rigid than in higher dimensions. 

\begin{remark}If $\dim M=1$, then the maximal set $\Lambda=\dot T^*M$ is the only closed conical subset of $\dot T^*M$ that is positively atomizable, by  Remark \ref{rem:atomiz-nonemptyfiber}. In Proposition \ref{prop:existence1} it is therefore a priori impossible to extend the positive atomizability  to dimension $1$.
\end{remark}

The following proposition reformulates a remarkable result by Kozma-Olevski\u{\i} on Fourier null-series \cite{KozmaOlevskii} in distribution language. As mentioned in the AI use disclaimer on p.~\pageref{sec:Aiuse}, the relevance of Kozma-Olevski\u{\i}'s work to the present paper was discovered by ChatGPT in a deep literature search.

\begin{proposition}[Kozma-Olevski\u{\i} \cite{KozmaOlevskii}]\label{prop:KOatom}
On the circle $S^1$, there exists a non-zero distribution $u\in \D'(S^1)$, supported on a Lebesgue null set, whose Fourier coefficients $\widehat u(n)$, $n\in \Z$, satisfy
\begin{align}\begin{split}
\widehat u(n)&=\mathcal O(|n|^{-\infty})\quad\text{ as }n\to-\infty,\\
\widehat u(n)&=o(1)\hspace*{3.35em}\text{ as }n\to+\infty.\label{eq:coeffestimun}\end{split}
\end{align}
 In particular, $\WF(u)$ is contained in one of the two connected components of $\dot T^\ast S^1$, and $u\in H^{-1/2-0}(S^1)$.
\end{proposition}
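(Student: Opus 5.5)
The plan is to import the Kozma--Olevski\u{\i} theorem in its original series form and translate it into distribution language. Their main result gives a trigonometric series $\sum_{n\geq 0} c_n e^{inx}$ with only non-negative frequencies, $c_n\to 0$, not all $c_n$ zero, which converges to $0$ pointwise outside a closed Lebesgue null set $K\subset S^1$. (The paper states almost everywhere convergence, but their construction yields a closed exceptional set; if needed I would use the version stated for a compact null set.) I would set $w:=\sum_{n\geq 0}c_ne^{inx}$, interpreted as a distribution on $S^1$. Since $(c_n)$ is bounded, $w\in\D'(S^1)$ is well defined and $\widehat w(n)=c_n$ up to normalization. In particular $\widehat w(n)=0$ for $n<0$ and $\widehat w(n)=o(1)$ as $n\to+\infty$, which is exactly \eqref{eq:coeffestimun} for $u:=w$. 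The distribution is non-zero because some $c_n\neq 0$.

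The main step is to show $\supp w\subset K$, so that $w$ is supported on a null set. Here I would use Riemann's localization theory for series with coefficients tending to $0$. Let $F(x):=c_0x^2/2-\sum_{n\geq 1}c_n n^{-2}e^{inx}$ be the twice formally integrated series. It is continuous, and $F''=w$ in $\D'$ on $\R$ (periodization aside). By Riemann's first lemma, the symmetric second Schwarz derivative of $F$ equals the pointwise sum, which is $0$ on each open arc $I\subset S^1\setminus K$. Schwarz's lemma then says that a continuous function with vanishing second Schwarz derivative on an interval is affine there. Hence $w=F''=0$ on $I$, and so $\supp w\subset K$. This is the step where care is needed. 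Convergence outside a merely measurable null set would not give a support statement, so it is essential that the exceptional set be closed, or that we can replace $K$ by a closed null set. Kozma--Olevski\u{\i} indeed produce series converging to zero outside a compact set of measure zero, which is what I would cite.

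The remaining claims are routine. For the wavefront set, take $\chi\in\Cinft(S^1)$. Then $\widehat{\chi w}=\widehat\chi * \widehat w$, and since $\widehat\chi$ decays rapidly and $\widehat w$ is supported on $n\geq 0$ and bounded, $\widehat{\chi w}(n)=\mathcal O(|n|^{-\infty})$ as $n\to-\infty$. Transferring to a local chart via the standard comparison of Fourier series and the Euclidean Fourier transform of compactly supported cutoffs, $\WF(w)$ lies in the component $\{(x,\xi):\xi>0\}$ (or its negative, depending on sign conventions). Finally, bounded coefficients give
\[
\sum_n \langle n\rangle^{2s}|\widehat w(n)|^2<\infty \quad\text{for all } s<-\tfrac12,
\]
so $w\in H^{-1/2-0}(S^1)$.
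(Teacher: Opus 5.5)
There is a genuine gap in your first step. The input you attribute to Kozma--Olevski\u{\i} is a non-trivial series $\sum_{n\geq 0}c_ne^{inx}$ with \emph{no} negative frequencies, $c_n\to 0$, summing to $0$ at every point off a closed null set $K$. Such a series does not exist. Since the $c_n$ are bounded, $F(z):=\sum_{n\geq 0}c_nz^n$ is holomorphic on the unit disk $\mathbb{D}$ with $|F(z)|\leq C(1-|z|)^{-1}$. By Abel's theorem, $F$ would have radial limit $0$ at every point of $S^1\setminus K$, a set of full measure. The Lusin--Privalov uniqueness theorem then forces $F\equiv 0$, i.e.\ $c_n=0$ for all $n$.

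Your own localization step exposes the same obstruction distributionally. It would give $w=0$ on the open arcs of $S^1\setminus K$. But $w$ is the distributional boundary value of $F$, and by the reflection principle a holomorphic function of moderate growth whose boundary value vanishes on an arc is identically zero. So every non-zero distribution on $S^1$ with $\widehat w(n)=0$ for all $n<0$ has full support. This is exactly why the statement only claims $\widehat u(n)=\mathcal O(|n|^{-\infty})$ as $n\to-\infty$. It is also why the Kozma--Olevski\u{\i} null series has rapidly decreasing but non-vanishing negative coefficients: in the paper's account $u=f-T_F$ with $f$ smooth, so $\widehat u(n)=\widehat f(n)$ for $n<0$. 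One-sided representations of functions are unique for the same reason, so they can never furnish a non-trivial null series.

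To repair the argument, start from the two-sided sequence the paper uses. There $c(n)$ is rapidly decreasing as $n\to-\infty$, $c(n)=o(1)$ as $n\to+\infty$, and the series converges to $0$ at every point outside a compact null set $N$. Since $\sum_{n<0}|c(n)|<\infty$, it does not matter whether you use one-sided or symmetric partial sums, and $c(n)\to 0$ as $|n|\to\infty$. Hence your Riemann--Schwarz argument applies verbatim: Riemann's first lemma for the twice-integrated series, then Schwarz's lemma on each arc of $S^1\setminus N$, gives $\supp u\subset N$.

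This is a legitimate alternative to the paper's support argument. The paper writes $u=f-T_F$ and uses Kozma--Olevski\u{\i}'s Lemma~3(i): $F$ extends continuously to $\mathbb{D}\cup(S^1\setminus N)$, with boundary values $f$ there. Your route needs only everywhere convergence off a closed null set plus classical uniqueness theory; the paper's reads the support directly off the structure of the construction.

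Your wavefront step must then use rapid decay rather than vanishing of the negative coefficients. For $n\to-\infty$, split $\widehat{\chi u}(n)=\sum_m\widehat\chi(n-m)\widehat u(m)$ into two parts:
\begin{itemize}
\item the terms with $m<0$ and $|m|\geq |n|/2$, where $\widehat u(m)$ is rapidly small;
\item the remaining $m$, where $|n-m|\geq |n|/2$ and $\widehat u$ is bounded.
\end{itemize}
Both parts are $\mathcal O(|n|^{-\infty})$. The Sobolev claim is unaffected.
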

\begin{proof}The claim is explicitly stated in \cite[end of p.~1045]{KozmaOlevskii} as a consequence of \cite[Thm.~3', Eq.~(17), Lemmas 3--6]{KozmaOlevskii}. We shall briefly  summarize the argument: The aforementioned reference establishes the existence of a sequence $c(n)\in \C$, $n\in \Z$, with $c(n)=\mathcal O(|n|^{-\infty})$ as $n\to -\infty$ and $c(n)=o(1)$ as $n\to +\infty$ such that the associated Fourier series $\sum_{n\in \Z}c(n)z^n$ converges to zero pointwise for all $z\in S^1$ outside a compact Lebesgue null set $N\subset S^1$ (in the notation of \cite[Sec.~3.5]{KozmaOlevskii}, $N=(K')^\circ$). More precisely, in \cite[Sections 3.5, 3.6]{KozmaOlevskii} Kozma-Olevski\u{\i} obtain the distribution $u\in \D'(S^1)$ with $\widehat u(n)=c(n)$ for all $n\in \Z$ as the difference $u=f-T_F$, where $f\in \Cinft(S^1)$ and $T_F\in \D'(S^1)$ is the boundary value of a holomorphic function $F$ on the open unit disk $\mathbb{D}\subset \C$ with Taylor series $F=\sum_{n\geq 0}\widehat F(n)z^n$, where $\widehat F(n)=\widehat f(n)-c(n)$. That is, $\widehat T_F(n)=\widehat{F}(n)$ for $n\geq 0$ and $\widehat T_F(n)=0$ for $n<0$.  In \cite[Lem.~3~(i)]{KozmaOlevskii} it is shown that $F$ extends continuously from $\mathbb{D}$ to $\mathbb{D}\cup (S^1\setminus N)$, the resulting function coinciding with $f$ on $S^1\setminus N$. Thus, if a test function $\varphi\in \CT(S^1)$ is supported outside of $N$, we have
\[
\eklm{u,\varphi}=\eklm{f,\varphi}-\eklm{T_F,\varphi}=\eklm{f,\varphi}-\eklm{f,\varphi}=0,
\]
proving that $\supp u\subset N$ since $N$ is closed in $S^1$. 

Given an $\eps>0$, the estimates \eqref{eq:coeffestimun} imply that there exists $C_u>0$ such that
\[
\sum_{n\in \Z}\eklm{n}^{-1-2\eps}|\hat u(n)|^2\leq C_u\sum_{n\in \Z}\eklm{n}^{-1-2\eps}<\infty,
\]
which implies that $u\in H^{-1/2-0}(S^1)$.

Finally, if $\eta\in \Cinft(S^1)$, then the Fourier coefficients of $\eta u$ are the convolution of the rapidly decreasing sequence $\hat \eta$ with $\hat u$, so \eqref{eq:coeffestimun} implies that they decay rapidly in the negative direction. This implies that $\WF(u)$ is contained in the connected component of $\dot T^\ast S^1$ that is chosen as the positive one.
\end{proof}
This remarkable result has a very useful consequence:
\begin{corollary}\label{cor:integrable}
Let $\Lambda\subset\dot T^*M$ be a closed conical set that locally contains the image of an integrable smooth $1$-form, i.e., around each point of $M$ there is an open set $U$ and a submersion $f:U\to\R$ with $\tau df|_x\in \Lambda$ for all $x\in U$, $\tau>0$. 
Then $\Lambda$ is $H^{-1/2-0}$-atomizable.
\end{corollary}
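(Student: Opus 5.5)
The plan is to build atoms of the form $A=\chi\cdot f^\ast w$, where $w\in \D'(\R)$ is a localized copy of the Kozma-Olevski\u{\i} distribution from Proposition \ref{prop:KOatom}. The first step is to transfer Proposition \ref{prop:KOatom} to $\R$. Pick a point $\theta_0$ of $\supp u$ and a small arc around it, identified with an interval in $\R$ via the angle coordinate. Multiplying $u$ by a cutoff $\eta$ that is nonzero near $\theta_0$ gives $w:=\eta u\in\E'(\R)$ with several properties:
\begin{itemize}
\item $w\neq 0$, and $\supp w$ is a Lebesgue null set;
\item $\WF(w)\subset\{(t,\tau)\,|\,\tau>0\}$, after possibly replacing $u$ by $\bar u$ or reflecting $t\mapsto -t$ so that the chosen component becomes the positive one;
\item $w\in H^{-1/2-0}(\R)$.
\end{itemize}
Affine rescalings $w_{a,b}(t):=w(at+b)$ with $a>0$ keep all three properties, and they let us place a copy of $w$ inside any prescribed small interval.

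The second step pulls these back locally. Fix $p\in M$, and take $U$ and the submersion $f:U\to\R$ from the hypothesis. After shrinking $U$, choose coordinates $(x_1,x')$ with $f=x_1$. For $\chi\in\CT(U)$ with $\chi\ge0$, set $A:=\chi\,(w_{a,b}\circ x_1)=\chi\,(w_{a,b}\otimes 1)$.

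Since $f$ is a submersion, the pullback $f^\ast w_{a,b}$ is defined, and $\WF(f^\ast w_{a,b})\subset\{(x,\tau\,df|_x)\,|\,\tau>0\}\subset\Lambda$ by \cite[Thm.~8.2.4]{HormanderI}.

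The support of $A$ lies in $\supp\chi\cap x_1^{-1}(\supp w_{a,b})$. This set is null, so it has empty interior. It is nonempty for suitable $\chi$, and $A\ne0$ because $w_{a,b}\otimes\chi$ is nonzero whenever $\chi$ is positive on a neighbourhood of a point of $\{x_1\in\supp w_{a,b}\}$.

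The Sobolev order comes from $\widehat{w\otimes 1}$ being a measure on the line $\xi'=0$ that is smoothed by $\hat\chi$. Locally one has $\|\chi(w\otimes 1)\|_{H^{s}}\lesssim\|w\|_{H^{s}(\R)}$ for $s<0$, so $A\in H^{-1/2-0}_\mathrm{comp}$. To check this, write $\widehat{A}(\xi)=\int \hat w(\xi_1-\eta_1)\hat\chi(\eta_1,\xi')\,d\eta_1$ and apply Peetre's inequality together with the rapid decay of $\hat\chi$ in $\xi'$.

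Given a nonempty open set $V\subset M$, pick $p\in V$, shrink $U$ so that $U\subset V$, and choose $a,b$ so that the rescaled support hits $f(U)$ near $f(p)$. This yields an atom with $\supp A\subset V$.

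The remaining point, which I expect to be the main obstacle, is the uniform Sobolev order required by Definition \ref{def:tame-atomizable}(iii). The atoms are built in infinitely many charts with varying rescalings $a$, and their $H^{-1/2-\eps}$ norms need not be uniformly bounded. However, Definition \ref{def:uniformorder} only asks for membership $\chi_l A\in H^{s}(M_l)$, not for norm bounds. Each atom lies in $H^{-1/2-\eps}_\mathrm{comp}(M)$ for every $\eps$, and this is preserved under chart changes and cutoffs. So the family is of Sobolev order $-1/2-0$ simply because every member is.

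I will take $\cA$ to be the countable family of all such $A$, built over a countable basis of the topology, and verify (i)--(iii). A final check is needed on the conical-set side: $\{\tau df\}$ has to sit inside $\Lambda$ with $\tau>0$ and not with $\tau<0$. This is handled by the orientation choice made in the first step, using $\bar u$ or the reflected distribution if necessary.
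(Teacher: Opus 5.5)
Your proposal is correct and follows essentially the paper's route: you localize the Kozma--Olevski\u{\i} distribution to a small interval with one-sided wavefront set, pull it back along the coordinate $t=f$, and cut off transversally. The paper uses the pure tensor product $u_I\otimes\rho$ and an orientation-preserving arc diffeomorphism where you use $\chi\cdot(w_{a,b}\circ x_1)$ and affine rescalings. Your explicit Peetre/Young bound for the Sobolev order, and your remark that Definition~\ref{def:uniformorder} asks only for membership and not uniform norm bounds, supply details the paper leaves implicit.
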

\begin{proof}To define atoms supported in an arbitrary non-empty open set $O\subset M$, we take a set $U\subset O$ as in the hypothesis, which we further shrink to a chart with coordinates $(t,y)$ and $t=f$. The latter is possible because $f$ is a submersion. Choose a small interval $I$ in the $t$-coordinate range and a function $\eta\in \CT(S^1)$, supported in an arc that is mapped into $I$ by an orientation-preserving diffeomorphism, such that $\eta u\neq 0$, where $u\in \D'(S^1)$ is as in Proposition \ref{prop:KOatom}.  We can then regard $\eta u$ as a non-zero compactly supported distribution $u_I$ on $I$.  The multiplication by $\eta$ and the pullback to $I$ preserve the one-sided wavefront set property, so that
\[
\WF(u_I)\subset\{(t,\tau dt)\,|\,\tau>0\}.
\]
Choose a non-zero cutoff function $\rho$ in the variable $y$ such that $I\times\supp \rho\subset U$, and define $A\in \E'(U)$ by $A=u_I\otimes \rho$.  Then $A\ne0$, $\supp A\subset \supp u_I\times\supp \rho\subset O$ has empty interior (because $\supp u_I\subset I$ has empty interior), and
\[
\WF(A)\subset\{((t,y),\tau dt)\,|\, (t,y)\in U,\tau>0\}=\{(x,\tau df|_{x})\,|\, x\in U,\tau>0\}\subset\Lambda.
\]
Since localizing to $I$, applying a diffeomorphism, and tensoring with $\rho$ preserve $H^{-1/2-0}$-regularity, we have $A\in H^{-1/2-0}_\mathrm{comp}(M)$, similarly as in Proposition \ref{prop:KOatom}. 
\end{proof}

We are now in the position to prove Proposition \ref{prop:existence1}.
\begin{proof}[{Proof of Proposition \ref{prop:existence1}}]\label{proof:existence1}
We begin with (1). Let $M_0\subset M$ be a connected component. Then $\dot T^\ast M_0$ has two connected components $\Lambda_+$ and $\Lambda_-$, both of which are closed conical sets. Since $\Gamma\subset \dot T^\ast M$ is closed and satisfies \eqref{eq:nonmaximalconddim1}, we have $\Gamma\cap \dot T^\ast M_0\subset\Lambda_+$ or $\Gamma\cap \dot T^\ast M_0\subset\Lambda_-$. Indeed: If $\Gamma=\emptyset$, both inclusions hold trivially, and if $\emptyset\neq \Gamma_p\neq \dot T_p^\ast M$ for all $p\in M_0$, then at each $p\in M_0$ the fiber $\Gamma_p$ equals either $(\Lambda_+)_p$ or $(\Lambda_-)_p$ and the sign cannot change as $p$ varies in $M_0$ because $\Gamma$ is closed. To see this, let $s_\pm:M_0\to\Lambda_\pm$ be continuous sections and consider the closed sets $(M_0)_\pm:=s_\pm^{-1}(\Gamma\cap \Lambda_\pm)$. Then $(M_0)_+\cup (M_0)_-=M_0$ and $(M_0)_+\cap (M_0)_-=\emptyset$, so the connectedness of $M_0$ implies that $(M_0)_+=M_0$ or $(M_0)_-=M_0$. Without loss of generality, suppose that $\Gamma\cap \dot T^\ast M_0\subset\Lambda_+$.  

Since every one-dimensional manifold is orientable, we can cover $M_0$ by charts whose sole coordinate $t$ satisfies $\tau dt|_x\in \Lambda_+$ for all $x\in M_0$, $\tau>0$, and thus $\Lambda_+$ is atomizable by Corollary \ref{cor:integrable}. Taking as $\Lambda\subset \dot T^\ast M$ the union of such sets $\Lambda_+$ containing the fibers of $\Gamma$ on  each connected component of $M$, we get that $\Lambda$ satisfies $\Gamma\cap (-\Lambda)=\emptyset$. Furthermore, $\Lambda$ satisfies the nowhere-maximality condition \eqref{eq:nonmaximalcondclaim} by construction: For each point $p\in M$ the fiber $\Lambda_p$ contains only one of the two connected components of $T_p^\ast M\setminus \{0\}$.  
 Thus (1) is proved. 

Now we prove (2a). Let $\{U_j\}_{j\in \N}$ be an open cover of $M$ subordinate to a locally finite atlas $\{\tilde U_j\}_{j\in \N}$ of $M$ consisting of relatively compact open sets $\tilde U_j\subset M$ such that $\overline{U}_j\subset \tilde U_j$ holds for all $j\in \N$; in particular $\overline{U}_j$ is compact. For each $j\in \N$, we identify $\tilde U_j$ with an open subset of $\R^n$ and identify $T^\ast \tilde U_j\equiv \tilde U_j\times \R^n$.  
Given a point $p_0\in \tilde U_j$, choose some $\xi\in \R^n\setminus \{0\}\equiv \dot T_{p_0}^\ast M$ with $(p_0,\xi)\not\in \Gamma$, this exists by \eqref{eq:nonmaximalcond}. Then there is an open set $\mathscr U\subset \tilde U_j$ around $p_0$ such that $(p,\xi)\not\in \Gamma$ for all $p\in \mathscr U$.  
To see this, suppose that it does not hold: Then there is a sequence  $\{p_k\}_{k\in \N}$ of points $p_k\in \tilde U_j$ such that $p_k\to p_0$ in $\tilde U_j$ and $(p_k,\xi)\in \Gamma$ for all $k\in \N$. Since $(p_k,\xi)\to (p_0,\xi)$ in $T^\ast \tilde U_j\equiv \tilde U_j\times \R^n$ but $(p_0,\xi)\not\in \Gamma$, this contradicts that $\Gamma$ is closed.

Note that, if two open sets $\mathscr U,\mathscr V\subset \tilde U_j$ are such that $\overline{\mathscr V}\subset\mathscr U$ and $\overline{\mathscr U}\subset \tilde U_j$, then $\Gamma\cap \overline{T^\ast \mathscr V}\subset \Gamma\cap T^\ast \mathscr U$, 
where  $\overline{T^\ast \mathscr V}\equiv \overline{\mathscr V}\times \R^n$ is the closure in $T^\ast \tilde U_j\equiv \tilde U_j\times \R^n$. 
Therefore, since $\overline U_j\subset \tilde U_j$ is compact, we can cover $\overline U_j$ by a finite family of open sets $
\mathscr U^1_j,\ldots,\mathscr U^{N(j)}_j\subset \tilde U_j$ 
around points $p_j^1,\ldots,p_j^{N(j)}\in \overline U_j$ for which elements $\xi_j^1,\ldots,\xi_j^{N(j)}\in \R^n\setminus\{0\}$ exist with 
\bq
(p,\xi_j^l)\not\in \Gamma \quad \forall\;p\in \overline{\mathscr U_j^l},\qquad 1\leq l\leq N(j),\label{eq:83593869023968036}
\eq
and such that $\overline{\mathscr U^l_j}\subset \tilde U_j$, $1\leq l\leq N(j)$; in particular each closure $\overline{\mathscr U^l_j}$ is compact. 

For all $j\in \N$, $1\leq l\leq N(j)$, define
\[
\Lambda_j^l:=\{(x,-\tau \xi_j^l)\,|\,\tau > 0,x\in \mathscr U_j^l\}\subset \dot T^\ast \mathscr U_j^l.
\]
Then $\Lambda_j^l$ is a closed conical subset of $\dot T^\ast \mathscr U_j^l$ and the closure of $\Lambda_j^l$ in $\dot T^\ast M$ is given by
\bq
\overline{\Lambda_j^l}=\{(x,-\tau \xi_j^l)\,|\,\tau > 0,x\in \overline{\mathscr U^l_j}\}.\label{eq:closurelambdajl}
\eq
Since the product $\overline{\mathscr U^l_j}\times S^{n-1}$ with the Euclidean unit sphere is compact and  \eqref{eq:83593869023968036} implies that $\overline{\Lambda_j^l}\cap(-\Gamma)\cap(\overline{\mathscr U^l_j}\times S^{n-1})=\emptyset$, it follows that
\bq
\overline{\Lambda_j^l}\cap(-\Gamma)=\emptyset.\label{eq:closureLambdaj}
\eq
Define the submersion $f^l_j:\mathscr U^l_j\to \R$, $x\mapsto - x\cdot \xi_{j}^l$, where $\cdot$ is the Euclidean inner product. 
Let now $\{{\mathscr U}_i\}_{i\in \N}:=\{{\mathscr U}_{p_i}\}_{i\in \N}$ be a basis of the topology of $M$ subordinate to the open cover $\{\mathscr U_j^l\,|\,j\in \N,1\leq l\leq N(j)\}$, this exists by second-countability.  To each ${\mathscr U}_i$, assign a unique ``parent set'' $\mathscr U_{j(i)}^{l(i)}$, for example by the rule
\begin{align*}
j(i)&:=\min\{j\in \N\,|\,\exists\; l\in\{1,\ldots,N(j)\}: {\mathscr U}_i\subset \mathscr U_j^l\},\\
 l(i)&:=\min\{l\in\{1,\ldots,N(j(i))\}\,|\,{\mathscr U}_i\subset\mathscr U_{j(i)}^l\}.
\end{align*}
For each $i\in \N$ and all $x\in \mathscr U_i$, $\tau>0$, the closed conical set $\Lambda_{j(i)}^{l(i)}\cap \dot T^\ast \mathscr U_i\subset \dot T^\ast \mathscr U_i$ contains $\tau df|_x$, where $f:=f_{j(i)}^{l(i)}|_{\mathscr U_i}:\mathscr U_i\to \R$ is a submersion. Thus Corollary \ref{cor:integrable} says that $\Lambda_{j(i)}^{l(i)}\cap \dot T^\ast \mathscr U_i$ is $H^{-1/2-0}$-atomizable, in particular there is a non-zero distribution $A_i\in \E'(M)$, supported in $\mathscr U_i$, with $\WF(A_i)\subset \Lambda_{j(i)}^{l(i)}\cap \dot T^\ast \mathscr U_i$ and $A_i\in H^{-1/2-0}_\mathrm{comp}(M)$. Define
\[
\Lambda:=\overline{\bigcup_{j\in \N}\bigcup_{1\leq l\leq N(j)}\Lambda^l_j}=\bigcup_{j\in \N}\bigcup_{1\leq l\leq N(j)}\overline\Lambda^l_j\quad \subset \dot T^\ast M
\]
where the closures are taken in $\dot T^\ast M$ and the equality holds because the atlas $\{\tilde U_j\}_{j\in \N}$ is locally finite and for each $j$ one takes the union over only finitely many $l\in \{1,\ldots,N(j)\}$. The closed set $\Lambda$ is clearly conical and satisfies $\Lambda\supset \WF(A_i)$ for all $i\in \N$. Since every open subset of $M$ contains some $\mathscr U_i$, this proves that  $\Lambda$ is $H^{-1/2-0}$-atomizable. 
Furthermore, \eqref{eq:closureLambdaj} implies that $\Gamma\cap (-\Lambda)=\emptyset$. Finally, for each $p\in M$, \eqref{eq:closurelambdajl}, the inclusion $\overline{\mathscr U^l_j}\subset \tilde U_j$ and the local finiteness of  the atlas $\{\tilde U_j\}$ imply that the fiber $\Lambda_p$ is contained in the union of finitely many $1$-dimensional rays in $T_p^\ast M$. Thus $\Lambda_p\neq \dot T_p^\ast M$ since $\dim M\geq 2$, showing that  \eqref{eq:nonmaximalcondclaim} holds. So (2a) is proved.

Let us now turn to (2b). If $\Gamma_p\cup (-\Gamma_p)\neq \dot T^*_pM$ holds for all $p\in M$, we modify the above construction as follows:  Instead of \eqref{eq:83593869023968036}, we achieve that
\bqn
(p,\xi_j^l),(p,-\xi_j^l)\not\in \Gamma \quad \forall\;p\in \overline{\mathscr U_j^l},\qquad 1\leq l\leq N(j),
\eqn
and define
\[
\Lambda_j^l:=\{(x,\tau \xi_j^l)\,|\,\tau \in \R\setminus \{0\},x\in \mathscr U_j^l\}\subset \dot T^\ast \mathscr U_j^l.
\]
The resulting closed conical set $\Lambda\subset \dot T^\ast M$ then satisfies $\Lambda=-\Lambda$ and $\Lambda\cap (-\Gamma)=\emptyset$.

For each $i\in \N$, we can construct a non-zero positive distribution $A_i\in \E'(M)$ with $\supp A_i\subset \mathscr U_i$ and $\WF(A_i)\subset \Lambda^{l(i)}_{j(i)}$ as follows: Choose a point $x_i\in \mathscr U_i\subset \R^n$ and some $\delta>0$ such that $
B^n_\delta(x_i)\subset \mathscr U_i$. Denote by $
V_i:=(\xi^{l(i)}_{j(i)})^\perp\subset \R^n$ 
 the Euclidean orthogonal complement and equip it with the unique Lebesgue measure $dy$ such that $\mathrm{vol}_{dy}(B^n_\delta(0)\cap V_i)=\mathrm{vol}_{dx} B^{n-1}_\delta(0)$, where $dx$ is the standard Lebesgue measure on $\R^{n-1}$. Choose a non-negative real-valued cutoff function $\chi_i\in \CT(V_i,\R)$ with $\chi_i(0)=1$ and $\supp \chi_i\subset  B^n_\delta(0)\cap V_i$.   Then define $A_i\in \E'(M)$ by
\[
A_i(f):=\int_{V_i} f(x_i+y)\chi_i(y)\,d y,\qquad f\in \Cinft(M).
\] 
By construction, $\supp A_i\subset B^n_\delta(x_i)\cap (x_i+V_i)\subset\mathscr U_i\cap (x_i+V_i)$. The  hypersurface $\mathscr U_i\cap (x_i+V_i)\subset M$ has codimension $1$, so  $\supp  A_i$ has empty interior in $M$. By Example \ref{ex:codimSobolev}, $A_i$ has Sobolev order $-\frac{1}{2}-0$. By construction, each $A_i$ is non-zero, positive, supported in $\mathscr U_i$, and 
\[
\WF(A_i)\subset \dot N^\ast ((x_i+V_i)\cap\mathscr U_i)\subset \mathscr U_i\times (V_i^\perp\setminus\{0\})= \mathscr U_i\times(\R_\times\xi^{l(i)}_{j(i)})\subset \Lambda_{j(i)}^{l(i)}.
\]
Again, every open subset of $M$ contains some $\mathscr U_i$. Hence $\Lambda$ is positively  $H^{-1/2-0}$-atomizable. We already noted above that $\Lambda\cap (-\Gamma)=\emptyset$. Finally, for each $p\in M$, the fiber $\Lambda_p$ is contained in the union of finitely many $1$-dimensional lines in $T_p^\ast M$ by  the same argument as in the proof of (2a), which again implies $\Lambda_p\neq \dot T_p^\ast M$ since $\dim M\geq 2$ is assumed. Thus also  \eqref{eq:nonmaximalcondclaim} holds and the proof is finished.
\end{proof}

\begin{corollary}\label{cor:existence1}
Let $\Upsilon\subset \dot T^*M$ be an open conical set with $\Upsilon_p\neq \emptyset$ for all $p\in M$ (implying $\dim M>0$). Then there is an $H^{-1/2-0}$-atomizable conical set $\Lambda\subset\Upsilon$. 

Moreover, if  $\Upsilon_p\cap (-\Upsilon_p)\neq \emptyset$ for all $p\in M$, then there is a positively $H^{-1/2-0}$-atomizable symmetric closed conical set $\Lambda=-\Lambda\subset\Upsilon$. 
\end{corollary}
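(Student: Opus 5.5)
The natural route is to apply Proposition \ref{prop:existence1} to the closed conical set $\Gamma:=-(\dot T^*M\setminus\Upsilon)$. This set is closed because $\Upsilon$ is open, and conical because $\Upsilon$ is conical. The condition $\Gamma\cap(-\Lambda)=\emptyset$ then says exactly that $\Lambda\cap(\dot T^*M\setminus\Upsilon)=\emptyset$, i.e.\ $\Lambda\subset\Upsilon$. So all that remains is to check the hypotheses of Proposition \ref{prop:existence1} for this $\Gamma$ in each dimension.

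In dimension $\geq 2$ the first claim requires \eqref{eq:nonmaximalcond}, namely $\Gamma_p\neq \dot T_p^*M$. This is equivalent to $\Upsilon_p\neq\emptyset$, which holds by assumption. Proposition \ref{prop:existence1}(2a) then gives an $H^{-1/2-0}$-atomizable $\Lambda\subset\Upsilon$.

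For the symmetric claim we need \eqref{eq:nonmaximalcondminus}, namely $\Gamma_p\cup(-\Gamma_p)\neq\dot T^*_pM$. Since $\Gamma_p\cup(-\Gamma_p)$ is the complement of $\Upsilon_p\cap(-\Upsilon_p)$ up to a sign, this is equivalent to $\Upsilon_p\cap(-\Upsilon_p)\neq\emptyset$. Proposition \ref{prop:existence1}(2b) then yields a positively $H^{-1/2-0}$-atomizable $\Lambda=-\Lambda\subset\Upsilon$.

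The one-dimensional case is the delicate step, because \eqref{eq:nonmaximalconddim1} requires either $\Gamma=\emptyset$ or that every fiber $\Gamma_p$ is nonempty and non-maximal, uniformly over $M$. That fails when $\Upsilon$ is the full fiber at some points but a single half-line at others. I would avoid Proposition \ref{prop:existence1}(1) altogether and argue directly on each connected component $M_0$, whose punctured cotangent bundle splits as $\Lambda_+\cup\Lambda_-$. Set $(M_0)_\pm:=\{p\mid(\Lambda_\pm)_p\subset\Upsilon_p\}$. Since $\Upsilon$ is open and each $\Upsilon_p$ is a nonempty union of half-lines, both sets are open and $(M_0)_+\cup(M_0)_-=M_0$. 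Around each point I then pick a chart interval contained in $(M_0)_+$ or $(M_0)_-$, with coordinate $t$ oriented so that $\R_{>0}dt$ lies in $\Upsilon$. The atoms are built as in Corollary \ref{cor:integrable}, from Proposition \ref{prop:KOatom} with $\WF$ in $\{\tau dt\mid\tau>0\}$, placed inside a basis set $\mathscr U_i\subset$ such a chart.

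One still has to define a \emph{closed} conical $\Lambda\subset\Upsilon$ containing all atom wavefront sets; this is the main obstacle. I would take small basis sets $\mathscr U_i$ with $\overline{\mathscr U_i}$ compact inside the corresponding open set $(M_0)_\pm$. Let $\Lambda$ be the union over $i$ of the closed half-line bundles over $\overline{\mathscr U_i}$, shrinking each atom's wavefront set accordingly. Closedness needs local finiteness, which I would obtain, as in the proof of Proposition \ref{prop:existence1}(2a), by organizing the choices through a locally finite atlas with finitely many parent sets per chart rather than through all basis sets. Because each closed half-line bundle lies over a compact subset of $(M_0)_\pm$, the union stays inside $\Upsilon$. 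The positive symmetric claim is vacuous in dimension one: Remark \ref{rem:atomiz-nonemptyfiber} forces $\Lambda=\dot T^*M$, and then $\Upsilon_p\cap(-\Upsilon_p)\neq\emptyset$ forces $\Upsilon=\dot T^*M$. In that case $\Lambda=\dot T^*M$ works via Example \ref{ex:maximal}, using Dirac atoms of order $-1/2-0$.
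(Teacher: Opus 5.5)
Your proposal is correct and follows the paper's proof essentially step by step. For $\dim M\geq 2$ you feed the same set $\Gamma=\dot T^*M\setminus(-\Upsilon)$ into Proposition \ref{prop:existence1}(2a),(2b); in dimension one you use the same open sets $M_\pm$, a locally finite family of parent sets with $\Lambda$ the union of the half-line bundles over their closures, and Example \ref{ex:maximal} for the positive case, where $\Upsilon=\dot T^*M$ is forced. The only cosmetic difference is that you build the one-dimensional atoms directly from Corollary \ref{cor:integrable}, whereas the paper formally applies Proposition \ref{prop:existence1}(1) on each parent set $V_j$, which reduces to the same construction.
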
 
\begin{proof}
First, suppose that $\dim M=1$. In this case, we cannot directly apply Proposition \ref{prop:existence1} (since we do not assume that $\Upsilon_p\neq \dot T^\ast_p M$ for all $p\in M$) and instead argue locally:  Decompose $\dot T^\ast M=\Gamma^+ \sqcup\Gamma^-$  such  that for every connected component $M_0\subset M$ the intersections $\dot T^\ast M_0\cap \Gamma^\pm$ are the two connected components of $\dot T^\ast M_0$. Define $M_\pm:=\{p\in M\,|\, \Gamma^\pm_p\subset \Upsilon_p \}$. 
Since $\Upsilon\subset \dot T^\ast M$ is an open conical set, the sets $M_\pm\subset M$ are open, and since $\Upsilon_p\neq \emptyset$ for all $p\in M$, one has $M=M_+\cup M_-$. Let $\{U_j\}_{j\in \N}$,  $\{V_j\}_{j\in \N}$ be locally finite families of open subsets of $M$ such that $\{U_j\}_{j\in \N}$ covers $M$, for each $j\in \N$ one has $\overline U_j\subset V_j$, and there is a sequence of signs $\sigma_j\in \{+,-\}$ such that $V_{j}\subset M_{\sigma_j}$ for all $j\in \N$. 

Fix some $j\in \N$ and consider the closed conical set $\Gamma_j:= \Gamma^{\sigma_j}|_{V_{j}}\subset \dot T^\ast V_{j}$. For every $p\in V_{j}$ it satisfies $\emptyset\neq(\Gamma_j)|_p\neq \dot T_p^\ast V_{j}$, so Proposition \ref{prop:existence1}(1) gives an $H^{-1/2-0}$-atomizable conical set $\Lambda_j\subset \dot T^\ast V_{j}$ such that $(\Lambda_j)_p\neq \dot T^\ast_p V_{j}$ for all $p\in V_{j}$ and $\Lambda_j\cap (-\Gamma_j)=\emptyset$. The atomizability implies that $(\Lambda_j)_p\neq \emptyset$ for all $p\in V_j$, which allows us to conclude that 
\bq
\Lambda_j = \Gamma^{\sigma_j}|_{V_{j}}.\label{eq:intersectionempty3636}
\eq
Now define
\[
\Lambda:=\bigcup_{j\in \N}\Gamma^{\sigma_j}|_{\overline U_j}\subset \dot T^\ast M.
\]
This is a closed conical set because the family $\{V_j\}_{j\in \N}$ is locally finite and $\overline U_j\subset V_j$ for all $j\in \N$. Moreover, since $V_{j}\subset M_{\sigma_j}$ for all $j\in \N$, one has $\Lambda\subset \Upsilon$. 

Choose a countable basis $\{\mathscr U_i\}_{i\in \N}$ of the topology of $M$ subordinate to the open cover $\{U_j\}_{j\in \N}$, and for each $i\in \N$, fix some $j(i)\in \N$ such that $\mathscr U_i\subset U_{j(i)}$. By \eqref{eq:intersectionempty3636}, the conical set $\Lambda_{j(i)}$ over the smooth manifold $V_{j(i)}\supset \mathscr U_i$ is $H^{-1/2-0}$-atomizable, so  there exists  $A_i\in \E'(V_{j(i)})\setminus \{0\}$, extended by zero to  $A_i\in \E'(M)$,  with $A_i\in H_\mathrm{comp}^{-1/2-0}(M)$, $\supp A_i\subset \mathscr U_i$, and $\WF(A_i)\subset \Lambda_{j(i)}\cap \dot T^\ast \mathscr U_i\subset \Lambda$. Every open subset of $M$ contains some $\mathscr U_i$, so $\Lambda$ is $H^{-1/2-0}$-atomizable, proving the first claim for $\dim M=1$.
 
If $\dim M\geq 2$, we apply Proposition \ref{prop:existence1} (2a) to $\Gamma:=\dot T^\ast M\setminus(-\Upsilon)$ and get an $H^{-1/2-0}$-atomizable conical set $\Lambda\subset \Upsilon$. This proves the first claim.

To prove the second claim, suppose that $\Upsilon_p\cap (-\Upsilon_p)\neq \emptyset$ for all $p\in M$. If $\dim M=1$, this means that $\Upsilon=\dot T^\ast M$, and then there is a positively $H^{-1/2-0}$-atomizable conical set $\Lambda\subset \Upsilon$ by Example \ref{ex:maximal}.  If $\dim M\geq 2$, the set  $\Gamma:=\dot T^\ast M\setminus(-\Upsilon)$ satisfies for $p\in M$
\[
\Gamma_p\cup (-\Gamma_p)=(\dot T^\ast_p M\setminus\Upsilon_p)\cup (\dot T^\ast_p M\setminus(-\Upsilon_p))=\dot T^\ast_p M\setminus (\Upsilon_p\cap (-\Upsilon_p))\neq \dot T_p^\ast M,
\]
so Proposition \ref{prop:existence1} (2b) applies to $\Gamma$. This provides a positively $H^{-1/2-0}$-atomizable conical set $\Lambda\subset\Upsilon$.
\end{proof}

\section{Proof of main results}\label{sec:mainproofs}

The following terminology is convenient to add extra technical strength to the main theorem stated below. The core statements, however, do not require this terminology.
\begin{definition}\label{def:controlfunction}Let $\Gamma\subset\dot T^*M$ be a closed conical set and let $s\in \R$. A function  $\mathsf{p}:\D'_{\Gamma}(M)\cap H^{s}_\mathrm{loc}(M)\to [0,\infty)$ 
is called a \emph{control function} if there exist seminorms  $q_1,\ldots,q_N\in\mathsf{SEM}(\Gamma,s)$, $N\geq 1$, and a constant $C>0$ such that
$\mathsf{p}\leq C(q_1+\cdots+q_N)$.
\end{definition} 

We now state and prove the main result of this paper:
\begin{theorem}\label{thm:main}Let $M$ be a smooth manifold, equipped with a smooth measure fixing an injection $L^1_\mathrm{loc}(M)\hookrightarrow \D'(M)$, and let $\Gamma,\Lambda\subset\dot T^*M$ be closed conical sets satisfying
\[
 \Gamma\cap(-\Lambda)=\emptyset,\qquad \pi(\Gamma)=\pi(\Lambda)=M,
\]
where $\pi:T^\ast M\to M$ is the canonical projection. Assume that, for some $s\in \R$, the conical set  $\Lambda$  is  $H^\bullet$-atomizable, where $\bullet=s$ or $\bullet=s-0$ (Definitions \ref{def:tame-atomizable}, \ref{def:uniformsminus0}). 

\begin{enumerate}
\item There exist distributions 
\[
u\in\D'_{\Gamma}(M)\cap H^{\dim M/2}_\mathrm{loc}(M),\qquad v\in\D'_{\Lambda}(M)\cap H^{\bullet}_\mathrm{loc}(M)
\]
such that
\[
\supp  u=\supp  v=M, \qquad uv=0.\vspace*{0.5em}
\]
\item If an $\eps>0$ (and in the case $\bullet=s-0$ another $\eps_0>0$), a smooth function $f\in \Cinft(M)$ with zero set of empty interior in $M$, and control functions
\begin{align*}
\mathsf p_\Gamma:&\;\D'_{\Gamma}(M)\cap H^{\dim M/2}_\mathrm{loc}(M)\to [0,\infty),\\
\mathsf p_\Lambda:&\begin{cases}\D'_{\Lambda}(M)\cap H^{s}_\mathrm{loc}(M)\hspace*{0.725em}\to [0,\infty),& \bullet =s,\\
\D'_{\Lambda}(M)\cap H^{s-\eps_0}_\mathrm{loc}(M)\to [0,\infty),& \bullet =s-0\end{cases}
\end{align*}
as in Definition \ref{def:controlfunction} are given, then we can achieve in (1) that
\bq
\mathsf p_\Gamma(u-f)<\eps,\qquad \mathsf p_\Lambda(v)<\eps.\label{eq:seminormseps}
\eq
\item If $\Gamma_p\cap (-\Gamma_p)\neq \emptyset$ for all $p\in M$, then we can achieve (1) with a positive distribution  $u$. If in  (2) the prescribed function $f$ satisfies $f\geq 0$, then we can achieve also \eqref{eq:seminormseps} with a positive distribution $u$.
\item If $\Lambda$ is positively $H^\bullet$-atomizable, then we can achieve (1) with a positive distribution $v$, and in the situation of (2) we can achieve \eqref{eq:seminormseps} with a positive $v$.
\item If $\Gamma_p\cap (-\Gamma_p)\neq \emptyset$ for all $p\in M$ and $\Lambda$ is positively $H^\bullet$-atomizable, then we can achieve (1)  with positive distributions $u,v$, and if in (2) the prescribed function $f$ satisfies $f\geq 0$, then we can also achieve \eqref{eq:seminormseps} with positive  $u,v$.
\end{enumerate} 
\end{theorem}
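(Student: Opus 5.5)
The proof will be an inductive thick/thin construction, as outlined in the introduction. Fix a countable basis $\{\mathscr U_j\}_{j\in\N}$ of the topology of $M$, an atomizing family $\cA$ for $\Lambda$ of order $\bullet$ (positive in cases (4),(5)), and a countable family of microlocal seminorms $\|\cdot\|^\Gamma_k$, $\|\cdot\|^\Lambda_k$ as in Lemma \ref{lem:countableseminorms1}. For (1) we take $f=1$ and trivial control functions, so (1) follows from (2). We construct inductively atoms $A_j\in\cA$ with $\supp A_j\subset \mathscr U_j$, functions $m_j\in\I^\infty(K_j)$ with $K_j:=\bigcup_{i\le j}\supp A_i$, and coefficients $\lambda_j\in\C\setminus\{0\}$ (real and positive in the positive cases). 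The limits are
\[
u=\lim_{N\to\infty}u_N,\quad u_N:=f\prod_{j=1}^N m_j,\qquad v=\sum_{j}\lambda_jA_j .
\]
The key device is that $m_j$ vanishes to infinite order on all earlier supports, not only on $\supp A_j$. Then $u_N$ lies in $\I^\infty(\supp A_i)$ for all $i\le N$, so by Lemma \ref{lem:flatkills} $u_NA_i=0$ for $N\ge i$.

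\textbf{Choosing $m_j$.} This is where Proposition \ref{prop:finiteflat} enters. At step $j$ we want $q(u_{j-1}m_j-u_{j-1})<\eps 2^{-j}$ for finitely many seminorms $q\in\mathsf{SEM}(\Gamma,\dim M/2)$. These are the seminorms dominating $\mathsf p_\Gamma$, together with the first $j$ Sobolev seminorms $\norm{\chi_l\cdot}_{H^{\dim M/2}(M_l)}$ and the first $j$ microlocal seminorms. By \eqref{eq:multiplicationmicrolocalnorms} and Lemma \ref{lem:multiplicationsobolev}, multiplication by the fixed smooth $u_{j-1}$ transfers this to a finite set $\mathsf Q$ of seminorms evaluated on $m_j-1$.

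Proposition \ref{prop:finiteflat} is stated for a compact set $K$ near one point $p$, whereas $K_j$ is spread out. The plan is therefore to choose $A_j$ supported in a tiny neighbourhood $U$ of a point $p\in\mathscr U_j$, given by Proposition \ref{prop:finiteflat} (using $\pi(\Gamma)=M$). We take $m_j\in\I^\infty(\supp A_j)$ close to $1$ in $\mathsf Q$, obtained from the proposition with $\eps$ small. Multiplying by $u_{j-1}$ already gives flatness on the earlier supports. So $m_j$ only has to be flat on $\supp A_j$, and the order of choices is: first $p$ and $U$ from the proposition, then $A_j\subset U\cap\mathscr U_j$ by atomizability.

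Positivity (3) uses the strengthened part of Proposition \ref{prop:finiteflat}, which gives $m_j\ge0$. The empty-interior clause keeps the zero set of $u_N$ of empty interior: it is a finite union of nowhere dense closed sets, since the $\supp A_i$ are closed with empty interior.

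\textbf{Convergence and full support.} The Cauchy estimates with summable errors give convergence in every Sobolev seminorm and every countable microlocal seminorm. By Corollary \ref{cor:convergencekl} and Lemma \ref{lem:countableseminorms1}, $u\in\D'_\Gamma\cap H^{\dim M/2}_{\mathrm{loc}}$, and $\mathsf p_\Gamma(u-f)<\eps$. For $v$ we pick $|\lambda_j|$ small enough that $\sum_j\lambda_jA_j$ converges in the relevant countable seminorm families (Sobolev of order $s$ or $s-1/l$) and $\mathsf p_\Lambda(v)<\eps$.

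For full supports we fix test functions $\varphi_j\in\CT(\mathscr U_j)$:
\begin{itemize}
\item For $u$, choose $\varphi_j$ with $\langle u_{j},\varphi_j\rangle\neq0$, possible since $u_j$ has zero set of empty interior. Later steps must keep $|\langle u_N-u_j,\varphi_i\rangle|<\tfrac12|\langle u_i,\varphi_i\rangle|$ for all $i\le j$. This is achievable because the $H^{\dim M/2}$-seminorm controls the pairing via \eqref{eq:globalpairing}, so we add these finitely many pairings to $\mathsf Q$.
\item For $v$, choose $\psi_j\in\CT(\mathscr U_j)$ with $A_j(\psi_j)\ne0$. Then pick $\lambda_j$ so small that $|\sum_{i>k}\lambda_iA_i(\psi_k)|<\tfrac12|\lambda_kA_k(\psi_k)|$ for all $k<j$. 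This is a finite set of constraints at each step, using the uniform Sobolev order to bound the pairings.
\end{itemize}
In the positive case all terms are positive measures and supports are automatic.

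\textbf{Product and main obstacle.} Finally, $uv=\lim_N u_Nv$ by Proposition \ref{prop:seqmult}. Each $u_Nv=\sum_{i>N}\lambda_iu_NA_i$ is a convergent sum in $\D'_\Lambda$ paired with a smooth function. I expect the main obstacle to be showing that this tends to $0$: it needs a bound on $u_N$ in $C^\infty$ seminorms uniformly in $N$, which we do not have. Instead I would use continuity of the Hörmander product $\D'_\Gamma\times\D'_\Lambda\to\D'$, hypocontinuous in the normal topology, or more simply write $uv=\lim_N u\,v_N$ with $v_N=\sum_{i\le N}\lambda_iA_i$ converging in $\D'_\Lambda$. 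Then $uA_i=\lim_N u_NA_i=0$ by Proposition \ref{prop:seqmult} applied with fixed $A_i$, and Proposition \ref{prop:seqmult} with roles swapped gives $uv=\lim_N uv_N=0$.
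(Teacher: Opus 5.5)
Your construction is essentially the paper's. You use the same order of choices: the point $p_j$, then the neighbourhood $U_j$ from Proposition \ref{prop:finiteflat}, then an atom in $U_j$, then $m_j$ flat only on $\supp A_j$. You also transfer seminorms through multiplication by $u_{j-1}$, control pairings via \eqref{eq:globalpairing} for the support of $u$, and use the same two-stage limit $uA_i=0\Rightarrow uv_N=0\Rightarrow uv=0$ via Proposition \ref{prop:seqmult}. Two slips: your opening request $m_j\in\I^\infty(\bigcup_{i\le j}\supp A_i)$ contradicts what you then correctly do, and in the $u$-support condition you mean $u_N-u_i$, not $u_N-u_j$.

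There is, however, a genuine gap in the full-support argument for $v$ in the non-positive case. You have
\[
\langle v,\psi_k\rangle=\langle v_{k-1},\psi_k\rangle+\lambda_kA_k(\psi_k)+\sum_{i>k}\lambda_iA_i(\psi_k),
\]
but you compare the tail only with $\tfrac12|\lambda_kA_k(\psi_k)|$, silently dropping the first term. That term need not vanish: the basis sets $\mathscr U_i$ overlap, so earlier atoms $A_i$ with $i<k$ may be supported where $\psi_k\neq 0$. Your constraints on $\lambda_k$ are only upper bounds on $|\lambda_k|$, so nothing prevents $\lambda_kA_k(\psi_k)$ from nearly or exactly cancelling $\langle v_{k-1},\psi_k\rangle$. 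In that case $|\langle v_k,\psi_k\rangle|$ can be $0$, or smaller than your tail bound, and $\langle v,\psi_k\rangle=0$ is not excluded.

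The paper closes exactly this hole in two ways. Condition \eqref{eq:singlevalue} makes $\lambda_k$ avoid the single value that would make $C_k:=\langle v_k,\psi_k\rangle$ vanish. Condition \eqref{eq:estim_lambdaSj} then bounds later coefficients relative to $|C_k|$ rather than $|\lambda_kA_k(\psi_k)|$.

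An equally good alternative fix uses that $\supp v_{k-1}$ is closed with empty interior, so $U_k\setminus\supp v_{k-1}$ is a non-empty open set. You may therefore choose the atom $A_k$ and $\psi_k$ supported there. Then $\langle v_{k-1},\psi_k\rangle=0$ and your inequality suffices.

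The analogous problem does not arise for $u$, because there you compare with $\langle u_i,\varphi_i\rangle$, which already contains all earlier factors. In the positive case your remark that the supports are automatic is correct. With this repair the proposal is complete.
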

\begin{proof} We first prove (1) and (2). To prove both claims at the same time, we make the convention that if no function $f$, no control functions, and no $\eps>0$ (resp.\ $\eps>0$ and $\eps_0>0$ in the case $\bullet=s-0$) are specified, we put $\mathsf p_\Lambda=\mathsf p_\Gamma=0$, $\eps=1$ (resp.\ $\eps=1$ and $\eps_0=1$), as well as $f=1$, i.e., the constant function $1$. 

 Let  $\cA_\Lambda\subset \E'_\Lambda(M)\cap H^\bullet_{\mathrm{comp}}(M)$ be an atomizing family for the conical set $\Lambda$.  We will  inductively construct $v$ as the limit $v=\lim_{j\to \infty}v_j$ in $\D'_\Lambda(M)$ of a sequence of linear combinations  of  atoms 
\[
v_{j}=\sum_{i=1}^j\lambda_iA_i,\qquad A_i\in \cA_\Lambda,\;\lambda_i\in \C,
\]
and $u$ as the limit $u=\lim_{j\to \infty}u_j$ in $\D'_\Gamma(M)$ of a sequence of products of smooth functions 
\[
u_j=fm_1\cdots m_j,\qquad m_i\in C^\infty(M),
\]
where the zero set of each $m_i$ has empty interior in $M$ and contains $\supp A_i$.  
\medskip

\noindent\textbf{Step 0: Preparation.} Choose a countable family $\{\|\cdot\|_j^\Lambda\}_{j\in \N}$ of microlocal seminorms on $\D'_\Lambda(M)$ as in Lemma \ref{lem:countableseminorms1}. If $\bullet=s$, denote by
\[
\mathsf{sem}(\Lambda,s):=\big\{\norm{\chi_l\cdot}_{H^{s}(M_l)},\|\cdot\|_j^\Lambda\,|\,j,l\in \N\big\}\subset \mathsf{SEM}(\Lambda,s)
\]
the countable subset  formed by the microlocal seminorms from the chosen countable family and the Sobolev seminorms of order $s$ that we already fixed in Section \ref{sec:summary}. 

If $\bullet=s-0$, we let the Sobolev order approach $s$ from below as $l\to \infty$:
\[
\mathsf{sem}(\Lambda,s-0):=\big\{\norm{\chi_l\cdot}_{H^{s-1/l}(M_l)},\|\cdot\|_j^\Lambda\,|\,j,l\in \N\big\}. 
\]
Furthermore, choose seminorms $q^\Lambda_1,\ldots,q^\Lambda_{N}\in\mathsf{SEM}(\Lambda,s)$ (resp.\ $\mathsf{SEM}(\Lambda,s-\eps_0)$ if $\bullet =s-0$) and a constant $C>0$ with $
\mathsf{p}_\Lambda\leq C(q^\Lambda_1+\cdots+q^\Lambda_N)=:q_{\mathsf{p}_\Lambda}$. 

For $\Gamma$, we analogously choose a countable family $\{\|\cdot\|_j^\Gamma\}_{j\in \N}$ of microlocal seminorms on $\D'_\Gamma(M)$ as in Lemma \ref{lem:countableseminorms1} and  define
\[
\mathsf{sem}(\Gamma,\dim M/2):=\big\{\norm{\chi_l\cdot}_{H^{\dim M/2}(M_l)},\|\cdot\|_j^\Gamma\,|\,j,l\in \N\big\}\subset \mathsf{SEM}(\Gamma,\dim M/2),
\]
and we choose seminorms $q^\Gamma_1,\ldots,q^\Gamma_{N'}\in\mathsf{SEM}(\Gamma,\dim M/2)$ and a constant $C'>0$ with $
\mathsf{p}_\Gamma\leq C'(q^\Gamma_1+\cdots+q^\Gamma_{N'})=:q_{\mathsf{p}_\Gamma}$. 
Now we fix sequences of seminorms
\[
\{q_k^{\Lambda}\,|\,k\in \N\},\qquad \{q_k^{\Gamma}\,|\,k\in \N\}
\]
with $q_1^{\Lambda}=q_{\mathsf{p}_\Lambda}$, $q_1^{\Gamma}=q_{\mathsf{p}_\Gamma}$, and such that $\{q_k^{\Lambda}\}_{k\geq 2}$ (resp.\ $\{q_k^{\Gamma}\}_{k\geq 2}$) enumerates the countable set  $\mathsf{sem}(\Lambda,\bullet)$ (resp.\ $\mathsf{sem}(\Gamma,\dim M/2)$).

Let $\{{\mathscr U}_j\}_{j\in \N}$ be a countable basis of the topology of $M$ such that $\overline {\mathscr U}_j\subset \mathrm{Int}(M_j)$ for all $j$, where the $M_j$ are the compact sets exhausting $M$ chosen in \eqref{eq:exhaustion}. For example, we can take small Euclidean balls in charts as the sets ${\mathscr U}_j$.    

\medskip
\noindent\textbf{Step 1: Base case and induction hypothesis.} Put $u_0:=f\in\D'_\Gamma(M)$ and $v_0:=0\in \D'_\Lambda(M)$. Choose some  $\phi_1\in \CT(\mathscr U_1)$ with $\eklm{u_0,\phi_1}=1$. 

Now let some $j\in \N$ be given. Assume that, for each $i\in \{0,\ldots,j-1\}$,  smooth functions $u_{i}\in \Cinft(M)\subset \D'(M)$ and distributions $v_{i}=\sum_{k\leq i}\lambda_k A_k\in \E'(M)$ have been constructed, where $\lambda_k\in \C$, $A_k\in \cA_\Lambda$, in such a way that the zero set of the smooth function $u_{i}$ has  empty interior in $M$ and one has $u_iv_i=0$. 
For each $i\in \{1,\ldots,j\}$, suppose that some $\phi_i\in C_c^\infty({\mathscr U}_i)$ has been chosen with $
\langle u_{i-1},\phi_i\rangle=1$. 

\medskip
\noindent\textbf{Step 2: Beginning of induction step.}  By \eqref{eq:multiplicationmicrolocalnorms}, Lemma \ref{lem:multiplicationsobolev} and the fact that $\mathsf p_\Gamma$ is dominated by a finite number of seminorms in  $\mathsf{SEM}(\Gamma,\dim M/2)$ there is a $\delta_j>0$ and a finite subset $\mathsf{Q}_j\subset\mathsf{SEM}(\Gamma,\dim M/2)$ such that, for all $h\in C^\infty(M)$, we have 
\bq
q(h)<\delta_j \quad \forall\;q\in \mathsf{Q}_j\implies q_k^\Gamma(u_{j-1}h)\leq  2^{-j-1}\eps  \quad\forall\; k\leq  j.
\label{eq:qhconditionj}
\eq
Moreover, by making $\mathsf{Q}_j$ larger and $\delta_j$ smaller, we can achieve that, for all $h\in C^\infty(M)$,
\bq
q(h)<\delta_j \quad \forall\;q\in \mathsf{Q}_j\implies |\langle u_{j-1}h,\phi_i\rangle| \leq  2^{-j-2}  \qquad\forall\; i\leq  j.
\label{eq:qhconditionj2}
\eq
Indeed, it suffices to add the seminorm $\norm{\chi_{j+1}\cdot}_{H^{\dim M/2}(M_{j+1})}$ to $\mathsf{Q}_j$, since
\[
|\langle u_{j-1}h,\phi_i\rangle|=|\langle h,u_{j-1}\phi_i\rangle|\leq C_{j}\norm{\chi_{j+1}h}_{H^{\dim M/2}(M_{j+1})}\|u_{j-1}\phi_i\|_{H^{-\dim M/2}(M_{j+1})}
\]
with $C_j>0$ independent of $h$, using \eqref{eq:globalpairing} and the fact that $\chi_{j+1}=1$ on  $\supp \phi_i\subset M_j$. 

\medskip
\noindent\textbf{Step 3: Induction step for the sequence $\{u_j\}_{j\in \N_0}$.} Choose a point $p_j\in {\mathscr U}_j$.  Since $\pi(\Gamma)=M$, we have $\Gamma_{p_j}\ne\emptyset$, so we can apply the first part of Proposition \ref{prop:finiteflat} to the finite family of seminorms $\mathsf{Q}_j$. This gives us an open set $U_j\subset {\mathscr U}_j$ containing $p_j$. By atomizability, choose an  atom $A_j\in\cA_\Lambda$ with $K_j:=\supp A_j\subset U_j$. Then  Proposition \ref{prop:finiteflat} provides  a function $m_j\in\I^{\infty}(K_j)$ with $ \mathrm{Int}(m_j^{-1}(\{0\}))\cap (M\setminus K_j)=\emptyset$ and $q(m_j-1)<\delta_j$ for all $q\in \mathsf{Q}_j$. Since $K_j$ has empty interior in $M$ (by Definition \ref{def:tame-atomizable}), it follows that the full zero set of $m_j$ has empty interior in $M$.   We now define
\[
u_j:=u_{j-1}m_j,  \qquad  d_j:=u_j-u_{j-1}=u_{j-1}(m_j-1).
\]
Then, applying \eqref{eq:qhconditionj} and \eqref{eq:qhconditionj2} to $h=m_j-1$, we have
\begin{align}
q_k^\Gamma(d_j)&\leq 2^{-j-1}\eps\qquad\forall\; k\leq  j,\label{eq:estim_ckGamma}\\
|\langle d_j,\phi_i\rangle| &\leq 2^{-j-2}\qquad\forall\; i\leq  j.\label{eq:Aiestim}
\end{align} 
The zero set of $u_j$ is the union of the zero sets of $u_{j-1}$ and $m_j$, which has empty interior in $M$. So we can choose a function $\phi_{j+1}\in C_c^\infty({\mathscr U}_{j+1})$ with $
\langle u_{j},\phi_{j+1}\rangle=1$. 

This finishes the induction step for the sequence $\{u_j\}_{j\in \N_0}$.

\medskip
\noindent\textbf{Step 4: Induction step for the sequence $\{v_j\}_{j\in \N_0}$.}  We now construct $v_j$. Choose $\psi_j\in C_c^\infty({\mathscr U}_j)$ with $\langle A_j,\psi_j\rangle\ne0$, this exists because $A_j$ is supported in ${\mathscr U}_j$ and non-zero.   Then choose $\lambda_j\in \C\setminus\{0\}$ such that
\begin{align}
q_k^\Lambda(\lambda_jA_j)&\leq  2^{-j-1}\eps   \qquad\forall\; k\leq  j,\label{eq:estim_ckLAmbda}\\
|\lambda_j\langle A_j,\psi_i\rangle|        &\leq 2^{-j-2}|\langle v_i,\psi_i\rangle|\qquad \forall\; i< j,\label{eq:estim_lambdaSj}\\
\lambda_j&\neq- \frac{\langle v_{j-1},\psi_j\rangle}{\langle A_j,\psi_j\rangle},\label{eq:singlevalue}
\end{align}
and define 
\[
v_j:=v_{j-1}+\lambda_jA_j.
\]
By \eqref{eq:singlevalue} we have $\langle v_j,\psi_j\rangle\ne0$, and we find
\[
u_jv_j=u_{j-1}m_j(v_{j-1}+\lambda_jA_j)=m_j\hspace*{-0.75em}\underbrace{u_{j-1}v_{j-1}}_{=0\text{ by ind.\ hyp.}}\hspace*{-0.5em}+\;\lambda_j u_{j-1}\hspace*{-1.75em}\underbrace{m_jA_j}_{=0\text{ by Lemma \ref{lem:flatkills}}}\hspace*{-1.5em}=0.
\]
This finishes the induction step for the sequence $\{v_j\}_{j\in \N_0}$.

\medskip
\noindent \textbf{Step 5: Convergence and control function estimates.} For each fixed $k\in \N$, the estimates \eqref{eq:estim_ckGamma} and \eqref{eq:estim_ckLAmbda}  give
\bq
\sum_{j=k}^\infty q_k^\Gamma(u_j-u_{j-1})<\eps,     \qquad   \sum_{j=k}^\infty q_k^\Lambda(v_j-v_{j-1})<\eps.\label{eq:sumsfinite}
\eq
This implies that the sequences $u_j$ and $v_j$ converge in $\D'_\Gamma(M)$ to limits
\[
u\in\D'_\Gamma(M)\cap H^{\dim M/2}_\mathrm{loc}(M), \qquad   v\in\D'_\Lambda(M)\cap H^{\bullet}_\mathrm{loc}(M).
\]
Indeed: Considering an index $k>1$ in \eqref{eq:sumsfinite} such that $q_k^\Gamma$ (resp.\ $q_k^\Lambda$) is a Sobolev norm of the shape $\norm{\chi_l\cdot}_{H^{\dim M/2}(M_l)}$ (resp.\ $\norm{\chi_l\cdot}_{H^{s}(M_l)}$ if $\bullet =s$ and $\norm{\chi_l\cdot}_{H^{s-1/l}(M_l)}$ if $\bullet=s-0$) implies that the sequence $u_j$ (resp.\ $v_j$) is Cauchy with respect to this Sobolev norm, hence  $\chi_l u_j$ (resp.\ $\chi_l v_j$) is convergent in the Hilbert space $H^{\dim M/2}(M_l)$ (resp.\ $H^{s}(M_l)$ if $\bullet =s$ and $H^{s-1/l}(M_l)$ if $\bullet =s-0$ ) as $j\to \infty$. With Corollary \ref{cor:convergencekl} it follows that $u_j$ (resp.\ $v_j$) converge in $\D'(M)$ to some $u\in \D'(M)$ (resp.\ $v\in \D'(M)$). The  uniqueness of the limit in $\D'(M)$ implies that for all $l\in \N$ one has  $\chi_lu_j \to \chi_l u$ in $H^{\dim M/2}(M_l)$ (resp.\ $\chi_l v_j\to \chi_l v$ in $H^{s}(M_l)$ if $\bullet =s$ and $\chi_l v_j\to \chi_l v$ in $H^{s-\eps'}(M_l)\;\forall\;\eps'\geq 1/l$ if $\bullet=s-0$). 

By the analogous argument for the microlocal seminorms enumerated by the remaining $k>1$ we conclude with Lemma \ref{lem:countableseminorms1} that the limits lie in $\D'_\Gamma(M)$ (resp.\ $\D'_\Lambda(M)$) and the convergences happen in $\D'_\Gamma(M)$ (resp.\ $\D'_\Lambda(M)$). Since for each $l$ we have $\chi_{l+1}=1$ on $M_l$ and the sets $M_l$ exhaust $M$, and $s-1/l$ converges to $s$ as $l\to \infty$, it follows that $u\in H^{\dim M/2}_\mathrm{loc}(M)$ and $v\in H^{\bullet}_\mathrm{loc}(M)$. Indeed, let us consider, for example, the case $\bullet=s-0$ explicitly: Given a cutoff function $\tau\in \CT(M)$ and $\eps'>0$, we have for large enough $l$ that $\supp \tau \subset M_{l-1}$ and $1/l<\eps'$. Then, since $\chi_l=1$ on $M_{l-1}$ and $
\tau v=\chi_l \tau v\in H^{s-1/l}(M_l)\subset H^{s-\eps'}(M_l)$, 
we see that $\tau v\in H^{s-\eps'}_\mathrm{comp}$. Since $\tau$ and $\eps'>0$ were arbitrary, it follows that $v\in H^{s-0}_\mathrm{loc}(M)$.   

Finally, recall that $q_1^\Gamma=q_{\mathsf p_\Gamma}$ and $q_1^\Lambda=q_{\mathsf p_\Lambda}$ are the seminorms dominating the given control functions. Thus the $k=1$ case of \eqref{eq:sumsfinite} implies, by telescoping, that $\mathsf p_\Gamma(u-u_0)<\eps$ and $\mathsf p_\Lambda(v-v_0)<\eps$. Indeed: For all $N\in \N$ we have
\begin{align*}
\mathsf p_\Lambda(v-v_0)\leq q_{\mathsf p_\Lambda}(v-v_0)&\leq q_{\mathsf p_\Lambda}(v-v_N)+\sum_{j=1}^N q_{\mathsf p_\Lambda}(v_j-v_{j-1}) \\
&\leq q_{\mathsf p_\Lambda}(v-v_N)+\eps\sum_{j=1}^N 2^{-j-1}\;\leq q_{\mathsf p_\Lambda}(v-v_N)+\frac{\eps}{2}.
\end{align*}
Now, the finitely many Sobolev components occurring in $q_{\mathsf p_\Lambda}$ are of the form $\norm{\chi_l\cdot}_{H^{s}(M_l)}$ if $\bullet =s$ and $\norm{\chi_l\cdot}_{H^{s-\eps_0}(M_l)}$ if $\bullet=s-0$. Let $l_0\in \N$ be larger than all the finitely many occurring $l$; if $\bullet=s-0$, choose $l_0$ also large enough that $1/l_0\leq \eps_0$. Then the estimates $\norm{\chi_l\cdot}_{H^{s}(M_l)}\leq C_{s,l,l_0}\norm{\chi_{l_0}\cdot}_{H^{s}(M_{l_0})}$ and $\norm{\chi_l\cdot}_{H^{s-\eps_0}(M_l)}\leq C_{s,l,l_0}'\norm{\chi_{l_0}\cdot}_{H^{s-1/l_0}(M_{l_0})}$ imply that all Sobolev components in $q_{\mathsf p_\Lambda}$, evaluated on  $v-v_N$,  converge to   zero as $N\to \infty$. Furthermore, since $v_N\to v$ in $\D'_\Lambda(M)$, all microlocal components in $q_{\mathsf p_\Lambda}$ converge to zero when evaluated on  $v-v_N$. Thus $q_{\mathsf p_\Lambda}(v-v_N)\to 0$ as $N\to \infty$, and consequently $\mathsf p_\Lambda(v-v_0)<\eps$. We argue similarly for  $\mathsf p_\Gamma$.  Since $u_0=f$ and $v_0=0$, this proves \eqref{eq:seminormseps}.

\medskip
\noindent\textbf{Step 6: Full support.} We verify that $u$ and $v$ have full support.  For each  fixed $i\in \N$, we have $
\langle u,\phi_i\rangle =1+\sum_{j\geq  i}\langle d_j,\phi_i\rangle$. 
By \eqref{eq:Aiestim}, the sum is bounded by $\sum_{j\geq  i}2^{-j-2}< 1$, so $\langle u,\phi_i\rangle\ne0$. Similarly, with $C_i:=\langle v_i,\psi_i\rangle\ne0$, we have $
\langle v,\psi_i\rangle=C_i+\sum_{j>i}\lambda_j\langle A_j,\psi_i\rangle$, 
and by \eqref{eq:estim_lambdaSj} the sum is bounded by $|C_i|\sum_{j>i}2^{-j-2}<|C_i|$. Hence $\langle v,\psi_i\rangle\ne0$.  

Since $\{{\mathscr U}_i\}_{i\in \N}$ is a basis of the topology of $M$ and the test functions $\phi_i,\psi_i$ are supported in ${\mathscr U}_i$,  we infer that $u$ and $v$ are non-zero on every non-empty open subset of $M$. Therefore $\supp u=\supp v=M$.

\medskip
\noindent\textbf{Step 7: Vanishing of the product.} We finally consider the product $uv$. First, fix some $j\in \N_0$. Since $u_jv_j=0$, we find for all $N\geq  j$ that $u_Nv_j=m_N\cdots m_{j+1}u_jv_j=0$. 
By Proposition~\ref{prop:seqmult}, passing to the limit $N\to \infty$ gives $uv_j=0$.  

Since the above holds for every $j\in \N$, another application of Proposition~\ref{prop:seqmult} (with $\Lambda$ and $\Gamma$ swapped compared to the first application) gives $uv=\lim_{j\to\infty}uv_j=0$. 
This finishes the proof of the claims (1) and (2). 

\medskip
\noindent\textbf{Step 8: Positive cases.} Let us now prove (3): Assume that $\Gamma_p\cap (-\Gamma_p)\neq \emptyset$ for all $p\in M$. Then we make small adjustments to some of the above arguments:  In Step 1, we observe that $u_0=f$ is positive if, and only if, $f\geq 0$.  In Step 3, we apply the strengthened part of Proposition \ref{prop:finiteflat}, providing $m_j\in \I^{\infty,>0}(K_j)$. In particular, $m_j\geq 0$. Thus, if $u_{j-1}$ was positive, then also $u_j=m_ju_{j-1}$ is positive. By induction, all $u_j$ are positive (provided that $u_0\equiv f\geq 0$ or no $f$ is specified, in which case $u_0\equiv 1$ is positive), and consequently their limit $u$ is positive: If $\varphi\in \CT(M,\R)$ satisfies $\varphi\geq 0$, then $u(\varphi)=\lim_{j\to \infty}u_j(\varphi)\geq 0$ because $u_j(\varphi)\geq 0$ for all $j$. 

Similarly, we prove (4):\label{step8} Assume that $\Lambda$ is positively $H^\bullet$-atomizable.  Then again we make small adjustments:  In Step 1, we observe that $v_0=0$ is positive.  In Step 3, the positive atomizability of $\Lambda$ allows us to choose  $A_j$ positive. In Step $4$, instead of $\lambda_j\in \C\setminus\{0\}$, we take $\lambda_j\in (0,\infty)$ to ensure that, if $v_{j-1}$ was positive, also $v_j=v_{j-1}+\lambda_j A_j$ is positive. Then $v=\lim_{j\to \infty}v_j$ is positive by the analogous elementary argument as for (3).  Although not necessary, we may in addition simplify the full support argument in Step 6 by choosing the test functions $\psi_i$  such that $\psi_i\geq 0$ and $\langle A_i,\psi_i\rangle>0$ (rather than just $\langle A_i,\psi_i\rangle\ne0$), which in Step 6 gives $\eklm{v,\psi_i}>0$ without invoking \eqref{eq:estim_lambdaSj} because only positive terms are summed.

Finally, to prove (5), we make the above adjustments for (3) and (4) simultaneously. This finishes the proof of Theorem \ref{thm:main}. 
\end{proof}

\subsection{Proofs of other main theorems}\label{sec:remainingthms}

Let us now derive Theorems \ref{thm:intro1}, \ref{thm:intro2a}, \ref{thm:intro2} and \ref{thm:intro3} stated in the introduction from Theorem \ref{thm:main}, Proposition \ref{prop:existence1}, and Corollary \ref{cor:existence1}. For the second part of Theorem \ref{thm:intro1} on the absence of continuity we require two additional technical lemmas.

\begin{lemma}\label{lem:nearidentity}
Let $s\geq0$, and let
$\Gamma,\Lambda\subset\R^n\setminus\{0\}$ be closed cones with $
  \Gamma\cap(-\Lambda)=\emptyset$.  
There exists a constant $\eps_0=\eps_0(s,\Gamma,\Lambda)>0$ such that the following holds:  Let $U\subset\R^n$ be open, $a\in L^\infty(U)$, $w\in H_\mathrm{comp}^{-s}(U)$. Suppose that
\[
\WF(a)\subset U\times\Gamma,  \qquad \WF(w)\subset U\times\Lambda,\qquad \|a-1\|_{L^\infty(U)}<\eps_0.
\]
Then $aw=0$ implies  $w=0$. 
\end{lemma}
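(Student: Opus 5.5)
The plan is to use the Neumann-series reflex $w=aw+(1-a)w$. Under the hypothesis $aw=0$ this gives $w=(1-a)w=:bw$, where $b:=1-a$ satisfies $\|b\|_{L^\infty(U)}<\eps_0$ and $\WF(b)=\WF(a)\subset U\times\Gamma$ (the constant $1$ contributes no wavefront set). Since the hypotheses are local and $w$ has compact support in $U$, I may replace $a$ by $\chi a$ for a cutoff $\chi\in\CT(U)$ equal to $1$ near $\supp w$, and view everything on $\R^n$.

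It then suffices to find a norm $\|\cdot\|_X$, finite on $w$, with
\[
\|bw\|_X\leq C\,\|b\|_{L^\infty}\,\|w\|_X+R(b,w),
\]
where $C$ depends only on $(s,\Gamma,\Lambda)$ and $R$ is a remainder to be absorbed. With $\eps_0<1/(2C)$ this yields $\|w\|_X\leq\frac12\|w\|_X+R$, and I would then show $R$ is controlled by a strictly weaker quantity, to be iterated.

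To build $X$, I would choose closed cones $\Lambda'\supset\Lambda$ and $\Gamma'\supset\Gamma$ containing $\Lambda,\Gamma$ in their interiors and still satisfying $\Gamma'\cap(-\Lambda')=\emptyset$. I then take a degree-$0$ symbol $\vartheta$ equal to $1$ on $\Lambda'$ and supported in a slightly larger cone, and set
\[
\|w\|_X^2:=\int\eklm{\xi}^{-2s}\,|\vartheta(\xi)\,\hat w(\xi)|^2\,d\xi+\|(1-\vartheta)\hat w\|_{\text{rapid}}^2 .
\]
Here the second term is finite because $\WF(w)\subset U\times\Lambda$ and $w$ is compactly supported, and the first is finite since $w\in H^{-s}$.

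On the Fourier side, $\widehat{bw}=C_0\,\hat b\ast\hat w$. I would split $\hat b$ into a part concentrated in $\Gamma'$ (plus a low-frequency part) and a rapidly decaying remainder outside $\Gamma'$; the latter comes from $\WF(b)\subset\Gamma$ after localizing. Likewise $\hat w$ splits into its $\vartheta$-part and a rapidly decaying part. The geometric input is the same separation estimate as \eqref{eq:89502890295}: for $\zeta\in\Gamma'$ and $\eta\in\Lambda'$ one has $|\zeta+\eta|\geq c(|\zeta|+|\eta|)$. Hence, on the interacting region, the output frequency $\xi=\zeta+\eta$ satisfies $|\xi|\gtrsim|\eta|$, so $\eklm{\xi}^{-2s}\lesssim\eklm{\eta}^{-2s}$ because $s\geq0$. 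This is exactly where the sign condition $\Gamma\cap(-\Lambda)=\emptyset$ and $s\geq0$ enter: a negative Sobolev weight can only improve when frequencies are pushed outward. All cross terms involving a rapidly decaying factor go into $R$, and are bounded using $\|b\|_{L^\infty}$ together with finitely many microlocal seminorms of $b$ and $w$.

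The main obstacle is the principal term. I need $\|\eklm{D}^{-s}\vartheta(D)(b\,g)\|_{L^2}\leq C\|b\|_{L^\infty}\|\eklm{D}^{-s}g\|_{L^2}$ for $g$ microlocalized in $\Lambda'$ and $b$ microlocalized in $\Gamma'$. For $s=0$ this is just $\|bg\|_{L^2}\leq\|b\|_\infty\|g\|_2$. For $s>0$, a Schur test on the kernel $\eklm{\xi}^{-s}|\hat b(\xi-\eta)|\eklm{\eta}^{s}$ does not give an $L^\infty$ bound. My first attempt is to write $g=\eklm{D}^{s}\tilde g$ with $\tilde g\in L^2$ and to study the conjugated operator $\eklm{D}^{-s}\vartheta(D)\,b\,\eklm{D}^{s}$ via a Littlewood--Paley decomposition. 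The outward-pushing property says that each dyadic block of $\tilde g$ interacts with $b$ only into output blocks of comparable or higher frequency, so the weight ratio there is $\lesssim1$. For each block I then use the plain $L^2$ bound by $\|b\|_\infty$ (after projecting $b$ to its $\Gamma'$-part, whose $L^\infty$ norm is controlled by $\|b\|_\infty$ up to the absorbed errors). The remaining almost-orthogonality summation is the delicate point. If it fails, I would fall back to proving the contraction only for $w$ in a fixed finite-dimensional-codimension subspace after removing low frequencies, and treat the low-frequency part separately using the compact support of $w$.
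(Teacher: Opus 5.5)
Your reduction to $w=bw$ with $b=\chi(1-a)$ matches the paper, and so does your geometric input: the cone separation forces output frequency $\gtrsim$ input frequency, which is favourable for $\eklm{\xi}^{-s}$ when $s\geq0$. But the argument does not close, and the problem goes beyond the almost-orthogonality summation you flag.

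\textbf{The split of $b$.} Your step ``project $b$ to its $\Gamma'$-part, whose $L^\infty$ norm is controlled by $\|b\|_{L^\infty}$ up to the absorbed errors'' fails. A conic Fourier cutoff $\vartheta_{\Gamma'}(D)$ is not bounded on $L^\infty$. Hence neither $\vartheta_{\Gamma'}(D)b$ nor the discarded smooth piece $b-\vartheta_{\Gamma'}(D)b$ is controlled by $\|b\|_{L^\infty}$. Already for smooth $b$ (so $\WF(b)=\emptyset\subset\Gamma$), both pieces can be huge while $\|b\|_{L^\infty}<\eps_0$. Multiplication by the discarded piece is also not lower order. Where input and output frequencies are comparable, it acts at full strength on $\|w\|_X$ with a coefficient that is not small, so it cannot be put into $R$. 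The split must be made in the interaction variables, keeping $b$ whole. The paper bounds the principal part $\widehat{bw_R}|_{B_R}$, with $\widehat{w_R}=1_{B_{CR}}\widehat w$, directly by $\|b\|_{L^\infty}$ in unweighted $L^2$. The rest (output $|\xi|\leq R$, input $|\eta|>CR$) is $\mathcal O(R^{-\infty})$ by the cone separation, with constants depending on $b$ and $w$.

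\textbf{Absorption in a fixed norm.} Even with that split, absorption in one fixed norm does not give $w=0$. The inward-transported part has constants that depend on $b$ and are not controlled by $(s,\Gamma,\Lambda,\|b\|_{L^\infty})$. For instance, take smooth $b=\eps\chi(x)e^{ix\cdot\zeta_0}$ with $\zeta_0\in-\Lambda$. It moves $\hat w$-mass near $-\zeta_0\in\Lambda$ to frequencies near $0$, multiplying any $\eklm{\xi}^{-s}$-weighted norm by roughly $\eps|\zeta_0|^s$. Such terms are bounded only by $C_b\|w\|_{H^{-M}}$. So the best you obtain is an a priori estimate $\|w\|_X\leq 2C_b\|w\|_{H^{-M}}$, which many non-zero distributions satisfy. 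Re-running the argument at order $-M$ does not help, because the required smallness of $\|b\|_{L^\infty}$ worsens as $M$ grows.

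\textbf{The missing idea.} What is needed is a scale-by-scale recursion for the monotone, unweighted quantity $F_w(R)=\|\hat w\|_{L^2(B_R)}$. From $w=bw$ one gets $F_w(R)\leq\|b\|_{L^\infty}F_w(CR)+C_NR^{-N}$. Iterate this $j$ times and use the a priori growth $F_w(C^jR)=\mathcal O(C^{js}R^s)$, which comes from $w\in H^{-s}$. With $\eps_0=\frac12C^{-s}$ this gives $F_w(R)=\mathcal O(R^{-N})$. Since $F_w$ is non-decreasing, this forces $F_w\equiv0$. The Sobolev weight then enters only through the growth of $F_w$, which $\|b\|_{L^\infty}^j$ beats; this replaces your weighted Littlewood--Paley estimate. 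The $b$-dependent constants $C_N$ are harmless, because only their decay in $R$ is used.
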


\begin{proof}\footnote{As mentioned in the AI use disclosure on p.~\pageref{sec:Aiuse}, this proof is the author's revised version of arguments suggested by the  LLM  GPT-5.6 Sol.}
Choose closed cones $\Gamma_1,\Lambda_1\subset\R^n\setminus\{0\}$ such that $
\Gamma\subset\operatorname{Int}\Gamma_1$,  $\Lambda\subset\operatorname{Int}\Lambda_1$, and $\Gamma_1\cap(-\Lambda_1)=\emptyset$. 
Since $\Gamma_1\cap S^{n-1}$ and $(-\Lambda_1)\cap S^{n-1}$ are compact and disjoint, there is a constant $C_0\geq1$ such that
\bq\label{eq:coneseparation}
|\alpha|+|\beta|\leq C_0|\alpha+\beta|  \qquad \forall\;\alpha\in\Gamma_1, \beta\in\Lambda_1.
\eq
Fix $C>1$ such that
\bq\label{eq:Cchoice}
  2C-1>C_0,
\eq
and put
\bq\label{eq:epsilonchoice}
  \eps_0:= C^{-s}/2.
\eq
Choose $\chi\in C_c^\infty(U)$ such that $0\leq\chi\leq1$ and $\chi=1$ near $\supp w$, and set $b:=\chi(1-a)$.
After extending $b$ by zero to $\R^n$, we have 
$b\in L^\infty_{\mathrm{comp}}(\R^n)$,
\bq\label{eq:bsmall}
 \|b\|_{L^\infty(\R^n)}  \leq\|1-a\|_{L^\infty(U)}  <\eps_0,
\eq
\bq\label{eq:bwf}
\WF(b)\subset\R^n\times\Gamma.
\eq
Now assume that $aw=0$. Then 
\bq\label{eq:wbw}
bw=\chi w-\chi(aw)=\chi w =w.
\eq
We will show that \eqref{eq:bsmall} and \eqref{eq:wbw} imply $w=0$ by separately considering the Fourier transforms of $bw$ and $w$ and using only later that $bw=w$. 

For the first estimate, we extend $w$ by zero to $w\in H^{-s}_\mathrm{comp}(\R^n)$ and recall that the Fourier transform of $bw\in \E'(\R^n)$ is given by
\bq\label{eq:fullconvolution}
\widehat{bw}(\xi)  =(2\pi)^{-n}\int_{\R^n}       \widehat b(\xi-\eta)\widehat w(\eta)\,d\eta,\qquad \xi\in \R^n,
\eq
where the integral converges by \eqref{eq:bwf}, $\WF(w)\subset \R^n\times\Lambda$, and $\Gamma\cap(-\Lambda)=\emptyset$ (see \cite[Def.~2, Thm.~13]{smoothWF}). To study the convolution \eqref{eq:fullconvolution} it is  convenient  to define for $R\geq1$ an auxiliary function $w_R\in L^2(\R^n)\cap \Cinft(\R^n)$ by
\bq\label{eq:wR}
\widehat{w_R}:=1_{B_{CR}}\widehat w,
\eq
where $C>0$ is the constant from \eqref{eq:Cchoice}, we use the simplified notation $B_r:=B_r(0)$ for the open ball of radius $r$ around $0$ in $\R^n$, and $1_{B_{CR}}$ is the characteristic function of $B_{CR}$.    Since $\widehat{w_R}\in L^1(\R^n)\cap L^2(\R^n)$ and $b\in L^\infty_\mathrm{comp}(\R^n)\subset L^1(\R^n)\cap L^\infty(\R^n)$, Fourier inversion gives
\bq\label{eq:truncconvolution}
\widehat{bw_R}(\xi)  =(2\pi)^{-n}\int_{|\eta|\leq CR}  \widehat b(\xi-\eta)\widehat w(\eta)\,d\eta.
\eq
Let us estimate the high-frequency part of the convolution \eqref{eq:fullconvolution}: We claim that
\bq\label{eq:tail}
 \Big\|\xi\mapsto   \int_{|\eta|>CR}\widehat b(\xi-\eta)\widehat w(\eta)\,d\eta
 \Big\|_{L^2(B_R)}=\mathcal O(R^{-\infty})
\eq
as $R\to \infty$. To see this, let $\xi \in \R^n$ with $|\xi|\leq R$, split the integration domain into
\begin{align*}
E_1 :=\{\eta\in\R^n\,|\,|\eta|>CR,\ \eta\not\in\Lambda_1\},\quad 
E_2 :=\{\eta\in\R^n\,|\,|\eta|>CR,\ \eta\in\Lambda_1\}.
\end{align*}
On $E_1$, $\widehat w$ decreases rapidly, uniformly outside $\Lambda_1$, while
$|\widehat b|\leq\|b\|_{L^1}$.  Hence the integral over $E_1$ is
$\mathcal O(R^{-K})$ for every $K\in \N$, uniformly for $|\xi|\leq R$.

If $\eta\in E_2$, then $\xi-\eta\notin\Gamma_1$: otherwise \eqref{eq:coneseparation} with $\alpha=\xi-\eta$ and $\beta=\eta$ would imply
$|\xi-\eta|+|\eta|\leq C_0|\xi|\leq C_0R$, 
whereas $ |\xi-\eta|+|\eta|  \geq 2|\eta|-|\xi|  >(2C-1)R$, 
contradicting \eqref{eq:Cchoice}.  Moreover, for $\eta\in E_2$, we have $
|\xi-\eta|\geq|\eta|-|\xi|  \geq(1-C^{-1})|\eta|$. 
Thus $\widehat b(\xi-\eta)$ is rapidly decaying in $\eta$ on $E_2$, uniformly for $|\xi|\leq R$. That is, for each $L>0$ there is a $C_L>0$ such that for all $\xi \in \R^n$ with $|\xi|\leq R$ and all $\eta\in E_2$ one has $
|\widehat b(\xi-\eta)| \leq C_L\langle\eta\rangle^{-L}$. 
Hence, by applying Cauchy-Schwarz in $L^2(\R^n)$ and using that $w\in H^{-s}(\R^n)$, we get
\begin{align*}
 \int_{E_2}  |\widehat b(\xi-\eta)\widehat w(\eta)|\,d\eta
&\leq C_L\int_{|\eta|>CR}     \langle\eta\rangle^{-L+s}
     \langle\eta\rangle^{-s}|\widehat w(\eta)|\,d\eta\\
&\leq C_L \left(\int_{|\eta|>CR}\langle\eta\rangle^{-2L+2s}\,d\eta\right)^{1/2}
 \|\langle\eta\rangle^{-s}\widehat w\|_{L^2(\R^n)}=\mathcal O(R^{-L+s+n/2}).
\end{align*}
To pass from these uniform pointwise estimates in $B_R$ to an $L^2(B_R)$-estimate for the integrals over $E_1$ and $E_2$, respectively, it suffices to integrate the squares of the estimated expressions over $B_R$ and to take into account that $ \mathrm{vol} (B_R)^{1/2}=\mathcal O(R^{n/2})$. Since $K,L>0$ can be chosen arbitrarily large, this proves \eqref{eq:tail}.

Next, we  turn to the Fourier transform $\widehat w$. Define for $R\geq 1$ the auxiliary function $
F_w(R):=\|\widehat w\|_{L^2(B_R)}$. 
Note that this function is non-decreasing, and since $w\in H^{-s}(\R^n)$, one has  $F_w(R)=\mathcal O(R^s)$ as $R\to \infty$, i.e., there is a constant  $C_w>0$ such that 
\bq
F_w(R)\leq C_w R^s\quad \forall\; R\geq 1.\label{eq:Fgrowth}
\eq
Now we use that $w=bw$ by \eqref{eq:wbw}, and splitting \eqref{eq:fullconvolution} at
$|\eta|=CR$, \eqref{eq:tail} and
\eqref{eq:truncconvolution} imply 
\[
\widehat w|_{B_R}=\widehat{bw_R}|_{B_R}+r_R\quad \text{in } L^2(B_R),
\]
where $r_R\in L^2(B_R)$ satisfies that for every $N\in \N$ there is a $C_N>0$ with $ \|r_R\|_{L^2(B_R)}\leq C_NR^{-N}$ for all $R\geq 1$. 
We can therefore estimate (using Plancherel at the first equality and recalling \eqref{eq:wR} at the last equality):
\begin{align}
 F_w(R)  \leq \|\widehat{bw_R}\|_{L^2(\R^n)}+C_NR^{-N} &=(2\pi)^{n/2}\|bw_R\|_{L^2(\R^n)}+C_NR^{-N}\nonumber\\
 &\leq \|b\|_{L^\infty(\R^n)}\|\widehat{w_R}\|_{L^2(\R^n)}+C_NR^{-N}\nonumber\\
 &=\|b\|_{L^\infty(\R^n)}F_w(CR)+C_NR^{-N}.  \label{eq:Frecursion}
\end{align}
Set $\eps:=\|b\|_{L^\infty(\R^n)}<\eps_0$.  Iterating \eqref{eq:Frecursion} gives for all $j\in \N$
\bq
 F_w(R)\leq \eps^j F_w(C^jR) +C_NR^{-N}\sum_{k=0}^{j-1}(\eps C^{-N})^k.\label{eq:Fiteration}
\eq
By \eqref{eq:Fgrowth}, and recalling from \eqref{eq:epsilonchoice} that $\eps_0=\frac12 C^{-s}$, we get
\[
  \eps^jF_w(C^jR)  \leq C_w R^s(\eps C^s)^j \leq C_w R^s 2^{-j} \longrightarrow 0\quad \text{ as }j\to \infty.
\]
On the other hand, the geometric sum in \eqref{eq:Fiteration} is bounded by $(1-\eps C^{-N})^{-1}$. Therefore, passing to the limit $j\to \infty$ and putting $C_{N}':= C_N (1-\eps C^{-N})^{-1}>0$, we arrive at
\bq
  F_w(R)\leq C'_{N} R^{-N}  \quad \forall\; R\geq 1.\label{eq:Fdecay}
\eq
Since $F_w$ is non-decreasing and non-negative, choosing $N=1$ in \eqref{eq:Fdecay} forces $F_w(R)=0$ for all $R\geq 1$.  Hence $\widehat w=0$ and consequently $w=0$. This finishes the proof.
\end{proof}

\begin{lemma}\label{lem:supportdetection}Let $u\in C(M)$ and $v\in\D'(M)$ be such that $  \WF(u)\cap(-\WF(v))=\emptyset$. Then one has
\bq\label{eq:support-identity}
  \supp(uv)\cap\{p\in M\,|\,u(p)\neq 0\}
  =\supp v\cap\{p\in M\,|\,u(p)\neq 0\}.
\eq
In particular,
\bq\label{eq:support-zero-product}
 uv=0\implies \supp v\subset u^{-1}(\{0\}).
\eq
\end{lemma}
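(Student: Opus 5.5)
The inclusion ``$\subset$'' in \eqref{eq:support-identity} is immediate, since $\supp(uv)\subset\supp u\cap\supp v$ by \cite[Thm.~8.2.10]{HormanderI}. The substance is the reverse inclusion: if $p\in\supp v$ with $u(p)\neq0$, then $p\in\supp(uv)$. I would argue by contradiction. Suppose $u(p)\neq 0$, $p\in\supp v$, and $uv=0$ on some open neighborhood $U_0$ of $p$. Then \eqref{eq:support-zero-product} follows at once by taking $uv=0$ globally.

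\textbf{Step 1: localize and normalize.} Work in a chart around $p$, identified with an open subset of $\R^n$. Put $c:=u(p)\neq0$. By continuity of $u$, for every $\eps>0$ there is a ball $U\subset U_0$ around $p$ with $\|u/c-1\|_{L^\infty(U)}<\eps$. Set $a:=u/c$ on $U$. Then $\WF(a)=\WF(u)$ over $U$.

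Choose $\chi\in\CT(U)$ with $\chi=1$ near $p$, and put $w:=\chi v\in\E'(U)$. Then $w$ is compactly supported, so it lies in $H^{-s}_\mathrm{comp}(U)$ for some $s\geq0$. Moreover $\WF(w)\subset\WF(v)$, and $w\neq0$ because $p\in\supp v$ and $\chi=1$ near $p$. The product satisfies $aw=c^{-1}\chi(uv)=0$ on $U$. This uses that multiplication by the smooth function $\chi$ commutes with the Hörmander product, by associativity for a smooth factor.

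\textbf{Step 2: reduce to fixed cones and apply Lemma \ref{lem:nearidentity}.} Lemma \ref{lem:nearidentity} needs fixed closed cones $\Gamma_0,\Lambda_0\subset\R^n\setminus\{0\}$ with $\Gamma_0\cap(-\Lambda_0)=\emptyset$, $\WF(a)\subset U\times\Gamma_0$ and $\WF(w)\subset U\times\Lambda_0$. I would take $\Lambda_0:=\WF_p(v)$ enlarged slightly, and $\Gamma_0$ a closed cone disjoint from $-\Lambda_0$ containing a neighborhood of $\WF_p(u)$. Such cones exist because $\WF_p(u)\cap(-\WF_p(v))=\emptyset$ and the wavefront sets are closed; compactness of the unit sphere makes the slight enlargements compatible. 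Closedness of the wavefront sets then gives these inclusions over a small enough ball $U$, after shrinking the supports of $\chi$ accordingly.

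The order matters here. First fix the cones and $U$. Then take $s$ to be the Sobolev order of $\chi v$, where $\chi$ may still be shrunk without raising $s$, since $\chi v$ stays in $H^{-s}$ for any smaller cutoff. This determines $\eps_0(s,\Gamma_0,\Lambda_0)$, and only then do we shrink $U$ further so that $\|a-1\|_{L^\infty(U)}<\eps_0$. Shrinking $U$ preserves the cone inclusions. Lemma \ref{lem:nearidentity} then gives $w=0$, which is a contradiction.

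\textbf{Main obstacle.} The delicate point is this ordering of choices in Step 2, together with checking that $a$ is only required to be $L^\infty$ with controlled wavefront set, which is exactly what Lemma \ref{lem:nearidentity} allows. A secondary point is justifying $\chi(uv)=u(\chi v)$, which follows from the definition of the product via pullback of tensor products.
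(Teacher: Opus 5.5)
Your proposal is correct and follows essentially the same route as the paper: localize in a chart, fix separating closed cones first, then fix the Sobolev order $s$ via a cutoff that is $1$ on a fixed neighborhood, only then take $\eps_0(s,\Gamma,\Lambda)$ from Lemma \ref{lem:nearidentity}, shrink so that $u/u(p)$ is $\eps_0$-close to $1$, and conclude $\chi v=0$ near $p$. The paper argues contrapositively (showing $p\notin\supp v$) and uses an auxiliary cutoff $\rho$ for the step you describe as ``shrinking $\chi$ without raising $s$'', but the ordering of choices and the key ingredients are identical.
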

\begin{proof}\footnote{As mentioned in the AI use disclosure on p.~\pageref{sec:Aiuse}, this proof is the author's revised version of arguments suggested by the  LLM  GPT-5.6 Sol.} The inclusion $\supp(uv)\subset\supp v$ is elementary, so we only need to prove that the left-hand side of \eqref{eq:support-identity} contains the right-hand side. To this end, let $
 p\in M\setminus\supp(uv)$ be such that $u(p)\neq 0$. 
We need to prove that $p\notin\supp v$. Choose a relatively compact chart $U\subset M$ around $p$ on which
$uv=0$, and identify $U$ with an open subset of $\R^n$.  After shrinking $U$, there are closed cones $\Gamma,\Lambda\subset\R^n\setminus\{0\}$ such that
\bqn
 \WF(u|_U)\subset U\times\Gamma, \qquad
  \WF(v|_U)\subset U\times\Lambda,  \qquad \Gamma\cap(-\Lambda)=\emptyset.
\eqn
Choose a smaller open set $U_1\subset M$ around $p$ such that $\overline U_1\subset U$, and choose some $\rho\in C_c^\infty(U)$ with $\rho=1$ on a neighborhood of $\overline U_1$.  Since $\rho v$ is compactly supported, there exists $s\geq 0$ such that, after extension by zero to $\R^n$,
\bq\label{eq:rho-v-sobolev}
  \rho v\in H^{-s}(\R^n).
\eq
Let $\eps_0=\eps_0(s,\Gamma,\Lambda)$ be as in Lemma \ref{lem:nearidentity}.  By continuity of $u$ and $u(p)\neq0$, there
is an open set $U_2\subset U_1$ around $p$ such that
\bq\label{eq:near-identity-local}
  \left\|\frac{u}{u(p)}-1\right\|_{L^\infty(U_2)}<\eps_0.
\eq
Choose $\chi\in C_c^\infty(U_2)$ with $\chi=1$ near $p$, and define $ a:=\frac{u}{u(p)}\big|_{U_2}$, $w:=\chi v|_{U_2}$. 
Then $a\in C(U_2)$, $w\in H^{-s}_\mathrm{comp}(U_2)$, and 
$\WF(a)\subset U_2\times\Gamma$,  $\WF(w)\subset U_2\times\Lambda$. 
Moreover, using that $\chi$ is smooth, we find
\[
 aw =\frac{1}{u(p)}\,u(\chi v) =\frac{\chi}{u(p)}\,uv =0.
\]
Together with \eqref{eq:near-identity-local}, all hypotheses of
Lemma~\ref{lem:nearidentity} are satisfied.  Hence $w=0$.  Since $\chi=1$
near $p$, the distribution $v$ vanishes near $p$, and therefore
$p\notin\supp v$.
\end{proof}

\begin{proof}[Proof of Theorem \ref{thm:intro1}]
If $\dim M=1$, choose an open conical set $\Upsilon\subset \dot T^\ast M$ with $\emptyset\neq\Upsilon_p\neq \dot T_p^\ast M$ for all $p\in M$; this exists by Proposition \ref{prop:existence1}(1) (applied with $\Gamma=\emptyset$): It gives an atomizable closed conical set $\mathscr C\subset \dot T^\ast M$ with $\mathscr C_p\neq \dot T^\ast_p M$ for all $p\in M$, and  atomizability implies $\mathscr C_p\neq \emptyset$ for all $p\in M$ by Remark \ref{rem:atomiz-nonemptyfiber}. Now put $\Upsilon:= \dot T^\ast M\setminus \mathscr C$.

If $\dim M\geq 2$, choose a symmetric open conical set $\Upsilon=-\Upsilon\subset \dot T^\ast M$ with $\emptyset\neq \Upsilon_p\neq \dot T_p^\ast M$ for all $p\in M$. This exists by Proposition \ref{prop:existence1} (2b) (applied with $\Gamma=\emptyset$), which gives a symmetric atomizable closed conical set $\mathscr C=-\mathscr C\subset \dot T^\ast M$ with $\mathscr C_p\neq \dot T^\ast_pM$ for all $p\in M$, and again $\mathscr C_p\neq \emptyset$ for all $p\in M$  by Remark \ref{rem:atomiz-nonemptyfiber}. Thus $\Upsilon:= \dot T^\ast M\setminus \mathscr C$ works.

By Corollary \ref{cor:existence1}, if $\dim M=1$, there is an atomizable closed conical set $\Lambda\subset\Upsilon$, and if $\dim M>1$,  there is a positively atomizable closed conical set $\Lambda=-\Lambda\subset\Upsilon$. 

If $\dim M=1$, the atomizability of $\Lambda$ implies that $\Lambda_p\neq \emptyset$ for all $p\in M$ by Remark \ref{rem:atomiz-nonemptyfiber}, and since  $\Lambda_p\subset\Upsilon_p\neq \dot T_p^\ast M$  for all $p\in M$, it follows in this one-dimensional situation that $\Lambda\cap (-\Lambda)=\emptyset$. We therefore simply define $\Gamma:=\Lambda$.

If $\dim M>1$, consider the symmetric open conical set $\Upsilon':=\dot T^*M\setminus\Lambda$. Then for all $p\in M$ we have $\Upsilon'_p=\Upsilon'_p\cap(-\Upsilon'_p)\supset\dot T^*_pM\setminus\Upsilon_p\neq \emptyset$. By Corollary \ref{cor:existence1}, there exists a positively atomizable symmetric closed conical set $\Gamma=-\Gamma\subset \Upsilon'$. In particular, $\Gamma_p=\Gamma_p\cap (-\Gamma_p)\neq \emptyset$ for all $p\in M$ by Remark \ref{rem:atomiz-nonemptyfiber}.

In either case, Theorem \ref{thm:main}(1) applies to $(\Gamma,\Lambda)$ and provides $u\in\D'_{\Gamma}(M)$, $v\in\D'_{\Lambda}(M)$ with $\supp  u=\supp  v=M$, $uv=0$. In particular, $\WF(u)\cap (-\WF(v))\subset \Gamma\cap (-\Lambda)=\emptyset$. Here Theorem \ref{thm:main}(1) uses only the atomizability of $\Lambda$; the atomizability of $\Gamma$ is only weakly and indirectly used through its implication that $\Gamma_p\neq \emptyset$ for all $p\in M$, as noted above.

If $\dim M>1$, we have chosen $\Gamma$ and $\Lambda$ such that Theorem \ref{thm:main}(5) applies and yields that $u$ and $v$ are positive. Again, Theorem \ref{thm:main}(5) uses only the positive atomizability of $\Lambda$; the atomizability of $\Gamma=-\Gamma$ is only used to know that $\Gamma_p\cap(-\Gamma_p)\neq \emptyset$ for all $p\in M$. The first part of Theorem \ref{thm:intro1} is proved.

The second part follows directly from Lemma \ref{lem:supportdetection}: If $u|_U\in C(U)$ for some non-empty open $U\subset M$ and $v\in\D'(M)$ is such that $\supp v= M$, $\WF(u)\cap(-\WF(v))=\emptyset$ and $uv=0$, then applying \eqref{eq:support-zero-product} to  $(U,u|_U,v|_U)$ yields $u|_U=0$, hence $\supp u\neq M$. We argue analogously with $u,v$ swapped. This finishes the proof of Theorem  \ref{thm:intro1}.
\end{proof}

\begin{proof}[Proof of Theorem \ref{thm:intro2}]
Corollary \ref{cor:existence1} provides an $H^{-1/2-0}$-atomizable closed conical set $\Lambda\subset \Upsilon$. Since $\Upsilon\cap (-\Gamma)=\emptyset$, it follows that $\Lambda\cap (-\Gamma)=\emptyset$. Atomizability implies that $\Lambda_p\neq \emptyset$ for all $p\in M$ by Remark \ref{rem:atomiz-nonemptyfiber}, and by assumption we have $\Gamma_p\neq \emptyset$ for all $p\in M$. Thus Theorem \ref{thm:main}(1) applies and proves the claim (1). 

If $\Gamma_p\cap (-\Gamma_p)\neq \emptyset$ for all $p\in M$, Theorem \ref{thm:main}(3) applies and gives the claim (2). 

If $\Upsilon_p\cap (-\Upsilon_p)\neq \emptyset$ for all $p\in M$, then Corollary \ref{cor:existence1} provides a symmetric, positively $H^{-1/2-0}$-atomizable closed conical set $\Lambda_0=-\Lambda_0\subset \Upsilon$, and Theorem \ref{thm:main}(4) applied to $(\Gamma,\Lambda_0)$ proves the claim (3).

If $\Gamma_p\cap (-\Gamma_p)\neq \emptyset$  and $\Upsilon_p\cap (-\Upsilon_p)\neq \emptyset$ for all $p\in M$, then Theorem \ref{thm:main}(5) applied to $(\Gamma,\Lambda_0)$ proves the claim (4).
\end{proof}

\begin{proof}[Proof of Theorem \ref{thm:intro2a}] Apply Theorem \ref{thm:intro2}(1),(2) with
$\Upsilon:=\dotT M\setminus(-\Gamma)$ and forget $v$.  
\end{proof}

\begin{proof}[Proof of Theorem \ref{thm:intro3}]
Let $f\in \Cinft(M)$ be a function with zero set of empty interior. Every neighborhood of $(f,0)$ in $V:=(\D'_\Gamma(M)\cap H^{\dim M/2}_\mathrm{loc}(M)) \times (\D'_\Lambda(M)\cap H^{-1/2-0}_\mathrm{loc}(M))$ with respect to the specified topology contains a set of the form
\[
\mathscr{N}_{E,\eps}(f,0):=\{(u,v)\in V\,|\, p((u,v)-(f,0))<\eps\;\forall\, p\in E\},
\]
where $\eps>0$ and $E$ is a finite set of seminorms of the form 
\[
E=\{p_{\Gamma,1},\ldots,p_{\Gamma,N},p_{\Lambda,1},\ldots,p_{\Lambda,N'}\},\quad N,N'\geq 0,
\]
where $p_{\Gamma,i}(u,v)=q_{\Gamma,i}(u)$ and $p_{\Lambda,j}(u,v)=q_{\Lambda,j}(v)$ for some $q_{\Gamma,i}\in \mathsf{SEM}(\Gamma,\dim M/2)$, $q_{\Lambda,j}\in\mathsf{SEM}(\Lambda,-1/2-\eps_0)$, $1\leq i\leq N$, $1\leq j\leq N'$. Fix such $E,\eps$. Then
\[
\mathsf{p}_\Gamma:=q_{\Gamma,1}+\cdots+ q_{\Gamma,N},\qquad \mathsf{p}_\Lambda:=q_{\Lambda,1}+\cdots+ q_{\Lambda,N'}
\]
are control functions in the sense of Definition \ref{def:controlfunction}. Thus, for the claim (1), it suffices to apply Theorem \ref{thm:main}(2) with these control functions and the given $\eps$.  Similarly, to get the claims (2)--(4), we apply the statements (3)--(5) of Theorem \ref{thm:main}.
\end{proof}

\section{Examples}\label{sec:examples}

Here we collect some geometric and dynamical settings to which our results apply.

\subsection{Foliations}

The following definition formalizes the concept of a ``continuous foliation with smooth leaves and continuous tangent (or conormal) bundle''. Our definition is a bit more general than others found in the literature because in (4) below we only consider first order derivatives: All we need is that the conormal bundle of the foliation is continuous; for that, we do not need continuity of higher order derivatives.

\begin{definition}\label{def:foliation} A \emph{$C^{\infty,0}$-foliation} $\mathcal F$ of $M$ is a collection of disjoint immersed smooth submanifolds of $M$ of some common dimension $k\geq 0$,  called \emph{leaves}, such that $
M=\bigsqcup_{\mathscr F\in \F}\mathscr F$ 
and for each point $p\in M$ there exists an open set  $U$ containing $p$ and a homeomorphism 
\[
\Phi:U\to B^k_1(0)\times B^{n-k}_1(0),\qquad n:=\dim M,
\]
called a \emph{foliating chart} centered at $p$, with the following properties: \begin{enumerate}
\item $\Phi(p)=(0,0)$. 
\item For each $y\in B^{n-k}_1(0)$, the so-called \emph{plaque} 
\[
\Phi^{-1}(B^k_1(0)\times \{y\})=:\mathscr P_y\subset M
\]
coincides with the connected component of the intersection $U\cap \mathscr F$ with the unique leaf $\mathscr F\in \F$ containing $\Phi^{-1}(0,y)$. In particular, $\mathscr P_y$ is an embedded smooth submanifold of $M$.
\item For each $y\in B^{n-k}_1(0)$, the restriction $\Phi^{-1}(\cdot,y):B^k_1(0)\times \{y\}\to \mathscr P_y$ is a diffeomorphism.
\item The derivatives of the above diffeomorphisms are continuous in the sense that for each continuous $1$-form $\xi\in C^0(U;T^\ast M)$ and each $v\in \R^k$ the map 
\[
B^k_1(0)\times B^{n-k}_1(0)\to \R,\qquad (x,y)\mapsto \xi(D(\Phi^{-1}(\cdot,y))|_x(v))
\] 
is continuous, where we identify  $\R^k\equiv T_{(x,y)}(B^k_1(0)\times \{y\})$.
\end{enumerate}
The \emph{tangent space} of $\F$ at a point $p\in M$ is $
T_p\F:=T_p\mathscr F$, 
where $\mathscr F\in \F$ is the leaf through $p$. The \emph{tangent} and \emph{conormal bundles} of $\F$ are
\[
T\F:=\bigsqcup_{p\in M}T_p\F,\qquad N^\ast \F:=\bigsqcup_{p\in M}\{\xi \in T_p^\ast M\,|\, \xi(T_p \F)=0\}.
\]
\end{definition}
More generally than in Definition \ref{def:foliation}, given a $C^{\infty,0}$-foliation $\F$ of $M$ and any open subset $U\subset M$ which is not necessarily equal to, but only \emph{contained} in the domain $W$ of some foliating chart for $\F$, we shall refer to the connected components of the intersections of $U$ with the leaves of $\F$ as \emph{plaques}. Then every point $p\in U$ lies in a unique plaque of $\F$ in $U$, and every plaque is an embedded submanifold of $M$ (being a connected component of the intersection of $U$ with a plaque of $W$).

Property (4) in Definition \ref{def:foliation} ensures that the vector bundles $T\F$ and $N^\ast \F$ are continuous, so in particular $\dot N^\ast \F\subset \dot T^\ast M$ is a closed conical subset. It is easy to show that $\dot N^\ast \F$  is atomizable (provided that it is non-empty):
\begin{lemma}\label{lem:foliations}Let $\mathcal F$ be a $C^{\infty,0}$-foliation of $M$ with leaves of codimension $k>0$. Then $\Lambda:=\dot N^\ast \mathcal F$ is positively  $H^{-k/2-0}$-atomizable. 
\end{lemma}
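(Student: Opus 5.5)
The atoms will be smooth positive densities carried by single plaques, modeled on the positive hypersurface atoms in the proof of Proposition \ref{prop:existence1}(2b). Since $k>0$, each leaf has dimension $n-k<n$. Fix a countable basis $\{\mathscr U_i\}_{i\in\N}$ of the topology of $M$ consisting of sets each contained in the domain of some foliating chart $\Phi_i$. The family $\cA$ will contain, for each $i$, one atom $A_i$ supported in $\mathscr U_i$; condition (ii) of Definition \ref{def:tame-atomizable} then holds automatically.

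\textbf{Construction of $A_i$.} Pick $p_i\in\mathscr U_i$ and let $\mathscr P$ be the plaque of $\F$ in $\mathscr U_i$ through $p_i$. By Definition \ref{def:foliation}(2),(3), $\mathscr P$ is an embedded smooth submanifold of $M$ of dimension $n-k$. Choose a smooth volume density $ds$ on $\mathscr P$ and a cutoff $\chi_i\in\CT(\mathscr P,\R)$ with $\chi_i\geq0$ and $\chi_i(p_i)=1$. Define
\[
A_i(\varphi):=\int_{\mathscr P}\chi_i\varphi\,ds .
\]
This distribution is non-zero and positive. Its support lies in $\mathscr P\cap\mathscr U_i$, and since $\mathscr P$ has positive codimension, the support has empty interior, which gives (i).

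\textbf{Wavefront set.} The wavefront set of a smooth density on an embedded submanifold lies in its conormal bundle. Here $N^\ast\mathscr P=N^\ast\F|_{\mathscr P}$, because $T_q\mathscr P=T_q\F$ for every $q\in\mathscr P$. Hence $\WF(A_i)\subset\dot N^\ast\F=\Lambda$.

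\textbf{Sobolev order.} This is the step needing care: the statement asserts $H^{-k/2-0}$, and part (iii) of the definition requires uniformity over the family. For each single $A_i$, Example \ref{ex:codimSobolev} gives $A_i\in H^{-k/2-0}_\mathrm{comp}(M)$, since $\mathscr P$ has codimension $k$. It then suffices that every $\chi_lA_i$ lies in $H^{-k/2-\eps}(M_l)$ for all $\eps>0$; this holds because $\chi_lA_i$ is again of the same surface-measure type. Example \ref{ex:codimSobolev} already states exactly this uniform conclusion for arbitrary families of submanifolds of fixed dimension, so I would invoke it directly.

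The only subtlety is that the foliation is merely $C^{\infty,0}$, i.e.\ only continuous transversally. This causes no difficulty, because each atom lives on a single smooth plaque and uses no transverse regularity. Continuity of $N^\ast\F$ is needed only so that $\Lambda$ is closed, and that is already guaranteed by property (4) of Definition \ref{def:foliation}.
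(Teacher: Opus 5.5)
Your proposal is correct and is essentially the paper's proof: the atoms are nonnegative smooth densities on single plaques inside a countable basis of sets contained in foliating chart domains, with $\WF(A_i)\subset\dot N^\ast\mathscr P\subset\Lambda$ and the Sobolev order $-k/2-0$ read off from Example~\ref{ex:codimSobolev}. Your extra care about uniformity in condition (iii) is harmless but not needed. By Definition~\ref{def:uniformorder}, it only requires the membership $\chi_l A_i\in H^{-k/2-\eps}(M_l)$ for all $i$, $l$ and all $\eps>0$, not a uniform norm bound.
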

\begin{proof}We carry out a simplified version of the construction from the proof of Proposition \ref{prop:existence1}: Let $\{{\mathscr U}_j\}_{j\in J}$ be a basis of the topology of $M$ such that each $\mathscr U_j$ is contained in a foliating chart domain for $\F$. For a given $j$, choose some $p_j\in {\mathscr U}_j$ and let $S_j\subset {\mathscr U}_j$ be the plaque of $\F$ through $p_j$ in ${\mathscr U}_j$. Take a non-zero non-negative cutoff $\chi_j\in C^\infty_c(S_j,\R)$, fix a  smooth measure $ds$ on $S_j$, and define $A_j\in \E'(M)$ by
\[
A_j(\varphi):=\int_{S_j} \varphi(s) \chi_j(s)\,ds,\qquad \varphi\in C^\infty_c(M).
\]
Then $A_j$ is positive, $\supp  A_j\subset S_j\cap {\mathscr U}_j$ has empty interior in $M$ because $S_j$ has positive codimension (being contained in a leaf of $\F$), $\mathrm{WF}(A_j)\subset \dot N^\ast S_j\subset\Lambda$, and $A_j\in H^{-k/2-0}_\mathrm{comp}(M)$ by Example \ref{ex:codimSobolev}.  Any open subset of $M$ contains some ${\mathscr U}_j$, hence also $\supp  A_j$, while $A_j\neq 0$. Thus $\cA:=\{A_j\,|\, j\in J\}$ atomizes $\Lambda$.
\end{proof}

Continuous foliations with smooth leaves occur naturally in the field of  hyperbolic dynamical systems:
\begin{example}\label{ex:Anosov}Let $M$ be a compact smooth manifold.
\begin{enumerate}
\item Let $f:M\to M$ be an Anosov diffeomorphism \cite{BrinStuck,KatokHasselblatt}. Then $M$ possesses two $C^{\infty,0}$-foliations $\mathcal W^s$, $\mathcal W^u$ called \emph{stable} and \emph{unstable} foliations of $f$, which are Hölder-regular and transverse: Their conormal bundles satisfy
\[
T^\ast M=N^\ast \mathcal W^s\oplus N^\ast \mathcal W^u.
\]
\item Let $\varphi_t:M\to M$ be an Anosov flow \cite{Ano67,KatokHasselblatt, dyatlovnotes}. Then $M$ possesses   two $C^{\infty,0}$-foliations $\mathcal W^s$, $\mathcal W^u$ called \emph{stable} and \emph{unstable} foliations of $\varphi_t$, respectively, which are Hölder-regular. Furthermore, $M$ possesses two $C^{\infty,0}$-foliations $\mathcal W^{\mathrm{cs}}$, $\mathcal W^{\mathrm{cu}}$ called \emph{weak} (or \emph{center-}) \emph{stable} and \emph{weak} (or \emph{center-}) \emph{unstable}  foliations of $\varphi_t$, respectively, which are also Hölder-regular. Their conormal bundles satisfy $\dot N^\ast \mathcal W^{\mathrm{cs}}\cap \dot N^\ast \mathcal W^{\mathrm{cu}}=\emptyset$ and
\[
T^\ast M=N^\ast \mathcal W^s\oplus N^\ast \mathcal W^{\mathrm{cu}}=N^\ast \mathcal W^{\mathrm{cs}}\oplus N^\ast \mathcal W^{u}.
\]
\end{enumerate}
\end{example}

\begin{corollary}\label{cor:foliations1}Let $\F$ be a $C^{\infty,0}$-foliation of $M$ with leaves of codimension $k>0$ and let $\Gamma\subset\dot T^\ast M$ be a  closed conical set with $\Gamma\cap \dot N^\ast \F=\emptyset$ and $\Gamma_p\neq \emptyset$ for all $p\in M$. Then there exist distributions
\[
u\in\D'_{\Gamma}(M)\cap H^{\dim M/2}_\mathrm{loc}(M),\qquad v\in\D'_{\dot N^\ast \F}(M)\cap H^{-k/2-0}_\mathrm{loc}(M)
\]
such that
\[
\supp  u=\supp  v=M, \qquad uv=0.\vspace*{0.5em}
\]
Moreover, if $\Gamma_p\cap (-\Gamma_p)\neq \emptyset$ for all $p\in M$, then $u$ and $v$ may be chosen positive.
\end{corollary}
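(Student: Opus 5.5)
The plan is to apply Theorem \ref{thm:main} with the given $\Gamma$ and with $\Lambda:=\dot N^\ast\F$. By the discussion after Definition \ref{def:foliation}, property (4) makes $N^\ast\F$ a continuous vector subbundle of rank $k>0$, so $\Lambda$ is a closed conical subset of $\dot T^\ast M$. Lemma \ref{lem:foliations} shows that $\Lambda$ is positively $H^{-k/2-0}$-atomizable. This means the hypothesis of Theorem \ref{thm:main} holds with $s=-k/2$ and $\bullet=s-0$.

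Next I would check the remaining hypotheses of Theorem \ref{thm:main}, namely $\Gamma\cap(-\Lambda)=\emptyset$ and $\pi(\Gamma)=\pi(\Lambda)=M$.
\begin{enumerate}
\item Since each fiber $N^\ast_p\F$ is a linear subspace, $-\Lambda=\Lambda$. The assumption $\Gamma\cap\dot N^\ast\F=\emptyset$ therefore gives $\Gamma\cap(-\Lambda)=\emptyset$.
\item $\pi(\Gamma)=M$ because $\Gamma_p\neq\emptyset$ for all $p$ by assumption.
\item $\pi(\Lambda)=M$ because $N^\ast_p\F$ has dimension $k\geq1$, so $\Lambda_p\neq\emptyset$. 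This also follows from Remark \ref{rem:atomiz-nonemptyfiber}(1) and atomizability.
\end{enumerate}

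Theorem \ref{thm:main}(1) then yields $u\in\D'_\Gamma(M)\cap H^{\dim M/2}_\mathrm{loc}(M)$ and $v\in\D'_{\dot N^\ast\F}(M)\cap H^{-k/2-0}_\mathrm{loc}(M)$ with full supports and $uv=0$.

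For the positivity statement, assume $\Gamma_p\cap(-\Gamma_p)\neq\emptyset$ for all $p$. Since $\Lambda$ is positively $H^{-k/2-0}$-atomizable by Lemma \ref{lem:foliations}, Theorem \ref{thm:main}(5) applies directly and gives positive $u$ and $v$.

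There is no real obstacle, since everything was prepared earlier. The only points needing care are the closedness of $\dot N^\ast\F$, which rests on the continuity granted by Definition \ref{def:foliation}(4), and the symmetry $\Lambda=-\Lambda$, which converts the disjointness hypothesis into the form Theorem \ref{thm:main} requires.
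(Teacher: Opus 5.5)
Your proposal is correct and is essentially the paper's own proof, which simply applies Lemma~\ref{lem:foliations} together with Theorem~\ref{thm:main}(1),(5). You additionally write out the hypothesis checks that the paper leaves implicit: that $\Lambda=-\Lambda$, so that $\Gamma\cap\dot N^\ast\F=\emptyset$ gives $\Gamma\cap(-\Lambda)=\emptyset$, and that both $\Gamma$ and $\Lambda$ have non-empty fibers.
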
 
\begin{proof}Apply Lemma \ref{lem:foliations} and Theorem \ref{thm:main}(1),(5).
\end{proof}
It is worth specializing this result further to the case where $\Lambda$ is a vector bundle transverse to the conormal bundle of the foliation:
\begin{corollary}\label{cor:foliations2}Let $\F$ be a $C^{\infty,0}$-foliation of $M$ with leaves of dimension $>0$ and codimension $k>0$ and let $E\subset T^\ast M$ be a continuous subbundle such that
\[
E\oplus N^\ast \F=T^\ast M.
\]
Then there exist positive distributions
\[
u\in\D'_{\dot E}(M)\cap H^{\dim M/2}_\mathrm{loc}(M),\qquad v\in\D'_{\dot N^\ast \F}(M)\cap H^{-k/2-0}_\mathrm{loc}(M)
\]
such that
\[
\supp  u=\supp  v=M, \qquad uv=0.\vspace*{0.5em}
\]
\end{corollary}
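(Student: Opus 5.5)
The plan is to reduce Corollary \ref{cor:foliations2} to Corollary \ref{cor:foliations1} with $\Gamma:=\dot E$. That corollary needs three facts about $\dot E$: it is closed and conical, $\dot E\cap \dot N^\ast\F=\emptyset$, and $\dot E_p\cap(-\dot E_p)\neq\emptyset$ for all $p\in M$. These yield both existence and positivity. Then $\WF(u)\subset \dot E$ and $\WF(v)\subset \dot N^\ast\F$, and the wavefront condition $\Gamma\cap(-\Lambda)=\emptyset$ holds because $-\dot N^\ast\F=\dot N^\ast\F$.

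First, $\dot E$ is closed in $\dot T^\ast M$ because $E$ is a continuous subbundle. Locally it is spanned by continuous frames, so it is the zero set of the continuous map given by projection onto a complementary continuous subbundle, for instance $N^\ast\F$ itself. It is conical because each fiber $E_p$ is a linear subspace.

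Second, disjointness: if $\xi\in \dot E_p\cap \dot N^\ast_p\F$, then $\xi\in E_p\cap N^\ast_p\F=\{0\}$ by the direct sum decomposition, which contradicts $\xi\neq0$.

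Third, the fiber condition. Counting ranks, $\dim E_p=n-\operatorname{rank} N^\ast_p\F=n-k$, and $n-k$ is the leaf dimension, which is positive by hypothesis. Hence $E_p\neq\{0\}$, so $\dot E_p\neq\emptyset$. Since $E_p$ is a linear subspace, $-\dot E_p=\dot E_p$, so $\dot E_p\cap(-\dot E_p)=\dot E_p\neq\emptyset$. This is the only place the assumption of positive leaf dimension enters.

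With these three facts, Corollary \ref{cor:foliations1} applies with $\Gamma=\dot E$. It is itself Lemma \ref{lem:foliations} (positive $H^{-k/2-0}$-atomizability of $\dot N^\ast\F$) combined with Theorem \ref{thm:main}(5), since $\pi(\dot E)=\pi(\dot N^\ast\F)=M$. It gives positive $u\in\D'_{\dot E}(M)\cap H^{\dim M/2}_\mathrm{loc}(M)$ and $v\in\D'_{\dot N^\ast\F}(M)\cap H^{-k/2-0}_\mathrm{loc}(M)$ with full supports and $uv=0$.

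No step is a real obstacle. The only points needing care are the closedness of $\dot E$ under mere continuity of $E$, and remembering that $\pi(\dot N^\ast\F)=M$ requires $k>0$, which is assumed.
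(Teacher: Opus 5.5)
Your proposal is correct and follows the paper's proof. The paper likewise applies Corollary~\ref{cor:foliations1} with $\Gamma:=\dot E$, using that positive leaf dimension forces $E$ to have positive rank (so $\pi(\dot E)=M$) and that $E=-E$ gives positivity; your explicit checks that $\dot E$ is closed and that $\dot E\cap\dot N^\ast\F=\emptyset$ are details the paper leaves implicit.
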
 
\begin{proof}As the leaves of $\F$ are of dimension $>0$, the transversality $E\oplus N^\ast \F=T^\ast M$ implies that $E$ has positive rank, in particular $\pi(\dot E)=M$. Since one also has $E=-E$, it suffices to apply Corollary \ref{cor:foliations1} to $\Gamma:=\dot E$.
\end{proof}
Note that in Corollary \ref{cor:foliations2} it is not required that $E$ be integrable, i.e., $E$ need not be the conormal bundle of a foliation. On the other hand, in cases where two transverse foliations $\F$ and $\mathcal G$ with $N^\ast \F\oplus N^\ast \mathcal G=T^\ast M$ are given, such as the Anosov dynamical systems mentioned in Example \ref{ex:Anosov}, Corollary \ref{cor:foliations2} gives positive distributions 
\[
u\in\D'_{\dot N^\ast \mathcal G}(M)\cap H^{\dim M/2}_\mathrm{loc}(M),\qquad v\in\D'_{\dot N^\ast \F}(M)\cap H^{-k/2-0}_\mathrm{loc}(M)
\]
such that
\[
\supp  u=\supp  v=M, \qquad uv=0.\vspace*{0.5em}
\]
Here one may interchange the roles of $\mathcal F$ and $\mathcal G$ and take $k$ as the codimension of the leaves of  $\mathcal G$ instead of $\mathcal F$.

\subsection{More general families of submanifolds}

Inspecting the proof of Lemma \ref{lem:foliations} shows that we can generalize the result to a much larger class of families of embedded submanifolds of $M$ than plaques of foliations. It suffices to abstractly characterize what is actually needed in the proof:

\begin{definition}\label{def:fragmentation}
A collection $\mathcal Z=\{\Sigma_i\}_{i\in I}$ of embedded submanifolds $\Sigma_i\subset M$ of possibly different dimensions is a \emph{fragmentation} of $M$ if for every open subset $U\subset M$ there is an $i\in I$ such that $\Sigma_i\cap U\neq \emptyset$. The manifolds $\Sigma_i\in \mathcal Z$ are called \emph{fragments}.

Let $n_\mathrm{min}:=\min_{i\in I}\dim \Sigma_i$, $n_\mathrm{max}:=\max_{i\in I}\dim \Sigma_i$. Then the \emph{maximal (resp.\ minimal) codimension} of $\mathcal Z$ is $k_\mathrm{max}:=\dim M-n_\mathrm{min}$ (resp.\ $k_\mathrm{min}:=\dim M-n_\mathrm{max}$).

The \emph{conormal cone} of $\mathcal Z$ is $
\Lambda^\ast \mathcal Z:=\overline{\bigcup_{i\in I}\dot N^\ast \Sigma_i}\subset \dot T^\ast M$, 
with closure  taken in $\dot T^\ast M$.
\end{definition}

Note that different fragments may not only have different dimensions but may also intersect, and we do not impose a restriction on the cardinality of the index set $I$.  Of course, for our purposes, a fragmentation of $M$ is only interesting if one has
\bq
\emptyset\neq \Lambda^\ast_p \mathcal Z\neq \dot T^\ast_p M\quad \forall\; p\in M, \label{eq:nontrivialitycond}
\eq
since otherwise we cannot apply Theorem \ref{thm:main} to the conical set $\Lambda^\ast \mathcal Z$.

\begin{example}\begin{enumerate}[leftmargin=*]\item A trivial fragmentation is given by $\mathcal Z:=\{\{q\}\,|\,q\in Q\}$, where $Q\subset M$ is any dense set and the fragments are the singleton sets formed by the points in $Q$, which are $0$-dimensional embedded submanifolds. In this case, $\Lambda^\ast \mathcal Z=\dot T^\ast M$ is positively  $H^{-\dim M/2-0}$-atomizable by Example \ref{ex:maximal}.
\item  Every $C^{\infty,0}$-foliation $\F$ of  $M$  with leaves of codimension $k$ defines a fragmentation $\mathcal Z$ of $M$ of minimal and maximal codimension $k$ with $\Lambda^\ast \mathcal Z=\dot N^\ast \F$: Choose an atlas of $M$ consisting of foliating charts for $\F$, and let $\mathcal Z$ be the collection of all plaques in all foliating charts. If  $0<k<\dim M$, \eqref{eq:nontrivialitycond} is satisfied.

\item Let $M=\R^3$ and consider the horizontal planes $P_{q}:=\{(s,t,q)\,|\,s,t\in \R\}$, $q\in \Q$,  
intersecting the vertical axis in all rational numbers, and let 
$L_1,\ldots,L_N\subset \R^3$ be a finite family of $1$-dimensional embedded closed submanifolds. Then $\mathcal Z:=\{P_q,L_j\,|\,q\in \Q,1\leq j\leq N\}$ 
is a fragmentation of $\R^3$ of maximal codimension $2$ and minimal codimension $1$ whose conormal cone satisfies \eqref{eq:nontrivialitycond}: 
\[
\Lambda^\ast \mathcal Z=\big\{(x,(0,0,s))\in \R^3\times (\R^3\setminus \{0\})\,|\, s\in \R,s\neq 0\big\}\cup \dot N^\ast L_1\cup \cdots \cup\dot N^\ast L_N.
\]
\end{enumerate}
\end{example}

\begin{corollary}\label{cor:fragments}Let $\mathcal Z$ be a fragmentation of $M$ of minimal codimension $k_\mathrm{min}>0$ and maximal codimension $k_\mathrm{max}$. Then $\Lambda^\ast \mathcal Z$ is positively  $H^{-k_\mathrm{max}/2-0}$-atomizable. 
\end{corollary}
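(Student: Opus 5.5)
The plan is to imitate the proof of Lemma \ref{lem:foliations}, replacing plaques of a foliation by pieces of fragments. Fix a countable basis $\{\mathscr U_j\}_{j\in\N}$ of the topology of $M$. For each $j$, the fragmentation property gives an index $i(j)\in I$ with $\Sigma_{i(j)}\cap\mathscr U_j\neq\emptyset$. Choose a point $p_j\in\Sigma_{i(j)}\cap\mathscr U_j$. Since $\Sigma_{i(j)}$ is an embedded submanifold, we can choose an open neighborhood $S_j\subset\Sigma_{i(j)}\cap\mathscr U_j$ of $p_j$ in $\Sigma_{i(j)}$ such that $S_j$ is an embedded submanifold of $M$ contained in a single chart. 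Next fix a smooth density $ds$ on $S_j$ and a non-negative, non-zero cutoff $\chi_j\in\CT(S_j,\R)$, and define
\[
A_j(\varphi):=\int_{S_j}\varphi(s)\chi_j(s)\,ds,\qquad \varphi\in\CT(M).
\]
Then $A_j$ is a non-zero positive distribution with $\supp A_j\subset\supp\chi_j\subset\mathscr U_j$.

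We check the atom properties as follows.
\begin{enumerate}
\item Since $k_\mathrm{min}>0$, every fragment has positive codimension. Hence $\supp A_j$ lies in a compact subset of a positive-codimension embedded submanifold and has empty interior in $M$.
\item For the wavefront set, we use $\WF(A_j)\subset\dot N^\ast S_j\subset\dot N^\ast\Sigma_{i(j)}\subset\Lambda^\ast\mathcal Z$. This is the standard wavefront bound for a smooth density on a submanifold, which can be checked in straightening coordinates, as in Example \ref{ex:codimSobolev}.
\item Since every open set contains some $\mathscr U_j$, property (ii) of Definition \ref{def:tame-atomizable} holds.
\end{enumerate}

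For the Sobolev order, Example \ref{ex:codimSobolev} gives $A_j\in H^{-(n-\dim\Sigma_{i(j)})/2-0}_\mathrm{comp}(M)$. Since $n-\dim\Sigma_{i(j)}\le k_\mathrm{max}$, the inclusions \eqref{eq:ssprimeincl} put every $A_j$ in $H^{-k_\mathrm{max}/2-0}_\mathrm{comp}(M)$. This also gives uniformly bounded Sobolev order $-k_\mathrm{max}/2-0$ in the sense of Definitions \ref{def:uniformorder} and \ref{def:uniformsminus0}, because $\chi_lA_j\in H^{-k_\mathrm{max}/2-\eps}(M_l)$ for all $j,l$ and $\eps>0$.

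The only point I expect to need care is the implicit finiteness of $k_\mathrm{max}$ (it is at most $\dim M$) and the fact that the family is indexed by countably many pieces even if $I$ is uncountable. This is harmless because we select one fragment per basis set. Everything else is a direct transcription of the foliation case, so I expect no substantial obstacle.
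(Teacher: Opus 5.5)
Your proposal is correct and follows essentially the same route as the paper: pick one fragment meeting each basis set $\mathscr U_j$ and build the atom $A_j$ by integrating a non-negative cutoff against a smooth density on that piece, exactly as in Lemma \ref{lem:foliations}. Then conclude with Example \ref{ex:codimSobolev} and the inclusion $H^{-k_j/2-0}_\mathrm{comp}(M)\subset H^{-k_\mathrm{max}/2-0}_\mathrm{comp}(M)$. Your extra shrinking of $S_j$ to a single chart is a harmless refinement.
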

\begin{proof}
Let $\{{\mathscr U}_j\}_{j\in \N}$ be a countable basis of the topology of $M$, and for each $j\in \N$ choose a fragment $\Sigma_j\in \mathcal Z$ with $S_j:=\mathscr U_j\cap \Sigma_j\neq \emptyset$. Then we argue exactly as in the proof of Lemma \ref{lem:foliations}, starting from ``For a given $j$,...'',  to get atoms $A_j\in H^{-k_j/2-0}_\mathrm{comp}(M)$ supported in $S_j$, where $0<k_\mathrm{min}\leq k_j\leq k_\mathrm{max}$ is the codimension of $S_j$. It then suffices to observe that $H^{-k_j/2-0}_\mathrm{comp}(M)\subset H^{-k_\mathrm{max}/2-0}_\mathrm{comp}(M)$.
\end{proof}

The following result generalizes its counterpart for foliations Corollary \ref{cor:foliations1}:
\begin{corollary}\label{cor:fragmentations}Let $\mathcal Z$ be a fragmentation of $M$ of minimal codimension $k_\mathrm{min}>0$ and maximal codimension $k_\mathrm{max}$ and let $\Gamma\subset\dot T^\ast M$ be a  closed conical set with $\Gamma\cap (-\Lambda^\ast \mathcal Z)=\emptyset$ and $\Gamma_p\neq \emptyset$ for all $p\in M$. Then there exist distributions
\[
u\in\D'_{\Gamma}(M)\cap H^{\dim M/2}_\mathrm{loc}(M),\qquad v\in\D'_{\Lambda^\ast \mathcal Z}(M)\cap H^{-k_\mathrm{max}/2-0}_\mathrm{loc}(M)
\]
such that
\[
\supp  u=\supp  v=M, \qquad uv=0.\vspace*{0.5em}
\]
Moreover, if $\Gamma_p\cap (-\Gamma_p)\neq \emptyset$ for all $p\in M$, then $u$ and $v$ may be chosen positive.
\end{corollary}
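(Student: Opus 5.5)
The plan is to mirror the proof of Corollary \ref{cor:foliations1}: apply Theorem \ref{thm:main} with $\Lambda:=\Lambda^\ast\mathcal Z$, taking the atomizability input from Corollary \ref{cor:fragments}. That corollary shows $\Lambda^\ast\mathcal Z$ is positively $H^{-k_\mathrm{max}/2-0}$-atomizable, using the hypothesis $k_\mathrm{min}>0$. This gives the regularity $\bullet=s-0$ with $s=-k_\mathrm{max}/2$.

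Next I will check the remaining hypotheses of Theorem \ref{thm:main}. The set $\Lambda^\ast\mathcal Z$ is closed and conical, since it is the closure in $\dot T^\ast M$ of a union of conical sets. The condition $\Gamma\cap(-\Lambda^\ast\mathcal Z)=\emptyset$ is assumed. The surjectivity $\pi(\Gamma)=M$ is equivalent to the hypothesis $\Gamma_p\neq\emptyset$ for all $p$. For $\pi(\Lambda^\ast\mathcal Z)=M$, I will invoke Remark \ref{rem:atomiz-nonemptyfiber}(1): every atomizable set has non-empty fibers. Theorem \ref{thm:main}(1) then yields
\[
u\in\D'_\Gamma(M)\cap H^{\dim M/2}_\mathrm{loc}(M),\qquad v\in\D'_{\Lambda^\ast\mathcal Z}(M)\cap H^{-k_\mathrm{max}/2-0}_\mathrm{loc}(M),
\]
with full supports and $uv=0$.

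For positivity, if $\Gamma_p\cap(-\Gamma_p)\neq\emptyset$ for all $p$, then together with the positive atomizability from Corollary \ref{cor:fragments} this is exactly the hypothesis of Theorem \ref{thm:main}(5), so $u$ and $v$ can both be chosen positive.

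I do not expect a real obstacle, since the corollary is a direct combination of earlier results. The only point requiring a moment of care is confirming that the atoms built in Corollary \ref{cor:fragments} have wavefront sets inside $\Lambda^\ast\mathcal Z$, which holds because $\WF(A_j)\subset\dot N^\ast S_j\subset\dot N^\ast\Sigma_j$.
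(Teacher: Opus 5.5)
Your proposal is correct and matches the paper's proof, which consists of combining Corollary \ref{cor:fragments} (positive $H^{-k_\mathrm{max}/2-0}$-atomizability of $\Lambda^\ast\mathcal Z$) with Theorem \ref{thm:main}(1),(5). Your explicit verification of $\pi(\Lambda^\ast\mathcal Z)=M$ via Remark \ref{rem:atomiz-nonemptyfiber}(1) is correct and covers a hypothesis the paper leaves implicit.
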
 
\begin{proof}Apply Corollary \ref{cor:fragments} and Theorem \ref{thm:main}(1),(5).
\end{proof}

\subsection{Ray bundles}

Since ray bundles are the smallest possible non-empty wavefront sets projecting onto all of $M$, they deserve special attention. Let $
\alpha:M\to T^\ast M$ 
be a continuous, nowhere-vanishing $1$-form. Then the associated ray bundle is
\[
\Lambda_\alpha:=\{(p,t\alpha(p))\,|\,t>0\}\subset \dot T^\ast M.
\]
Thanks to the continuity of $\alpha$, this is a closed conical set.

 If $\beta$ is a smooth and closed  $1$-form (i.e., $d\beta=0$, where $d$ is the exterior derivative), then $\beta$ is locally exact by the Poincaré-Lemma, and if $\beta$ is nowhere-vanishing,  Corollary \ref{cor:integrable}  implies that $\Lambda_\beta$ is $H^{-1/2-0}$-atomizable. More generally, the Corollary implies the same if $\beta$ is nowhere-vanishing and locally conformally closed, i.e., around every point $p\in M$ there is an open set $U\subset M$ on which the $1$-form $\beta$ is of the shape $\beta|_U=f_U\beta_U$, where $f_U\in \Cinft(U)$ is nowhere-vanishing and $\beta_U:U\to T^\ast U$ is a smooth, closed, nowhere-vanishing $1$-form. Such a $1$-form $\beta$ is called \emph{locally integrable}; an equivalent characterization is that $\beta\wedge d\beta=0$. In this case, the Frobenius theorem implies that the line bundle $\R\beta\subset T^\ast M$ is the conormal bundle of a smooth foliation. 

\begin{corollary}\label{cor:raybundles}Let $\alpha:M\to T^\ast M$ be a continuous, nowhere-vanishing $1$-form and $\beta:M\to T^\ast M$ a smooth, nowhere-vanishing, locally conformally closed $1$-form with $
\Lambda_\alpha\cap (-\Lambda_\beta)=\emptyset$. Then there exist distributions
\[
u\in\D'_{\Lambda_\alpha}(M)\cap H^{\dim M/2}_\mathrm{loc}(M),\qquad v\in\D'_{\Lambda_\beta}(M)\cap H^{-1/2-0}_\mathrm{loc}(M)
\]
such that
\bq
\supp  u=\supp  v=M, \qquad uv=0.\vspace*{0.5em} \label{eq:253902390}
\eq
Moreover, if the line bundles $\mathcal L_\alpha:=\R\alpha,\mathcal L_\beta:=\R \beta\subset T^\ast M$ intersect only in the zero section, then there exist positive distributions
\[
u\in\D'_{\dot {\mathcal L}_\alpha}(M)\cap H^{\dim M/2}_\mathrm{loc}(M),\qquad v\in\D'_{\dot {\mathcal L}_\beta}(M)\cap H^{-1/2-0}_\mathrm{loc}(M)
\]
satisfying \eqref{eq:253902390}.
\end{corollary}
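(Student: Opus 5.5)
The plan is to derive both parts of Corollary \ref{cor:raybundles} directly from Theorem \ref{thm:main}, taking $\Gamma=\Lambda_\alpha$ and $\Lambda=\Lambda_\beta$ in the first part, and $\Gamma=\dot{\mathcal L}_\alpha$ and $\Lambda=\dot{\mathcal L}_\beta$ in the second. What has to be checked is that these sets are closed and conical, project onto $M$, satisfy $\Gamma\cap(-\Lambda)=\emptyset$, and that $\Lambda$ is $H^{-1/2-0}$-atomizable, or positively so in the second part.

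For the first part, $\Lambda_\alpha$ and $\Lambda_\beta$ are closed conical sets because $\alpha$ and $\beta$ are continuous and nowhere vanishing. Each fiber is a single ray, so both project onto $M$. The disjointness $\Lambda_\alpha\cap(-\Lambda_\beta)=\emptyset$ is a hypothesis. For atomizability I will use Corollary \ref{cor:integrable}. Near each point write $\beta|_U=f_U\beta_U$ with $\beta_U$ closed and nowhere vanishing. Shrinking $U$ to a ball, the Poincaré lemma gives $\beta_U=dg$ with $g$ a submersion. Since $f_U$ is nowhere vanishing, on a connected $U$ it has a constant sign $\sigma$. Then $h:=\sigma g$ is a submersion with $\tau\,dh|_x=\tau|f_U(x)|^{-1}\beta(x)\in\Lambda_\beta$ for all $\tau>0$. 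Corollary \ref{cor:integrable} therefore makes $\Lambda_\beta$ $H^{-1/2-0}$-atomizable, and Theorem \ref{thm:main}(1) with $\bullet=-1/2-0$ gives the first claim.

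For the second part, $\dot{\mathcal L}_\alpha=\Lambda_\alpha\cup\Lambda_{-\alpha}$ and $\dot{\mathcal L}_\beta$ are closed, conical and symmetric. The assumption $\mathcal L_\alpha\cap\mathcal L_\beta=0$ gives $\dot{\mathcal L}_\alpha\cap(-\dot{\mathcal L}_\beta)=\emptyset$. Also $\Gamma_p\cap(-\Gamma_p)=\Gamma_p\neq\emptyset$. The remaining point, which I expect to be the only real obstacle, is positive $H^{-1/2-0}$-atomizability of $\dot{\mathcal L}_\beta$.

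I will get this from the Frobenius observation preceding the corollary. In a chart $U$ with $\beta|_U=f_U\,dg$, the level sets of $g$ are the plaques of a smooth codimension-one foliation $\F$ of $U$ with $\dot N^\ast\F=\dot{\mathcal L}_\beta|_U$. Lemma \ref{lem:foliations} then gives positive atoms of order $-1/2-0$ supported in arbitrary small open subsets of $U$, with wavefront set in $\dot{\mathcal L}_\beta$. Concretely, these are smooth densities on pieces of the hypersurfaces $\{g=c\}$. Choosing such a chart family and a countable basis subordinate to it, as in the proof of Corollary \ref{cor:fragments}, gives a positive atomizing family. Note that all atoms are hypersurface measures, so they have a uniform order.

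Theorem \ref{thm:main}(5) then yields positive $u,v$ with the stated wavefront and Sobolev properties, full supports and $uv=0$.
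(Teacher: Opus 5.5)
Your proposal is correct and follows the paper's proof. For the first part you use Corollary~\ref{cor:integrable} together with Theorem~\ref{thm:main}(1), and for the second part you use positive hypersurface atoms from Lemma~\ref{lem:foliations} together with Theorem~\ref{thm:main}(5). Two details you spell out are left implicit in the paper: replacing $g$ by $\sigma g$ to handle the sign of $f_U$, and working with the foliation locally rather than invoking the global Frobenius theorem.
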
 
\begin{proof}
Since  the nowhere-vanishing $1$-forms $\alpha,\beta$ are defined on all of $M$, one has $\pi(\Lambda_\alpha)=\pi(\Lambda_\beta)=M$. We already noted above that $\Lambda_\beta$ is $H^{-1/2-0}$-atomizable by Corollary \ref{cor:integrable}, so  Theorem \ref{thm:main}(1)  gives the first claimed statement. For the claim involving the line bundles, we use that $\mathcal L_\beta$ is the conormal bundle of a smooth codimension-$1$ foliation by the Frobenius theorem. Lemma \ref{lem:foliations} therefore says that $\dot{\mathcal L}_\beta$ is positively $H^{-1/2-0}$-atomizable. Since in addition we have $\dot{\mathcal L}_\alpha=-\dot{\mathcal L}_\alpha$, Theorem \ref{thm:main}(5) applies and the proof is finished.
\end{proof}
In particular, the above corollary applies to any contact manifold $(M,\alpha)$ with a global contact $1$-form $\alpha$ admitting a complementary locally conformally closed $1$-form $\beta$. For example, consider on $M=\R^3$ with coordinates $(x,y,z)$ the contact $1$-form \[
\alpha:=dz-x\,dy
\]
and the exact $1$-form\label{ex:contactexact} 
\[
\beta:=dx.
\]
Then Corollary \ref{cor:raybundles} provides distributions
\[
 u\in\D'_{\R_{>0}\alpha}(\R^3)\cap H^{3/2}_\mathrm{loc}(\R^3),\qquad         v\in\D'_{\R_{>0}\beta}(\R^3)\cap H^{-1/2-0}_\mathrm{loc}(\R^3)
\]
and positive distributions
\[
 \tilde u\in\D'_{\R_\times\alpha}(\R^3)\cap H^{3/2}_\mathrm{loc}(\R^3),\qquad         \tilde v\in\D'_{\R_\times\beta}(\R^3)\cap H^{-1/2-0}_\mathrm{loc}(\R^3)
\]
with 
\[
\supp u=\supp v=\supp \tilde u=\supp \tilde v=\R^3,\qquad  uv=\tilde u\tilde v=0.\vspace*{0.5em}
\]
\subsection{Proof of introductory theorem}\label{sec:introthmproof}Here we quickly prove Theorem \ref{thm:intro4}: In case (1), Lemma \ref{lem:foliations} shows that $\Lambda$ is
positively $H^{-k/2-0}$-atomizable. In case (2), the corresponding conclusion follows from Corollary \ref{cor:fragments}. In case (3), Corollary \ref{cor:integrable}, together with the inheritance observed before Remark \ref{rem:atomiz-nonemptyfiber}, shows that $\Lambda$ is $H^{-1/2-0}$-atomizable. In all three cases, the claims now follow from
Theorem \ref{thm:main}(1),(2). \qed

\subsection{An example with two thin factors}\label{sec:alternative}
If one lowers the wavefront set confinement and  Sobolev regularity requirements, there are  slightly simpler constructions of full-support zero divisor pairs than the ``thick-thin'' construction developed in the proof of the main Theorem \ref{thm:main}. In particular, if one accepts a negative Sobolev regularity of both factors and one only requires confinement of the wavefront sets to closed conical subsets  $\Gamma,\Lambda\subset \dot T^\ast M$ with $\Gamma\cap (-\Lambda)=\emptyset$ which are \emph{both} atomizable, then a ``thin-thin'' construction works. Instead of systematically exploring such constructions, which would lead us outside the scope of this paper, we consider an illustrative example:

Take $M=\R^2$ with coordinates $x_1,x_2$ and put $\Gamma:=\R_\times dx_2,\;\Lambda:=\R_\times dx_1\subset \dot T^\ast M$. Fix a dense sequence of points $\{t_j\}_{j\in \N}\subset \R$, for example an enumeration of $\Q$. Choose a smooth function $h\in \Cinft(\R)$ with $h(0)=0$,  $0<h(t)\leq 1$ for $t\neq 0$, and $\Vert h^{(k)}\Vert_{\infty}<\infty$ for all $k\in \N$, for example $h(t):=\frac{t^2}{t^2+1}$. Now, define for each $n\in \N$ a function $a_n\in \Cinft(\R)$ by $a_n(x):=\prod_{j=1}^n h(x-t_j)$. Then, for each $N\in \N$, we define distributions  $u_N\in H^{-1/2-0}_\mathrm{loc}(\R^2)\cap \D'_\Gamma(\R^2)$, $v_N\in H^{-1/2-0}_\mathrm{loc}(\R^2)\cap \D'_\Lambda(\R^2)$ by
\begin{align*}
u_N=u_N(x_1,x_2)&:=\sum_{n=1}^N 2^{-n}a_n(x_1)\delta(x_2-t_n),\\
v_N=v_N(x_1,x_2)&:=\sum_{n=1}^N 2^{-n}a_n(x_2)\delta(x_1-t_n).
\end{align*}
For all $N_1,N_2\in \N$ one finds that the product $u_{N_1}v_{N_2}\in \D'(\R^2)$ vanishes:
\[
u_{N_1}v_{N_2}=\sum_{n=1}^{N_1}\sum_{m=1}^{N_2} 2^{-n-m}a_n(x_1)a_m(x_2)\delta(x_2-t_n) \delta(x_1-t_m)=0,
\]
since $a_n(t_m)=0$ when $n\geq m$ and $a_m(t_n)=0$ when $n\leq m$. On the other hand, since $a_n$ vanishes at $t_1,\ldots,t_n$ and is strictly positive elsewhere, and $\supp \delta(x-t)=\{t\}$ for all $t,x\in \R$ (with the obvious abuse of notation), we have for all $N\in \N$
\bqn
\supp u_N= \R\times \{t_1,\ldots,t_N\},\qquad \supp v_N= \{t_1,\ldots,t_N\}\times \R.
\eqn
By similar arguments as in the proof of Theorem \ref{thm:main} (with  simplifications similar to those mentioned in the proof of (4) in Step 8 on p.~\pageref{step8}), one can show for every $\eps>0$ that $u_N$ and $v_N$ converge in $H^{-1/2-\eps}_\mathrm{loc}(\R^2)\cap \D'_\Gamma(\R^2)$ and $H^{-1/2-\eps}_\mathrm{loc}(\R^2)\cap \D'_\Lambda(\R^2)$ to positive limits $u\in H^{-1/2-0}_\mathrm{loc}(\R^2)\cap \D'_\Gamma(\R^2)$ and $v\in H^{-1/2-0}_\mathrm{loc}(\R^2)\cap \D'_\Lambda(\R^2)$, respectively, such that
\[
\supp u=\supp v=\R^2,\qquad uv=0.
\]
Here one uses that for all $k,n\in \N$ one has $\Vert a_n^{(k)}\Vert_\infty\leq C_k(1+n)^k$ for some $C_k>0$ independent of $n$, and this growth is compensated by the $2^{-n}$ in the sums defining $u_N$ and $v_N$, leading to convergence with respect to all microlocal and Sobolev seminorms.

Such thin-thin examples lack the additional Sobolev regularity and accumulation properties established in Theorems \ref{thm:intro2} and \ref{thm:intro3}. Nevertheless, they illustrate that full-support zero divisor pairs can be constructed in various ways and exist in abundance.

\printbibliography

\bigskip

\end{document}